\documentclass[12pt]{amsart}
\usepackage{amssymb, tensor, fullpage, textcomp, pb-diagram, wasysym, wrapfig, comment, xspace, leftindex, xcolor, appendix, tensor, relsize}
\usepackage[Symbol]{upgreek}
\usepackage[mathscr]{euscript}
\usepackage[shortlabels]{enumitem}

\let\oldtocsection=\tocsection

\let\oldtocsubsection=\tocsubsection

\let\oldtocsubsubsection=\tocsubsubsection

\renewcommand{\tocsection}[2]{\hspace{0em}\oldtocsection{#1}{#2}}
\renewcommand{\tocsubsection}[2]{\hspace{1em}\oldtocsubsection{#1}{#2}}
\renewcommand{\tocsubsubsection}[2]{\hspace{2em}\oldtocsubsubsection{#1}{#2}}

\DeclareFontFamily{U}{fsy}{}
\DeclareFontShape{U}{fsy}{m}{n}{<->s*[.9]psyr}{}
\DeclareSymbolFont{der@m}{U}{fsy}{m}{n}
\DeclareMathSymbol{\der}{\mathord}{der@m}{182}

\newcommand{\tu}{T_\mathrm{U}}
\newcommand{\id}{\operatorname{id}}
\newcommand{\esp}{\mathrm{ES}_p}
\newcommand{\dlokk}{\dlo^\kappa_k}
\newcommand{\altT}{\mathrm{Alt}^*_{T,k}}
\newcommand{\lalt}{L_\mathrm{alt}}
\newcommand{\lvec}{L_\mathrm{vec}}

\newcommand{\nil}{\mathrm{Nil}}

\newcommand{\bil}{\mathrm{Bil}}

\newcommand{\wron}{\operatorname{Wr}}
\newcommand{\spn}{\operatorname{Span}}

\newcommand{\ips}{\mathrm{Hilb}_\R}
\newcommand{\cips}{\mathrm{Hilb}_\C}

\newcommand{\tfeq}{T^*_\mathrm{Feq}}

\newcommand{\dkmin}{D^\kappa_k(T)_{\_}}
\newcommand{\dkapk}{D^\kappa_k(T)}

\newcommand{\dcf}{\mathrm{DCF}}

\newcommand{\nset}{\{1,\ldots,n\}}
\newcommand{\gik}{(g_i)_{i < \kappa}}

\newcommand{\fik}{(f_i)_{i < \kappa}}
\newcommand{\sik}{(\sigma_i)_{i < \kappa}}
\newcommand{\hik}{(h_i)_{i < \kappa}}

\newcommand{\dlo}{\mathrm{DLO}}

\newcommand{\hyp}{\mathrm{Hyp}}
\newcommand{\rel}{\mathrm{Rel}}

\newcommand{\spcrngle}{\langle\hspace{.075cm},\hspace{.025cm}\rangle}
\newcommand{\spcbrckt}{[\hspace{.075cm},\hspace{.025cm}]}

\DeclareSymbolFont{imag@m}{OT1}{cmr}{m}{ui}
\DeclareMathSymbol{\imag}{\mathord}{imag@m}{105}

\newtheorem{theorem}{Theorem}[section]
\newtheorem*{theorem*}{Theorem}

\newtheorem*{qst*}{Question}
\newtheorem{thm}[theorem]{Theorem}
\newtheorem{proposition}[theorem]{Proposition}
\newtheorem*{proposition*}{Proposition}
\newtheorem*{fact*}{Fact}

\newtheorem*{Claim*}{Claim}
\newtheorem{fact}[theorem]{Fact}

\newtheorem{lemma}[theorem]{Lemma}
\newtheorem{corollary}[theorem]{Corollary}

\newtheorem*{thmA}{Theorem A}

\newtheorem*{thmB}{Theorem B}

\theoremstyle{definition}

\theoremstyle{remark}

\newcommand{\tp}{\operatorname{tp}}
\newcommand{\qftp}{\operatorname{qftp}}
\newcommand{\acf}{\mathrm{ACF}}

\newcommand{\rgroop}{(R; +, \trianglelefteq)}
\newcommand{\rgoup}{(\R;+,<)}

\newcommand{\rfield}{(\R;+,\times)}
\newcommand{\rcf}{\mathrm{RCF}}

\newcommand{\vvec}{\mathrm{Vec}}
\newcommand{\triv}{\mathrm{Triv}}

\newcommand{\Th}{\mathrm{Th}}
\ProvideTextCommandDefault{\cprime}{(U+042C)}

\newcommand{\Fraisse}{Fra\"iss\'e\xspace}
\newcommand{\Erdos}{Erd\H{o}s\xspace}

\newenvironment{claimproof}[1][\proofname]
               {
                 \proof[#1]
                 
               }
               {
                 \endproof
               }

\newcommand{\nip}{\mathrm{NIP}}
\newcommand{\ip}{\mathrm{IP}}
\newcommand{\nfop}{\mathrm{NFOP}}

\newcommand{\Cal}[1]{\ensuremath{\mathcal{#1}}}
\newcommand{\Sa}[1]{\ensuremath{\mathscr{#1}}}

\newcommand{\age}[1]{\ensuremath{\textup{Age{#1}}}}

\newcommand{\Z}{\mathbb{Z}}
\newcommand{\N}{\mathbb{N}}
\newcommand{\C}{\mathbb{C}}
\newcommand{\Q}{\mathbb{Q}}
\newcommand{\R}{\mathbb{R}}
\newcommand{\F}{\mathbb{F}}
\newcommand{\E}{\mathbb{E}}

\begin{document}
\title[]{Trace definability IV: higher arity notions}

\author{Erik Walsberg}
\email{erik.walsberg@gmail.com}
\begin{abstract}
Motivated by the ``composition theorems" of Chernikov-Hempel and Abd Aldaim-Conant-Terry we introduce $k$-trace definability between first order theories.
Any theory which is $k$-trace definable in a $\mathrm{NIP}$ theory is $k$-$\mathrm{NIP}$ and any theory which is $2$-trace definable in a stable theory is $2$-$\mathrm{NFOP}$.
All known examples of $k$-$\mathrm{NIP}$ theories are $k$-trace definable in $\mathrm{NIP}$ theories.
We show that for several of the main examples of $k$-$\mathrm{NIP}$ theories $T$ there is a $\mathrm{NIP}$ theory $T^*$ such that $T$ is the (unique up to a certain notion of equivalence) universal theory which is $k$-trace definable in $T^*$.
For example the theory of Hilbert space is the universal theory which is $2$-trace definable in $\mathrm{RCF}$, the theory of the generic class $k$ nilpotent Lie algebra over $\mathbb{F}_p$ is the universal theory which is $k$-trace definable in the theory of infinite $\mathbb{F}_p$-vector spaces,  the theory of the generic $k$-hypergraph is the universal theory which is $k$-trace definable in the theory of a set with two elements, and the theory of Uryshon space is the universal theory which is $2$-trace definable in $\Th\rgoup$.
We construct the universal theory $D_k(T)$ which is $k$-trace definable in an arbitrary theory $T$.
\end{abstract}

\maketitle
\section*{Introduction}
Among other things, we give a precise meaning to the following analogy.
\begin{center}
Infinite-dimensional Hilbert space : $\R$ :: The \Erdos-Rado graph : A two-element set.
\end{center}
This requires some explanation.
Chernikov and Hempel introduced a technique for showing that structures are $k$-$\nip$~\cite{hempel-chernkov, chernikov2025ndependentgroupsfieldsiii}.
It has been used to treat a series of examples.
They showed that a structure $\Sa M$ is $k$-$\nip$ when there is a $\nip$ reduct $\Sa M_0$ of $\Sa M$ and a family $\Cal E$ of $\Sa M$-definable functions $M^k \to M$ such that every $\Sa M$-definable subset of every $M^m$ is of the form 
\[
\{ (a_1, \ldots, a_m) \in M^m : (f_1(a_{i_{1, 1}}, \ldots, a_{i_{1, k}} ), \ldots , f_d(a_{i_{d, 1}}, \ldots, a_{i_{d, k}} ) ) \in Y\}
\]
for some $f_1, \ldots, f_d \in \Cal E$, indices $i_{j, \ell} \in \{1, \ldots, m\}$, and $\Sa M_0$-definable $Y \subseteq M^d$.
For example, let $\ips$ be the theory of infinite-dimensional real Hilbert space\footnote{This is the first order theory of Hilbert space, not the continuous theory.
It is shown in \cite{solovay-arthan-harrison} that a sentence $\varphi$ holds in $\ips$ iff it holds in $\R^d$ for $d$ at least the number of vector variables in $\varphi$.}.
Solovay, Arthan, and Harrison showed that any formula in vector variables $v_1, \ldots, v_m$ and scalar variables $c_1, \ldots, c_n$ is equivalent in $\ips$ to a formula of the form $\vartheta(\langle v \rangle, c_1, \ldots, c_n)$
where $\vartheta(x_1, \ldots, x_{m^2 + n})$ is a formula in the language of ordered fields and $\langle v \rangle$ is the tuple of inner products of the $v_i$~\cite{solovay-arthan-harrison}.
It follows that $\ips$ is $2$-$\nip$.
This is sharp as the formula $\langle v_1, v_2\rangle = 0$ is $\ip$.

\medskip
Let $T$ be an arbitrary theory.
Motivated by the Chernikov-Hempel technique we let $D_k(T)$ be the theory of two-sorted structures of the form $(P, \Sa M, f)$ where $P$ is an infinite set, $\Sa M \models T$, and $f$ is a generic function $P^k \to M$.
We show that $D_k(T)$ exists, is complete, and admits quantifier elimination relative to $T$.
It follows that $D_k(T)$ is strictly $k$-$\nip$ when $T$ is $\nip$.
Intuitively, $D_k(T)$ should be the universal $k$-$\nip$ theory which can be produced from $T$ using the Chernikov-Hempel technique.
This suggests, for example, that $\ips$ should be reducible to $D_2(\rcf)$ in some sense.

\medskip
We make this precise using two notions of reducibility introduced in \cite{trace1, trace2}.
We recall these notions. 
We say that $\Sa N$ {\bf trace defines} $\Sa M$ if one of the following equivalent conditions holds:
\begin{enumerate}[leftmargin=*]
\item Up to isomorphism, $M$ is a subset of some $N^n$ and every $\Sa M$-definable subset of every $M^m$ is of the form $Y \cap M^m$ for some $\Sa N$-definable $Y \subseteq N^{nm}$.
\item There is an injection $\uptau \colon M \hookrightarrow N^n$ for some $n$ such that every $\Sa M$-definable subset of every $M^m$ is a pullback via $\uptau$ of an $\Sa N$-definable subset of $N^{nm}$.
\end{enumerate}
There is also a coarser ``local" notion.
We say that $\Sa N$ {\bf locally trace defines} $\Sa M$ if for any $\Sa M$-definable sets $X_1, \ldots, X_d$ there is an injection $\uptau \colon M \hookrightarrow N^n$ for some $n$ such that each $X_i$ is the pullback via $\uptau$ of an $\Sa N$-definable set.
Several more definitions naturally follow.
A theory (locally) trace defines another if every (equivalently: some) model of the second is (locally) trace definable in a model of the first.
Two theories are (locally) trace equivalent if each is (locally) trace definable in the other.
Two structures are (locally) trace equivalent when their theories are.
Equivalently: two structures are (locally) trace equivalent if each is (locally) trace definable in an elementary extension of the other.
Classification-theoretic properties such as superstability, strong dependence, total transcendence, finiteness of various ranks, etc, are preserved under trace definability~\cite{trace1}.
Properties that are defined locally such as stability and $k$-$\nip$ are preserved under local trace definability.

\medskip
We show that any theory that can be produced from $T$ using the Chernikov-Hempel technique is locally trace definable in $D_k(T)$, and $D_k(T)$ is the unique theory with this property up to local trace equivalence.
If, as is usually the case, we only require finitely many functions, then we can drop ``locally" and ``local" here.
We need to explain what we mean by ``produced from $T$ using the Chernikov-Hempel technique".
Perhaps surprisingly, this notion is a higher arity version of trace definability.
We say that $\Sa N$ {\bf locally $k$-trace defines $\Sa M$} if there is a family $\Cal E$ of functions $M^k \to N$ such that every $\Sa M$-definable subset of every $M^m$ is of the form 
\[
\{ (a_1, \ldots, a_m) \in M^m : (f_1(a_{i_{1, 1}}, \ldots, a_{i_{1, k}} ), \ldots , f_d(a_{i_{d, 1}}, \ldots, a_{i_{d, k}} ) ) \in Y\}
\]
for some $f_1, \ldots, f_d \in \Cal E$, indices $i_{j, \ell} \in \{1, \ldots, m\}$, and $\Sa N$-definable $Y \subseteq N^d$.
We say that $\Sa N$ {\bf $k$-trace defines} $\Sa M$ if we may choose $\Cal E$ to be finite.
(Think of the $f \in \Cal E$ as $N$-valued invariants of elements of $M^k$.)
We extend these definitions to theories as above.
Local $k$-trace definability in a $\nip$ structure implies $k$-$\nip$.
As far as I know, every example of a $k$-$\nip$ structure is $k$-trace definable in a $\nip$ structure.
As the terminology suggests (local) $1$-trace definability is equivalent to (local) trace definability.
For any theories $T, T^*$ we have:

\begin{itemize}[leftmargin=*]
\item $T^*$ is $k$-trace definable in $T$ if and only if it is trace definable in $D_k(T)$, and $D_k(T)$ is the unique theory up to trace equivalence with this property.
\item $T^*$ is locally $k$-trace definable in $T$ if and only if it is locally trace definable in $D_k(T)$, and $D_k(T)$ is the unique theory up to local trace equivalence with this property.
\end{itemize}
We show that a number of the main examples of $k$-$\nip$ theories are trace equivalent to $D_k(T)$ for a $\nip$ theory $T$.
We give the main examples, described somewhat informally.

\begin{thmA}
\hspace{.00000000000000000000001cm}
\begin{itemize}[leftmargin=*]
\item The theory of infinite-dimensional real Hilbert space and the theory of infinite-dimensional complex Hilbert space are both trace equivalent to $D_2(\rcf)$.
\item The theory of the generic class $k$ nilpotent Lie algebra over $\F_p$ is trace equivalent to $D_k(\vvec_p)$, where $\vvec_p$ is the theory of infinite $\F_p$-vector spaces.
\item Let $p$ be an odd prime and $\bil_p$ be the theory of  two-sorted structures $(V, W, \beta)$ where $V, W$ are infinite $\F_p$-vector spaces and $\beta$ is an non-degenerate alternating bilinear form $V^2 \to W$.
Then $\bil_p$ is trace equivalent to $D_2(\vvec_p)$
\item Let $T$ be the theory of a characteristic zero field $\F$.
The theory of a generic infinite-dimensional class $k$ nilpotent Lie algebra over $\F$ is locally trace equivalent to $D_k(T)$.
The theory of an infinite-dimensional $\F$-vector space $V$ equipped with a generic $k$-linear alternating form $V^k \to \F$ is also locally trace equivalent to $D_k(T)$.
\item Let $\Sa R$ be an arbitrary expansion of $\rgoup$ and $T = \Th(\Sa R)$.
Consider the classical Uryshon space to be a two-sorted structure $(X, \Sa R, d)$ where $d$ is the metric $X^2 \to \R$.
Then the theory of $(X, \Sa R, d)$ is trace equivalent to $D_2(T)$.
\item The theory of a generic linear order on the $k$th power of an infinite set is trace equivalent to $D_k(\dlo)$.
\item The theory of the generic $k$-hypergraph  is trace equivalent to $D_k(\Th(\Sa F))$ where $\Sa F$ is an arbitrary fixed finite structure with at least two elements.
For any $m \ge 2$, the theory of the generic $km$-hypergraph is trace equivalent to $D_k(T)$ for $T$ the theory of the generic $m$-hypergraph. 
\end{itemize}
\end{thmA}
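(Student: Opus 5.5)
Each item of Theorem A is proved by the same two-sided scheme, built on the characterization stated just before it: $T^*$ is (locally) $k$-trace definable in $T$ iff it is (locally) trace definable in $D_k(T)$. Write $T^*$ for the theory in a given item. The scheme has two halves.

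\emph{Upper bound.} We show $T^*$ is $k$-trace definable in the relevant $\nip$ theory $T$. This amounts to exhibiting finitely many $k$-ary invariant functions together with a relative quantifier elimination.

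\emph{Lower bound.} We show $D_k(T)$ is (locally) trace definable in $T^*$. For this it suffices to trace define, inside a model of $T^*$, a structure $(P,\Sa M,f)$ with $\Sa M\models T$ and $f\colon P^k\to M$ generic. By the relative quantifier elimination for $D_k(T)$, the definable sets of such a structure are Boolean combinations of sets of the form $\{(a_1,\dots,a_m): (f(a_{\bar\imath_1}),\dots,f(a_{\bar\imath_d}))\in Y\}$ with $Y$ definable in $T$. So we must embed $P$ and $M$ into $T^*$-definable sorts so that $f$ becomes a $T^*$-definable function, at least on the image, and $T$-definable sets pull back to traces of $T^*$-definable sets.

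For the upper bounds I would use the following sources, item by item.
\begin{itemize}
\item Hilbert space: the Solovay--Arthan--Harrison elimination quoted in the introduction, with $\mathcal E$ the inner product. In the complex case use its real and imaginary parts, reducing to $\rcf$.
\item Nilpotent Lie algebras and $\bil_p$: the known quantifier elimination of Chernikov--Hempel (and its characteristic-zero analogue), with $\mathcal E$ the bracket or form composed with coordinate maps.
\item Urysohn space: quantifier elimination in the metric language, using $d$.
\item Generic order on $P^k$: the function $P^k\to \dlo$ which codes the order.
\item Hypergraphs: characteristic functions of edges valued in $\Sa F$. For the $km$ case, write the $km$-ary edge relation as an $m$-ary relation on $k$-tuples.
\end{itemize}

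For the lower bounds the plan is to realize a generic $f$ as a function inside $T^*$. In Hilbert space, take $P$ to be an orthonormal-like family $\{v_i\}$. Given a target generic function $g\colon P^2\to R$, realize it by choosing vectors $w_i$ with prescribed inner products $\langle w_i,w_j\rangle=g(i,j)$ for $i<j$. Arbitrary real values are allowed off the diagonal, since one can add large diagonal norms to keep the Gram matrix positive definite. By compactness, each finite configuration of a generic function embeds. Symmetry of the inner product is a nuisance: restrict to an ordered $P$ and recover a generic non-symmetric $f$ from two symmetric ones on disjoint copies, e.g.\ $f(a,b)=\langle w_a,u_b\rangle$ with $w$ and $u$ ranging over two independent families. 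The same trick handles alternating forms and brackets, which are generic on distinct independent families. For the linear-order item, compare pairs $(a,c)$ against fixed reference tuples to encode a $\dlo$-valued invariant. For hypergraphs, the value set of $\Sa F$ is coded by finitely many parameter vertices.

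The main obstacle I expect is the lower bound in the cases with only local equivalence, together with the field sort. In the characteristic-zero items, $T$ is the full theory of $\F$, but the formulas of $T^*$ only see polynomial combinations of the form values. One must check that every $T$-definable $Y$ pulls back correctly, which works because the scalar sort of $T^*$ is literally $\F$ with its full structure. The remaining issue is to make infinitely many $k$-ary invariants simultaneously generic; this is why only local trace equivalence is claimed there. I would first attempt to get each finite fragment from a single generic form by compactness, and fall back to the local version if that fails.
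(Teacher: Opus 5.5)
Your overall architecture matches the paper's. Upper bounds go through $k$-trace definability and Theorem~\ref{thm;dock}; lower bounds realize a generic function on independent families as in Lemma~\ref{lem;f 0}. Your two-family Hilbert space lower bound is essentially the paper's.

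\textbf{The main gap is in the characteristic-zero item, and you have put the difficulty in the wrong direction.} The lower bound there is not local at all:
\begin{itemize}
\item $\mathrm{Alt}^*_{T,k}$ trace defines $D_k(T)$ outright, by the exterior-power construction;
\item $\nil^k_T$ trace defines $D_k(\vvec_T)$, which trace defines $D_k(T)$ because $\vvec_T$ interprets $T$.
\end{itemize}
What fails as stated is your upper bound. The relative quantifier eliminations (Chernikov--Hempel for forms, d'Elb\'ee--M\"uller--Ramsey--Siniora for Lazard Lie algebras) only reduce formulas to $\lvec$-formulas in the vector variables and the form or bracket values. These still involve linear (in)dependence of vectors and the coordinate functions $\spn_{n,i}$. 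So they show that $T^*$ is $k$-trace definable in the two-sorted $\vvec_T$, not in the field theory $T$. Nothing reduces the vector sort to finitely many $\F$-valued invariants, and in general this is impossible: $\vvec_T$ has infinite dp-rank when $\F$ is infinite, so it is not trace definable in $\rcf$ or $\acf_0$.

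The missing idea is Proposition~\ref{prop;infn}(2): in characteristic zero, $\vvec_T$ is \emph{locally} trace definable in $T$.
\begin{itemize}
\item Realize $V$ as $\F(t)$ inside an elementary extension $\Sa E\succ\Sa F$.
\item Take the iterated derivations $\der^{(n)}\colon V\to\E$ as the witnessing functions.
\item The Wronskian criterion expresses $\F$-linear independence, and $\wron(w)^{-1}\Delta_d(u)$ computes the $\spn$ coordinates.
\end{itemize}
Then compose: $T^*$ is $k$-trace definable in $\vvec_T$, and $\vvec_T$ is locally trace definable in $T$. This gives local $k$-trace definability of $T^*$ in $T$, hence local trace definability in $D_k(T)$. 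This step is the sole source of ``local''. Your planned compactness argument over finite fragments of a generic form repairs a direction that needs no repair.

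\textbf{Several other items lack the idea that makes them work.}
\begin{itemize}
\item \emph{Urysohn space.} You give no lower bound. A generic $f\colon P^2\to O$ is not a metric. The paper takes $X=P\times\{1,2\}$, with cross distances $f(p_1,p_2)+\delta$ and distance $\delta$ between distinct points in the same copy. Here $\delta$ exceeds every element of $O$, which needs an elementary extension in which $O$ is not cofinal, since a generic $f$ is unbounded. It then recovers $f$ as $d-\delta$.
\item \emph{Linear orders.} A generic $f\colon P^k\to M\models\dlo$ has infinite fibres, so it induces only a linear preorder on $P^k$. ``Reference tuples'' does not say how ties are traced by a strict order. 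The paper writes the preorder as an intersection of two linear orders and uses that $\dlo^2_k$ is trace definable in $\dlo^1_k$.
\item \emph{Complex Hilbert space.} Using real and imaginary parts presupposes a complex Solovay--Arthan--Harrison theorem and completeness of $\cips$. The paper gets both from mutual interpretability via complexification.
\item \emph{Nil-$k$ Lie algebras.} For the upper bound you need all Lie monomials of degree $\le k$, and the Lazard filtration in the base, which is interpretable in $\vvec_T$. For the lower bound, ``brackets are generic on independent families'' needs an actual construction. The paper uses the free nil-$k$ algebra and Hall-basis independence of the $[x^1_{p_1},\ldots,x^k_{p_k}]$. It then quotients by the ideal spanned by $f(\bar p)-[x^1_{p_1},\ldots,x^k_{p_k}]$.
\item \emph{$km$-hypergraphs.} This case follows most directly from the composition theorem making $D_k(D_m(T))$ trace equivalent to $D_{km}(T)$, which your plan never invokes.
\end{itemize}
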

The reader familiar with $k$-$\nip$ structures may wonder whether the theory of infinite extra-special $p$-groups is trace equivalent to $D_2(T)$ for some $\nip$ theory $T$, as it is one of the first examples of a $2$-$\nip$ theory.
On the contrary, we show that this theory cannot be locally trace equivalent to $D_k(T)$ for any theory $T$.
This follows by combining several results.
\begin{enumerate}[leftmargin=*]
\item Any infinite extra-special $p$-group is trace equivalent to the disjoint union of an infinite $\F_p$-vector space with the \Erdos-Rado graph.
This shows in a precise sense that infinite extra-special $p$-groups are the simplest $\ip$ structures which are more complicated then a $\F_p$-vector spaces.
\item If $T$ is any theory with infinite models and $k \ge 2$ then $D_k(T)$ is not locally trace definable in the theory of a disjoint union of a stable structure with a binary structure.
This is proven by applying an ad-hoc property strictly intermediate between stability and $2$-$\nip$ which we call ``partition-wise stability".
\item The last claim of Theorem~A and Theorem~B below together show that $D_k(T)$ cannot locally trace define $\vvec_p$ where $T$ is the theory of some finite structure.
\end{enumerate}

\begin{thmB}
A theory admitting quantifier elimination in a bounded arity relational language cannot locally $k$-trace define a group which has an abelian subgroup of cardinality at least $n$ for every $n \in \N$.
\end{thmB}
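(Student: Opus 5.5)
The approach is to reduce to a purely combinatorial statement about the relation ``$x_1 + \cdots + x_n = 0$'' on large finite abelian groups, and then show that this relation cannot be a bounded Boolean combination of relations of arity less than $n$.

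Let $T$ admit quantifier elimination in a relational language whose symbols all have arity $\le r$. Suppose towards a contradiction that $\Sa N \models T$ locally $k$-trace defines a group $G$ which has abelian subgroups $H$ with $|H| \ge n$ for every $n$. Set $s = rk$ and $n = s + 1$. Apply the definition of local $k$-trace definability to the single $G$-definable set
\[
E = \{(a_1,\ldots,a_n) \in G^n : a_1 a_2 \cdots a_n = 1\}.
\]
This gives finitely many $f_1,\ldots,f_d$ and an $\Sa N$-definable $Y \subseteq N^d$ pulling back to $E$. By quantifier elimination $Y$ is defined by a fixed quantifier-free formula. This formula is a Boolean combination of $L$ atomic formulas, each mentioning at most $r$ of the $d$ coordinates, and parameters are harmless since they only fix values. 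Each coordinate $f_j(a_{i_{j,1}},\ldots,a_{i_{j,k}})$ depends on at most $k$ of the variables. Hence $E$ is a Boolean combination, of a shape independent of $G$, of $L$ relations $R_1,\ldots,R_L$ on $G^n$, where each $R_i$ depends only on a set $S_i \subseteq \{1,\ldots,n\}$ of coordinates with $|S_i| \le s < n$.

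The crucial point is that $L$ and the Boolean shape are fixed, while we may restrict to an abelian $H \le G$ of arbitrarily large finite size. So it suffices to prove the following combinatorial lemma. \emph{For fixed $n$ and $L$ and all sufficiently large finite abelian groups $H$, the relation $x_1 + \cdots + x_n = 0$ on $H^n$ is not a Boolean combination of $L$ relations each depending on fewer than $n$ coordinates.}

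I would prove the lemma by a cube argument. The $2^L$ truth-value patterns of $(R_1,\ldots,R_L)$ at a point determine membership in $E$. Choose $a_1,\ldots,a_n$ and $b_1,\ldots,b_{n}$ and look at the cube $\prod_j \{a_j,b_j\}$. Each $R_i$ misses some coordinate $j_i$, so its value at a cube vertex is unchanged when coordinate $j_i$ is flipped. The goal is to find a cube on which every vertex other than one vertex $q$ has the same $E$-value, but $E(q)$ differs. This would contradict a parity or inclusion--exclusion consequence of each atom being insensitive to one direction, provided we also arrange the atom values to be ``cube-homogeneous''. To obtain homogeneity, I would color each $(n-1)$-subtuple by the vector of values of the relevant atoms. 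Then I would apply a finite product-Ramsey theorem (Erdős' theorem on complete $s$-partite subhypergraphs) to find boxes $B_1 \times \cdots \times B_{n-1}$ of size at least $2$ in each factor on which the atoms not involving $x_n$ are constant. The last coordinate I would not shrink: I would take it to be $-(x_1 + \cdots + x_{n-1})$ or a translate of it. The abelian hypothesis is used exactly here, since it makes the set of sums over a box large and lets us translate freely.

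The main obstacle is the treatment of the atoms involving $x_n$. Their color depends on the value of $x_n$, which ranges over all of $H$, so naive Ramsey with $|H|$-many colors fails. What I would try first is to make $x_n$ range only over a small set $C$, namely the sums over a $2 \times \cdots \times 2$ box together with one extra element. I would then Ramsey-color the first $n-1$ coordinates by the finitely many values of the atoms at each $c \in C$, with $C$ chosen in advance from a bounded-size configuration, for instance a Sidon-type set inside a large $H$. If that fails, the fallback is a density argument. The sum relation has density $1/|H|$ and is ``$(n-1)$-wise pseudorandom'' (uniform in every $n-1$ coordinates), whereas by the hypergraph regularity and removal lemma, a Boolean combination of $L$ relations of arity less than $n$ containing $|H|^{n-1}$ tuples that pairwise share at most $n-2$ coordinates must contain many more tuples. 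That would contradict $|E| = |H|^{n-1}$.
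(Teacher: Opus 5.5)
Your reduction is correct, and more direct than the paper's, which passes through $(k+1)$-trace definability in a two-element set and a $k$-ary relational structure $\Sa G$. With $n=rk+1$, the set $E=\{a\in G^n : a_1\cdots a_n=1\}$ is a fixed Boolean combination of $L$ relations, each depending on fewer than $n$ coordinates, and this persists on every subgroup. The gap is in the combinatorial lemma, at exactly the step you call the main obstacle, and that step is the whole content of the theorem.

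\textbf{The cube argument does not close.} No parity or inclusion--exclusion obstruction exists for Boolean (as opposed to $\F_2$-linear) combinations. On a cube, the indicator of a single vertex $q$ is the conjunction of $[x_1=q_1\wedge\cdots\wedge x_{n-1}=q_{n-1}]$ and $[x_n=q_n]$, and each conjunct is insensitive to one direction. So a contradiction can only come from two points at which every atom takes the same value but $E$ differs. The cube plays no role; everything rests on homogeneity. Product Ramsey on the first $n-1$ coordinates only handles atoms not involving $x_n$. The Sidon-set idea is circular, because the sums of the box that Ramsey produces cannot be fixed before the colouring is chosen.

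The missing idea is to let the colouring see the sum; this is what the paper takes from Oger's use of Graham--Leeb--Rothschild.
\begin{itemize}
\item Put $\sigma(x)=x_1+\cdots+x_{n-1}$ and colour $x\in H^{n-1}$ by the values of all atoms at $(x,-\sigma(x))$.
\item Find a monochromatic corner $\{b,b+de_1,\ldots,b+de_{n-1}\}$ with $d\neq 0$. Use Gallai--Witt inside $\{0,g,\ldots,Mg\}^{n-1}$ if $H$ has an element $g$ of large order. Otherwise $H$ has bounded exponent and contains a large $(\Z/p)^m$; use Hales--Jewett over the alphabet $\{0,e_1,\ldots,e_{n-1}\}$ there.
\item Then $(b,-\sigma(b))\in E$ and $(b,-\sigma(b)-d)\notin E$ agree on every atom. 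Atoms not involving $x_n$ agree trivially. An atom missing coordinate $i\le n-1$ takes the same value at $(b,-\sigma(b)-d)$ as at $(b+de_i,-\sigma(b+de_i))$, hence the same as at $(b,-\sigma(b))$.
\end{itemize}
This is exactly the paper's comparison of $(b_1,\ldots,b_k,b_1\cdots b_k)$ with $(b_1,\ldots,b_k,b_1\cdots b_k b_{k+1})$ in the proof of Theorem~\ref{thm:fh}.

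\textbf{The density fallback is sound for finite $H$, but only for finite $H$.} It is a genuinely different route. Each atom of the Boolean algebra generated by your relations is a cylinder intersection $\{x : x_{[n]\setminus\{j\}}\in A_j \text{ for all } j\}$. Some such atom lies inside $E$ and has at least $|H|^{n-1}/2^L$ points. These are edge-disjoint simplices, since any $n-1$ coordinates determine the last. Simplex removal then forces $\Omega(|H|^n)$ points in that atom, contradicting $|E\cap H^n|=|H|^{n-1}$. This route trades Ramsey theory for hypergraph removal and does not even use commutativity. You only assert it, though.

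More importantly, your claim that ``we may restrict to an abelian $H\le G$ of arbitrarily large finite size'' is false. The hypothesis only gives abelian subgroups of size at least $n$, and these may all be infinite and torsion-free. $G=\Z$ is the basic example, and the paper explicitly applies the result to groups of infinite exponent. You need a case split:
\begin{itemize}
\item If some abelian subgroup has an element $g$ of infinite order, run the removal argument on the partite set $\{0,g,\ldots,(N-1)g\}^{n-1}\times\{-jg : 0\le j\le (n-1)(N-1)\}$.
\item Otherwise the large abelian subgroups are torsion, hence locally finite, hence contain arbitrarily large finite subgroups.
\end{itemize}
The corner argument above handles all of these cases uniformly.
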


Theorem~B covers the familiar examples of groups\footnote{Unless the reader happens to be familiar with free Burnside groups or Tarski monsters.}.
It should be possible to strengthen (3) to arbitrary infinite groups.
However, this appears to be substantially more difficult.
Theorem~B follows by adapting the proof of a theorem of Oger~\cite{oger}, which is itself an application of a strong form of Ramsey's theorem.

\subsection*{Acknowledgments}
This research was funded in part by the Austrian Science Fund (FWF) 10.55776/PAT1673125.



\subsection*{Conventions}
Throughout $m, n, k, l, d$ are natural numbers (including $0$) and $\kappa$ is a non-zero cardinal.
All languages, structures, and theories are first order and all theories are consistent, complete, and deductively closed unless stated otherwise.
These assumptions ensure that any $L$-theory $T$ has cardinality $|L| + \aleph_0$, so if $\kappa$ is an infinite cardinal then $|T| \le \kappa$ if and only if $|L| \le \kappa$.
Throughout ``definable" without modification means ``first order definable, possibly with parameters".
Given a language $L$, structure $\Sa M$, and $A \subseteq M$, we let $L(A)$ be the expansion of $L$ by constant symbols defining the elements of $A$.

\section{Basic facts about (local) $k$-trace definability}\label{section:k-trace def}
Fix $k\ge 1$.
Let $\Sa N, \Sa M$ be arbitrary structures and $T, T^*$ be arbitrary theories.
Suppose  $\Sa N$ eliminates quantifiers.
Then $\Sa N$ \textbf{locally $k$-trace defines} $\Sa M$ if there is a collection $\Cal E$ of functions $M^k \to N$ such that every $\Sa M$-definable subset of every $M^n$ is quantifier free definable in the two-sorted structure $(M, \Sa N,\Cal E)$.
We say that $\Sa N$ \textbf{$k$-trace defines} $\Sa M$ if we may additionally take $\Cal E$ to be finite, i.e. if there are functions $\uptau_1,\ldots,\uptau_m\colon M^k\to N$ such that every $\Sa M$-definable set is quantifier free definable in $(M, \Sa N,\uptau_1,\ldots,\uptau_m)$.
Note that if $\Sa N^*$ is another structure on $N$ which admits   quantifier elimination and is interdefinable with $\Sa N$  then $\Sa N^*$ (locally) $k$-trace defines $\Sa M$ if and only if $\Sa N$ (locally) $k$-trace defines $\Sa M$.
We extend the definitions to arbitrary structures by saying that an arbitrary structure $\Sa N$ (locally) $k$-trace defines $\Sa M$ if any structure which interdefinable with $\Sa N$ and admits quantifier elimination (locally) $k$-trace defines $\Sa M$.

\medskip
We say that $\Sa M$ is (locally) $k$-trace definable  in $T$ when $\Sa M$ is (locally) $k$-trace definable in a model of $T$, and $T^*$ is (locally) $k$-trace definable in $T$ when every model of $T^*$ is (locally) $k$-trace definable in $T$.
Proposition~\ref{prop:anudda} restates the definitions in a slightly different form.

\begin{proposition}
\label{prop:anudda}
Fix $k\ge 1$, let $\Sa N$ be an $L$-structure, and let $\Sa M$ be an $L^*$-structure.
Then the following are equivalent:
\begin{enumerate}
[leftmargin=*]
\item $\Sa M$ is $k$-trace definable in $\Sa N$.
\item There is a finite collection $\Cal E$ of functions $M^k \to N$ such that every $L^*(M)$-formula $\phi(x_1,\ldots,x_n)$ is equivalent to a formula of the form
\[
\vartheta(\uptau_1(x_{i_{1,1}},\ldots,x_{i_{1,k}}), \uptau_2(x_{i_{2,1}},\ldots,x_{i_{2,k}}),\ldots,\uptau_m(x_{i_{m,1}},\ldots,x_{i_{m,k}}))
\]
for  $\vartheta$ an $L(N)$-formula, $\uptau_1, \ldots, \uptau_m \in \Cal E$ and $i_{1,1},\ldots,i_{1,k},\ldots,i_{m,1},\ldots,i_{m,k}\in\{1,\ldots,n\}$.
\end{enumerate}
Furthermore the following are equivalent:
\begin{enumerate}[leftmargin=*]\setcounter{enumi}{2}
\item $\Sa M$ is locally $k$-trace definable in $\Sa N$.
\item There is a collection $\Cal E$ of functions $M^k \to N$ such that every $L^*(M)$-formula $\phi(x_1,\ldots,x_n)$ is equivalent to a formula of the form
\[
\vartheta(\uptau_1(x_{i_{1,1}},\ldots,x_{i_{1,k}}), \uptau_2(x_{i_{2,1}},\ldots,x_{i_{2,k}}),\ldots,\uptau_m(x_{i_{m,1}},\ldots,x_{i_{m,k}}))
\]
for  $\vartheta$ an $L(N)$-formula, $\uptau_1,\ldots,\uptau_m\in\Cal E$, and $i_{1,1},\ldots,i_{1,k},\ldots,i_{m,1},\ldots,i_{m,k}\in\{1,\ldots,n\}$.
\end{enumerate}
Here we take all formulas to be in the two sorted structure $(\Sa M, \Sa N,\Cal E)$.
\end{proposition}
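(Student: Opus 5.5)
The argument is an unwinding of the definitions. The only real content is relating ``quantifier free definable in $(M,\Sa N,\Cal E)$'' for a quantifier-eliminating $\Sa N$ to ``given by an $L(N)$-formula applied to $\Cal E$-terms'' for an arbitrary $\Sa N$. I treat (1)$\Leftrightarrow$(2); the local case (3)$\Leftrightarrow$(4) is word-for-word the same with ``finite'' deleted.

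Fix a structure $\Sa N^*$ on $N$ which is interdefinable with $\Sa N$ and admits quantifier elimination. One can take the Morleyization, whose language $L^*_N$ has a relation symbol $R_\psi$ for each $L$-formula $\psi$. Interdefinability only gives parameter-definability, so I will use the Morleyization over parameters, adding $R_\psi$ for every $L(N)$-formula; this structure is interdefinable with $\Sa N$ and eliminates quantifiers. By the remark preceding the proposition, (1) holds iff $\Sa N^*$ $k$-trace defines $\Sa M$, and this does not depend on which such $\Sa N^*$ we choose.

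\emph{Key step: describing the quantifier free formulas.} In the two-sorted structure $(M,\Sa N^*,\Cal E)$, with variables $x_1,\ldots,x_n$ all of sort $M$, the terms of sort $N$ are exactly $\uptau(x_{i_1},\ldots,x_{i_k})$ with $\uptau\in\Cal E$, together with constants from $N$. The only atomic formulas are:
\begin{itemize}
\item equalities $x_i=x_j$ of sort $M$;
\item atomic $L^*_N(N)$-formulas applied to $N$-sorted terms.
\end{itemize}
The equalities $x_i = x_j$ are not of the form in (2) on their face. I will handle them using the fact that in (1) every $\Sa M$-definable set, and hence in particular the diagonal $\{x_1=x_2\}$, is quantifier free definable. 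So I do not need to eliminate such equalities in general; I only need the following. Every $L^*(M)$-formula $\phi$ is, by hypothesis (1), equivalent to a Boolean combination $\chi$ of atomic formulas of these two kinds. My first attempt is to require that the equalities be avoidable, i.e.\ to show they can be absorbed into $\vartheta$.

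This absorption is the main obstacle. If $\Cal E$ does not ``see'' equality, so that no $\uptau$ separates points, then $x_1=x_2$ need not be expressible in form (2) at all. I will therefore check how the paper intends the two-sorted structure in (2). The phrase ``taking all formulas in $(\Sa M,\Sa N,\Cal E)$'' suggests that (2) permits the same atomic material, namely $\vartheta$ together with $M$-equalities. Alternatively, one can add to $\Cal E$ a function $\uptau_0(x_1,\ldots,x_k)$ encoding the equality type of the tuple via distinct elements of $N$. This requires $|N|\ge 2$; if $|N|\le 1$ and $M$ is infinite then neither condition can hold, so that case is trivial. Since a function of $k$ variables can be applied with repeated indices, $\uptau_0(x_i,x_j,\ldots,x_j)$ detects whether $x_i=x_j$. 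Hence the equalities can be rewritten as $N$-sorted conditions, and (1)$\Rightarrow$(2) goes through with the enlarged finite family $\Cal E\cup\{\uptau_0\}$.

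\emph{Rewriting into form (2).} Each $R_\psi$ is replaced by its defining $L(N)$-formula $\psi$, so a Boolean combination of atomic $L^*_N(N)$-formulas in $\Cal E$-terms becomes a single $L(N)$-formula $\vartheta$ evaluated at a tuple of the form $\uptau_j(x_{i_{j,1}},\ldots,x_{i_{j,k}})$, which is exactly the shape required in (2).

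\emph{Converse (2)$\Rightarrow$(1).} Given $\phi$ equivalent to $\vartheta(\uptau_1(\ldots),\ldots,\uptau_m(\ldots))$, quantifier elimination in $\Sa N^*$ replaces $\vartheta$ by a quantifier free $L^*_N(N)$-formula. Substituting the terms $\uptau_j(\ldots)$ yields a quantifier free formula of $(M,\Sa N^*,\Cal E)$ defining the same set, which is (1).
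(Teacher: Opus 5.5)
Your core step is exactly the unwinding the paper has in mind (it leaves the proof to the reader), and it is fine: pass to a quantifier-eliminating $\Sa N^*$ interdefinable with $\Sa N$, replace atomic $\Sa N^*$-formulas in $\Cal E$-terms by their defining $L(N)$-formulas, and conversely use quantifier elimination to make $\vartheta$ quantifier-free.

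The step that fails is your absorption of equalities on the $M$-sort. The patch $\uptau_0(x_i,x_j,\ldots,x_j)$ needs $k\ge 2$. For $k=1$, expressing $x_1=x_2$ in form (2) forces $a\mapsto(\uptau(a))_{\uptau\in\Cal E}$ to be an injection $M\hookrightarrow N^{|\Cal E|}$, and no such injection exists when $M$ is infinite and $N$ is finite. Your dismissal of the case $|N|\le 1$ is also wrong. Under your reading of (1), an infinite set with no structure satisfies (1) with $\Cal E=\emptyset$ for every $\Sa N$, since all its definable sets are Boolean combinations of $x_i=x_j$ and $x_i=a$. Yet it fails (2) when $|N|=1$, and also when $k=1$ and $N$ is finite. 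So if equality on $M$ is freely available, (1)$\Rightarrow$(2) is false, and adding functions to $\Cal E$ cannot rescue it.

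The paper intends $M$ to be a bare index set whose only atomic formulas are $\Sa N^*$-atomic formulas of $\Cal E$-terms. This is why Lemma~\ref{lem:c} adds $\iota$ or $g$ precisely to handle equality on $P$. It is why Proposition~\ref{prop:fhkd} includes $\chi_=$. It is also why $1$-trace definability can coincide with trace definability, which demands an injection. With that reading there is nothing to absorb, and your translation argument works verbatim for all $k\ge 1$, with no $\uptau_0$.

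A smaller omission concerns parameters. Since ``definable'' allows parameters, the $N$-sorted terms of $(M,\Sa N^*,\Cal E)$ also include $\uptau$ applied to tuples that mix the $x_i$ with elements of $M$, and your list of terms leaves these out. Form (2) has to be read as permitting such arguments too. Otherwise (1)$\Rightarrow$(2) fails already for the random graph. By Proposition~\ref{prop:fhkd} it is $2$-trace definable in a two-element set via $\{\chi_E,\chi_=\}$. But for any finite $\Cal E$ there are only finitely many sets $\{x:\vartheta(\uptau_1(x,\ldots,x),\ldots,\uptau_m(x,\ldots,x))\}$, while the sets defined by $E(x,a)$ are infinitely many. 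Once both conventions are made explicit, your argument is a complete proof.
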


If $\Cal E$ is as in (2), (4) then we say that $\Cal E$ {\bf witnesses} local $k$-trace definability, $k$-trace definability of $\Sa M$ in $\Sa N$, respectively.
We leave the proof of Proposition~\ref{prop:anudda} to the reader as it should be obvious upon reflection.
Note also that formulas of the form described in (2) are closed under boolean combinations, so it is enough to show that every $L^*(M)$-formula is equivalent to a boolean combination of such formulas.
We will often use this without mention.

\medskip
These definitions are motivated by work of Chernikov-Hempel and Abd Aldaim-Conant-Terry.
Let $L$ be an arbitrary language, $L^*$ be the expansion of $L$ by a collection $\Cal E$ of $k$-ary functions, $\Sa M^*$ be an $L^*$-structure, and $\Sa M$ be the $L$-reduct of $\Sa M^*$.
Let $\vartheta(y_1, \ldots, y_n)$ be an $L(M)$-formula.
Chernikov and Hempel show that if $\vartheta$ is $\nip$ in $\Sa M$ and $f_1, \ldots, f_n \in \Cal E$ then the formula
\[
\vartheta^* = \vartheta(f_1(x_{1, 1}, \ldots, x_{1, k}), \ldots, f_n(x_{n, 1}, \ldots, x_{n, k}))
\]
is $k$-$\nip$ in $\Sa M^*$~\cite[Thm.~3.24]{chernikov2025ndependentgroupsfieldsiii}.
Abd Aldaim, Conant, and Terry show that if $k = 2$ and $\vartheta$ is stable in $\Sa M$ then $\vartheta^*$ is $2$-$\nfop$\footnote{$k$-$\nfop$ is a proposed higher arity version of stability, see \cite{aldaim} for the definition.} in $\Sa M^*$~\cite[Thm.~2.17]{aldaim}.
Proposition~\ref{prop:ChHe} follows immediately.

\begin{proposition}\label{prop:ChHe}
Any theory which is locally $k$-trace definable in a $\nip$ theory is $k$-$\nip$.
Any theory which is locally $2$-trace definable in a stable theory is $2$-$\nfop$.
\end{proposition}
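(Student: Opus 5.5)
The plan is to reduce both claims to the cited results of Chernikov--Hempel \cite[Thm.~3.24]{chernikov2025ndependentgroupsfieldsiii} and Abd Aldaim--Conant--Terry \cite[Thm.~2.17]{aldaim}, using the reformulation in Proposition~\ref{prop:anudda}. Let $T^*$ be locally $k$-trace definable in a $\nip$ theory $T$ and let $\Sa M \models T^*$. Then $\Sa M$ is locally $k$-trace definable in some $\Sa N \models T$, and we fix a family $\Cal E$ of functions $M^k \to N$ witnessing this, as in clause (4) of Proposition~\ref{prop:anudda}. Since $k$-$\nip$ is a property of individual formulas, it suffices to show that each $L^*(M)$-formula $\phi(x_1,\ldots,x_n)$ is $k$-$\nip$ in $\Sa M$. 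By (4), $\phi$ is equivalent in $(\Sa M,\Sa N,\Cal E)$ to
\[
\vartheta(\uptau_1(x_{i_{1,1}},\ldots,x_{i_{1,k}}),\ldots,\uptau_m(x_{i_{m,1}},\ldots,x_{i_{m,k}}))
\]
for some $L(N)$-formula $\vartheta$ and some $\uptau_j \in \Cal E$. The formula $\vartheta$ is $\nip$ in $\Sa N$ because $T$ is $\nip$.

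We then apply \cite[Thm.~3.24]{chernikov2025ndependentgroupsfieldsiii}. The two-sorted structure $(M,\Sa N,\Cal E)$ is an expansion of $\Sa N$, with $M$ as an extra sort, by $k$-ary functions. The cited theorem gives that the displayed formula is $k$-$\nip$ in this structure. Since $\phi$ defines the same subset of $M^n$, $\phi$ is $k$-$\nip$ there as well. Being $k$-$\nip$ depends only on the family of sets the formula defines as its parameters vary: a failure is witnessed by a set of the form $\{(a_1,\ldots,a_k) : \phi(a_1,\ldots,a_k;b)\}$ containing a $k$-dimensional grid. Hence $\phi$ is also $k$-$\nip$ in the reduct $\Sa M$. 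Here the partition of the variables into the $k$ groups plus a parameter block is arbitrary, and the representation from (4) is taken for the full tuple of variables.

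Two technical points need care. First, the cited theorem is stated for a one-sorted $L$-structure expanded by functions on its own domain, whereas our functions have domain $M^k$ and codomain $N$. The fix is to pass to a one-sorted structure on $M \sqcup N$, with unary predicates naming the two pieces and each $\uptau_j$ extended by an arbitrary default value off $M^k$. The reduct $\Sa N$, viewed on $M\sqcup N$ with $M$ a trivial extra piece, is still $\nip$, and $\vartheta$ relativized to $N$ remains $\nip$. Second, the variables $x_{i_{j,\ell}}$ may repeat or coincide across different $\uptau_j$. I expect the cited theorem to allow arbitrary variable assignments, as its statement with arbitrary variables $x_{j,\ell}$ suggests. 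If it does not, we replace each $\uptau_j$ by the function obtained by composing with the appropriate diagonal map, which is again a $k$-ary function in the expansion. Making this bookkeeping compatible with the hypotheses of the cited theorem is the main obstacle, but I expect it to be routine.

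The second claim follows by the identical argument with $k=2$. We replace $\nip$ by stable and invoke \cite[Thm.~2.17]{aldaim} in place of the Chernikov--Hempel theorem. We also use that $2$-$\nfop$ of a formula likewise depends only on the definable family and so passes to the reduct $\Sa M$.
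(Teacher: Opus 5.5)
Your proposal is correct and takes essentially the paper's approach: the paper also writes each formula as $\vartheta(\uptau_1(\ldots),\ldots,\uptau_m(\ldots))$ and derives the statement immediately from the composition theorems of Chernikov--Hempel \cite[Thm.~3.24]{chernikov2025ndependentgroupsfieldsiii} and Abd Aldaim--Conant--Terry \cite[Thm.~2.17]{aldaim}. Your extra bookkeeping is what the paper leaves implicit, namely passing to a one-sorted structure on $M \sqcup N$, observing that each $k$-ary $\uptau_j$ involves at most $k$ of the $k+1$ variable blocks, and transferring finite $k$-$\ip$ or $2$-$\mathrm{FOP}$ configurations to the reduct.
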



We now consider the relationship with trace definability.
The present notion of (local) $1$-trace definability is equivalent to the notion of (local) trace definability introduced in~\cite{trace1, trace2}.
We recall the previous definitions.
First, $\Sa M$ is trace definable in $\Sa N$ if either of the following equivalent conditions holds:
\begin{enumerate}[leftmargin=*]
\item Up to isomorphism $M$ is a subset of some $N^n$ and every $\Sa M$-definable subset of every $M^m$ is of the form $Y \cap M^m$ for some $\Sa N$-definable $Y \subseteq N^{nm}$.
\item There is an injection $\uptau \colon M \hookrightarrow N^n$ for some $n$ such that every $\Sa M$-definable subset of every $M^m$ is of the form \[\{ (a_1, \ldots, a_m) \in M^m : (\uptau(a_1), \ldots, \uptau(a_m)) \in Y \} \text{\quad for some $\Sa N$-definable $Y \subseteq N^{nm}$.}\]
\end{enumerate}
It is easy to see that (1) and (2) are equivalent.
If $\uptau$ is as in (2) and $\uptau_1, \ldots, \uptau_n$ are the functions $M \to N$ such that $\uptau = (\uptau_1, \ldots, \uptau_n)$ then $\Cal E = \{\uptau_1, \ldots, \uptau_n\}$ witnesses $1$-trace definability of $\Sa M$ in $\Sa N$.
Conversely, if $\{\uptau_1, \ldots, \uptau_n\}$ is a set of functions $M \to N$ witnessing $1$-trace definability of $\Sa M$ in $\Sa N$ then the map $\uptau \colon M \to N^n$ given by $\uptau = (\uptau_1, \ldots, \uptau_n)$ satisfies the condition in (2).
Furthermore $\Sa M$ is locally trace definable in $\Sa N$ when the following equivalent conditions hold.
\begin{enumerate}[leftmargin=*]\setcounter{enumi}{2}
\item For any $\Sa M$-definable set $X \subseteq M^{m}$ there is an injection $\uptau \colon M \hookrightarrow N^n$ for some $n$ such that \[X = \{ (a_1, \ldots, a_m) \in M^m : (\uptau(a_1), \ldots, \uptau(a_m)) \in Y \} \text{\quad for some $\Sa N$-definable $Y \subseteq N^{nm}$.}\] 
\item For any $\Sa M$-definable sets $X_1 \subseteq M^{m_1}, \ldots, X_d \subseteq M^{m_d}$ there is an injection $\uptau \colon M \hookrightarrow N^n$ for some $n$ such that each $X_i$ is of the form \[\{ (a_1, \ldots, a_{m_i}) \in M^{m_i} : (\uptau(a_1), \ldots, \uptau(a_{m_i})) \in Y \} \text{\quad for some $\Sa N$-definable $Y \subseteq N^{nm_i}$.}\] 
\item If $L$ is a finite relational language then any $\Sa M$-definable $L$-structure embeds into an $\Sa N$-definable $L$-structure.
\end{enumerate}
See \cite[Prop.~2.4]{trace2} for a proof of the equivalence of these definitions, and a proof that local trace definability is equivalent to local $1$-trace definability.
It is clear from these definitions that (local) trace definability is a transitive relation between structures and between theories.
So we say that two theories are {\bf (locally) trace equivalent} if each (locally) trace defines the other and two structures are (locally) trace equivalent when their theories are.
We now gather some basic facts about (local) $k$-trace definability.

\begin{lemma}
\label{lem:ka}
Suppose $\Sa N\models T$, $\Sa M\models T^*$ are structures, and $k\ge 1$.
\begin{enumerate}
[leftmargin=*]
\item If $\Sa M$ is locally $k$-trace definable in $\Sa N$ then this is witnessed by a collection of functions of cardinality at most $|T^*|$.
\item If every reduct of $\Sa M$ to a finite sublanguage is locally $k$-trace definable in $\Sa N$ then $\Sa M$ is locally $k$-trace definable in $\Sa N$.
\end{enumerate}
Suppose furthermore that $\Sa N$ admits quantifier elimination and $\Cal E$ is a collection of functions $M^k \to N$.
Then we have the following.
\begin{enumerate}[leftmargin=*]\setcounter{enumi}{2}
\item Suppose that every set which is zero-definable in $\Sa M$ is quantifier-free definable in $(M, \Sa N,\Cal E)$.
Then $\Cal E$ witnesses local $k$-trace definability of $\Sa M$ in $\Sa N$.
\item Suppose that $L^*$ is relational, that $T^*$ is an $L^*$-theory with quantifier elimination, and that $\{\alpha\in M^n : \Sa M\models R(\alpha)\}$ is quantifier-free definable in $(M, \Sa N,\Cal E)$ for all $n$-ary $R\in L^*$.
Then $\Cal E$ witnesses local $k$-trace definability of $\Sa M$ in $\Sa N$.
\item If $\Sa M$ admits quantifier elimination in a finite relational language then $\Sa M$ is locally $k$-trace definable in $\Sa N$ if and only if $\Sa M$ is $k$-trace definable in $\Sa N$.
\item Suppose that $\Cal F$ is a collection of functions $M^d \to N$, $d\le k$, such that every $\Sa M$-definable set is quantifier-free definable in $(M, \Sa N,\Cal F)$.
Then $\Sa M$ is locally $k$-trace definable in $\Sa N$.
If $\Cal F$ is finite then $\Sa M$ is $k$-trace definable in $\Sa N$.
\end{enumerate}
\end{lemma}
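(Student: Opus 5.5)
We take the items in turn, using Proposition~\ref{prop:anudda} throughout. The guiding observation is that each $L^*(M)$-formula needs only finitely many members of $\Cal E$ for its representation.

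For (1), fix a witnessing family $\Cal E$. For each $L^*$-formula $\phi(x;y)$ without parameters we would like one finite $\Cal E_\phi\subseteq\Cal E$ that handles every instance $\phi(x;b)$ with $b\in M$. Taken one instance at a time, the choice depends on $b$ and could need $|M|$ many functions. So we first treat the $0$-definable set $\phi(M)\subseteq M^{|x|+|y|}$ and choose a finite $\Cal E_\phi\subseteq\Cal E$ with
\[
\phi(x,y)\leftrightarrow\vartheta(\uptau_1(\ldots),\ldots,\uptau_m(\ldots)),\qquad \uptau_j\in\Cal E_\phi,
\]
where the $\uptau_j$ are applied to subtuples of $xy$. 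Substituting $b$ for $y$ turns each $\uptau_j$ applied to a tuple involving some coordinates of $b$ into a function of fewer variables. We handle this by adding a parameter from $N$ when all arguments are from $b$. When only some arguments are from $b$, we regard $\uptau_j(x_{i},\ldots,b_\ell,\ldots)$ as $\uptau_j$ with repeated variables: we substitute a variable $x_{n+1}$ and quantify it as $x_{n+1}=b_\ell$. This requires care with the form in (4). Instead we enlarge: we add variables to $\phi$ and express $\phi(x;b)$ in the two-sorted structure with parameters $b$. We then note that the witnessing condition in the paper is ``quantifier free definable in $(M,\Sa N,\Cal E)$'' with parameters allowed, so terms like $\uptau(x_1,b)$ are legitimate. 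Then $\bigcup_\phi\Cal E_\phi$ has size at most $|T^*|$ and witnesses. This parameter bookkeeping is the main subtlety; everything else is routine.

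For (2), we take the union over finite sublanguages $L_0$ of witnessing families $\Cal E_{L_0}$. Every $L^*(M)$-formula lives in some $L_0(M)$, so it is represented using $\Cal E_{L_0}$. Item (3) is exactly the reduction above: parameter instances of $0$-definable sets are obtained by plugging in $b$. Item (4) follows from (3). By quantifier elimination every $0$-definable set is a boolean combination of atomic relations $R$ with variables possibly repeated or permuted. A diagonal or permuted copy of a set of the form in (2) again has that form, by reindexing the $i_{j,\ell}$, and boolean combinations are closed.

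For (5), the nontrivial direction applies (4) to the finitely many relations. We take the union of the finitely many finite families; by (4) it witnesses, and it is finite, so we get $k$-trace definability. For (6), given $f\in\Cal F$ of arity $d\le k$, we define $f'\colon M^k\to N$ by $f'(a_1,\ldots,a_k)=f(a_1,\ldots,a_d)$. Then $f(x_{i_1},\ldots,x_{i_d})=f'(x_{i_1},\ldots,x_{i_d},x_{i_d},\ldots,x_{i_d})$, so $\{f':f\in\Cal F\}$ witnesses, and it is finite when $\Cal F$ is.
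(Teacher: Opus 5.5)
Your proposal is correct and follows the paper's proof. In (1) you take the union over $0$-definable sets of finite subfamilies and pass to parameter instances by substitution; this matches the paper, which likewise treats (3) as immediate from the definition because parameters from $M$ are allowed inside the $\uptau$'s. Item (2) is the same union over finite sublanguages, (4) and (5) reduce to (3) and (4) via quantifier elimination and the finiteness of quantifier-free formulas, and (6) pads the arity exactly as the paper does (your detour of quantifying $x_{n+1}=b_\ell$ would not be quantifier-free, so dropping it was right).
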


By (6) we could define (local) $k$-trace definability in terms of a collection $\Cal E$ of functions of arity at most $k$.
This would perhaps be more natural, but the notation would be even worse.
If $\Cal F$ is as in (6) then we will also say that $\Cal F$ witnesses local $k$-trace definability of $\Sa M$ in $\Sa N$.

\begin{proof}
After possibly Morleyizing, suppose that $\Sa N$ admits quantifier elimination.
Observe that (2), (3), and (4) are immediate from the definition and that (5) follows from (4).

\medskip
(1):
Suppose that $\Sa M$ is locally $k$-trace definable in $\Sa N$ and let $\Cal E$ be a collection of functions witnessing this.
For every set $X$ which is zero-definable in $\Sa M$ there is a finite subset $\Cal E_X$ of $\Cal E$ such that $X$ is quantifier-free definable in $(M, \Sa N, \Cal E_X)$.
Let $\Cal E$ be the union of the $\Cal E_X$.
Then $|\Cal E^*| \le |T^*|$ and an application of (3) shows that $\Cal E^*$ witnesses local $k$-trace definability of $\Sa M$ in $\Sa N$.
A similar argument yields (2).

\medskip
(6):
Let $\Cal E$ be the collection of functions $M^k \to N$ of the form $f(x_1,\ldots,x_k)=g(x_1,\ldots,x_d)$ for every $d$-ary $g\in\Cal F$.
Then $(M, \Sa N,\Cal F)$ is quantifier-free interdefinable with $(M, \Sa N,\Cal E)$, hence $\Cal E$ witnesses local $k$-trace definability of $\Sa M$ in $\Sa N$.
Furthermore $\Cal E$ is finite when $\Cal F$ is finite, hence $\Cal E$ witnesses $k$-trace definability of $\Sa M$ in $\Sa N$ when $\Cal F$ is finite.
\end{proof}

\begin{proposition}
\label{prop:trace-theories}
Fix theories $T,T^*$.
Then $T^*$ is (locally) $k$-trace definable in $T$ when some $T^*$-model is (locally) $k$-trace definable in a $T$-model.
\end{proposition}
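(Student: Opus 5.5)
Suppose $\Sa M \models T^*$ is (locally) $k$-trace definable in $\Sa N \models T$, witnessed by $\Cal E$. After Morleyizing, $\Sa N$ admits quantifier elimination. Given an arbitrary $\Sa M' \models T^*$, we must find $\Sa N' \models T$ that (locally) $k$-trace defines $\Sa M'$. The plan is to express the witnessing data as a first order theory in a larger language and then transfer it by compactness.

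\textbf{Step 1: the expanded structure.} Consider the two-sorted structure $\Sa C = (\Sa M, \Sa N, \Cal E)$ in the language $L^\sqcup$ containing $L^*$ on the first sort, $L$ on the second sort, and a $k$-ary function symbol for each $f \in \Cal E$. For each $L^*$-formula $\phi(x_1,\ldots,x_n)$ without parameters, fix a quantifier-free formula $\psi_\phi$ in the $L$-sort and the function symbols, as in Proposition~\ref{prop:anudda}, such that $\Sa C \models \forall x\,(\phi(x) \leftrightarrow \psi_\phi(x))$.

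The parameter issue needs some care. $\psi_\phi$ may use parameters from $N$, but we may replace these by existentially quantified variables. Then the sentence $\exists c\,\forall x\,(\phi(x)\leftrightarrow \psi_\phi(x,c))$ lies in $\Th(\Sa C)$. Handling zero-definable sets suffices: by Lemma~\ref{lem:ka}(3) in the local case, and in the finite case $\Cal E$ is finite and $L^*(M)$-formulas reduce to zero-definable families in the usual way, by treating parameters as extra variables and then specializing.

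\textbf{Step 2: transfer to $\Sa M'$.} Since $T^*$ is complete, $\Sa M'$ is elementarily equivalent to $\Sa M$. Take $\Sa C^* \succ \Sa C$ so large that its first sort contains a copy of an elementary extension of $\Sa M'$. The cleanest route is to use a common elementary extension $\Sa M'' \succ \Sa M'$ with $\Sa M'' \equiv \Sa M$, and then realize an elementary extension of $\Sa C$ whose first sort is $\Sa M''$. This is possible by Keisler--Shelah, or directly by compactness: add constants for $M''$ together with its elementary diagram to $\Th(\Sa C)$. This is consistent because every finite piece of the elementary diagram of $\Sa M''$ is realized in $\Sa M$, by elementary equivalence. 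The resulting $\Sa C^* = (\Sa M^\dagger, \Sa N', \Cal E')$ has $\Sa N' \models T$, $\Sa M'' \preceq \Sa M^\dagger$, and satisfies all the sentences from Step~1. Hence $\Cal E'$ witnesses that $\Sa M^\dagger$ is (locally) $k$-trace definable in $\Sa N'$.

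\textbf{Step 3: restrict to $\Sa M'$.} Restricting the functions in $\Cal E'$ to $(M')^k$ gives witnesses for $\Sa M'$. Since $\Sa M' \preceq \Sa M^\dagger$, each zero-definable subset of $(M')^n$ is the trace of the corresponding zero-definable subset of $(M^\dagger)^n$. That set is quantifier-free definable in $(M^\dagger,\Sa N',\Cal E')$, so its trace on $M'$ is quantifier-free definable via the restricted functions. Lemma~\ref{lem:ka}(3) then finishes the local case. In the $k$-trace case $\Cal E$ is finite, so $\Cal E'$ is finite as well.

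I expect the only delicate point to be Step~2: arranging that the first sort of the elementary extension actually contains $\Sa M'$. The elementary diagram and compactness argument handles this. Everything else is bookkeeping.
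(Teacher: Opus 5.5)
Your proof is correct and takes essentially the paper's route: pass to a model of the theory of $(\Sa M, \Sa N, \Cal E)$ whose first sort elementarily contains the target model, then restrict the witnessing functions and apply Lemma~\ref{lem:ka}(3). The only difference is how you get the elementary embedding. The paper uses an $|O|^+$-saturated elementary extension of $(\Sa M, \Sa N, \Cal E)$, which also lets it keep the original $N$-parameters. You use compactness with the elementary diagram, which only produces a model of the theory, so you have to quantify those parameters out existentially, as you correctly do.
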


We use Proposition~\ref{prop:trace-theories} constantly and often without mention.

\begin{proof}
Suppose that $\Cal E$ witnesses local $k$-trace definability of $\Sa M \models T^*$ in $\Sa N \models T$.
Let $\Sa O$ be an arbitrary model of $T$.
Let $(\Sa M^*, \Sa N^*, \Cal E^*)$ be an $|O|^+$-saturated elementary extension of $(\Sa M, \Sa N, \Cal E)$.
We may suppose that $\Sa O$ is an elementary substructure of $\Sa M^*$.
Let $\Cal F$ be the collection of functions given by restricting elements of $\Cal E^*$ to $O^k$.
Note that $\Cal F$ witnesses local $k$-trace definability of $\Sa O$ in $\Sa N^*$.
Finally, $\Cal F$ is finite when $\Cal E$ is finite, so this argument also covers $k$-trace definability.
\end{proof}

\begin{lemma}\label{lem:trace compose}
Let $\Sa M_1, \Sa M_2, \Sa M_3$ be structures, let $\Cal E_1$ a collection of functions $M_2^m \to M_1$ witnessing local $m$-trace definability of $\Sa M_2$ in $\Sa M_1$, and let $\Cal E_2$ be a collection of functions $M_3^n \to M_2$ witnessing local $n$-trace definability of $\Sa M_3$ in $\Sa M_2$.
Let $\Cal E_3$ be the collection of functions $M_3^{nm} \to M_1$ of the form
\[
f(x_1,\ldots,x_{nm}) = g( h_1(x_1,\ldots,x_n ) , \ldots, h_m(x_{n(m - 1) + 1},\ldots,x_{mn} ) )
\]
for $g\in \Cal E_1$ and $h_1,\ldots,h_m \in \Cal E_2$.
Then $\Cal E_3$ witnesses local $nm$-trace definability of $\Sa M_3$ in $\Sa M_1$.
\end{lemma}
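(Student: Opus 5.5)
We substitute one representation into the other. Everything is organized by the two-step unfolding: an $\Sa M_3$-formula is first rewritten as an $\Sa M_2$-formula applied to terms from $\Cal E_2$, and then that $\Sa M_2$-formula is rewritten as an $\Sa M_1$-formula applied to terms from $\Cal E_1$.

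By Lemma~\ref{lem:ka} we may assume, after Morleyizing, that $\Sa M_1$ and $\Sa M_2$ admit quantifier elimination. The definitions are insensitive to replacing a structure by an interdefinable one with quantifier elimination, so this changes nothing.

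Fix an $\Sa M_3$-definable set $X \subseteq M_3^p$. Since $\Cal E_2$ witnesses local $n$-trace definability, there are $h_1, \ldots, h_d \in \Cal E_2$, index tuples $\bar\imath_1, \ldots, \bar\imath_d \in \{1,\ldots,p\}^n$, and an $\Sa M_2$-definable $Y \subseteq M_2^d$ such that
\[
X = \{ \bar a \in M_3^p : (h_1(\bar a_{\bar\imath_1}), \ldots, h_d(\bar a_{\bar\imath_d})) \in Y \},
\]
where $\bar a_{\bar\imath}$ denotes the subtuple of $\bar a$ selected by $\bar\imath$. Now apply the hypothesis on $\Cal E_1$ to $Y$. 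There are $g_1, \ldots, g_e \in \Cal E_1$, index tuples $\bar\jmath_1, \ldots, \bar\jmath_e \in \{1, \ldots, d\}^m$, and an $\Sa M_1$-definable $Z \subseteq M_1^e$ such that $\bar b \in Y$ if and only if $(g_1(\bar b_{\bar\jmath_1}), \ldots, g_e(\bar b_{\bar\jmath_e})) \in Z$.

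Substituting $b_s = h_s(\bar a_{\bar\imath_s})$, each coordinate becomes
\[
g_t\bigl(h_{j_{t,1}}(\bar a_{\bar\imath_{j_{t,1}}}), \ldots, h_{j_{t,m}}(\bar a_{\bar\imath_{j_{t,m}}})\bigr).
\]
This is exactly $f_t(\bar a_{\bar\ell_t})$, where $f_t \in \Cal E_3$ is built from $g_t$ and $h_{j_{t,1}}, \ldots, h_{j_{t,m}}$. The index tuple $\bar\ell_t \in \{1,\ldots,p\}^{nm}$ is the concatenation of $\bar\imath_{j_{t,1}}, \ldots, \bar\imath_{j_{t,m}}$. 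Repeated indices are allowed, both in the concatenated tuple and among the $h$'s, so the fact that $\Cal E_3$ uses separate variable blocks for each $h_r$ causes no problem. Hence
\[
X = \{\bar a : (f_1(\bar a_{\bar\ell_1}), \ldots, f_e(\bar a_{\bar\ell_e})) \in Z\}.
\]

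Proposition~\ref{prop:anudda} phrases everything in terms of formulas with parameters, and the argument above works verbatim in that setting. By the remark after Proposition~\ref{prop:anudda}, we only need to handle each definable set individually, and the substitution above does this. So $\Cal E_3$ witnesses local $nm$-trace definability of $\Sa M_3$ in $\Sa M_1$.

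There is no real obstacle; the only care needed is in the bookkeeping of index tuples. If $\Cal E_1$ and $\Cal E_2$ are finite, then $\Cal E_3$ is finite as well, so the same argument also gives the non-local version.
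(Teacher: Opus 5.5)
Your proposal is correct and is the direct verification the paper has in mind; the paper leaves this lemma to the reader as following immediately from the definition, and substituting the $\Cal E_2$-representation of $X$ into the $\Cal E_1$-representation of $Y$, with $\bar\ell_t$ the concatenation of the $\bar\imath_{j_{t,r}}$, is that verification. You also correctly use the form of the definition from the introduction and Proposition~\ref{prop:anudda}, where equality on the first sort is not available for free; if equality atoms on $M_2$ were allowed, the substitution would produce equalities $h_s(\cdot) = h_{s'}(\cdot)$ in $M_2$ that $\Cal E_3$ need not be able to express.
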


Lemma~\ref{lem:trace compose} follows directly from the definition and is also left to the reader.
Note that $|\Cal E_3| = |\Cal E_1| |\Cal E_2|^m$, so in particular $\Cal E_3$ is finite when $\Cal E_1$ and $\Cal E_2$ are both finite.




\begin{proposition}
\label{prop:trace-basic}
Let $\Sa M_1, \Sa M_2, \Sa M_3$ be structures, $T_1, T_2, T_3$ be theories, and $m, n, k \ge 1$.
\begin{enumerate}[leftmargin=*]
\item\label{i1} If $\Sa M_3$ is (locally) $n$-trace definable in $\Sa M_2$ and $\Sa M_2$ is (locally) $m$-trace definable in $\Sa M_1$ then $\Sa M_3$ is (locally) $nm$-trace definable in $\Sa M_1$.
Furthermore if $\kappa, \eta \ge \aleph_0$, local $n$-trace definability of $\Sa M_3$ in $\Sa M_2$ is witnessed by $\le \kappa$ functions, and local $m$-trace definability of $\Sa M_2$ in $\Sa M_1$ is witnessed by $\le \eta$ functions, then local $nm$-trace definability of $\Sa M_3$ in $\Sa M_1$ is witnessed by $\le \kappa + \eta$ functions.
\item\label{i3} If $T_3$ is (locally) $n$-trace definable in $T_2$ and $T_2$ is (locally) $m$-trace definable in $T_1$ then $T_3$ is (locally) $nm$-trace definable in $T_1$.
\item\label{i5} (Local) $k$-trace definability between theories is invariant under (local) trace equivalence.
More formally:
Suppose that $T_i$ is \textup{(}locally\textup{)} trace equivalent to $T^*_i$ for $i = 1, 2$.
Then $T_1$ \textup{(}locally\textup{)} $k$-trace defines $T_2$ if and only if $T^*_1$ \textup{(}locally\textup{)} $k$-trace defines $T^*_2$.
\item\label{i6} 
If $T_2$ is locally $m$-trace definable in $T_1$ then $T_2$ is $n$-trace definable in $T_1$ for any $n > m$.
\end{enumerate}
\end{proposition}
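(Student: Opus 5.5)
The plan is to obtain (1) from Lemma~\ref{lem:trace compose}, (2) and (3) from (1) together with Proposition~\ref{prop:trace-theories}, and (4) by a coding trick using parameters from $M$. The main obstacle is (4).

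\emph{Item (1).} Let $\Cal E_1$ witness local $m$-trace definability of $\Sa M_2$ in $\Sa M_1$, and $\Cal E_2$ witness local $n$-trace definability of $\Sa M_3$ in $\Sa M_2$. Lemma~\ref{lem:trace compose} then produces $\Cal E_3$ witnessing local $nm$-trace definability of $\Sa M_3$ in $\Sa M_1$. Since $|\Cal E_3| = |\Cal E_1||\Cal E_2|^m$, the family $\Cal E_3$ is finite when both $\Cal E_1,\Cal E_2$ are, which gives the non-local version. If $|\Cal E_2|\le\kappa$ and $|\Cal E_1|\le\eta$ with $\kappa,\eta$ infinite, then $|\Cal E_3|\le \eta\kappa^m=\eta\kappa=\kappa+\eta$. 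The only point to watch is the standing convention that the defining structure admits quantifier elimination. I would Morleyize $\Sa M_1$ and $\Sa M_2$ first; this does not change the witnessing families, since interdefinable structures with quantifier elimination (locally) $k$-trace define the same structures.

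\emph{Items (2) and (3).} For (2), fix $\Sa M_3\models T_3$. By hypothesis it is (locally) $n$-trace definable in some $\Sa M_2\models T_2$. By the definition for theories, every model of $T_2$, and in particular $\Sa M_2$, is (locally) $m$-trace definable in some $\Sa M_1\models T_1$. Applying (1) shows $\Sa M_3$ is (locally) $nm$-trace definable in $\Sa M_1$, and Proposition~\ref{prop:trace-theories} upgrades this to the theories. For (3), recall that (local) trace definability coincides with (local) $1$-trace definability. Suppose $T_1$ (locally) $k$-trace defines $T_2$. Then $T_2^*$ is $1$-trace definable in $T_2$, which is $k$-trace definable in $T_1$, which is $1$-trace definable in $T_1^*$. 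Applying (2) twice gives $1\cdot k\cdot 1=k$, and the converse direction is symmetric.

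\emph{Item (4).} The idea is to collapse a possibly infinite family into a single function by using one extra coordinate to encode which function is meant, with the code supplied as a parameter from $M$. By Proposition~\ref{prop:trace-theories} it suffices to treat a single well-chosen model. If $T_2$ has finite models, take $\Sa M\models T_2$ finite and any $\Sa N\models T_1$ with $|N|\ge|M|$; enlarging $\Sa N$ elementarily if necessary, we may assume $N$ is infinite. An injection $M\hookrightarrow N$ then witnesses trace definability, since every subset of $M^r$ is definable from parameters in the finite structure $\Sa M$. So we get $1$-trace definability and hence $n$-trace definability by Lemma~\ref{lem:ka}(6).

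Otherwise, take $\Sa M\models T_2$ with $|M|\ge|T_2|$, and let $\Cal E$ witness local $m$-trace definability in some $\Sa N\models T_1$; by Lemma~\ref{lem:ka}(1) we may take $|\Cal E|\le|T_2|$. Choose distinct $c_f\in M$ for $f\in\Cal E$ and define $F\colon M^{m+1}\to N$ by $F(x_1,\ldots,x_m,y)=f(x_1,\ldots,x_m)$ when $y=c_f$, and arbitrarily otherwise. Then each $f(x)=F(x,c_f)$, so every set quantifier-free definable in $(M,\Sa N,\Cal E)$ is quantifier-free definable with parameters from $M$ in $(M,\Sa N,F)$. Definability is with parameters, so this is allowed; formally one views each $c_f$ as an extra variable and then specializes. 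Thus $\{F\}$ witnesses $(m+1)$-trace definability, and Lemma~\ref{lem:ka}(6) gives $n$-trace definability for all $n\ge m+1$.

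The step I expect to need the most care is checking that substituting $M$-parameters into the arguments of $F$ is legitimate within the quantifier-free definability formalism, together with the degenerate finite case.
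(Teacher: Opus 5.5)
Your proposal is correct and follows the paper's proof. (1) comes from Lemma~\ref{lem:trace compose}, (2) and (3) follow from (1), and (4) uses the same device of adding one argument ranging over $M$ whose value, as a parameter, selects a member of a witnessing family of size $\le |T_2|$, followed by Lemma~\ref{lem:ka}(6).

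Your separate treatment of finite $T_2$ covers a case that the paper's choice $|M_2| = |T_2|$ silently skips. As written, though, it fails when $T_1$ has only finite models smaller than $M$, since you cannot enlarge $\Sa N$. Instead, keep the finitely many members of $\Cal E$ needed to define the singletons of $M$. These then define every subset of every $M^r$, and so witness $m$-trace definability.
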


\begin{proof}
Lemma~\ref{lem:trace compose} gives (\ref{i1}).
Note that (\ref{i3}) follows from (\ref{i1}) and the definitions.
Furthermore (\ref{i5}) follows from (\ref{i3}).
We prove (\ref{i6}).
By Lemma~\ref{lem:ka}(6) it is enough to treat the case when $n = m + 1$.
Suppose that $T_2$ is locally $m$-trace definable in $T_1$.
After possibly Morleyizing suppose $T_1$ admits quantifier elimination.
Fix $\Sa M_2 \models T_2$ with $|M_2| = |T_2|$.
Then $\Sa M_2$ is locally $m$-trace definable in some $\Sa M_1 \models T_1$.
Let $\Cal E$ be a collection of functions $M_2^m \to M_1$ witnessing this.
By Lemma~\ref{lem:ka}(1) we may suppose that $|\Cal E| = |T_2| = |M_2|$.
Let $\fik$ be an enumeration of $\Cal E$ and $(a_i)_{i <\kappa}$ be an enumeration of $M_2$.
Define $g \colon M_2^{m + 1} \to M_1$  by declaring $g(a_i, b_1, \ldots, b_m) = f_i(b_1, \ldots, b_m)$ for all $i < \kappa$ and $b_1, \ldots, b_m \in M_2$.
Any subset of any $M_2^k$ which is quantifier-free definable  in $(M_2, \Sa M_1, \Cal E)$ is also quantifier-free definable in $(M_2, \Sa M_1, g)$.
Hence $g$ witnesses $(m + 1)$-trace definability of $\Sa M_2$ in $\Sa M_1$.
\end{proof}




\begin{proposition}\label{prop:fhkd}
Suppose that $k \ge 2$, $L$ is a $k$-ary relational language, and $\Sa M$ is an $L$-structure with quantifier elimination.
Let $\Sa T$ be a structure in the language of equality with two elements.
Then $\Sa M$ is locally $k$-trace definable in $\Sa T$.
If $L$ is finite then $\Sa M$ is $k$-trace definable in $\Sa T$.
\end{proposition}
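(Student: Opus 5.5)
The plan is to encode each relation of $L$ by a $\{0,1\}$-valued function and then apply Lemma~\ref{lem:ka}. Write $T = \{0,1\}$ for the underlying set of $\Sa T$. The structure $\Sa T$, a two-element set in the language of equality, admits quantifier elimination after naming both elements as constants. This naming is interdefinable since the set is finite, and it is harmless by the remark following the definition.

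For each $R \in L$ of arity $d \le k$, define $\chi_R \colon M^d \to T$ by $\chi_R(\alpha) = 1$ iff $\Sa M \models R(\alpha)$. Let $\Cal F = \{\chi_R : R \in L\}$. Then the set defined by $R$ is
\[
\{\alpha \in M^d : \chi_R(\alpha) = 1\},
\]
which is quantifier-free definable in $(M, \Sa T, \Cal F)$.

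One point needs care. Lemma~\ref{lem:ka}(4) is phrased for functions $M^k \to N$, while the $\chi_R$ have arity $d \le k$. I will pad each $\chi_R$ with dummy variables, exactly as in the proof of Lemma~\ref{lem:ka}(6). Doing so puts us in the setting of (4) with $\Sa N = \Sa T$, which has quantifier elimination, and with $\Sa M$ relational and having quantifier elimination. Lemma~\ref{lem:ka}(4) then gives that $\Cal F$ witnesses local $k$-trace definability of $\Sa M$ in $\Sa T$.

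There is also the matter of equality and atomic formulas with repeated variables. An atomic $L$-formula $R(x_{i_1}, \ldots, x_{i_d})$ with repeated variables is obtained from $R$'s set by substituting variables, so it is still of the required form. Equality $x_i = x_j$ between $M$-variables, however, is not directly expressible through $\Sa T$. In the two-sorted structure $(M, \Sa T, \Cal E)$ the sort $M$ carries equality, so quantifier-free formulas include $x_i = x_j$, and I will take this reading. If instead one insists on formulas of the form in Proposition~\ref{prop:anudda}(2), I will add one extra function $e \colon M^2 \to T$ with $e(a,b) = 1$ iff $a = b$. This is allowed because $k \ge 2$, which is exactly where the hypothesis is used. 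The function $e$ handles equality, and Lemma~\ref{lem:ka}(6) absorbs the arity-$2$ function.

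Finally, if $L$ is finite then $\Cal F \cup \{e\}$ is finite, so the second clause of Lemma~\ref{lem:ka}(6) yields $k$-trace definability. Alternatively, Lemma~\ref{lem:ka}(5) gives this directly.

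The only genuine subtlety is the handling of equality on the $M$ sort, together with checking that quantifier elimination in $\Sa M$ reduces everything to atomic $L$-formulas and equalities. That reduction is exactly the content of Lemma~\ref{lem:ka}(4). The rest is bookkeeping.
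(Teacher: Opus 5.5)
Your proof is correct and is essentially the paper's: take the characteristic functions $\chi_R$ of the relations of $L$, count equality among them (your extra function $e$, which is where $k \ge 2$ is used), and apply Lemma~\ref{lem:ka}(4), padding arities as in Lemma~\ref{lem:ka}(6). The intended reading is the one in Proposition~\ref{prop:anudda}, where equality on the $M$ sort is not available for free, so including $e$ is necessary rather than optional; under your first reading the hypothesis $k \ge 2$ would do no work.
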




\begin{proof}
Let $\Sa T = \{p, q \}$.
For each $d$-ary $R \in L$ (including equality) let $\chi_R \colon M^d \to \{p, q\}$ be given by declaring $\chi_R(\alpha) = p$ if and only if $\Sa M \models R(\alpha)$ for all $\alpha \in M^d$.
An application of Lemma~\ref{lem:ka}(4) shows that $\{\chi_R : R \in L\}$  witnesses local $k$-trace definability of $\Sa M$ in $\Sa T$ and  witnesses $k$-trace definability when $L$ is finite.
\end{proof}

We finish by recalling two useful facts about trace definability.
The first is an immediate consequence of the elementary fact that the truth value of a quantifier-free formula is preserved under embeddings.

\begin{fact}\label{fact:trace embedd}
If $\Sa M$ and $\Sa N$ are structure in the same language, $\Sa M$ admits quantifier elimination, and $\uptau$ is an embedding $\Sa M \hookrightarrow \Sa N$, then $\uptau$ witnesses trace definability of $\Sa M$ in $\Sa N$.
\end{fact}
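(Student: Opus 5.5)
We prove Fact~\ref{fact:trace embedd} by a direct verification, using characterization (2) of trace definability with $n = 1$. The map $\uptau \colon M \hookrightarrow N$ is injective because it is an embedding. So it remains to show that every $\Sa M$-definable subset of every $M^m$ is the pullback under $\uptau$ of an $\Sa N$-definable subset of $N^m$.

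\emph{Reducing to quantifier-free formulas.} Let $X \subseteq M^m$ be $\Sa M$-definable, say by $\phi(x_1,\ldots,x_m; b)$ with parameters $b \in M^l$. Since $\Sa M$ admits quantifier elimination, $\phi(x; y)$ is equivalent in $\Sa M$ to a quantifier-free $L$-formula $\psi(x; y)$. Then
\[
X = \{ a \in M^m : \Sa M \models \psi(a; b) \}.
\]

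\emph{Transferring along the embedding.} Embeddings preserve and reflect the truth of quantifier-free formulas. Hence for every $a \in M^m$,
\[
\Sa M \models \psi(a; b) \iff \Sa N \models \psi(\uptau(a); \uptau(b)),
\]
where $\uptau$ is applied coordinatewise. Let $Y \subseteq N^m$ be the set defined in $\Sa N$ by $\psi(x; \uptau(b))$. This set is $\Sa N$-definable with parameters $\uptau(b)$, and
\[
X = \{ a \in M^m : (\uptau(a_1),\ldots,\uptau(a_m)) \in Y \}.
\]
Thus $\uptau$ witnesses trace definability of $\Sa M$ in $\Sa N$. Equivalently, $\{\uptau\}$ witnesses $1$-trace definability.

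There is one point to keep in mind. In the paper's convention, trace definability in $\Sa N$ is tested against a quantifier-eliminating structure interdefinable with $\Sa N$, such as its Morleyization. Every $\Sa N$-definable set remains definable in that structure, so this convention causes no difficulty. The only step needing any care is the second one, where quantifier elimination in $\Sa M$ (and not in $\Sa N$) is used, together with the fact that parameters are mapped along by $\uptau$.
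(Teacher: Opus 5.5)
Your proof is correct and takes the same approach as the paper. The paper calls this fact an immediate consequence of the preservation of quantifier-free formulas under embeddings, and your argument spells that out: you use quantifier elimination in $\Sa M$ and push the parameters forward along $\uptau$.
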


\medskip
Fact~\ref{fact:new hyp rel} summarizes some results about the theory $\hyp_k$ of the generic $k$-hypergraph and the theory $\rel_k$ of the generic $k$-ary relation.

\begin{fact}\label{fact:new hyp rel}
Fix $k \ge 2$.
\begin{enumerate}[leftmargin=*]
\item $\hyp_k$ and $\rel_k$ are trace equivalent.
\item Any structure admitting quantifier elimination in a finite $k$-ary relational language is trace definable in $\hyp_k$.
\item A theory $T$ is $(k - 1)$-$\ip$ if and only if it trace defines $\hyp_k$.
\end{enumerate}
\end{fact}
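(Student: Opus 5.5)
This is a Fact that summarizes earlier work, so the plan is to assemble it from known results rather than prove it from scratch. Where possible I would cite the earlier trace definability papers \cite{trace1, trace2}, and I will indicate how each item reduces to standard ingredients.

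For (1), I would exhibit the two trace definitions directly. The generic $k$-hypergraph is trace definable in $\rel_k$: let $R$ be the generic $k$-ary relation, and on the same universe define a $k$-hypergraph by declaring $E(a_1,\ldots,a_k)$ to hold iff the $a_i$ are distinct and $R(a_{\sigma(1)},\ldots,a_{\sigma(k)})$ holds for some permutation $\sigma$. By genericity of $R$, this structure satisfies the extension axioms of $\hyp_k$, so it is a model of $\hyp_k$. Since it is $\rel_k$-definable, it is trace definable by the identity. For the converse I would apply (2): $\rel_k$ admits quantifier elimination in a finite $k$-ary relational language, so (2) yields trace definability of $\rel_k$ in $\hyp_k$.

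For (2), which is the heart of the matter, let $\Sa M$ admit quantifier elimination in a finite $k$-ary relational language $L$. By the equivalent condition (5) for local trace definability, together with Lemma~\ref{lem:ka}(5) in the case $k=1$ (quantifier elimination in a finite relational language makes local and global coincide), it suffices to embed $\Sa M$, or an elementarily equivalent model, into an $L$-structure definable in a model of $\hyp_k$. I would code each $d$-ary $R \in L$, with $d \le k$, using a definable family of hypergraph relations. First fix parameters $c_R$, one tuple for each symbol. Then put $R^*(x_1,\ldots,x_d)$ iff $E(x_1,\ldots,x_d,c_R)$ holds, padding with the distinct parameters when $d<k$. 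Because relations of arity $d<k$ would otherwise collide, the parameters $c_R$ should be chosen independent and generic. The extension axioms then let me realize any countable (or sufficiently small) $L$-structure, up to isomorphism, as a substructure of this definable structure by building the embedding one element at a time; this step uses saturation of the $\hyp_k$-model. Quantifier elimination in $\Sa M$ and Fact~\ref{fact:trace embedd} then finish the argument. Handling symmetry and the diagonal (tuples with repeated entries) is where care is needed, and I would treat it by working with $R$ restricted to injective tuples, encoding the repeated-coordinate patterns as separate lower-arity relations.

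For (3), there are two directions. If $T$ trace defines $\hyp_k$, then $T$ is $(k-1)$-$\ip$, because $\hyp_k$ is $(k-1)$-$\ip$ (the edge relation witnesses it) and $(k-1)$-$\nip$ is preserved under (local) trace definability, as noted in the introduction. For the converse, suppose $\varphi(x;y_1,\ldots,y_{k-1})$ witnesses $(k-1)$-$\ip$. By compactness and indiscernibility, working in a saturated model, we obtain sequences $(b^j_i)_{i}$ for $j<k$ together with elements $a_S$ such that $\varphi(a_S; b^1_{i_1},\ldots,b^{k-1}_{i_{k-1}})$ holds iff $(i_1,\ldots,i_{k-1}) \in S$, for every $S$. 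The main obstacle is turning this $k$-partite coding into a trace embedding of a single-sorted generic hypergraph. My plan is to map a countable model of $\hyp_k$ into a product of sorts, so that each vertex $v$ is sent to a tuple consisting of $b$-coordinates and an $a$-coordinate coding the neighborhood of $v$. The edge relation is then recovered by a definable condition, using an ordering of the vertices to decide which vertex plays the role of $x$. Finally, Lemma~\ref{lem:ka}(5) upgrades local trace definability to trace definability.
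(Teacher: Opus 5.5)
Your argument for (3), and the symmetrization showing that $\rel_k$ trace defines $\hyp_k$ in (1), are sound. The paper gives no proof of this Fact; it only cites \cite[Lemma~2.4, Prop.~2.5]{trace1}. The genuine gap is in (2), and the other half of (1) inherits it.

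Your definable $L$-structure lives on the domain $N$ of the hypergraph, with $R^*(x_1,\ldots,x_d) \Longleftrightarrow E(x_1,\ldots,x_d,c_R)$. Every relation obtained this way is symmetric in its $d$ arguments and fails on tuples with a repeated entry. For $d = k$ there is no room for a parameter at all, so every $k$-ary symbol of $L$ is interpreted as $E$ itself. Hence no amount of saturation lets you embed a structure with a non-symmetric $k$-ary relation, or with two distinct $k$-ary relations. In particular you cannot embed the model of $\rel_k$ you need in (1). Restricting to injective tuples only handles the diagonal, not the asymmetry: on $k$ distinct points a $k$-ary relation carries $k!$ bits, a symmetric one carries one.

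The missing idea is to use the freedom in trace definability to map into a power $N^n$, giving each argument position of each relation symbol its own copy of every point. For each $d$-ary $R \in L$ choose injections $\uptau_{R,1},\ldots,\uptau_{R,d} \colon M \to N$ and distinct parameters $c_{R,1},\ldots,c_{R,k-d}$. Take all images and parameters pairwise disjoint, also across different symbols, such that
\[
\Sa M \models R(a_1,\ldots,a_d) \quad\Longleftrightarrow\quad \Sa N \models E(\uptau_{R,1}(a_1),\ldots,\uptau_{R,d}(a_d),c_{R,1},\ldots,c_{R,k-d}).
\]
This is possible for two reasons:
\begin{enumerate}
\item The sets $\{(a_1,R,1),\ldots,(a_d,R,d),c_{R,1},\ldots,c_{R,k-d}\}$ with $\Sa M \models R(a_1,\ldots,a_d)$ are the edges of a genuine $k$-uniform hypergraph. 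The $k$ vertices are distinct even when some $a_i$ coincide.
\item Any $k$-uniform hypergraph of cardinality at most $|M|+\aleph_0$ embeds into an $(|M|+\aleph_0)^+$-saturated model of $\hyp_k$.
\end{enumerate}
Then $\uptau = (\uptau_{R,i})_{R,i}$ is an injection $M \hookrightarrow N^n$; add one more arbitrary injection if $L$ has no symbols. Under $\uptau$, every atomic relation of $\Sa M$ is the pullback of an $\Sa N$-definable set, and quantifier elimination for $\Sa M$ finishes the argument.

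You already use exactly this product-of-copies device in (3). There, no ordering of the vertices is needed. Since $E$ is symmetric and irreflexive, you can let $a_v$ code the whole link $\{(w_1,\ldots,w_{k-1}) : E(v,w_1,\ldots,w_{k-1})\}$. This gives $E(v_1,\ldots,v_k) \Longleftrightarrow \varphi(a_{v_1};b^1_{v_2},\ldots,b^{k-1}_{v_k})$ for all tuples, including non-injective ones. An ordering of the vertices could not appear in the defining formula in any case.
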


Fact~\ref{fact:new hyp rel} is proven in \cite[Lemma~2.4, Prop.~2.5]{trace1}.

\subsection{Disjoint unions}
We let $\Sa M_1 \sqcup \Sa M_2$ be the disjoint union of structure $\Sa M_1$ and $\Sa M_2$.
This is most naturally considered as a two-sorted structure.
We can also consider it as a one-sorted structure, in the same way that any finitely sorted structure can be considered as a one-sorted structure.
In particular this allows us to define (local) $k$-trace definability of $\Sa M_1 \sqcup \Sa M_2$ without having to define (local) $k$-trace definability between multi-sorted structures.
The trivial but important point is that a subset of $M^{m_1}_1 \times M^{m_2}_2$ is definable in $\Sa M_1 \sqcup \Sa M_2$ if and only if it is a finite union of sets of the form $X_1 \times X_2$ where each $X_i \subseteq M^{m_i}_i$ is definable in $\Sa M_i$.
Given theories $T_1, T_2$ we also let $T_1 \sqcup T_2$ be the theory of the disjoint union of a model of $T_1$ with a model of $T_2$.
Lemma~\ref{lem:k-dis} follows immediately.

\begin{lemma}\label{lem:k-dis}
Let $\Sa N$, $\Sa M_1$, $\Sa M_2$ be arbitrary structures, $T, T_1, T_2$ be arbitrary theories, and fix $k \ge 1$.
Then $\Sa M_1 \sqcup \Sa M_2$ is (locally) $k$-trace definable in $\Sa N$ if and only if $\Sa M_1$ and $\Sa M_2$ are both (locally) $k$-trace definable in $\Sa N$.
Hence $T_1 \sqcup T_2$ is (locally) $k$-trace definable in $T$ if and only if $T_1$ and $T_2$ are both (locally) $k$-trace definable in $T$.
\end{lemma}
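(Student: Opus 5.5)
We prove the structure-level statement in both directions using the stated description of definable sets in a disjoint union. After that, the statement for theories follows from Proposition~\ref{prop:trace-theories}. Throughout, we regard $\Sa M_1 \sqcup \Sa M_2$ as a one-sorted structure on $M = M_1 \sqcup M_2$ carrying unary predicates for the two pieces. We also assume, after Morleyizing, that $\Sa N$ admits quantifier elimination, and that $N$ has at least two elements; the case $|N| = 1$ is degenerate and can be handled separately.

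For the forward direction, suppose $\Cal E$ is a collection of functions $M^k \to N$ witnessing local $k$-trace definability of $\Sa M$. Let $\Cal E_i$ be the collection of restrictions of members of $\Cal E$ to $M_i^k$. Every $\Sa M_i$-definable set $X \subseteq M_i^n$ is $\Sa M$-definable. So $X$ has the form given in Proposition~\ref{prop:anudda}(4) with $\Cal E$. Restricting every variable to $M_i$ gives the same form with $\Cal E_i$. Finiteness is preserved, so the finite ($k$-trace) case is covered as well.

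For the converse, suppose $\Cal E_i$ witnesses the definability of $\Sa M_i$ in $\Sa N$. We extend each $f \in \Cal E_i$ to $M^k$ by sending every tuple not in $M_i^k$ to a fixed element $c \in N$. We also add one unary function $\chi \colon M \to N$ that takes value $p$ on $M_1$ and $q \neq p$ on $M_2$. By Lemma~\ref{lem:ka}(6), adding this function of arity $1 \le k$ is harmless. Now take any $\Sa M$-definable $X \subseteq M^n$. For each of the finitely many patterns $s \in \{1,2\}^n$ saying which coordinate lies in which sort, we look at $X \cap \prod_j M_{s(j)}$. After permuting coordinates, this piece is a subset of $M_1^{m_1} \times M_2^{m_2}$. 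By the stated fact about disjoint unions, it is a finite union of products $Y_1 \times Y_2$ with each $Y_i$ definable in $\Sa M_i$. Each $Y_i$ is expressed through the extended $\Cal E_i$, and this expression is correct on tuples lying in $M_i$. The pattern $s$ itself is expressed by the formula $\bigwedge_j \chi(x_j) = p$ or $q$, as appropriate. Taking conjunctions within each pattern and then the finite disjunction over patterns shows that $X$ has the required form. Since these forms are closed under boolean combinations, this finishes the converse.

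The only delicate point is bookkeeping: each $\Cal E_i$-formula must be evaluated only on tuples from the correct sort. The pattern conjunct handles this, so the extended functions may take arbitrary values (here $c$) off $M_i^k$. The theory-level statement then follows from the structure-level one. One point needs care there: a model of $T_1 \sqcup T_2$ is exactly a disjoint union of a model of $T_1$ and a model of $T_2$. By Proposition~\ref{prop:trace-theories}, it suffices in each direction to find some model that is locally $k$-trace definable in $T$.
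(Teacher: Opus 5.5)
Your structure-level argument is essentially the paper's. The paper just says the lemma ``follows immediately'' from the description of definable subsets of $M_1^{m_1}\times M_2^{m_2}$ in $\Sa M_1\sqcup\Sa M_2$, and your restrict / extend-plus-sort-indicator argument is the natural way to carry that out. It is correct when $|N|\ge 2$. Two points need repair.

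\emph{The case $|N|=1$.} It cannot be ``handled separately'': under the formulation of Proposition~\ref{prop:anudda}, the converse is false there. Every function into a one-element $N$ is constant, so only $\emptyset$ and $M^n$ have the required form. Take one-element structures $\Sa M_1,\Sa M_2$. Each is $k$-trace definable in $\Sa N$, but $\Sa M_1\sqcup\Sa M_2$ is not, because $M_1$ is a proper nonempty definable subset of it. The same example refutes the theory-level statement when $T$ has one-element models. So you should state $|N|\ge 2$ as a tacit hypothesis, which the paper also assumes implicitly; for theories, require that $T$ has a model with at least two elements. This is exactly what your $\chi$ needs.

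\emph{The theory-level converse.} Proposition~\ref{prop:trace-theories} does not supply a single model of $T$. You know $\Sa M_1$ is locally $k$-trace definable in some $\Sa N_1\models T$ and $\Sa M_2$ in some $\Sa N_2\models T$. The structure-level result, however, needs one $\Sa N$ that works for both. To fix this:
\begin{enumerate}
\item Take a common elementary extension $\Sa N$ of $\Sa N_1$ and $\Sa N_2$, which exists because $T$ is complete.
\item Compose each witnessing family for $\Sa N_i$ with the elementary embedding $\Sa N_i\preceq\Sa N$. It still witnesses (local) $k$-trace definability, since each $L(N_i)$-formula $\vartheta$ has the same truth value in $\Sa N$ on tuples from $N_i$.
\end{enumerate}
Then apply your structure-level argument in $\Sa N$. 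With these two repairs the proof is complete.
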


Some structures decompose into disjoint unions up to trace equivalence.
Suppose that $P$ is a model-theoretic property and $\Sa C$ is a structure such that a theory $T$ trace defines $\Sa C$ if and only if $T$ has $P$.
If $\Sa M$ is trace equivalent to $\Sa M_1 \sqcup \Sa C$, then we can say in a precise sense that $\Sa M$ is the simplest structure which is more complicated then $\Sa M_1$ and has $P$.
For example, consider the case when $\Sa C$ is the unary relational structure with domain the Cantor set and unary relations defining all clopen subsets.
Then $T$ is not totally transcendental if and only if $T$ trace defines $\Sa C$~\cite[Prop.~3.3]{trace1}.
We showed in \cite[Prop.~4.5]{trace2} that $(\Z; +)$ is trace equivalent to $(\Q; +) \sqcup \Sa C$, so $(\Z; +)$ is the simplest structure which is more complicated then $(\Q; +)$ and is not totally transcendental.
See Proposition~\ref{prop:final final} below for another example.

\section{The universal theory which is $k$-trace definable in $T$}

\subsection{The theory of $\kappa$ generic $k$-ary functions taking values in a model of  $T$}\label{section:genk}
Throughout this section we fix a  language $L$, an $L$-theory $T$, a cardinal $\kappa \ge 1$, and an integer $k \ge 1$.
We also suppose that $T$ has a model with $\ge 2$ elements.
Let $D^\kappa_k(T)_{\_}$ be the theory of  two sorted structures of the form $(P, \Sa M,\fik)$ where $\Sa M\models T$, $P$ is an infinite set, and each $f_i$ is a function $P^k \to M$.
Note that if $T$ is definitionally equivalent to $T^*$ then $D^\kappa_k(T^*)_{\_}$ is definitionally equivalent to $D^\kappa_k(T^*)_{\_}$.
We define the model companion $\dkapk$ of $D^\kappa_k(T)_{\_}$ relative to $T$.

\medskip
We will let $D_k(T)=D^1_k(T)$ and $D^\kappa(T)=D^\kappa_1(T)$.
The theory $D^\kappa(T)$ was introduced, and its basic properties were developed, in~\cite{trace3}.

\medskip
A {\bf $(\kappa, k)$-function structure} consists of sorts $P, M$ and $\kappa$ functions $P^k \to M$.
When $\kappa$ is finite we say that a $(\kappa, k)$-function structure $(P, M; \fik)$ is {\bf rich} if whenever $(X, Y; \sik)$ is a finite $(\kappa, k)$-function structure and $X^* \subseteq X$, then any embedding of $(X^*, Y; \sik)$ into $(P, M; \fik)$ extends to an embedding of $(X, Y; \sik)$ into $(P, M; \fik)$.
In general we say that a $(\kappa, k)$-function structure is rich when every reduct to a finite sublanguage is rich.
If $(P, M; \fik)$ is rich then we will also say that $\fik$ is rich.
It should be clear from the definition that the class of rich $(\kappa, k)$-function structures is first order with a $\forall\exists$ axiomization.

\begin{lemma}\label{lem:exists P}
Any $(\kappa, k)$-function structure $(P, M; \fik)$ extends to a rich $(\kappa, k)$-function structure $(Q, M; \gik)$.
\end{lemma}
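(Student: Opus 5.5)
Since the $M$-sort is held fixed, we only enlarge the $P$-sort and extend the functions, using a chain construction. Note that an embedding of a $(\kappa,k)$-function structure $(X,Y;\sik)$ into $(P,M;\fik)$ consists of injections $X\to P$ and $Y\to M$ commuting with the functions; richness only requires extending over new elements of the first sort while keeping $Y$ fixed.

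\emph{One-step extension.} Given $(P,M;\fik)$, we build $(P',M;(f'_i)_{i<\kappa})$ with $P\subseteq P'$ and $f'_i$ extending $f_i$. It should realize every one-point extension problem over $P$. Concretely, fix a finite $A\subseteq P$ and a finite set $F\subseteq\kappa$ of function symbols. For each such pair, and each assignment of values in $M$ to the $k$-tuples from $A\cup\{*\}$ that involve $*$ (one value per $f_i$ with $i\in F$), adjoin a fresh point $*$. Define $f'_i$ on tuples involving new points by the prescribed values where they are specified, and by a fixed element of $M$ otherwise; on $P^k$ let $f'_i=f_i$.

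This handles extension problems with $|X\setminus X^*|=1$. Suppose $(X^*,Y)\to(P,M)$ is an embedding and $X=X^*\cup\{x\}$. The prescribed values $\sigma_i(\bar u)$ on tuples involving $x$ lie in $Y$, which maps into $M$. So they correspond to one of the adjoined points, and mapping $x$ to that point gives the extension. It is injective on the $P$-sort since the point is new, and it commutes with $\sigma_i$ on tuples from $X^*$ because the original embedding did.

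\emph{General extensions and the chain.} An arbitrary finite extension $X^*\subseteq X$ is handled by adding the points of $X\setminus X^*$ one at a time. The intermediate structures are the substructures $(X^*\cup\{x_1,\dots,x_j\},Y;\sigma)$, and each step is a one-point extension problem. So it suffices to iterate the one-step construction. Set $P_0=P$, let $P_{n+1}=(P_n)'$, and take $Q=\bigcup_n P_n$ with $g_i=\bigcup_n f_i^{(n)}$. Any finite $X^*$ embeds into some $P_n$, and each one-point extension is realized in the next stage, so $(Q,M;\gik)$ is rich. It is also infinite, since $P$ is.

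\emph{Infinite $\kappa$.} Richness is defined via reducts to finite sublanguages, and the construction already ranges over all finite $F\subseteq\kappa$. A reduct to a finite sublanguage $F$ is therefore rich, because the adjoined points for that $F$ witness all required one-point extensions. I expect no real obstacle; the only point needing care is bookkeeping that the $M$-sort stays fixed. That is legitimate because, in the definition of rich, $Y$ is the same in the substructure and in the extension.
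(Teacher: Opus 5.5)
Your proof is correct and uses the same chain-and-union closure argument as the paper. The paper builds $Q_{n+1}$ so that every finite extension problem over $Q_n$ is solved in one stage, and leaves that step to the reader. You instead solve only one-point extension problems at each stage, with an explicit construction, and realize a general extension over several successive stages; this works because earlier images lie in earlier stages and each new point is fresh.
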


This is a routine closure argument, so we leave some details to the reader.

\begin{proof}
We need to produce a set $Q$ containing $P$ and a rich collection $\gik$ of functions $Q^k \to M$ such that each $g_i$ extends $f_i$.
Working inductively, we construct for each $n$ a set $Q_n$ and a family $(g^n_i)_{i < \kappa}$ of functions $Q^k_n \to M$ such that we have the following for all $n$:
\begin{enumerate}[leftmargin=*]
\item $Q_0 = P$ and $Q_n \subseteq Q_{n + 1}$,
\item $g^0_i = f_i$ and $g^{n + 1}_i$ extends $g^n_i$ for each $i < \kappa$,
\item If $I \subseteq \kappa$ is finite, $(X, Y; (\sigma_i)_{i \in I})$  is a finite $(\kappa, k)$-function structure, and $X^* \subseteq X$, then any embedding of $(X^*, Y; (\sigma_i)_{i \in I})$ into $(Q_n, M; (g^n_i)_{i \in I})$ extends to an embedding of $(X, Y; (\sigma_i)_{i \in I})$ into $(Q_{n + 1}, M; (g^{n + 1}_i)_{i \in I})$.
\end{enumerate}
We then let $Q$ be the union of the $Q_n$ and for each $i < \kappa$ let $g_i$ be the function $Q^k \to M$ whose restriction to each $Q^k_n$ is $g^n_i$.
Finally, note that $(Q, M; \gik)$ is rich.
\end{proof}

\begin{lemma}\label{lem:crucial embedd}
Suppose that
\begin{enumerate}[leftmargin=*]
\item $(P, M; \fik)$ is a $(\kappa, k)$-function structure, 
\item  $(Q, N; \gik)$ is a rich $\max(|P|, |M|, \kappa)^+$-saturated $(\kappa, k)$-function structure, 
\item $P_0$ is a subset of $P$,
\item and $\eta$ is an embedding of $(P_0, M; \fik)$ into $(Q, N; \gik)$.
\end{enumerate}
Then $\eta$ extends to an embedding of $(P, M; \fik)$ into $(Q, N; \gik)$.
\end{lemma}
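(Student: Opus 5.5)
The plan is a compactness argument. Richness handles finitely many new points at a time, and saturation of $(Q, N; \gik)$ lets us realize all the new points of $P \setminus P_0$ at once. Note that $\eta$ is already defined on all of $M$, so only the $P$-sort needs extending.

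First I will write down the type to realize. For each $a \in P \setminus P_0$ take a variable $x_a$ in the $Q$-sort, and for $b \in P$ write $t_b$ for $x_b$ if $b \notin P_0$ and for the parameter $\eta(b)$ if $b \in P_0$. Let $\Sigma$ be the set of formulas consisting of:
\begin{itemize}
\item $x_a \neq x_{a'}$ for distinct $a, a' \in P \setminus P_0$;
\item $x_a \neq \eta(c)$ for $a \in P\setminus P_0$ and $c \in P_0$;
\item $g_i(t_{b_1}, \ldots, t_{b_k}) = \eta(f_i(b_1, \ldots, b_k))$ for all $i < \kappa$ and all $(b_1, \ldots, b_k) \in P^k$ that involve at least one element of $P \setminus P_0$.
\end{itemize}
The parameters used lie in $\eta(P_0) \cup \eta(M)$, which has cardinality $\le \max(|P|,|M|)$. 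There are $\le |P|$ variables and at most $\max(|P|,|M|,\kappa)$ formulas. By the standard fact that a $\lambda^+$-saturated structure realizes every finitely satisfiable type in $\le \lambda$ variables over $\le \lambda$ parameters (with $\lambda = \max(|P|,|M|,\kappa)$), it suffices to show $\Sigma$ is finitely satisfiable. A realization $(e_a)_a$ then gives an extension of $\eta$, sending $a \mapsto e_a$, which is an embedding of $(P, M; \fik)$, as required.

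For finite satisfiability, fix a finite $\Sigma_0 \subseteq \Sigma$. It mentions finitely many indices $I \subseteq \kappa$, finitely many new points $A \subseteq P \setminus P_0$, and finitely many $c \in P_0$. Let $X^* \subseteq P_0$ be the finite set of those $c$, together with all elements of $P_0$ occurring in the tuples of $\Sigma_0$, and let $X = X^* \cup A$. Let $Y \subseteq M$ be the finite set $\{f_i(\bar b) : i \in I, \bar b \in X^k\}$. Then $(X, Y; (f_i|_{X^k})_{i \in I})$ is a finite $(|I|, k)$-function structure, and $(X^*, Y; (f_i|_{(X^*)^k})_{i\in I})$ is a substructure of it. I include all of $X^k$ in $Y$ precisely so that these restrictions land in $Y$, making both objects genuine function structures.

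The restriction of $\eta$ to $X^* \cup Y$ is an embedding of the substructure into $(Q, N; (g_i)_{i \in I})$. That reduct is rich, since richness for infinite $\kappa$ is defined via finite sublanguages. Hence $\eta|_{X^* \cup Y}$ extends to an embedding of $(X, Y; (f_i|_{X^k})_{i\in I})$. The images of the points of $A$ then satisfy $\Sigma_0$: injectivity yields the inequalities, and commuting with the $g_i$ yields the equations, because the extension agrees with $\eta$ on $Y$ and on $X^*$.

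I expect the only delicate points to be bookkeeping. One must close $Y$ under all values on $X^k$ so that the finite structures are well defined. One must also invoke saturation in the many-variable form rather than realize points one at a time. Alternatively, one could proceed by transfinite induction, adding one point of $P \setminus P_0$ at a time with a one-variable type; at each stage the parameter set still has size $\le \lambda$, so the same argument applies.
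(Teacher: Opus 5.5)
Your proposal is correct and follows the same approach as the paper. Saturation reduces the problem to finitely many indices $I \subseteq \kappa$ and finitely many new points. Richness of the reduct $(Q, N; (g_i)_{i \in I})$ then embeds each finite piece compatibly with $\eta$ on $X \cap P_0$ and on $Y$. You have simply written out the type and the bookkeeping, such as closing $Y$ under the values of the $f_i$ on $X^k$, which the paper leaves implicit.
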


\begin{proof}
By saturation it is enough to fix finite $I \subseteq \kappa$ and show that any finite substructure $(X, Y; (f_i)_{i \in I})$ of $(P, M; (f_i)_{i \in I})$ admits an embedding into $(Q, N; (g_i)_{i \in I})$ which agrees with $\eta$ on both $X \cap P_0$ and $Y$.
This follows by richness.
\end{proof}

We now define $\dkapk$.
Let $D^\kappa_k(T)_{\_}$ be the theory of  two sorted structures of the form $(P, \Sa M,\fik)$ where $\Sa M\models T$, $P$ is an infinite set, and each $f_i$ is a function $P^k \to M$.
Let $L^\kappa_k$ be the language of $D^\kappa_k(T)_{\_}$.
Let $D^\kappa_k(T)$ be the theory of models $(P, \Sa M, \fik)$ of $\dkmin$ with $\fik$ rich.
Note that if $T$ is $\forall\exists$ then $D^\kappa_k(T)_{\_}$ and $D^\kappa_k(T)$ are both $\forall\exists$.
Furthermore, note that if $T$ and $T^*$ are definitionally equivalent then $D^\kappa_k(T)$ and $D^\kappa_k(T^*)$ are also definitionally equivalent.
Finally, note that $D^1_1(T)$ is mutually interpretable with $T$.


\begin{lemma}\label{lem:triv obs}
A  map between models of $\dkmin$ is an embedding if and only if it gives both an embedding of $L$-structures and an embedding of $(\kappa, k)$-function structures.
\end{lemma}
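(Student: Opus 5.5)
This is a direct unwinding of definitions. I treat a model of $\dkmin$ as a two-sorted structure in the language $L^\kappa_k$, which has the symbols of $L$ on the $M$-sort, the function symbols $f_i$ for $i<\kappa$ of type $P^k \to M$, and equality on each sort. A map between models $(P,\Sa M,\fik)$ and $(Q,\Sa N,\gik)$ is a pair $\eta=(\eta_P,\eta_M)$ with $\eta_P\colon P\to Q$ and $\eta_M\colon M\to N$. By definition, it is an $L^\kappa_k$-embedding exactly when:
\begin{enumerate}[leftmargin=*]
\item both components are injective;
\item $\eta_M$ preserves and reflects every relation symbol of $L$ and commutes with every function and constant symbol of $L$;
\item $\eta_M(f_i(a_1,\ldots,a_k)) = g_i(\eta_P(a_1),\ldots,\eta_P(a_k))$ for all $i<\kappa$ and $a_1,\ldots,a_k\in P$.
\end{enumerate}

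Next I match these clauses to the two conditions in the lemma. Clause (2) together with injectivity of $\eta_M$ says precisely that $\eta_M$ is an embedding of $L$-structures $\Sa M\hookrightarrow\Sa N$. A $(\kappa,k)$-function structure has sorts $P,M$ and the functions $f_i$ only, with no structure on $M$. So an embedding $(P,M;\fik)\hookrightarrow(Q,N;\gik)$ is exactly a pair of injections satisfying clause (3). Both directions of the equivalence then follow by reading off the clauses: an $L^\kappa_k$-embedding restricts to both kinds of embedding, and conversely the two embeddings together give all three clauses.

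The only point needing care is that the definition of embedding in many-sorted logic involves nothing beyond these clauses. There are no symbols mixing the sorts other than the $f_i$, and $P$ carries no structure beyond equality. Since every atomic formula of $L^\kappa_k$ is built from these symbols, preservation of atomic formulas reduces to clauses (1)–(3). I do not expect any genuine obstacle here.
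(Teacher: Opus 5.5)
Your proposal is correct. The paper gives no proof and simply calls the lemma trivial; unwinding the definition of a two-sorted embedding into the $L$-part on the $M$-sort and the $f_i$-part, as you do, is exactly the intended argument.
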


Lemma~\ref{lem:triv obs} is trivial but useful.

\begin{lemma}\label{lem:universal}
Suppose that $\Sa M \models T$, $O$ is a set, $f$ is a function $O^k \to M$, and $(P, \Sa N, g)$ is a $\max(|M|, |O|^+)$-saturated model of $D_k(T)$ such that $\Sa M$ is a substructure of $\Sa N$.
Then there is an injection $\uptau \colon O \hookrightarrow
P$ such that $\uptau$ and the inclusion $\Sa M \hookrightarrow \Sa N$ together give an embedding $(O, \Sa M, f) \hookrightarrow (P, \Sa N, g)$.
\end{lemma}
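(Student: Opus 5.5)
This is a direct application of Lemma~\ref{lem:crucial embedd} with $\kappa = 1$ and $P_0 = \emptyset$. One technicality has to be handled first: Lemma~\ref{lem:crucial embedd} asks for an embedding of $(P_0, M; f)$ with the \emph{same} second sort on both sides of the inclusion. Here, however, $\Sa M$ is only a substructure of $\Sa N$, and we must also respect the $L$-structure on $M$.

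We first reduce to the case $\Sa M$ is an $L$-substructure of $\Sa N$ viewed as a plain set embedding. By Lemma~\ref{lem:triv obs}, a map of $\dkmin$-structures is an embedding exactly when it is an $L$-embedding on the second sort and an embedding of $(1,k)$-function structures. The $L$-part is already given by the inclusion $\Sa M \hookrightarrow \Sa N$. So it suffices to find an injection $\uptau \colon O \hookrightarrow P$ with $g(\uptau(a_1),\ldots,\uptau(a_k)) = f(a_1,\ldots,a_k)$ for all $a_1, \ldots, a_k \in O$. In other words, we need an embedding of the $(1,k)$-function structure $(O, M; f)$ into $(P, N; g)$ extending the inclusion $M \subseteq N$ on the second sort.

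Next we apply Lemma~\ref{lem:crucial embedd} with the $(1,k)$-function structure $(O, M; f)$, the subset $P_0 = \emptyset$, and $\eta$ the inclusion $M \hookrightarrow N$ together with the empty map on the first sort. The empty partial map is trivially an embedding of $(\emptyset, M; f)$, since there are no tuples from $\emptyset^k$ to check. Since $(P,\Sa N,g) \models D_k(T)$, the reduct $(P, N; g)$ is rich. The saturation required by the lemma is $\max(|O|,|M|,1)^+$. Our hypothesis gives $\max(|M|,|O|^+)$-saturation, so here I would need to check that this suffices.

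The main obstacle is precisely this mismatch in saturation degrees. I would handle it by re-examining the proof of Lemma~\ref{lem:crucial embedd} rather than quoting it verbatim. The type over $M$ in variables $(x_a)_{a \in O}$ consists of the formulas $x_a \ne x_b$ and $g(x_{a_1},\ldots,x_{a_k}) = f(a_1,\ldots,a_k)$. It uses $|O|$ variables and parameters from $M$. Finite satisfiability follows from richness: each finite part involves a finite $X \subseteq O$ and a finite $Y \subseteq M$ containing the relevant values. Extending the empty embedding of $(\emptyset, Y; f)$ to $(X, Y; f|_{X^k})$ then works, provided $Y$ carries the inclusion, which richness permits. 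To realize a type in $|O|$ many variables over a parameter set of size $|M|$, I would build $\uptau$ by transfinite recursion on an enumeration of $O$. At each step I realize a $1$-type over $M$ together with the at most $|O|$ previously chosen points, and this is where the hypothesis $\max(|M|,|O|^+)$ is used. Each such $1$-type is consistent by the same richness argument, now applied with $X^*$ the previously handled finite set. This is exactly the extension property of rich structures, with nonempty $X^*$.
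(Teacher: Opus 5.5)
\textbf{Same route as the paper, but one step fails as written.} Your overall approach matches the paper's proof. You use Lemma~\ref{lem:triv obs} to reduce to embedding the function structure $(O, M; f)$ into $(P, N; g)$ over the inclusion $M \subseteq N$, and you get that embedding from richness plus saturation as in Lemma~\ref{lem:crucial embedd}. The paper takes $P_0 = \{q\}$ with $g(p,\ldots,p) = f(q,\ldots,q)$. Your choice $P_0 = \emptyset$ is simpler and does not need $O \neq \emptyset$. You are also right that $\max(|M|,|O|^+)$-saturation does not literally give the $\max(|O|,|M|)^+$-saturation that Lemma~\ref{lem:crucial embedd} requires. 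The paper applies that lemma directly and passes over this. In its one application, Theorem~\ref{thm;dock}, the model is $\max(|M|,|O|)^+$-saturated, so the issue does not arise there.

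\textbf{Where your fix breaks.} In your recursion you realize, at each stage, a $1$-type over $M$ together with up to $|O|$ previously chosen points, and you say this is where the hypothesis is used. When $|M| > |O|$, the hypothesis amounts to $|M|$-saturation. A $\lambda$-saturated structure only realizes types over fewer than $\lambda$ parameters, so a parameter set of size $|M|$ is too large and this step fails.

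\textbf{The missing observation.} $M$ plays no role at all. The only parameters occurring in your type are the values $f(a_1,\ldots,a_k)$, that is, the set $f(O^k)$.
\begin{itemize}
\item If $O$ is finite, richness with $X = O$, $Y = f(O^k)$, $X^* = \emptyset$ gives $\uptau$ directly, with no saturation needed.
\item If $O$ is infinite, then $|f(O^k)| \le |O^k| = |O|$. Each $1$-type in your recursion is then over at most $|O| < |O|^+$ parameters, so $|O|^+$-saturation realizes it, and the hypothesis does provide $|O|^+$-saturation.
\end{itemize}
Equivalently, apply Lemma~\ref{lem:crucial embedd} to $(O, f(O^k); f)$ rather than $(O, M; f)$, which only asks for $|O|^+$-saturation. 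Then combine the resulting $\uptau$ with the inclusion $M \hookrightarrow N$.

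With this change the rest of your argument goes through: each $1$-type is consistent by richness, taking $X^*$ to be a finite set of previously handled points.
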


Lemma~\ref{lem:universal} generalizes to the case $\kappa > 1$, but we only need this case.

\begin{proof}
Fix $q \in O$ and $p \in P$ such that $f(q, \ldots, q) = g(p, \ldots, p)$.
Let $\eta$ be the map  $(\{q\}, \Sa M, f) \to (P, \Sa N, g)$ that sends $q$ to $p$ and agrees with the inclusion $M \hookrightarrow N$ on $M$.
Note that $\eta$ is an embedding.
An application of Lemma~\ref{lem:crucial embedd} shows that $\eta$ extends to an embedding $\eta^*$ of $(O, M; f)$ into $(P, N; g)$.
Lemma~\ref{lem:triv obs} shows that $\eta^*$ gives an embedding of $(O, \Sa M, f)$ into $(P, \Sa N, g)$.
Let $\uptau$ be the map $O \to P$ given by $\eta^*$.
\end{proof}

\begin{lemma}
\label{lem:blow}
Suppose that  $T$ has quantifier elimination.
Then $D^\kappa_k(T)$ is the model companion of $D^\kappa_k(T)_{\_}$ and $D^\kappa_k(T)$ is complete and admits quantifier elimination.
\end{lemma}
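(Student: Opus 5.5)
The plan is to use the standard back-and-forth criterion. Let $(P,\Sa M,\fik)$ and $(Q,\Sa N,\gik)$ be models of $\dkapk$, with the second $\lambda^+$-saturated where $\lambda \ge |P|+|M|+\kappa$. Let $A$ be a substructure of the first, and $\eta$ an embedding of $A$ into the second. I will show $\eta$ extends to an embedding of the whole first model. This gives quantifier elimination. Completeness then follows by applying the criterion to the substructure generated by the empty set. Since $T$ has quantifier elimination and a model with at least two elements, the $L$-sort of the empty-generated substructure is determined by $T$. Its $P$-sort is empty, because there are no constants of sort $P$ and the $f_i$ only produce elements of the $M$-sort. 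So any two such substructures are isomorphic, and the theory is complete.

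For the extension, write $A=(P_0, M_0, \fik)$, where $M_0$ is an $L$-substructure of $\Sa M$ closed under the values $f_i(P_0^k)$. The first step handles the $M$-sort. By Lemma~\ref{lem:triv obs}, $\eta\restriction M_0$ is an $L$-embedding into $\Sa N$. Quantifier elimination for $T$ plus saturation of $\Sa N$, which is a model of $T$ and $\lambda^+$-saturated as a reduct, extend it to an elementary embedding $\theta\colon \Sa M\to\Sa N$. The second step handles the $P$-sort. Consider the $(\kappa,k)$-function structure $(P,M;\fik)$ and its substructure on $P_0$. The map $\eta\restriction P_0$ together with $\theta$ gives an embedding of $(P_0,M;\fik)$ into $(Q,N;\gik)$. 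This is exactly because $\theta$ extends $\eta$ on $M_0 \supseteq f_i(P_0^k)$, and $\eta$ respected the $f_i$ on $A$. Lemma~\ref{lem:crucial embedd} then extends this to an embedding of $(P,M;\fik)$ into $(Q,N;\gik)$. Its $M$-part is still $\theta$, since in that lemma the embedding on $Y$ is fixed. By Lemma~\ref{lem:triv obs}, the result is an embedding of $\Sa L^\kappa_k$-structures.

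For the model companion claim, it remains to show that every model of $\dkmin$ embeds in a model of $\dkapk$. Given $(P,\Sa M,\fik)$, Lemma~\ref{lem:exists P} produces a rich extension $(Q,M;\gik)$ with the same $\Sa M$, which is a model of $\dkapk$. Conversely, models of $\dkapk$ are models of $\dkmin$, so the two theories are mutually model-consistent. A theory with quantifier elimination is model complete, so $\dkapk$ is the model companion. One should also note that $\dkapk$ is consistent, which holds by the same lemma, and that rich structures have infinite $P$; this last point is automatic from richness, or is built into $\dkmin$.

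The main obstacle is checking that the first step is legitimate. The issue is that $A$'s $M$-sort need not be a model of $T$, so I rely on quantifier elimination in $T$ to extend the partial isomorphism. I also need the extension to be compatible with the function values already fixed. This is handled by extending on the $M$-sort first, then applying Lemma~\ref{lem:crucial embedd} with the whole $M$ in the role of $Y$. The saturation hypothesis of that lemma, $\max(|P|,|M|,\kappa)^+$, is met by the choice of $\lambda$.
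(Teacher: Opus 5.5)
Your proof is correct, but it is organized differently from the paper's. You verify the saturated-model embedding test for quantifier elimination directly. For an arbitrary substructure $(P_0, M_0, \fik)$ you first extend $\eta\restriction M_0$ to an elementary embedding $\theta$ of $\Sa M$, using quantifier elimination for $T$ and saturation. You then feed the map that is $\eta$ on $P_0$ and $\theta$ on $M$ into Lemma~\ref{lem:crucial embedd}. Model completeness, the model companion property (via Lemma~\ref{lem:exists P}), and completeness (via the empty-generated substructure) then follow formally.

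The paper uses your core step only in the special case where the substructure is itself a model of $\dkapk$ and the larger structure is a model of $\dkmin$. That case gives existential closedness, and hence the model companion property. The paper then obtains quantifier elimination separately, from model completeness plus the amalgamation property of $\dkapk_\forall$: it amalgamates the $T_\forall$-parts, takes the union of the $P$-sorts, and lets the functions be arbitrary off $P_1^k \cup P_2^k$. Completeness comes from joint embedding.

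Your route is shorter and handles all three claims with a single extension argument, without describing $\dkapk_\forall$. The paper's route gives explicit amalgamation and joint-embedding constructions, the same template it reuses for Uryshon spaces in Lemma~\ref{lem:ury model comp}.

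One small correction. The empty-generated substructure is the same in all models because $T$ is complete, which fixes the quantifier-free diagram of the $L$-constants. Quantifier elimination for $T$ and the existence of a model with two elements are not the reason. Completeness of $T$ is also what makes the empty partial map elementary when $L$ has no constants.
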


\begin{proof}
We first show that $\dkapk$ is the model companion of $\dkmin$.
It follows by Lemma~\ref{lem:exists P} that any model of $\dkmin$ embeds into a model of $\dkapk$.
Hence it is enough to show that a fixed model $(P, \Sa M, \fik)$ of $D^\kappa_k(T)$ is existentially closed in the class of models of $\dkmin$.
Let $(P^*,  \Sa M^*, \fik)$ be a model of $\dkmin$ extending $(P, \Sa M, \fik)$.
Let $(Q, \Sa N, \gik)$ be a $\max(\kappa, |P^*|, |M^*|)^+$-saturated elementary extension of $(P, \Sa M, \fik)$.
It suffices to show that there is an embedding $\eta$ of $(P^*,  \Sa M^*, \fik)$ into $(Q, \Sa N, \gik)$ which fixes every element of both $P$ and $M$.
Our assumption of quantifier elimination for $T$ ensures that $\Sa M^*$ is an elementary extension of $\Sa M$.
Hence by saturation we may suppose that $\Sa M^*$ is an elementary substructure of $\Sa N$.
Now observe that $(P, M^*; \fik)$ is a $(\kappa, k)$-function structure and a substructure of $(Q, N; \gik)$.
By Lemma~\ref{lem:crucial embedd} the inclusion $(P, M^*; \fik) \hookrightarrow (Q, N; \gik)$ extends to an embedding $\eta$ of $(P^*, M^*; \fik)$ into $(Q, N; \gik)$.
Finally, $\eta$ gives an embedding of $(P^*, \Sa M^*, \fik)$ into $(Q, \Sa N, \gik)$ by Lemma~\ref{lem:triv obs}. 

\medskip
We now show that $\dkapk$ admits quantifier elimination.
As $\dkapk$ is model complete it is enough to show that $\dkapk_\forall$ has the amalgamation property~\cite[Thm.~8.4.1]{Hodges}.
Note that a model of $\dkapk_\forall$ is a structure $(P, \Sa M, \fik)$ where $\Sa M \models T_\forall$ and $(P, M; \fik)$ is a $(\kappa, k)$-function structure.
Fix a model $(P, \Sa M, \fik)$ of $\dkapk_\forall$ and let $(P_j, \Sa M_j, (f^j_i)_{i < \kappa})$ be another model of $\dkapk_\forall$ extending $(P, \Sa M, \fik)$ for $j = 1,2$.
Now $T_\forall$ has the amalgamation property as $T$ admits quantifier elimination so there is a model $\Sa N \models T_\forall$ and embeddings $\Sa M_j \hookrightarrow \Sa N$ for $j = 1, 2$ such that the resulting square commutes.
We may suppose that $P_1 \cap P_2 = P$.
Let $P_\cup = P_1 \cup P_2$.
For each $i < \kappa$ let $g_i$ be some function $P^k_\cup \to N$ which agrees with $f^j_i$ on $P^k_j$ for $j = 1, 2$.
This is possible as $f^1_i$ and $f^2_i$ agree on $P^k$.
So $(P_\sqcup, \Sa N, \gik) \models \dkapk_\forall$.
Finally, the embeddings $\Sa M_j \hookrightarrow \Sa N$ and the inclusion $P_j \hookrightarrow P_\sqcup$ give an embedding $(P_j, \Sa M_j, (f^j_i)_{i < \kappa}) \hookrightarrow (Q, \Sa N, \hik)$ for $j = 1, 2$.
This gives the desired amalgamation.

\medskip
It remains to show that $D^\kappa_k(T)$ is complete.
By model completeness it is enough to show that any two models of $\dkapk$ jointly embed into a third.
As $T$ is complete any two models of $T$ embed into a third, so we can use a slight variation of the argument given in the previous paragraph which we leave to the reader.
\end{proof}

\subsection{$\dkapk$ and trace definability}
We discuss the relationship between $\dkapk$ and trace definability.

\begin{lemma}
\label{lem:blown}
Let $T$ be an arbitrary theory.
Then $D^\kappa_k(T)$ is complete and any formula $\phi(x_1,\ldots,x_n,y_1,\ldots,y_m)$ in $D^\kappa_k(T)$ with each $x_i,y_j$ a variable of the first sort, second sort, respectively is equivalent to a boolean combination of formulas in the language of equality in the variables $x_1,\ldots,x_n$ and formulas of the form
\[
\vartheta(f_1(x_{i_{1,1}},\ldots,x_{i_{1,k}}),\ldots,f_d(x_{i_{d,1}},\ldots,x_{i_{d,k}}),y_1,\ldots,y_m )
\]
for an $L$-formula $\vartheta(z_1,\ldots,z_{d + m})$, $f_1,\ldots,f_d \in L^\kappa_k \setminus L$, and indices $i_{j, l}\in\nset$.
Furthermore if $\eta$ is an embedding between models of $D^\kappa_k(T)$ and the induced embedding between $L$-structures is elementary then $\eta$ is elementary.
\end{lemma}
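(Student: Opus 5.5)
The plan is to reduce to the case where $T$ admits quantifier elimination by Morleyizing, and then read everything off from Lemma~\ref{lem:blow}. Let $L^\diamond$ be the Morleyization of $L$ and $T^\diamond$ the corresponding theory. Then $T^\diamond$ is definitionally equivalent to $T$ and has quantifier elimination. Every model of $D^\kappa_k(T)$ expands uniquely to a model of $D^\kappa_k(T^\diamond)$: richness only concerns the $(\kappa,k)$-function structure, so it is unaffected by the expansion. These expansions are exactly the models of $D^\kappa_k(T^\diamond)$. Completeness of $D^\kappa_k(T^\diamond)$, given by Lemma~\ref{lem:blow}, therefore transfers to $D^\kappa_k(T)$. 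The only point needing care is that the language of $D^\kappa_k(T)$ is a sublanguage: two models of $D^\kappa_k(T)$ have expansions that are elementarily equivalent, hence they are elementarily equivalent themselves.

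For the normal form, apply the quantifier elimination of Lemma~\ref{lem:blow}. Any formula $\phi(x,y)$ is equivalent modulo $D^\kappa_k(T^\diamond)$ to a quantifier-free $L^\diamond\cup\{f_i\}$-formula. I will analyse the atomic formulas in the two-sorted language. Terms of the first sort are just variables $x_i$, since there are no function symbols into $P$. Terms of the second sort are $L^\diamond$-terms applied to variables $y_j$ and to terms $f_i(x_{i_1},\ldots,x_{i_k})$; in the Morleyization, $L$-terms may be kept as is. So the atomic formulas are of two kinds: equalities $x_i=x_j$, and $L^\diamond$-atomic formulas applied to tuples of the form $(f(\bar x),\ldots,y)$. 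Each $L^\diamond$-atomic formula is a relation symbol $R_\psi$ standing for an $L$-formula $\psi$, so replacing it by $\psi$ gives exactly the displayed form, with $\vartheta$ an $L$-formula. A boolean combination of these is as claimed. The resulting equivalence holds in $D^\kappa_k(T^\diamond)$, and since both sides are $L^\kappa_k$-formulas, it holds in $D^\kappa_k(T)$.

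For the \emph{furthermore} clause, let $\eta\colon(P,\Sa M,\fik)\to(Q,\Sa N,\gik)$ be an embedding whose $L$-part is elementary. Then $\eta$ preserves every $L$-formula on the second sort, so it is an embedding of the $L^\diamond$-expansions. By quantifier elimination of $D^\kappa_k(T^\diamond)$, embeddings between its models are elementary, so $\eta$ is elementary for the expansions, and hence for the reducts. Alternatively, this follows directly from the normal form: equalities are preserved because $\eta$ is injective on $P$, and formulas $\vartheta(f(\bar a),\ldots,b)$ are preserved because $\eta$ commutes with the $f_i$ and is $L$-elementary.

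The main obstacle is bookkeeping rather than substance. The key check is that richness and $D^\kappa_k(\cdot)_{\_}$ are insensitive to passing to a definitional expansion of $T$, as the paper notes just before Lemma~\ref{lem:triv obs}. One also has to confirm that no mixed atomic formulas arise, that is, no equality between sorts.
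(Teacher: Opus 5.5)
Your proposal is correct and follows the paper's proof, which consists only of ``Morleyize and apply Lemma~\ref{lem:blow}'' with the details left to the reader. You supply exactly those details: completeness transfers through the unique expansion to $D^\kappa_k(T^\diamond)$, atomic formulas reduce to $x_i = x_j$ and $L$-formulas in the $f_i(\bar x)$ and $y$, and the elementarity clause follows from quantifier elimination for the Morleyized theory.
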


Lemma~\ref{lem:blown} shows in particular that any $L^\kappa_k$-formula with all variables of the first sort is equivalent in $D^\kappa_k(T)$ to an $L$-formula.
Hence if $(P, \Sa M, \fik)\models D^\kappa_k(T)$ then the induced structure on $M$ is interdefinable with $\Sa M$.

\begin{proof}
Morleyize and apply Lemma~\ref{lem:blow}.
We leave the details to the reader.
\end{proof}




Suppose that  $(P, \Sa M,\fik)\models D^\kappa_k(T)$.
Let $\Sa P$ be the structure induced on $P$ by $(P, \Sa M, \fik)$.
Each $f_i$ is a surjection $P^k\to M$, hence we may consider $M$ to be an imaginary sort of $\Sa P$.
So $\Sa P$ is bi-interpretable with $(P, \Sa M, \fik)$.
We will often replace $(P, \Sa M, \fik)$ with $\Sa P$.
By Lemma~\ref{lem:blown} every definable subset of $P^n$ is a boolean combination of sets definable in the language of equality and sets of the form
\[
\{ (p_1,\ldots,p_n)\in P^n : \Sa M\models \vartheta(f_{i_1}(p_{i_{1,1}},\ldots p_{i_{1,k}}),\ldots,f_{i_m}(p_{i_{m,1}},\ldots,p_{i_{m,k}} )) \}
\]
for some $m$-ary $L(M)$-formula $\vartheta$, $i_1, \ldots, i_m < \kappa$, and indices $i_{j, l}$ from $\nset$.

\medskip
We now consider the relationship between $\dkapk$ and local $k$-trace definability in $T$.

\begin{lemma}\label{lem:c}
Suppose that either $T$ has infinite models or $T$ has a finite model with $\ge 2$ elements and $k \ge 2$.
If $\kappa$ is finite then $D_k^\kappa(T)$ is $k$-trace definable in $T$.
If $\kappa$ is infinite then $\dkapk$ is locally $k$-trace definable in $T$, and this is witnessed by $\le \kappa$ functions.
\end{lemma}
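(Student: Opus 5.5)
We use the description of $D^\kappa_k(T)$ given by Lemma~\ref{lem:blown}. Fix $(P, \Sa M, \fik) \models D^\kappa_k(T)$, where $\Sa M \models T$. After Morleyizing we may assume $T$ has quantifier elimination. We work with the structure $\Sa P$ induced on $P$, which is bi-interpretable with $(P,\Sa M,\fik)$, so it suffices to $k$-trace define $\Sa P$ in a model $\Sa N$ of $T$; we will take $\Sa N = \Sa M$ or a suitable elementary extension of it. By Lemma~\ref{lem:blown}, every definable subset of $P^n$ is a boolean combination of two kinds of sets:
\begin{itemize}[leftmargin=*]
\item sets of the form $\{\bar p : \Sa M \models \vartheta(f_{i_1}(\ldots),\ldots,f_{i_m}(\ldots))\}$ with $\vartheta$ an $L(M)$-formula;
\item sets definable in the language of equality, which are boolean combinations of the diagonals $x_i = x_j$.
\end{itemize}
The first kind is already of the shape required in Proposition~\ref{prop:anudda}(4) with $\Cal E = \{f_i : i<\kappa\}$. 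So the whole task reduces to handling equality on $P$: we must add finitely many further functions of arity $\le k$ into $M$ that make $x = y$ quantifier-free definable. Lemma~\ref{lem:ka}(6) then lets us mix arities.

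\textbf{Case $T$ has infinite models.} Pass to an elementary extension of $\Sa M$ with $|M| \ge |P|$; this is harmless by Proposition~\ref{prop:trace-theories}, since the statement is about theories. Choose an injection $\iota \colon P \hookrightarrow M$. Then $x = y$ holds iff $\iota(x) = \iota(y)$, and the right-hand side is quantifier-free in $\Sa M$. There is one subtlety. The $L(M)$-formulas $\vartheta$ must be evaluated in the model we trace define into. If we enlarge $\Sa M$ to $\Sa M'\succ \Sa M$, the sets defined by the $f_i$ are unchanged: compose $f_i$ with the inclusion $M\subseteq M'$ and use elementarity. So $\Cal E = \{f_i\}_{i<\kappa} \cup \{\iota\}$ witnesses local $k$-trace definability. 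This family has size $\le \kappa$ when $\kappa$ is infinite, and is finite when $\kappa$ is finite, giving $k$-trace definability in that case.

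\textbf{Case $T$ has a finite model with $\ge 2$ elements and $k\ge 2$.} Now $M$ is finite, so no injection $P\to M$ exists, and this is the main obstacle. Fix distinct $a,b \in M$, and let $\chi \colon P^2 \to M$ send $(x,y)$ to $a$ if $x=y$ and to $b$ otherwise. Then $x=y$ iff $\chi(x,y)=a$, which is quantifier-free with parameter $a$. Since $2 \le k$, Lemma~\ref{lem:ka}(6) lets us include the binary function $\chi$ among the witnesses, so $\{f_i\}\cup\{\chi\}$ works as before; equivalently, we may absorb $\chi$ into a $k$-ary function.

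In both cases, closure under boolean combinations, together with the reduction via Lemma~\ref{lem:blown}, finishes the proof. The only care needed is that parameters from $M$ appearing in the $\vartheta$ are allowed, which Proposition~\ref{prop:anudda} permits since it uses $L(N)$-formulas.
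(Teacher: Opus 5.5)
Your proof is correct and takes essentially the same approach as the paper's. The $f_i$ handle everything via Lemma~\ref{lem:blown} except equality on $P$, which is handled by an injection $P \hookrightarrow M$ or, in the finite case with $k \ge 2$, by the binary equality indicator into two distinct points. The only cosmetic differences are that the paper keeps the second sort and adds $\id_M$ instead of passing to the induced structure on $P$, and it arranges $|P| \le |M|$ by choice of model rather than by enlarging the target model of $T$.
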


\begin{proof}
Fix $(P, \Sa M, \fik) \models \dkapk$.
Let $\id_M$ be the identity $M \to M$.
First suppose that $T$ has infinite models.
Then we may suppose that $|P| \le |M|$.
Let $\iota$ be an arbitrary injection $P \hookrightarrow M$.
Lemma~\ref{lem:blown} shows that $\fik$, $\iota$, $\id_M$ together witness local $k$-trace definability of $(P, \Sa M, \fik)$ in $\Sa M$.
Note that $\iota$ handles formulas in the language of equality with variables ranging over $P$.
Now suppose that $T$ has a finite model with $\ge 2$ elements and that $k \ge 2$.
Let $a, b$ be distinct elements of $M$ and let $g \colon P^2 \to \{a, b\}$ be given by declaring $g(p, p^*) = a$ if and only if $p = p^*$.
Now observe that $\fik$, $g$, $\id_M$ witness local $k$-trace definability of $(P, \Sa M, \fik)$ in $\Sa M$.
(Now $g$ handles the formulas that $\iota$ handled in the previous case.)
\end{proof}

Theorem~\ref{thm;dock} is the key result on $D^\kappa_k(T)$.

\begin{thm}
\label{thm;dock}
Suppose that either $T$ has infinite models or $T$ has a finite model with $\ge 2$ elements and $k \ge 2$.
Then the following holds for any theory $T^*$ and $\kappa \ge \aleph_0$.
\begin{enumerate}
[leftmargin=*]
\item  $T$ $k$-trace defines $T^*$ if and only if $D_k(T)$ trace defines $T^*$.
\item $T$ locally $k$-trace defines $T^*$ if and only if $D_k(T)$ locally trace defines $T^*$.
\item $T$ locally $k$-trace defines $T^*$, and this is witnessed by at most $\kappa$ functions, if and only if $D^\kappa
_k(T)$ trace defines $T^*$.
\item If $\kappa\ge |T^*|$ then $T$ locally $k$-trace defines $T^*$ if and only if $D^\kappa_k(T)$  trace defines $T^*$.
\end{enumerate}
Furthermore we have the following for any theory $T$.
\begin{enumerate}[label=(\alph*),leftmargin=*]
\item $D_k(T)$ is the unique theory modulo trace equivalence satisfying (1) for all theories $T^*$.
\item  $D_k(T)$ is the unique theory modulo local trace equivalence with (2) for all theories $T^*$.
\item  $D^\kappa_k(T)$ is the unique theory modulo trace equivalence satisfying (3) for all theories $T^*$.
\item If $\kappa\ge |T|$ then $D^\kappa_k(T)$ is the unique theory of cardinality $\le\kappa$ modulo trace equivalence satisfying (4) for all theories $T^*$ of cardinality $\le\kappa$.
\end{enumerate}
\end{thm}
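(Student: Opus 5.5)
The plan is to prove both directions of (1)--(4) by composing witnesses, using Lemma~\ref{lem:c} for one direction and Lemma~\ref{lem:universal} (with its straightforward $\kappa$-function generalization) for the other. Throughout we Morleyize $T$ so that it admits quantifier elimination; by Lemma~\ref{lem:blow} and Lemma~\ref{lem:blown} this makes $D^\kappa_k(T)$ complete with quantifier elimination, and the induced structure on the second sort is just $\Sa M$.

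\emph{Backward direction.} Suppose $D_k(T)$ (locally) trace defines $T^*$. By Lemma~\ref{lem:c}, $D_k(T)$ is $k$-trace definable in $T$. Applying Proposition~\ref{prop:trace-basic}(\ref{i3}) with $m = k$ and $n = 1$ shows that $T^*$ is (locally) $k$-trace definable in $T$. This is exactly the reverse implication of (1) and (2). For (3) we do the same with $D^\kappa_k(T)$, which is locally $k$-trace definable in $T$ via at most $\kappa$ functions. Proposition~\ref{prop:trace-basic}(\ref{i1}) tracks the cardinality bound: at most $\kappa + \eta$ functions, where $\eta$ is finite for each definable set. A local trace definition involves finitely many coordinates per finite family of sets, so the composite witness uses at most $\kappa$ functions overall.

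\emph{Forward direction.} Suppose $\Cal E = (f_i)_{i<\kappa}$ witnesses local $k$-trace definability of $\Sa O\models T^*$ in $\Sa M\models T$. Take a sufficiently saturated $(P,\Sa N,(g_i)_{i<\kappa})\models D^\kappa_k(T)$ with $\Sa M\preceq\Sa N$. By the $\kappa$-version of Lemma~\ref{lem:universal}, which is proved identically via Lemma~\ref{lem:crucial embedd} and Lemma~\ref{lem:triv obs}, there is an injection $\uptau\colon O\hookrightarrow P$ with $g_i(\uptau(\bar a)) = f_i(\bar a)$ for all $i$.

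Now let $X\subseteq O^n$ be definable. It has the form $\{\bar a : \Sa M \models \vartheta(f_{i_1}(\ldots),\ldots)\}$, and $\vartheta$ may be replaced by the same formula in $\Sa N$ because $\Sa M \preceq \Sa N$. Then $X$ is the $\uptau$-pullback of the set $\{\bar p\in P^n : \Sa N\models\vartheta(g_{i_1}(\ldots),\ldots)\}$, which is definable in $(P,\Sa N,\bar g)$ and hence in the structure $\Sa P$. So $\uptau$ witnesses trace definability of $\Sa O$ in $D^\kappa_k(T)$, which gives (3). When $\Cal E$ is finite we pad it to a single function: $D^d_k(T)$ with $d$ finite is trace equivalent to $D_k(T)$. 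One gets this by coding the tuple $(f_1,\ldots,f_d)$ as a single function into $M^d$; or, more concretely, by the trick of Proposition~\ref{prop:trace-basic}(\ref{i6}) of using an extra argument coordinate. This gives (1).

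For (2), $D_k(T)$ locally trace defines $T^*$ iff it does so for every finite sublanguage reduct, by Lemma~\ref{lem:ka}(2). Each such reduct needs only finitely many functions, so (1) applies. Statement (4) follows from (3) together with Lemma~\ref{lem:ka}(1).

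\emph{Uniqueness (a)--(d).} These are formal. If $T'$ satisfies (1) for all $T^*$, take $T^* = D_k(T)$: by Lemma~\ref{lem:c} it is $k$-trace definable in $T$, so $T'$ trace defines $D_k(T)$. Conversely, take $T^* = T'$: applying (1) to $T'$ shows $T$ $k$-trace defines $T'$, and then (1) for $D_k(T)$ shows $D_k(T)$ trace defines $T'$. The same argument works for the local and $\kappa$ versions. In (d) we additionally need $|D^\kappa_k(T)|\le\kappa$, which holds since $\kappa\ge|T|$.

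The main obstacle I expect is the finite-$\kappa$ reduction, namely showing that $D^d_k(T)$ is trace definable in $D_k(T)$. The plan is to embed a model of $D^d_k(T)$ into a model of $D_k(T)$ by an injection $P\to P'^{\,2}$ (or $P'\times P'$ with tags), reading off $f_j$ as $g$ evaluated at argument tuples offset by chosen constants $c_j$. This requires checking that the resulting structure is embedded by Lemma~\ref{lem:universal}, and that the equality-language formulas on $P$ remain traceable.
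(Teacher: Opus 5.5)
Your backward directions (Lemma~\ref{lem:c} with Proposition~\ref{prop:trace-basic}), the deduction of (4) from (3) and Lemma~\ref{lem:ka}(1), and the uniqueness arguments are the paper's. For the forward direction of (3) you embed into a saturated model of $D^\kappa_k(T)$ via a $\kappa$-version of Lemma~\ref{lem:universal}. The paper instead extends $O$ to a rich $(P,\Sa M,(g_i)_{i<\kappa})$ over the same $\Sa M$ by Lemma~\ref{lem:exists P}, which avoids saturation; both routes work.

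\textbf{Gap in the forward direction of (2).} You reduce via Lemma~\ref{lem:ka}(2) to reducts $\Sa O_0$ of $\Sa O$ to finite sublanguages, and assert that each such reduct ``needs only finitely many functions'', so that (1) applies. This is false in general. A finite language still has infinitely many formulas, and local $k$-trace definability only supplies finitely many functions per definable set.

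For example, take $k=1$ and $T=\rcf$, which is within the theorem's hypotheses. An infinite-dimensional two-sorted real vector space $(V,\R)$ is interdefinable with its reduct to vector addition, scalar multiplication and the field operations, since the $\spn_{n,i}$ are $\emptyset$-definable from these. It is locally trace definable in $\rcf$ by Proposition~\ref{prop;infn}(2). Yet it is not trace definable in $\rcf$, because it has infinite dp-rank. So (1) cannot be invoked for finite-sublanguage reducts.

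There are two ways to repair this. You can localize over finitely many definable sets rather than over finite sublanguages. Or you can argue as the paper does: (2) follows from (3) once $D^\lambda_k(T)$ is locally trace definable in $D_k(T)$. There Lemma~\ref{lem:ka}(2) is applied to $D^\lambda_k(T)$, whose finite-sublanguage reducts are reducts of models of $D^d_k(T)$ with $d$ finite. Those really are $k$-trace definable in $T$ by finitely many functions (Lemma~\ref{lem:c}), so (1) applies.

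\textbf{Unresolved step in (1).} The step you call the main obstacle is left open, and two of your suggestions fail as stated:
\begin{enumerate}
\item A single function into $M^d$ does not fit the signature of $D_k(T)$; it is just the data of $D^d_k(T)$ again. Appealing to $D_k$ of the theory of $M^d$ would be circular, since Corollary~\ref{cor;dock} rests on this theorem.
\item The extra-argument trick of Proposition~\ref{prop:trace-basic}(\ref{i6}) raises the arity to $k+1$.
\end{enumerate}

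No detour through $D^d_k(T)$ is needed. As in the paper, fix one sufficiently saturated $(P,\Sa N,f)\models D_k(T)$ with $\Sa M\preceq\Sa N$. Apply Lemma~\ref{lem:universal} separately to each $(O,\Sa M,\uptau_i)$ to get injections $\upeta_i\colon O\hookrightarrow P$ with $f(\upeta_i(a_1),\ldots,\upeta_i(a_k))=\uptau_i(a_1,\ldots,a_k)$. Then $(\upeta_1,\ldots,\upeta_m)\colon O\hookrightarrow P^m$ witnesses trace definability. Indeed, every $\Sa O$-definable set is the pullback of the set defined by the same $L(M)$-formula with $f(\upeta_i(\cdot))$ in place of $\uptau_i(\cdot)$.

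Applying this device to the finite set $\Cal E_0\subseteq\Cal E$ needed for finitely many given definable sets also repairs (2) directly, since local trace definability lets the injection depend on that finite family.
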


It follows in particular that $D^\kappa_k(T)$ is trace equivalent to $D_k(T)$ when $\kappa$ is finite.
For this reason we will only consider $\dkapk$ when $\kappa$ is either $1$ or infinite.

\begin{proof}
Note that (1) describes the class of theories that are trace definable in $D_k(T)$ and hence characterizes $D_k(T)$ up to trace equivalence.
Hence (a) follows from (1).
Likewise (b), (c), and (d) follows from (2), (3), and (4), respectively.
We prove (1) - (4).

\medskip
Lemma~\ref{lem:c} shows that $D_k(T)$ is $k$-trace definable in $T$.
So any theory that is (locally) trace definable in $D_k(T)$ is (locally) $k$-trace definable in $T$ by Proposition~\ref{prop:trace-basic}.
This gives the right to left directions of both (1) and (2).
Lemma~\ref{lem:c} also shows that $D^\kappa_k(T)$ is locally $k$-trace definable in $T$ and that this is witnessed by $\kappa$ functions.
Hence if $T^*$ is trace definable in $\dkapk$ then $T^*$ is locally $k$-trace definable in $T$, and this is witnessed by $\kappa$ functions.
The right to left implication of (3) and (4) follow.

\medskip
We prove the left to right implication of (1).
Suppose that $T^*$ is $k$-trace definable in $T$.
Fix $\Sa O \models T^*$, $\Sa M \models T$, and $\uptau_1, \ldots, \uptau_m \colon O^k \to M$ witnessing $k$-trace definability of $\Sa O$ in $\Sa M$.
Let $(P, \Sa N, f) \models D_k(T)$ be $\max(|M|, |O|)^+$-saturated and suppose that $\Sa M$ is an elementary submodel of $\Sa N$.
An application of Lemma~\ref{lem:universal} yields an injection $\upeta_i \colon O \hookrightarrow P$ for each $i = 1, \ldots, m$ such that 
\[
\uptau_i(a_1, \ldots, a_k) = f(\upeta_i(a_1), \ldots, \upeta_i(a_k)) \quad \text{for all $a_1, \ldots, a_k \in O$.}
\]
It follows that the $\upeta_i$ witness trace definability of $\Sa O$ in $(P, \Sa N, f)$.

\medskip
The left to right implication of (4) follows from the left to right implication of (3) and Lemma~\ref{lem:ka}(1).
We now prove the left to right implication of (3).
Suppose $\Sa O \models T^*$, $\Sa M \models T$, and $\fik$ is a collection of functions $O^k \to M$ witnessing local $k$-trace definability of $\Sa O$ in $\Sa M$.
Applying Lemma~\ref{lem:exists P} we obtain a set $P$ containing $O$ and a rich family $\gik$ of functions $P^k \to M$ such that each $g_i$ extends $f_i$.
So $(P, \Sa M, \gik) \models D^\kappa_k(T)$.
If $X \subseteq O^n$ is $\Sa O$-definable then $X$ is quantifier-free definable in $(O, \Sa M, \fik)$, hence we have $X = Y \cap O^n$ for a $(P, \Sa M, \gik)$-definable subset $Y \subseteq P^n$.
Hence the inclusion $O \hookrightarrow P$ witnesses trace definability of $\Sa O$ in $(P, \Sa M, \gik)$.

\medskip
We finally prove the left to right direction of (2).
By (3) it is enough to show that $D^\lambda_k(T)$ is locally trace definable in $D_k(T)$ for any cardinal $\lambda \ge 1$.
By Lemma~\ref{lem:ka}(2) it is enough to prove this in the case when $\lambda$ is finite, and this follows from (1) and Lemma~\ref{lem:c}.
\end{proof}

\begin{corollary}
\label{cor;dock}
Fix theories $T,T^*$ and suppose $\kappa \ge |T|,|T^*|$.
Suppose that either $T$ has infinite models or $T$ has a finite model with $\ge 2$ elements and $k \ge 2$.
Then we have the following.
\begin{enumerate}[leftmargin=*]
\item If $T$ trace defines $T^*$ then $D_k(T)$ trace defines $D_k(T^*)$.
If $T$ is trace equivalent to $T^*$ then $D_k(T)$ is trace equivalent to $D_k(T^*)$.
\item If $T$ locally trace defines $T^*$ then $D_k(T)$ locally trace defines $D_k(T^*)$.
If $T$ is locally trace equivalent to $T^*$ then $D_k(T)$ is locally trace equivalent to $D_k(T^*)$.
\item If $T$ locally trace defines $T^*$ then $D^\kappa_k(T)$  trace defines $D^\kappa_k(T^*)$.
If $T$ is locally trace equivalent to $T^*$ then $D^\kappa_k(T)$ is trace equivalent to $D^\kappa_k(T)$.
\end{enumerate}
\end{corollary}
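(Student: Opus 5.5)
We derive all three parts from Theorem~\ref{thm;dock} and the composition results of Section~\ref{section:k-trace def}. The strategy throughout is the same: show that $D_k(T^*)$ (or $D^\kappa_k(T^*)$) is (locally) $k$-trace definable in $T$ with the appropriate number of witnessing functions, and then apply the left to right directions of Theorem~\ref{thm;dock}. The equivalence statements then follow by applying the one-directional statements twice.

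For (1), suppose $T$ trace defines $T^*$, that is, $T$ $1$-trace defines $T^*$. First we need that $T^*$ also satisfies the hypothesis of Theorem~\ref{thm;dock}, so that Lemma~\ref{lem:c} applies to it.
\begin{itemize}
\item If $T^*$ has infinite models, there is nothing to check.
\item If $T^*$ has only finite models, then, since we assume throughout that $T^*$ has a model with $\ge 2$ elements, the hypothesis holds when $k\ge 2$.
\item The remaining case, $k=1$ with $T^*$ finite, should be degenerate. There $D_1(T^*)$ is interpretable in $T^*$ together with an infinite set, and the infinite set is trace definable in $T$ because $T$ has infinite models or a two-element model. I expect this case to be the only fiddly point, and I would handle it separately.
\end{itemize}
With the hypothesis in place, Lemma~\ref{lem:c} gives that $D_k(T^*)$ is $k$-trace definable in $T^*$. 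By Proposition~\ref{prop:trace-basic}(\ref{i3}), $D_k(T^*)$ is then $k\cdot 1 = k$-trace definable in $T$. Theorem~\ref{thm;dock}(1) now says that $D_k(T)$ trace defines $D_k(T^*)$.

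Part (2) is identical in its local form. If $T$ locally trace defines $T^*$, then $D_k(T^*)$ is locally $k$-trace definable in $T^*$ by Lemma~\ref{lem:c}. Composing via Proposition~\ref{prop:trace-basic}(\ref{i3}), it is locally $k$-trace definable in $T$. Theorem~\ref{thm;dock}(2) then gives that $D_k(T)$ locally trace defines $D_k(T^*)$.

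For (3) we must track cardinalities, and I expect this to be the main point needing care. Lemma~\ref{lem:c} shows that local $k$-trace definability of $D^\kappa_k(T^*)$ in $T^*$ is witnessed by $\le\kappa$ functions. By Lemma~\ref{lem:ka}(1), local trace definability of $T^*$ in $T$ is witnessed by $\le |T^*|\le\kappa$ functions. The "furthermore" clause of Proposition~\ref{prop:trace-basic}(\ref{i1}) then bounds the witnesses of local $k$-trace definability of a model of $D^\kappa_k(T^*)$ in a model of $T$ by $\kappa+\kappa=\kappa$. This is done at the level of models, passing to theories via Proposition~\ref{prop:trace-theories}, whose proof preserves the number of functions. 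Theorem~\ref{thm;dock}(3) then yields that $D^\kappa_k(T)$ trace defines $D^\kappa_k(T^*)$. The second sentence of (3) follows by symmetry; note that its stated conclusion "$D^\kappa_k(T)$ is trace equivalent to $D^\kappa_k(T)$" is evidently a typo for $D^\kappa_k(T^*)$.
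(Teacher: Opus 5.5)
Your proposal is correct and is essentially the paper's proof: compose the (local) $k$-trace definability of $D_k(T^*)$ (resp.\ $D^\kappa_k(T^*)$) in $T^*$ with the (local) trace definability of $T^*$ in $T$, then apply Theorem~\ref{thm;dock}. The one difference is in (3), where the paper invokes Theorem~\ref{thm;dock}(4) using $|D^\kappa_k(T^*)|=\kappa$, while you count witnesses and invoke Theorem~\ref{thm;dock}(3); this is the same argument unfolded.

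The paper silently applies Lemma~\ref{lem:c} to $T^*$, so your flag on the case $k=1$ with $T^*$ finite is a genuine if minor refinement; that case recurs in (2) and (3) and is handled the same way. One correction to your sketch: there it is the infinite models of $T$, forced by $k=1$, that trace define the infinite set. A two-element model could not do this, since an infinite set does not inject into a finite $N^n$.
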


\begin{proof}
The second claim of each of (1), (2), and (3) follows from the first.
If $T$ trace defines $T^*$ then every theory that is $k$-trace definable in $T^*$ is also $k$-trace definable in $T$, and hence $D_k(T)$ trace defines $D_k(T^*)$ by Theorem~\ref{thm;dock}(1).
This gives (1).
A similar argument gives (2).
We prove (3).
Suppose that $T$ locally trace defines $T^*$.
Now $D^\kappa_k(T^*)$ is locally $k$-trace definable in $T^*$, hence $D^\kappa_k(T^*)$ is locally $k$-trace definable in $T$.
We have $|D^\kappa_k(T^*)| = \kappa$, so $D^\kappa_k(T^*)$ is trace definable by $\dkapk$ by Theorem~\ref{thm;dock}(4).
\end{proof}

\begin{corollary}\label{cor:small}
If $T$ is $\nip$ then $\dkapk$ is $k$-$\nip$ and $(k - 1)$-$\ip$ for any $k \ge 2$ and $\kappa \ge 1$.
\end{corollary}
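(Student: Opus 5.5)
We prove the two halves separately: $k$-$\nip$ comes from the trace-definability results already established, and $(k-1)$-$\ip$ comes from exhibiting a generic $k$-ary relation inside $\dkapk$.

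\emph{$k$-$\nip$.} Since $T$ is $\nip$ it has infinite models, so Lemma~\ref{lem:c} applies. It gives that $\dkapk$ is locally $k$-trace definable in $T$; when $\kappa$ is finite it is even $k$-trace definable. Proposition~\ref{prop:ChHe} then yields that $\dkapk$ is $k$-$\nip$. Alternatively one can argue directly from Lemma~\ref{lem:blown}. Every formula is a boolean combination of equality formulas in the $P$-variables and formulas $\vartheta(f_1(\ldots),\ldots,f_d(\ldots),y)$ with $\vartheta$ an $L$-formula. Such $\vartheta$ are $\nip$ in $T$, so the Chernikov--Hempel composition theorem makes each of these formulas $k$-$\nip$, and $k$-$\nip$ formulas are closed under boolean combinations. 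Formulas whose variables also range over the second sort are handled because $M$ is an imaginary sort of $\Sa P$.

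\emph{$(k-1)$-$\ip$.} By Fact~\ref{fact:new hyp rel}(3) it suffices to show that $\dkapk$ trace defines $\hyp_k$. By Fact~\ref{fact:new hyp rel}(1) it is equivalent to trace define $\rel_k$, and $\rel_k$ in turn reduces to a generic $k$-ary relation. Our plan is to find two distinct elements $a\neq b$ of $M$ and to work in a sufficiently saturated model $(P,\Sa M,\fik)$. Given any countable $k$-ary relational structure $(O;R)$, we define $h\colon O^k\to M$ by setting $h(\alpha)=a$ if $R(\alpha)$ holds and $h(\alpha)=b$ otherwise. Lemma~\ref{lem:universal}, applied to $f_0$ (take the reduct to $D_k(T)$, or note that the lemma generalizes), gives an embedding of $(O,\Sa M,h)$ into $(P,\Sa N,f_0)$.

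Take $O$ to be the generic $k$-hypergraph. Its edge relation is then the pullback of the definable set $\{p\in P^k : f_0(p)=a\}$. Because $\hyp_k$ has quantifier elimination in a $k$-ary relational language, Lemma~\ref{lem:ka}(4) and Fact~\ref{fact:trace embedd}-style reasoning show that this injection witnesses trace definability.

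\emph{Main obstacle.} The step needing the most care is verifying that all $\hyp_k$-definable sets, not merely the edge relation, pull back to definable sets. Quantifier elimination for $\hyp_k$ handles this, since the equality relation pulls back from equality in $P$. We also need that $M$ has at least two elements, which the standing assumption on $T$ guarantees, and that $k\ge2$, which ensures $\hyp_k$ is $(k-1)$-$\ip$. Alternatively, $(k-1)$-$\ip$ can be read off directly: the formula $f_0(x_1,\ldots,x_k)=a$ encodes an arbitrary $k$-ary pattern by richness.
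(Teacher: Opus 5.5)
Your proposal is correct and follows the paper's proof. You get $k$-$\nip$ from Lemma~\ref{lem:c} plus Proposition~\ref{prop:ChHe}, and $(k-1)$-$\ip$ by trace defining $\hyp_k$ and applying Fact~\ref{fact:new hyp rel}(3). Your direct use of Lemma~\ref{lem:universal} with the two-valued function $h$ is just the paper's combination of Proposition~\ref{prop:fhkd} and Theorem~\ref{thm;dock} unwound.

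One small fix: $\nip$ does not imply having infinite models, since theories of finite structures are $\nip$. Lemma~\ref{lem:c} still applies, because $k \ge 2$ and $T$ has a model with at least two elements.
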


\begin{proof}
Corollary~\ref{cor;dock} and Proposition~\ref{prop:ChHe} together show that $\dkapk$ is $k$-$\nip$.
We show that $\dkapk$ is $(k - 1)$-$\ip$.
Proposition~\ref{prop:fhkd} and Theorem~\ref{thm;dock} together show that $\dkapk$ trace defines $\hyp_k$.
Apply Fact~\ref{fact:new hyp rel}(3).
Alternatively, it is easy to see that if $(P, \Sa M, \fik) \models \dkapk$ then $f_i(x_1, \ldots, x_k) = y$ is a $(k - 1)$-$\ip$ formula for any $i < \kappa$.
\end{proof}

\subsection{Composition theorems}\label{section:comp}
We consider theories of the form $D_m(D_n(T))$, $D_k(D^\kappa(T))$, etc.
We only defined these operations on one-sorted theories, and $D^\kappa_k(T)$ is two-sorted.
We can rectify this by considering any $(P, \Sa M, \fik) \models D^\kappa_k(T)$ to be a one-sorted structure with domain $P \cup M$ and unary relations defining $P$ and $M$.

\begin{proposition}\label{prop:compose combine}
Let $T$ be a theory which has a model with $\ge 2$ elements, $\kappa$ and $\lambda$ be infinite cardinals, and $k, m, n \ge 2$.
Then we have the following.
\begin{enumerate}[leftmargin=*]
\item $D^\kappa(D_k(T))$ is trace equivalent to $D^\kappa_k(T)$.
\item $D_m(D_n(T))$ is trace equivalent to $D_{mn}(T)$.
\end{enumerate}
If $T$ has infinite models then we additionally have the following.
\begin{enumerate}[leftmargin=*]\setcounter{enumi}{2}
\item $D^\lambda(D^\kappa(T))$ is trace equivalent to $D^{\lambda + \kappa}(T)$.
\item $D_k(D^\kappa(T))$ is trace equivalent to $\dkapk$.
\end{enumerate}
\end{proposition}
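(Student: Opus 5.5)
The plan is to use the universal property in Theorem~\ref{thm;dock} together with the composition results Proposition~\ref{prop:trace-basic}(\ref{i1}) and Lemma~\ref{lem:trace compose}. Each item asserts trace equivalence between two theories. For each item I will prove two things: first, that the composite theory is (locally) $k$-trace definable in $T$, with the right arity and the right number of witnessing functions; second, that the single-step theory is trace definable in the composite.

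\textbf{(2).} First, $D_n(T)$ is $n$-trace definable in $T$ by Lemma~\ref{lem:c}, and $D_m(D_n(T))$ is $m$-trace definable in $D_n(T)$, again by Lemma~\ref{lem:c}. Note that $D_n(T)$ has infinite models since $P$ is infinite, so Lemma~\ref{lem:c} applies. By Proposition~\ref{prop:trace-basic}(\ref{i3}), $D_m(D_n(T))$ is then $mn$-trace definable in $T$, so it is trace definable in $D_{mn}(T)$ by Theorem~\ref{thm;dock}(1).

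For the converse, Theorem~\ref{thm;dock}(1) reduces matters to showing that $D_{mn}(T)$ is $m$-trace definable in $D_n(T)$. Fix $(P,\Sa M,f)\models D_{mn}(T)$ and take a very saturated $(Q,\Sa N,g)\models D_n(T)$ with $\Sa M\preceq\Sa N$. I want maps $h\colon P^m\to Q$ and a single injection $e\colon P\to Q$ such that
\[
g(h(\bar x_1),\ldots,h(\bar x_n)) = f(\bar x_1,\ldots,\bar x_n)
\]
for all $m$-blocks $\bar x_1,\ldots,\bar x_n$. Lemma~\ref{lem:universal} supplies this: apply it to the set $O=P^m$ with the function $O^n\to M$ given by $f$, which yields an injection $h\colon P^m\hookrightarrow Q$ with $g\circ h^{\times n}=f$. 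The map $h$, together with $\id$ on $M$ (or the equality-handling map from Lemma~\ref{lem:c}), then witnesses $m$-trace definability via Lemma~\ref{lem:blown}. Here the formulas of $D_{mn}(T)$ become $L$-formulas applied to $g(h(\cdot),\ldots)$, and these are definable in $D_n(T)$. Equality on $P$ is handled by the injectivity of $h$ restricted to the diagonal.

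\textbf{(1), (3), (4).} These are the same argument with $\kappa$ functions. For (4), $D_k(D^\kappa(T))$ is $k$-trace definable in $D^\kappa(T)$. The latter is locally $1$-trace definable in $T$ with $\le\kappa$ functions, so the composite is locally $k$-trace definable in $T$ with $\le\kappa$ functions by Proposition~\ref{prop:trace-basic}(\ref{i1}). Theorem~\ref{thm;dock}(3) then gives trace definability in $D^\kappa_k(T)$.

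Conversely, set $O=P^k$ and encode the $\kappa$ functions $f_i\colon P^k\to M$ as functions $O\to M$. These $\kappa$ functions witness local $1$-trace definability in $T$, so by the $1$-ary case of Theorem~\ref{thm;dock}(3) the structure $(O,\Sa M,(f_i))$ embeds into a model of $D^\kappa(T)$, via the richness extension of Lemma~\ref{lem:exists P}. This gives a single $k$-ary map into the model of $D^\kappa(T)$, so $D^\kappa_k(T)$ is $k$-trace definable in $D^\kappa(T)$, and Theorem~\ref{thm;dock}(1) finishes. Items (1) and (3) are handled symmetrically, tracking the number of functions with the cardinal bound $\kappa+\eta$ in Proposition~\ref{prop:trace-basic}(\ref{i1}).

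The main obstacle is the bookkeeping in the converse directions. There I must realize a given $\kappa$-family of $k$-ary functions as a single $k$-ary function composed with $1$-ary or inner maps, while keeping the equality relation on the $P$ sort definable. I also need the hypotheses of Lemma~\ref{lem:c} to be met, which is why $T$ must have infinite models in (3) and (4): the inner theory $D^\kappa(T)$ has to supply an injection for equality.
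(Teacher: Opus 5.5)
\textbf{Verdict: gap.} The converse of (1) is asserted but never proved; the rest is sound.

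Your treatment of (2) and (4), and of the forward directions of (1) and (3), is correct and follows the paper's route. You compose witnesses via Proposition~\ref{prop:trace-basic}, tracking the number of functions via Lemma~\ref{lem:trace compose}, and then invoke Theorem~\ref{thm;dock}. Your converse for (4) is the paper's: all that is really used is Lemma~\ref{lem:exists P} applied to the $(\kappa,1)$-function structure $(P^k, M; \fik)$. The sentence saying the $f_i$ ``witness local $1$-trace definability in $T$'' has no meaning there and can be dropped.

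In the converse of (2) you realize $f$ via Lemma~\ref{lem:universal} inside a saturated $(Q,\Sa N,g)$ with $\Sa M\preceq\Sa N$. The paper instead applies Lemma~\ref{lem:exists P} to $(P^m, M; f)$, obtaining a model of $D_n(T)$ over the same $\Sa M$, which needs no saturation or elementary extension. Both versions work.

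The gap is the converse of (1). You call it ``symmetric'' and then list it as the main obstacle without resolving it. The device from (4), turning $P^k$ into a new point set, does not help here. What you must show is that $D^\kappa_k(T)$ is locally trace definable in $D_k(T)$ with at most $\kappa$ witnessing maps. That means realizing all $\kappa$ functions $f_i$, via unary maps, through the single $k$-ary function of one model of $D_k(T)$.

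This is easy with the tool you already used in (2):
\begin{enumerate}
\item Take one sufficiently saturated $(R,\Sa N,h)\models D_k(T)$ with $\Sa M\preceq\Sa N$.
\item Apply Lemma~\ref{lem:universal} to each $f_i$ separately. This gives injections $\tau_i\colon P\hookrightarrow R$ with $f_i(p_1,\ldots,p_k)=h(\tau_i(p_1),\ldots,\tau_i(p_k))$.
\item By Lemma~\ref{lem:blown}, the $\tau_i$ together with the inclusion $M\hookrightarrow N$ witness local trace definability with $\kappa$ maps.
\item Theorem~\ref{thm;dock}(3), applied over $D_k(T)$, then gives trace definability in $D^\kappa(D_k(T))$.
\end{enumerate}
The paper argues differently here. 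It observes that each finite reduct $(P,\Sa M,f_{i_1},\ldots,f_{i_n})$ is a model of $D^n_k(T)$, hence trace definable in $D_k(T)$ by Lemma~\ref{lem:c} and Theorem~\ref{thm;dock}(1).

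The converse of (3) is likewise not addressed; the paper only cites earlier work for it. It needs a short case split:
\begin{itemize}
\item If $\lambda\le\kappa$, then $D^\kappa(T)$ is already a sort of $D^\lambda(D^\kappa(T))$.
\item If $\lambda>\kappa$, take $\Sa R\models D^\kappa(T)$ whose $T$-sort is $\Sa M$. Extend $(P, R\cup M; (f_i)_{i<\lambda})$ richly by Lemma~\ref{lem:exists P}, and use the inclusion of $P$ as the witness.
\end{itemize}
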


In Sections~\ref{section:examplesI} and \ref{section:dkex} we will give examples of $n$-$\nip$ theories $T^*$ that are trace equivalent to $D_n(T)$ for a $\nip$ theory $T$.
Note that (2) shows that in this case (local) $m$-trace definability in $T^*$ is equivalent to (local) $mn$-trace definability in $T$ for all $m \ge 1$.

\begin{proof}
(1):
Local trace definability of $D^\kappa(D_k(T))$ in $D_k(T)$ is witnessed by $\kappa$ functions and $k$-trace definability of $D_k(T)$ in $T$ is witnessed by three functions, hence local $k$-trace definability of $D^\kappa(D_k(T))$ in $T$ is witnessed by $\kappa$ functions.
Thus $D^\kappa(D_k(T))$ is trace definable in $D^\kappa_k(T)$ by Theorem~\ref{thm;dock}(3).

\medskip
We show that $\dkapk$ is trace definable in $D^\kappa(D_k(T))$.
Fix $(P, \Sa M, \fik) \models D^\kappa_k(T)$.
If $i_1, \ldots, i_n < \kappa$ are distinct then $(P, \Sa M, f_{i_1}, \ldots, f_{i_n})$ is a model of $D^n_k(T)$ and is hence trace definable in $D_k(T)$.
It follows that $(P, \Sa M, \fik) \models D^\kappa_k(T)$ is locally trace definable in $D_k(T)$, and that this is witnessed by $\kappa$ functions.
Hence $\dkapk$ is trace definable in $D^\kappa(D_k(T))$.

\medskip
(2):
Note first that $D_n(T)$ is $n$-trace definable in $T$ and $D_m(D_n(T))$ is $m$-trace definable in $D_n(T)$, hence $D_m(D_n(T))$ is $mn$-trace definable in $T$ by Proposition~\ref{prop:trace-basic}, hence $D_m(D_n(T))$ is trace definable in $D_{mn}(T)$.

\medskip
We show that $D_{mn}(T)$ is trace definable in $D_m(D_n(T))$.
By Theorem~\ref{thm;dock} it suffices to show that $D_{mn}(T)$ is $m$-trace definable in $D_n(T)$.
Fix $(P, \Sa M, f) \models D_{mn}(T)$.
Let $g$ be the function $(P^m)^n \to M$ given by 
\[
g((a_1, \ldots, a_m), \ldots, (a_{n(m - 1) + 1}, \ldots, a_{mn})) = f(a_1, \ldots, a_{mn}) \quad \text{for all $a_1, \ldots, a_{mn} \in P$.}
\]
By Lemma~\ref{lem:exists P} there is a set $Q$ containing $P^m$ and a function $g^* \colon Q^n \to M$ extending $g$ such that $(Q, \Sa M, g^*) \models D_n(T)$.
Finally note that the inclusion $P^m \hookrightarrow Q$ and the identity $M \to M$ together witness $m$-trace definability of $(P, \Sa M, f)$ in $(Q, \Sa M, g^*)$.

\medskip
(3):
This is proven in~\cite[Prop.~2.5]{trace3}.

\medskip
(4):
An argument similar to that given in (1) shows that $D_k(D^\kappa(T))$ is trace definable in $\dkapk$.
We show that $\dkapk$ is trace definable in $D_k(D^\kappa(T))$.
It suffices to show that $\dkapk$ is $k$-trace definable in $D^\kappa(T)$.
Fix $(P, \Sa M, \fik) \models \dkapk$.
By Lemma~\ref{lem:exists P} there is a set $Q$ extending $P^k$ and a family $\gik$ of functions $Q \to M$ such that each $g_i$ extending $f_i$ and $(Q, \Sa M, \gik) \models D^\kappa(T)$.
Finally, note that the inclusion $P^k \hookrightarrow Q$ and the identity $M \to M$ together witness $k$-trace definability of $(P, \Sa M, \fik)$ in $(Q, \Sa M, \gik)$.
\end{proof}

\begin{proposition}
\label{thm:factor}
Let $T,T^*$ be theories with infinite models and $m,n\ge 1$.
Then we have the following.
\begin{enumerate}[leftmargin=*]
\item $T^*$ is $mn$-trace definable in $T$ if and only if there is a theory $T^{**}$ such that $T^{*}$ is $m$-trace definable in $T^{**}$ and $T^{**}$ is $n$-trace definable in $T$. 
\item $T^*$ is locally $mn$-trace definable in $T$ if and only if there is a theory $T^{**}$ such that $T^{*}$ is locally $m$-trace definable in $T^{**}$ and $T^{**}$ is locally $n$-trace definable in $T$. 
\item $T^*$ is locally $mn$-trace definable in $T$ if and only if there is a theory $T^{**}$ such that $T^{*}$ is $m$-trace definable in $T^{**}$ and $T^{**}$ is locally $n$-trace definable in $T$.
\item $T^*$ is locally $mn$-trace definable in $T$ if and only if there is a theory $T^{**}$ such that $T^{*}$ is locally $m$-trace definable in $T^{**}$ and $T^{**}$ is $n$-trace definable in $T$.
\end{enumerate}
\end{proposition}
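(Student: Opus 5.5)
In all four parts the right-to-left direction is composition, and the left-to-right direction is obtained by taking $T^{**}$ to be a suitable $D_n(T)$ or $D^\kappa_n(T)$.

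\textbf{Right to left.} Proposition~\ref{prop:trace-basic}(2) composes the two reductions into (local) $mn$-trace definability of $T^*$ in $T$. In (3) and (4) one link is non-local. A non-local link is in particular local, since a finite witnessing family is a witnessing family. So in every case local $mn$-trace definability follows, and in (1) we get non-local $mn$-trace definability.

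\textbf{Left to right of (1).} Take $T^{**}=D_n(T)$. This is $n$-trace definable in $T$ by Lemma~\ref{lem:c}, using that $T$ has infinite models. Suppose $T^*$ is $mn$-trace definable in $T$. Then by Theorem~\ref{thm;dock}(1), $T^*$ is trace definable in $D_{mn}(T)$. By Proposition~\ref{prop:compose combine}(2), $D_{mn}(T)$ is trace equivalent to $D_m(D_n(T))$. The edge cases $m=1$ or $n=1$ are trivial: take $T^{**}=T^*$ or $T^{**}=T$ respectively.

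By Theorem~\ref{thm;dock}(1) applied to the base theory $D_n(T)$, which has infinite models, $D_m(D_n(T))$ is $m$-trace definable in $D_n(T)$. Composing trace definability with this, using Proposition~\ref{prop:trace-basic}(1) with the $1$-trace step, $T^*$ is $m$-trace definable in $D_n(T)$, as required.

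\textbf{Left to right of (2)--(4).} Suppose $T^*$ is locally $mn$-trace definable in $T$, and let $\kappa\ge|T^*|+|T|$ be infinite.

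For (3), take $T^{**}=D^\kappa_n(T)$. Lemma~\ref{lem:c} shows this is locally $n$-trace definable in $T$. By Theorem~\ref{thm;dock}(4), $T^*$ is trace definable in $D^\kappa_{mn}(T)$. It then suffices to show that $D^\kappa_{mn}(T)$ is $m$-trace definable in $D^\kappa_n(T)$. I would mimic the proof of Proposition~\ref{prop:compose combine}(2):
\begin{itemize}[leftmargin=*]
\item Given $(P,\Sa M,\fik)\models D^\kappa_{mn}(T)$, view each $f_i$ as a function $g_i$ on $(P^m)^n$.
\item Use Lemma~\ref{lem:exists P} to extend to a rich family $(g_i^*)_{i<\kappa}$ on some $Q\supseteq P^m$, giving a model of $D^\kappa_n(T)$.
\item The inclusion $P^m\hookrightarrow Q$ together with $\id_M$ then witnesses $m$-trace definability. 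Only finitely many functions are needed, namely the coordinate maps to $Q$ and one map into $M$, handled as in Lemma~\ref{lem:c}.
\end{itemize}
This also gives (2), since $m$-trace definability implies local $m$-trace definability.

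For (4), take $T^{**}=D_n(D^\kappa(T))$, which is $n$-trace definable in $D^\kappa(T)$ by Theorem~\ref{thm;dock}(1). To pass to $T$ itself there are two options:
\begin{itemize}[leftmargin=*]
\item Use $T^{**}=D_n(T)$ directly. Then we need $T^*$ to be locally $m$-trace definable in $D_n(T)$. By Theorem~\ref{thm;dock}(2), $T^*$ is locally trace definable in $D_{mn}(T)\equiv D_m(D_n(T))$, which is $m$-trace definable in $D_n(T)$. Composing gives local $m$-trace definability, and $D_n(T)$ is $n$-trace definable in $T$.
\item Alternatively, $T^*$ is trace definable in $D^\kappa_{mn}(T)$, and the same $D^\kappa$ argument as in (3) applies.
\end{itemize}
This settles (4). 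Part (2) then also follows from (4) or from (3).

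\textbf{Main obstacle.} The delicate point is the transfer between local definability and the $D^\kappa$ theories. One must check that Theorem~\ref{thm;dock}(2) is stated with $D_k$ and local trace equivalence, so that the compositions of a local trace definition with an $m$-trace definition stay local. One must also check that the $D^\kappa$ analogue of Proposition~\ref{prop:compose combine}(2) holds, that is, that richness is preserved by the regrouping $P^{mn}=(P^m)^n$ followed by Lemma~\ref{lem:exists P}. Since Lemma~\ref{lem:exists P} produces richness from any starting family, I expect this to go through verbatim.
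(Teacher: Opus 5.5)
Your proposal is correct. Part (1) follows the paper's argument. Your first option for (4) is also what the paper does: its ``similar argument'' for (2) shows that $T^*$ is locally trace definable in $D_{mn}(T)$, which is trace equivalent to $D_m(D_n(T))$. Hence $T^*$ is locally $m$-trace definable in $D_n(T)$, which is (4) with $T^{**}=D_n(T)$.

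The genuine difference is in (3). The paper proves no new definability statement there. It uses the trace equivalence $D^\kappa_k(T)\equiv D_k(D^\kappa(T))$ of Proposition~\ref{prop:compose combine}(4) to get $R^{\mathrm{loc}}_k=R_k\circ R^{\mathrm{loc}}_1$. It then derives (3) from (1) purely formally: $R^{\mathrm{loc}}_{mn}=R_{mn}\circ R^{\mathrm{loc}}_1=R_m\circ(R_n\circ R^{\mathrm{loc}}_1)=R_m\circ R^{\mathrm{loc}}_n$. Part (4) is obtained symmetrically from $D^\kappa_k(T)\equiv D^\kappa(D_k(T))$.

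You instead prove directly a $D^\kappa$-version of Proposition~\ref{prop:compose combine}(2): $D^\kappa_{mn}(T)$ is $m$-trace definable in $D^\kappa_n(T)$. This is sound. By Lemma~\ref{lem:blown} you only need to handle equality on $P$ and the terms $f_i(\ldots)$. Every $f_i$ factors as $g^*_i$ applied to $n$ copies of the single inclusion $\iota\colon P^m\hookrightarrow Q$. Since the $g^*_i$ are symbols of the target language, one $m$-ary map plus $\id_M$ suffice, and no locality creeps in.

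Your intermediate theory $D^\kappa_n(T)$ is trace equivalent to the paper's implicit one, $D_n(D^\kappa(T))$. Your route is more hands-on, bypasses Proposition~\ref{prop:compose combine}(4), and works uniformly for $m=1$ or $n=1$. The paper's route reuses already established equivalences and reduces everything to bookkeeping with composition of relations.

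You should, however, delete two things from your treatment of (4): the opening suggestion $T^{**}=D_n(D^\kappa(T))$, and your second ``alternative'' option. Both only produce a $T^{**}$ that is \emph{locally} $n$-trace definable in $T$, which is what (3) needs, not (4). The (3)-style argument yields $T^{**}=D^\kappa_n(T)$. For $\nip$ $T$ and $\kappa>|T|$, Theorem~\ref{thm:distinct} shows that $D^\kappa_n(T)$ is not trace definable in $D_n(T)$. By Theorem~\ref{thm;dock}(1) it is therefore not $n$-trace definable in $T$. Your first option alone settles (4).
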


Let $[T]$ be the trace equivalence class of a theory $T$.
For each $k\ge 1$ let $R_k, R^\mathrm{loc}_k$ be the binary relation on trace equivalence classes given by declaring  $R_k([T^*],[T]), R^\mathrm{loc}_k([T^*],[T])$ when $T^*$ is $k$-trace definable, locally $k$-trace definable in $T$, respectively.
Proposition~\ref{thm:factor} is equivalent to the following equalities, where $\circ$ is the usual composition of binary relations.

\[
R_{mn} = R_m\circ R_n  \qquad \text{and} \qquad R^\mathrm{loc}_{mn} = R^\mathrm{loc}_{m}\circ R^\mathrm{loc}_{n}   = R_{m}\circ R^\mathrm{loc}_{n} = R^\mathrm{loc}_{m}\circ R_{n}
\]



\begin{proof}
The right to left implication of each item follows by Proposition~\ref{prop:trace-basic}.
Suppose that $T^*$ is $mn$-trace definable in $T$.
Then $T^*$ is trace definable in $D_{mn}(T)$.
Proposition~\ref{prop:compose combine} shows that $D_{mn}(T)$ is trace equivalent to $D_m(D_n(T))$.
So $T^*$ is $m$-trace definable in $D_n(T)$ and $D_n(T)$ is $n$-trace definable in $T$.
This yields (1).
A similar argument gives (2).

\medskip
We now fix $k \ge 2$ and suppose that $T^*$ is locally $k$-trace definable in $T$.
We show that:
\begin{enumerate}[label=(\alph*),leftmargin=*]
\item There is a theory $T^{**}$ such that $T$ is $k$-trace definable in $T^{**}$ and $T^{**}$ is locally trace definable in $T^{*}$.
\item There is a theory $T^{**}$ such that $T$ is locally trace definable in $T^{**}$ and $T^{**}$ is $k$-trace definable in $T^{*}$.
\end{enumerate}
Fix a cardinal $\kappa \ge |T^*|$.
Then $T^*$ is trace definable in $D^\kappa_k(T)$ by Theorem~\ref{thm;dock}(4).
By Proposition~\ref{prop:compose combine}(2) $D^\kappa_k(T)$ is trace equivalent to both $D^\kappa(D_k(T))$ and $D_k(D^\kappa(T))$.
So (a) holds with $T^{**} = D^\kappa(T)$ and (b) holds with $T^{**} = D_k(T)$.

\medskip
Now (3) follows from (1) and (a) as
\[
R^\mathrm{loc}_{mn} = R_{mn} \circ R^\mathrm{loc}_1 = (R_m \circ R_n) \circ R^\mathrm{loc}_1 = R_m \circ (R_n \circ R^\mathrm{loc}_1) = R_m \circ R^\mathrm{loc}_n.
\]
Likewise (4) follows from (1) and (b).
\end{proof}





We now give an application of the composition theorems.
Below we give examples of a family $\Cal T$ of theories and a fixed $\nip$ theory $T$ such that every member of $\Cal T$ is trace equivalent to some $\dkapk$.
Theorem~\ref{thm:distinct} largely describes trace definability between elements of $\Cal T$.

\begin{thm}\label{thm:distinct}
Let $T$ be a $\nip$ theory, $m, n \ge 1$, and $\kappa, \eta$ be cardinals such that each of $\kappa, \eta$ is either $1$ or $> |T|$.
Then $D^\kappa_m(T)$ trace defines $D^\eta_n(T)$ if and only if either $n < m$ or $n = m$ and $\eta \le \kappa$, hence $D^\kappa_m(T)$ is trace equivalent to $D^\eta_n(T)$ if and only if $m = n$ and $\kappa = \eta$.
\end{thm}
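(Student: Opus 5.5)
The plan is to prove the two implications separately. The "if" direction comes from Theorem~\ref{thm;dock} together with Proposition~\ref{prop:trace-basic}. For the "only if" direction I would use two obstructions: a $k$-$\nip$ bound to rule out $n>m$, and a cardinal "width" invariant to rule out $n=m$ with $\eta>\kappa$.

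\medskip
\textbf{If.} By Lemma~\ref{lem:c}, $D^\eta_n(T)$ is locally $n$-trace definable in $T$, witnessed by $\le\eta$ functions ($\le 3$ when $\eta=1$).

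Suppose first that $n<m$. Proposition~\ref{prop:trace-basic}(\ref{i6}) makes $D^\eta_n(T)$ $m$-trace definable in $T$. Hence by Theorem~\ref{thm;dock}(1) it is trace definable in $D_m(T)$.

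In turn $D_m(T)=D^1_m(T)$ is trace definable in $D^\kappa_m(T)$. Indeed $D_m(T)$ is $m$-trace definable in $T$ via finitely many functions, hence via $\le\kappa$ functions. When $\kappa$ is infinite, Theorem~\ref{thm;dock}(3) applies; when $\kappa=1$ there is nothing to do.

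Now suppose $n=m$ and $\eta\le\kappa$. If $\kappa=1$ then $\eta=1$ and the claim is trivial. Otherwise $\kappa$ is infinite, and Theorem~\ref{thm;dock}(3) applies directly to the $\le\eta\le\kappa$ witnessing functions. The second clause (trace equivalence iff $m=n$ and $\kappa=\eta$) follows formally once both directions are in hand.

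\medskip
\textbf{Only if, case $n>m$.} By Corollary~\ref{cor:small}, $D^\eta_n(T)$ is $(n-1)$-$\ip$, while $D^\kappa_m(T)$ is $m$-$\nip$. Since $m\le n-1$, the theory $D^\kappa_m(T)$ is also $(n-1)$-$\nip$. The property $(n-1)$-$\nip$ passes down under (local) trace definability, which gives the contradiction.

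\medskip
\textbf{Only if, case $n=m$, $\eta>\kappa$.} This is the main obstacle. Then $\eta>|T|$, and $\kappa$ is either $1$ or infinite with $\kappa<\eta$; in either case $\kappa+|T|<\eta$.

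Counting formulas is useless because parameters are allowed, so I would use an ict-type array invariant. In a model $(P,\Sa M,(f_i)_{i<\eta})$ of $D^\eta_m(T)$, choose for each $i<\eta$ a formula $\phi_i$ built from $f_i$ alone. An example is $f_i(x_1,\ldots,x_m)=y$ with parameter $y$, or a partitioned variant in which one variable varies. Using richness, these give an array of depth $\eta$ in which each row is independent of the others in the pattern sense.

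Now suppose $D^\eta_m(T)$ were trace definable in $D^\kappa_m(T)$. Each $\phi_i$ pulls back to a formula of $D^\kappa_m(T)$. By Lemma~\ref{lem:blown} that formula is a boolean combination of equality formulas and formulas $\vartheta(f_{j_1}(\ldots),\ldots,f_{j_d}(\ldots),\bar c)$ with $\vartheta$ an $L$-formula.

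There are only $\kappa+|T|<\eta$ choices for the parameter-free shape, meaning which finite set of $f_j$, which variable pattern, and which $\vartheta$. So by pigeonhole a subfamily of $\eta$, or at least $(\kappa+|T|)^+$, rows shares one shape. Substituting the fixed terms $f_j(\ldots)$ as new variables transfers this subarray to an ict-pattern in $T$ of depth $(|T|)^+$ for the single formula $\vartheta$ with varying parameters. That contradicts the NIP bound that dp-rank is $<|T|^+$ (Shelah).

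The delicate point is designing the $D^\eta_m$ pattern so that it survives the substitution. The rows must be indexed by elements of $P$ in a way that is coherent with $m$-ary composition, so I expect to reduce to a $1$-variable pattern by fixing $m-1$ coordinates generically.
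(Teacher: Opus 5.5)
Your ``if'' direction and the case $n>m$ are fine. The case $n>m$ is exactly the paper's argument, and for $n=m$, $\eta\le\kappa$ it suffices to note that $D^\eta_m(T)$ is a reduct of $D^\kappa_m(T)$.

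The gap is in the case $n=m$, $\eta>\kappa$, and for $m\ge 2$ it is not a technicality. An ict/dp-rank type invariant cannot separate these theories. By Corollary~\ref{cor:small}, $D^\kappa_m(T)$ is $(m-1)$-$\ip$, in particular $\ip$, so it has ict-patterns of every depth. When $T$ has infinite models this already happens in one first-sort variable: take $f(x,c_\alpha)=b_{\alpha,i}$ with distinct $c_\alpha\in P$ and distinct $b_{\alpha,i}\in M$, realized by richness and saturation. So the pulled-back pattern in $D^\kappa_m(T)$ gives nothing by itself, and everything rests on your transfer to $T$.

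That transfer fails. After pigeonholing, the common formula from Lemma~\ref{lem:blown} will in general contain mixed terms such as $f_j(u_1,e)$. Here $u_1$ is a pattern variable and $e$ is a first-sort coordinate of a row parameter, coming from $\uptau(b_{\alpha,i})$ or from the auxiliary parameters of the pulled-back formula. Such a term changes from row to row, and even from column to column. Replacing the terms $f_j(\ldots)$ by new variables therefore does not give one formula $\vartheta$ in one fixed tuple of variables. The resulting array in $T$ is trivially realizable and contradicts nothing. ``Fixing $m-1$ coordinates generically'' in the source does not help, because these parameters live in the target, which you do not control. (For $m=1$ no mixed terms occur. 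Moreover $D^\kappa_1(T)$ is $\nip$ of cardinality $\kappa+|T|<\eta$, so Shelah's bound applies there directly and your argument goes through.)

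The missing idea is to make the many functions unary before counting. By Proposition~\ref{prop:compose combine}(1), $D^\kappa_m(T)$ is trace equivalent to $D^\kappa(D_m(T))$, i.e.\ $\kappa$ generic unary functions into a model of $D_m(T)$, and likewise for $\eta$. In this form, row parameters can no longer enter the function terms. However, the base theory $D_m(T)$ is only $m$-$\nip$, so Shelah's bound is still unavailable. The paper instead applies Fact~\ref{fact:distinct} with $T'=D_m(T)$: if $T'$ is $\infty$-$\nip$ and $|T'|\le\kappa<\eta$, then $D^\kappa(T')$ does not trace define $D^\eta(T')$. When $\kappa=1$, first pass to $D^{|T|}_m(T)$, of which $D_m(T)$ is a reduct, using $|T|<\eta$.
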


It is natural to ask if this generalizes to the case when $T$ is $k$-$\nip$.
By the proof below it would be enough to generalize the composition theorem of Chernikov and Hempel, see the discussion above Proposition~\ref{prop:ChHe}.
Fact~\ref{fact:distinct} is proven in~\cite[Thm.~2.7]{trace3}.

\begin{fact}\label{fact:distinct}
If $T$ is $\infty$-$\nip$ and $|T| \le \kappa < \eta$ then $D^\kappa(T)$ does not trace define $D^\eta(T)$.
\end{fact}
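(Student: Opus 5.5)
The plan is to separate the two theories by an invariant that is preserved under trace definability: the maximal depth of an ict-pattern in a single variable, i.e.\ dp-rank computed with arbitrary cardinals. I read ``$\infty$-$\nip$'' as the hypothesis that this invariant is bounded for finite tuples, namely that there is no ict-pattern of depth $|T|^+$ in a finite tuple of variables. If the paper's definition differs, the argument needs exactly this consequence of it.

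\emph{Step 1: ict-patterns transfer along trace definitions.} Suppose $\uptau\colon O\hookrightarrow N^n$ witnesses trace definability of $\Sa O$ in $\Sa N$, and let $\phi_i(x;y_i)$, $i<\mu$, with parameters $(b_{i,j})$ form an ict-pattern in $\Sa O$ for the single variable $x$, realized by elements $a_\sigma$. Each $\phi_i(x;y_i)$ is the pullback of some $\Sa N$-formula $\psi_i(\bar u;\bar v_i)$, and I take the parameters to be $\uptau(b_{i,j})$. Then $\psi_i(\bar u;\uptau(b_{i,j}))$ with realizations $\uptau(a_\sigma)$ is an ict-pattern of depth $\mu$ in the $n$-tuple $\bar u$. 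By compactness this also holds in elementary extensions, so it applies to theories. Hence if $D^\kappa(T)$ trace defines $D^\eta(T)$, some finite tuple in a model of $D^\kappa(T)$ carries an ict-pattern of depth $\eta$.

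\emph{Step 2: lower bound in $D^\eta(T)$.} Take $(P,\Sa M,(f_i)_{i<\eta})\models D^\eta(T)$ with $M$ infinite, which we may arrange by choosing a suitable model of $T$. For $i<\eta$ choose distinct $c_{i,j}\in M$ for $j<\omega$ and put $\phi_i(x;y):= f_i(x)=y$. Richness of $(f_i)$ together with compactness gives, for every $\sigma\colon\eta\to\omega$, some $p\in P$ with $f_i(p)=c_{i,\sigma(i)}$ for all $i$. Since each $f_i(x)=c$ excludes $f_i(x)=c'$ for $c'\neq c$, this is an ict-pattern of depth $\eta$ in a single $P$-variable.

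\emph{Step 3: upper bound in $D^\kappa(T)$.} By Lemma~\ref{lem:blown}, every formula in an $n$-tuple of variables from $P\cup M$ is equivalent to a boolean combination of equalities and $L$-formulas in finitely many of the terms $f_j(x_l)$, $j<\kappa$, together with the $M$-coordinates. So the type of such an $n$-tuple over any set is determined by the type in $\Sa M$ of the $\kappa$-tuple $\bar a$ of all $f_j(x_l)$ and the $M$-coordinates, plus equality data. Equality formulas contribute only a bounded, finite amount of depth.

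Suppose there were an ict-pattern of depth $\kappa^+$ in the tuple. Each row's formula mentions only a finite set of coordinates of $\bar a$. There are only $\kappa$ finite subsets of $\kappa\times n$, so by pigeonhole $\kappa^+$ rows share a single finite support. These rows form an ict-pattern of depth $\kappa^+\ge|T|^+$ in a finite tuple of $T$, which contradicts $\infty$-$\nip$. Hence depth is at most $\kappa$. This contradicts Step~2 via Step~1, since $\eta>\kappa$.

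\emph{Expected obstacle.} The delicate point is Step~3. First, a pattern formula may mix $L$-formulas in the coordinates of $\bar a$ with parameters from the $P$-sort; those parameters contribute terms $f_j(b)$, which are just further parameters in $M$, so they are harmless. Second, the reduction to finitely many coordinates must be made uniformly per row. This works because each row uses a single formula $\phi_i$ with varying parameters, so each row has one support, as the pigeonhole argument requires.
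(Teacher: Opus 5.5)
Your argument proves the Fact at best for $\nip$ theories $T$ with infinite models. The statement needs more than that.

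\textbf{The gap.} In this paper $\infty$-$\nip$ means $k$-$\nip$ for some $k$. That is how the Fact is used in the proof of Theorem~\ref{thm:distinct}: it is applied to $D_m(T)$, which for $m \ge 2$ is $(m-1)$-$\ip$ by Corollary~\ref{cor:small}. The consequence you isolate (no ict-pattern of depth $|T|^+$ in a finite tuple) is equivalent to $\nip$. So it fails for every $\infty$-$\nip$ theory with $\ip$, for instance $\hyp_2$.

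For such $T$ your Step~3 is false, not merely unproved. The second sort of $D^\kappa(T)$ is a model of $T$, and an $\ip$ formula of $T$ gives ict-patterns of every depth in a single variable of that sort. Your pigeonhole then produces a deep pattern in $T$, which contradicts nothing. In fact no bound on ict-depth can separate $D^\kappa(T)$ from $D^\eta(T)$ in this case.

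Step~2 also breaks when $T$ is the theory of a finite structure, which the standing assumptions of Section~\ref{section:genk} allow. Then $M$ cannot be made infinite. Moreover $D^\eta(T)$ is essentially a set with unary predicates on $P$, which is dp-minimal, so there is no deep pattern to transfer.

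\textbf{A possible repair.} Measure the \emph{arity} of independence instead of its depth.
\begin{enumerate}
\item Let $k$ be least such that $T$ is $k$-$\nip$. Fix $\theta(z;y_1,\ldots,y_{k-1})$ that is $(k-1)$-$\ip$ in $T$; for $k=1$ take any non-constant $\theta(z)$, e.g.\ $z=m$.
\item In $D^\eta(T)$ put $\phi_i(x;\bar y) := \theta(f_i(x);\bar y)$ for $i<\eta$. The trace embedding pulls each $\phi_i$ back to $\psi_i(\bar u;\bar v_1,\ldots,\bar v_{k-1};\bar c_i)$ with $\psi_i$ parameter-free.
\item Since $|D^\kappa(T)| = \kappa < \eta$, a single $\psi$ serves for all $i$ in some infinite set $I$.
\item Richness lets you find $p\in P$ with $f_i(p)$ prescribed independently for finitely many $i\in I$. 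Each prescribed value can realize an arbitrary finite $(k-1)$-dimensional $\theta$-pattern. Hence $\psi(\bar u;\bar v_1,\ldots,\bar v_{k-1},\bar w)$, with the new slot $\bar w$ running through $(\bar c_i)_{i\in I}$, is $k$-$\ip$ in $D^\kappa(T)$.
\item This is a contradiction, because $D^\kappa(T)$ is $k$-$\nip$. By Lemma~\ref{lem:blown} its formulas are boolean combinations of equalities and $T$-formulas in which each variable is composed with a unary map.
\end{enumerate}
Here the pigeonhole runs over formulas rather than finite supports, and it needs only infinitely many repetitions. The case $k=1$ also covers finite $T$.
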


\begin{proof}[Proof of Theorem~\ref{thm:distinct}]
It is enough to prove the first claim.
If $n=m$ and $\eta\le\kappa$ then $D^\eta_n(T)$ is a reduct of $D^\kappa_m(T)$.
If $n<m$ then every structure that is locally $n$-trace definable in $T$ is also $m$-trace definable in $T$ by Proposition~\ref{prop:trace-basic}(\ref{i6}), hence $D^\eta_n(T)$ is trace definable in $D_m(T)$ by Theorem~\ref{thm;dock}.
This gives the right to left direction.

\medskip
Corollary~\ref{cor:small} and preservation of $m$-$\nip$ under trace definability together show that $D^\kappa_m(T)$ cannot trace define $D^\eta_n(T)$ when $m < n$.
Suppose that $m = n$.
By Proposition~\ref{prop:compose combine}(2) $D^\kappa_m(T)$, $D^\eta_m(T)$  is trace equivalent to $D^\kappa(D_m(T))$, $D^\eta(D_m(T))$, respectively.
Now $D_m(T)$ is $m$-$\nip$, so $D^\kappa(D_m(T))$ does not trace define $D^\eta(D_m(T))$ when $\kappa < \eta$ by Fact~\ref{fact:distinct}.
\end{proof}

\section{Examples}\label{section:examples}

\subsection{The trivial theory, dense linear orders, and the generic $m$-hypergraph}\label{section:examplesI}
We consider $D^\kappa_k(T)$ where $T$ is $\triv$, $\dlo$, or the theory $\hyp_m$ of the generic $m$-hypergraph.
Let $\rel^\kappa_k$ be the model companion of the theory of a set equipped with $\kappa$ relations, each of arity $k$.
This theory exists, is complete, and admits quantifier elimination.
This follows from~\cite[Thm.~3.7, Cor.~B.3]{jera}, but it is also an elementary exercise in quantifier elimination.

\begin{proposition}\label{prop:finite case}
Let $\Sa M$ be a finite structure with at least two elements, $T = \Th(\Sa M)$, $k \ge 1$, and $\kappa$ be either $1$ or an infinite cardinal.
Then $\dkapk$ is trace equivalent to $\rel^\kappa_k$ and $D_k(T)$ is trace equivalent to $\hyp_k$.
\end{proposition}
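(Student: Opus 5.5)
The plan is to prove the two trace definabilities separately, working mostly with the structure $\Sa P$ induced on the first sort $P$. Since $M$ is finite, the sort $M$ adds nothing essential: we can add $|M|$ constant points to $P$, or view $M$ as a finite imaginary sort. Either way, trace definability of $(P, \Sa M, \fik)$ reduces to trace definability of $\Sa P$ together with finitely many extra points, and a finite set of extra points is trivially handled in any target theory with infinite models. The second claim is the case $\kappa = 1$ together with Fact~\ref{fact:new hyp rel}(1), since $\rel^1_k = \rel_k$ is trace equivalent to $\hyp_k$. The case $k = 1$, where $\hyp_1$ should be read as $\rel_1$, can be handled by the same embedding arguments below without Lemma~\ref{lem:c}.

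\textbf{$\dkapk$ is trace definable in $\rel^\kappa_k$.} Fix $(P, \Sa M, \fik) \models \dkapk$. Every subset of the finite set $M$ is $\Sa M$-definable with parameters. Hence, by Lemma~\ref{lem:blown}, every definable subset of $P^n$ is a boolean combination of equalities and sets of the form $\{ \bar p : f_i(\bar p) = c \}$ for $i < \kappa$ and $c \in M$. So $\Sa P$ is interdefinable with the relational structure on $P$ given by the $k$-ary relations $R_{i,c}(\bar x) \iff f_i(\bar x) = c$. This structure admits quantifier elimination and has $\kappa \cdot |M|$ relations, which is $\kappa$ many when $\kappa$ is infinite and finitely many when $\kappa = 1$.

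When $\kappa = 1$, we apply Fact~\ref{fact:new hyp rel}(2) and (1) to get trace definability in $\hyp_k$, hence in $\rel_k$. When $\kappa$ is infinite, we relabel the relations by $\kappa$ and extend them by empty relations if needed. Then we embed this structure into a sufficiently saturated model of $\rel^\kappa_k$. Such an embedding exists because $\rel^\kappa_k$ is the model companion of the universal theory of $\kappa$ many $k$-ary relations, by the usual genericity and saturation argument as in Lemma~\ref{lem:crucial embedd}. Fact~\ref{fact:trace embedd} then gives trace definability.

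\textbf{$\rel^\kappa_k$ is trace definable in $\dkapk$.} Fix two distinct elements $a, b \in M$. For a model of $\rel^\kappa_k$ with relations $(R_i)_{i<\kappa}$, let $\chi_{R_i}$ send a tuple to $a$ if it satisfies $R_i$ and to $b$ otherwise, as in the proof of Proposition~\ref{prop:fhkd}. When $k \ge 2$, add one further function $g$ handling equality. By Lemma~\ref{lem:ka}(4) these functions witness local $k$-trace definability in $T$. There are $\le \kappa$ of them, and only finitely many when $\kappa = 1$. So Theorem~\ref{thm;dock}(3), respectively (1), yields trace definability in $\dkapk$, respectively $D_k(T)$.

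For $k = 1$, Theorem~\ref{thm;dock} is unavailable. Instead we embed a model of $\rel^\kappa_1$ directly into the $P$-sort of a saturated model of $D^\kappa_1(T)$. We do this by extending the map sending each element to a point of $P$ with $f_i = a$ exactly where $R_i$ holds, using Lemma~\ref{lem:crucial embedd}. Quantifier elimination on both sides then makes this embedding a trace definition.

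\textbf{Expected obstacle.} The step I expect to need the most care is the bookkeeping in the first direction. We must check that the finite sort $M$ and the named parameters from $M$ genuinely do not obstruct trace definability in the one-sorted target. We must also check that relabeling $\kappa \cdot |M|$ relations as $\kappa$ relations is legitimate, which is a cardinal computation for infinite $\kappa$. Finally, the edge case $k = 1$ has to be handled by the direct embedding argument above.
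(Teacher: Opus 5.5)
Your proof is correct, but it takes a different route from the paper's. The paper first notes that $\Sa M$ is mutually interpretable with a two-element set in the language of equality, and uses Corollary~\ref{cor;dock} to replace $T$ by the theory of that set. It then observes that $D^\kappa_k$ of a two-element set $\{a,b\}$ is bi-interpretable with $\rel^\kappa_k$, by sending each $f_i$ to the $k$-ary relation $f_i^{-1}(a)$; richness of $\fik$ corresponds exactly to genericity of these relations. You keep $\Sa M$ as it is and prove the two trace definabilities separately. Finiteness of $M$ plus Lemma~\ref{lem:blown} makes the induced structure on $P$ a quantifier-eliminating structure in the $k$-ary relations $f_i(\bar x)=c$. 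You embed that structure into a model of $\rel^\kappa_k$, or pass to $\hyp_k$ via Fact~\ref{fact:new hyp rel}(2) when $\kappa=1$. For the converse you use the characteristic functions of Proposition~\ref{prop:fhkd} together with Theorem~\ref{thm;dock}(1),(3).

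The paper's argument is shorter, and in the two-element case it gives the sharper conclusion of bi-interpretability. Yours avoids changing the base theory, so it does not pass through Corollary~\ref{cor;dock}. That corollary's hypotheses are not automatic here: it needs $k\ge 2$ when $T$ has no infinite models, and $\kappa\ge|T|$ for the infinite-$\kappa$ part. The price of your route is the extra bookkeeping you already identify.

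One small repair is needed. When $k=\kappa=1$, your first direction is not covered by ``the same embedding arguments.'' Fact~\ref{fact:new hyp rel}(2) is stated only for $k\ge 2$, and $(P;(R_c)_{c\in M})$ cannot embed into a model of $\rel_1$ once $|M|>2$. Instead, pick distinct patterns $\sigma_c\in\{0,1\}^n$ with $2^n\ge|M|$, and take $(N;U)\models\rel_1$ large enough. Map $P$ injectively into $N^n$ so that points of $R_c$ land on tuples whose $U$-membership pattern is $\sigma_c$. Since this partition structure eliminates quantifiers, the map is a trace definition.
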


\begin{proof}
The second claim follows from the first by Fact~\ref{fact:new hyp rel}(1).
Note that $\Sa M$ is mutually interpretable with the structure in the language of equality with two elements.
So we may suppose that $\Sa M$ is this structure by Corollary~\ref{cor;dock}(1).
Now $\dkapk$ is the model companion of the theory of structures $(P, M; \fik)$ where $P$ is an infinite set, $M$ is a set with two elements, and each $f_i$ is a function $P^k \to M$.
It easily follows that $\dkapk$ is bi-interpretable with $\rel^\kappa_k$.
\end{proof}

Proposition~\ref{prop:hyper} now follows by combining Fact~\ref{fact:new hyp rel}(1) with Propositions~\ref{prop:finite case} and \ref{prop:compose combine}.

\begin{proposition}\label{prop:hyper}
Fix $m \ge 2$, $k \ge 1$, and let $\kappa$ be either $1$ or an infinite cardinal.
Then $D^\kappa_k(\hyp_m)$ is trace equivalent to $\rel^\kappa_{km}$.
In particular $D_k(\hyp_m)$ is trace equivalent to $\hyp_{km}$.
\end{proposition}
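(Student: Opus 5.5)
The plan is to chain together results already established, just as the remark before the statement suggests. Proposition~\ref{prop:finite case}, applied to a finite structure with at least two elements, makes $\hyp_m$ (for $m\ge 2$) trace equivalent to $D_m(T_2)$, where $T_2$ is the theory of a two-element set in the language of equality. Concretely, Proposition~\ref{prop:finite case} gives $D_m(T_2)$ trace equivalent to $\hyp_m$, and Fact~\ref{fact:new hyp rel}(1) identifies $\hyp_m$ with $\rel_m$ up to trace equivalence. The argument then has three steps.

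\begin{enumerate}[leftmargin=*]
\item Apply Corollary~\ref{cor;dock} to replace $\hyp_m$ inside $D^\kappa_k(\cdot)$. Since $\hyp_m$ is trace equivalent to $D_m(T_2)$, Corollary~\ref{cor;dock}(1) gives $D_k(\hyp_m)$ trace equivalent to $D_k(D_m(T_2))$. For infinite $\kappa$, use Corollary~\ref{cor;dock}(3): trace equivalence implies local trace equivalence, so $D^\kappa_k(\hyp_m)$ is trace equivalent to $D^\kappa_k(D_m(T_2))$. Corollary~\ref{cor;dock}(3) requires $\kappa \ge |T|,|T^*|$, which holds since both theories are countable. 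The hypotheses on $T$ hold because $\hyp_m$ and $D_m(T_2)$ have infinite models.
\item For $\kappa=1$, Proposition~\ref{prop:compose combine}(2) shows $D_k(D_m(T_2))$ is trace equivalent to $D_{km}(T_2)$. This needs $k,m\ge 2$; if $k=1$ the statement is trivial, since $D_1(T^*)$ is mutually interpretable with $T^*$ up to trace equivalence. Then Proposition~\ref{prop:finite case} gives $D_{km}(T_2)$ trace equivalent to $\hyp_{km}$, and so to $\rel_{km}$.
\item For infinite $\kappa$, combine parts (1) and (4) of Proposition~\ref{prop:compose combine}. Part (1) gives $D^\kappa_k(D_m(T_2))\sim D^\kappa(D_k(D_m(T_2)))$. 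Corollary~\ref{cor;dock} (the analogous statement for $D^\kappa=D^\kappa_1$) gives $D^\kappa(D_k(D_m(T_2)))\sim D^\kappa(D_{km}(T_2))$. Part (1) again gives $D^\kappa(D_{km}(T_2))\sim D^\kappa_{km}(T_2)$. Finally Proposition~\ref{prop:finite case} gives $D^\kappa_{km}(T_2)\sim \rel^\kappa_{km}$.
\end{enumerate}

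The main obstacle is bookkeeping of hypotheses rather than any new idea. Proposition~\ref{prop:compose combine}(1),(2) require $k,m\ge2$ and a model with two elements, which $T_2$ has. Passing through $D^\kappa(\cdot)$ requires the $\kappa=1$ trace-equivalence transfer inside $D^\kappa$, which Corollary~\ref{cor;dock}(3) supplies with $k=1$. I would also handle the degenerate case $k=1$ separately: there the claim reduces to $D^\kappa(\hyp_m)\sim \rel^\kappa_m$, obtained by the same argument using Proposition~\ref{prop:compose combine}(1) with $k$ replaced by $m$.
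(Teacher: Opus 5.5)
Your proposal is correct and takes essentially the paper's route: chain Proposition~\ref{prop:finite case} and Fact~\ref{fact:new hyp rel}(1) with Proposition~\ref{prop:compose combine}, using Corollary~\ref{cor;dock} (which the paper leaves implicit) to carry trace equivalence inside $D_k(\cdot)$ and $D^\kappa(\cdot)$, with the $k=1$ case handled as you describe. The only blemish is that step 3 announces Proposition~\ref{prop:compose combine}(4) but never uses it, so drop that reference.
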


Our next goal is to prove analogues of Proposition~\ref{prop:hyper} for $\triv$ and $\dlo$.
We first describe a general construction on \Fraisse classes with disjoint amalgamation which will be used to construct the relevant theories.
A theory $T$ is {\bf algebraically trivial} if the algebraic closure of any subset $A$ of a model of $T$ is $A$.
Fact~\ref{fact:fraisse} contains the basic facts about \Fraisse classes and limits that we need.

\begin{fact}\label{fact:fraisse}
Let $T$ be a (necessarily incomplete) universal theory in a finite relational language such that the finite models of $T$ form a \Fraisse class $\Cal C$ with limit $\Sa M$.
Then $\Th(\Sa M)$ is the model companion of $T$.
Furthermore $\Th(\Sa M)$ is algebraically trivial if and only if $\Cal C$ has disjoint amalgamation.
\end{fact}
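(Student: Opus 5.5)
The plan is to treat Fact~\ref{fact:fraisse} as standard Fra\"iss\'e theory and check each claim against the usual characterizations.

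\emph{Model companion.} Since $L$ is finite relational and $\Cal C$ is a Fra\"iss\'e class, $\Sa M$ is ultrahomogeneous. Hence $\Th(\Sa M)$ eliminates quantifiers and is axiomatized by the extension axioms: for $A \subseteq B$ in $\Cal C$, every embedding of $A$ extends to one of $B$. Each extension axiom is a single first-order sentence because $L$ is finite relational, so $\Th(\Sa M)$ is $\forall\exists$. We then argue as follows.
\begin{enumerate}[leftmargin=*]
\item $\Th(\Sa M) \supseteq T$. The age of $\Sa M$ is $\Cal C$, and $T$ is universal in a finite relational language, so $T$ is determined by its finite models. Thus $\Sa M \models T$.
\item Every model $\Sa N \models T$ embeds into a model of $\Th(\Sa M)$. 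Every finite substructure of $\Sa N$ lies in $\Cal C$ and so embeds into $\Sa M$. By compactness, $\Sa N$ embeds into an elementary extension of $\Sa M$.
\item $\Th(\Sa M)$ is model complete, by quantifier elimination.
\end{enumerate}
Together these show that $\Th(\Sa M)$ is the model companion of $T$.

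\emph{Algebraic triviality.} We prove the two directions separately.

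($\Leftarrow$) Assume $\Cal C$ has disjoint amalgamation. By quantifier elimination it suffices to show that for finite $A \subseteq \Sa M$ and $b \notin A$ the type of $b$ over $A$ has infinitely many realizations. Let $B = A \cup \{b\}$. Repeatedly amalgamate copies of $B$ over $A$ disjointly, which gives a structure in $\Cal C$ containing $A$ together with $n$ distinct copies of $b$, all with the same quantifier-free type over $A$. The extension property embeds this over $A$ into $\Sa M$, producing $n$ realizations. Since $n$ is arbitrary, $b \notin \operatorname{acl}(A)$. By compactness and homogeneity of a monster model this transfers to every model.

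($\Rightarrow$) Assume acl is trivial, and let $A \subseteq B_1, B_2$ in $\Cal C$. Embed $B_1$ into $\Sa M$, and realize the type of $B_2$ over $A$ (an image of $A$). Because $\operatorname{acl}(A) = A$, Neumann's lemma (or the standard fact that nonalgebraic types in the monster model can be realized outside any given finite set) lets us choose the realization of $B_2 \setminus A$ disjoint from $B_1 \setminus A$. The union is a finite substructure of $\Sa M$, hence lies in $\Cal C$, and gives a disjoint amalgam. The step needing most care is realizing the whole tuple $B_2 \setminus A$ simultaneously away from $B_1$. The standard tool is Neumann's lemma applied in a saturated model, followed by pulling the finite configuration back into $\Sa M$ via the extension property.
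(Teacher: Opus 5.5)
The paper gives no proof of this Fact; it is recorded as standard Fra\"iss\'e theory. Your argument is a correct version of the standard one, and I have only small remarks.

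First, $\Th(\Sa M)$ is axiomatized by $T$ together with the extension axioms, not by the extension axioms alone. Nothing in your argument depends on this.

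Second, in ($\Leftarrow$) the case $A=\emptyset$ uses disjoint amalgamation over the empty structure, i.e.\ a disjoint joint embedding property. This is the convention under which the Fact actually holds. The class of pure sets with at most one element has disjoint amalgamation over nonempty bases, but its limit is a single point, which is not algebraically trivial.

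Third, in ($\Rightarrow$) you correctly isolate the one delicate step. You cannot move $B_2\setminus A$ off $B_1$ coordinate by coordinate, because a later coordinate may be algebraic over $A$ together with the earlier ones. B.~H.~Neumann's lemma for $\mathrm{Aut}(\mathbb U/A)$ acting on $\mathbb U\setminus A$, whose orbits are all infinite because $\operatorname{acl}(A)=A$, is exactly the right tool. You could equally apply it in $\Sa M$ itself: by ultrahomogeneity, for finite $A$ the $\mathrm{Aut}(\Sa M/A)$-orbits are the realization sets of types over $A$.

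Finally, the pull-back into $\Sa M$ is unnecessary. Every finite substructure of a model of $\Th(\Sa M)$ is a finite model of the universal theory $T$, so it already lies in $\Cal C$ and the configuration you find is itself the disjoint amalgam.
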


Let $L$ be a finite relational language.
Fix $k \ge 1$ and a cardinal $\kappa \ge 1$.
Let $L^{(k, \kappa)}$ be the language containing a $dk$-ary relation $R^*_i$ for every $i < \kappa$ and $d$-ary $R\in L$ other than equality.
Given an $L^{(k, \kappa)}$-structure $\Sa M$ and $i < \kappa$ we let $\Sa M[L, i]$ be the $L$-structure with domain $M^k$ given by declaring 
$$\Sa M[L, i]\models R\left((\alpha_1,\ldots,\alpha_k),(\alpha_{k+1},\ldots,\alpha_{2k}),\ldots,(\alpha_{(d-1)k+1},\ldots,\alpha_{dk})\right)$$
if and only if $\Sa M\models R^*_i(\alpha_1,\ldots,\alpha_{dk})$ for all $d$-ary $R\in L$ other than equality  and elements $\alpha_1,\ldots,\alpha_{dk}\in M$.
Given a class $\Cal C$ of finite $L$-structures let $B^\kappa_k(\Cal C)$ be the class of finite $L^{(k, \kappa)}$-structures such that $\Sa M[L, i] \in \Cal C$ for all $i < \kappa$.

\begin{lemma}\label{lem:dag}
Suppose that $\Cal C$ is a \Fraisse class of $L$-structures with disjoint amalgamation.
Fix $k, m \ge 1$.
Then $B^m_k(\Cal C)$ is a \Fraisse class with disjoint amalgamation.
\end{lemma}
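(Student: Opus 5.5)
We need to verify that $B^m_k(\Cal C)$ has the hereditary property, joint embedding, and disjoint amalgamation. We also need it to have countably many isomorphism types, which is automatic because the language $L^{(k,m)}$ is finite relational. Joint embedding follows from disjoint amalgamation over the empty structure, provided the empty structure lies in the class, or more carefully by amalgamating over a one-point structure. So the work is in heredity and amalgamation.

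\emph{Heredity.} Let $\Sa A\in B^m_k(\Cal C)$ and let $\Sa A_0\subseteq\Sa A$ be a substructure. For each $i<m$, the structure $\Sa A_0[L,i]$ is the substructure of $\Sa A[L,i]$ with domain $A_0^k\subseteq A^k$. Indeed, for $d$-ary $R$ the relation on $k$-tuples from $A_0$ is determined by $R^*_i$ restricted to $A_0$. Equality on $k$-tuples is coordinatewise equality, so it is preserved as well. Since $\Cal C$ is hereditary, $\Sa A_0[L,i]\in\Cal C$.

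\emph{Disjoint amalgamation.} Suppose $\Sa A\subseteq\Sa B_1,\Sa B_2$ lie in $B^m_k(\Cal C)$, and take $B_1\cap B_2=A$. Set $D=B_1\cup B_2$. For each $i<m$ we must put an $L$-structure on all of $D^k$ which restricts to $\Sa B_j[L,i]$ on $B_j^k$ and lies in $\Cal C$. Note that $B_1^k\cap B_2^k=A^k$. Apply disjoint amalgamation in $\Cal C$ to $\Sa A[L,i]\subseteq\Sa B_1[L,i],\Sa B_2[L,i]$. This gives a $\Cal C$-structure $\Sa E_i$ with domain $B_1^k\cup B_2^k$ in which the two copies meet exactly in $A^k$; disjointness is exactly what permits identifying the domain with this union.

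The remaining $k$-tuples, those in $D^k\setminus(B_1^k\cup B_2^k)$, must also be accommodated. Here we use disjoint amalgamation once more, this time over $\Sa E_i$ and a structure on $D^k$. Take any one-point extensions in $\Cal C$; these exist because disjoint amalgamation of a structure with itself over a substructure produces new points, assuming $\Cal C$ has arbitrarily large members. Iterating the amalgamation, we embed $\Sa E_i$ as a substructure of some $\Sa F_i\in\Cal C$ with $|F_i|=|D^k|$. Then extend the identification bijectively, so that $D^k\setminus(B_1^k\cup B_2^k)$ is sent onto $F_i\setminus E_i$.

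Finally define $R^*_i$ on $D$ by pulling back $\Sa F_i$. The relations $R^*_i$ for distinct $i$ are independent, so this is consistent. The identity is coordinatewise, so $D^k$ carries the correct equality. The resulting $L^{(k,m)}$-structure $\Sa D$ satisfies $\Sa D[L,i]\cong\Sa F_i\in\Cal C$ and restricts to $\Sa B_j$ on $B_j$.

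The main obstacle is this last padding step: realizing $\Sa E_i$ inside a $\Cal C$-structure of exactly the required size $|D^k|$. I will get this by repeated disjoint amalgamation of one-point extensions. A one-point extension is obtained by disjointly amalgamating over a singleton substructure, which is possible when $\Cal C$ contains a structure with at least two elements, and the case where all members are trivial can be treated directly. A degenerate case must be handled separately: when $\Cal C$ contains only structures of size $\le1$, the relevant $D^k$ are small and the statement is checked by hand.
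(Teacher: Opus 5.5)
Your proof is correct and follows the paper's proof: you disjointly amalgamate the $\Sa B_j[L,i]$ over $\Sa A[L,i]$ in $\Cal C$ (using $B_1^k\cap B_2^k=A^k$), enlarge the amalgam to a member of $\Cal C$ of size $|B_1\cup B_2|^k$, and pull it back along a bijection onto $(B_1\cup B_2)^k$ that fixes $B_1^k\cup B_2^k$; the only cosmetic difference is that you treat all $i<m$ at once, whereas the paper first reduces to $m=1$ via $B^m_k(\Cal C)=B^1_k(B^m_1(\Cal C))$. Two small points: your fallback of getting joint embedding by amalgamating over a one-point structure fails in general (e.g.\ if $L$ has a unary predicate, two members of $B^m_k(\Cal C)$ need not share a one-point substructure), so rely on amalgamation over the empty structure or adapt the same construction to joint embedding as the paper does; and your degenerate-case worry in the padding step is unnecessary, since padding is only needed when $|B_1\cup B_2|\ge 2$, in which case the amalgam already has at least two points and disjoint self-amalgamation over a singleton (plus heredity) gives extensions of every larger size.
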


\begin{proof}
The case when $k = 1$ is obvious and observed in \cite{Bodirsky2014}.
Clearly $B^m_k(\Cal C) = B^1_k(B^m_1(\Cal C))$, so we may suppose that $m = 1$.
Write $B_k(\Cal C) = B^1_k(\Cal C)$ and $\Sa M[L] = \Sa M[L, 0]$.
Note that $B_k(\Cal C)$ is a hereditary class as $\Cal C$ is a hereditary class.
We show that $B_k(\Cal C)$ satisfies disjoint amalgamation.
Fix $\Sa A, \Sa B_1, \Sa B_2 \in B_k(\Cal C)$.
Suppose that $\Sa A$ is a substructure of both $\Sa B_1$ and $\Sa B_2$, and $A = B_1 \cap B_2$.
Now $\Sa A[L]$ is a substructure of $\Sa B_i[L]$ for $i = 1, 2$ and $A^k = B^k_1 \cap B^k_2$.
We apply disjoint amalgamation for $\Cal C$.
Let $\Sa D \in \Cal C$ be an extension of $\Sa A[L]$ and $f_i$ be an embedding $\Sa B_i[L] \hookrightarrow \Sa D$ for $i = 1, 2$ such that $f_1(B^k_1) = A^k = f_2(B^k_2)$.
We may suppose that $\Sa B_i[L]$ is a substructure of $\Sa D$ and that $f_i$ is the inclusion for $i = 1, 2$.
As $\Cal C$ is hereditary we may suppose that $\Sa D$ has domain $B^k_1 \cup B^k_2$.
Let $\Sa D^* \in \Cal C$ be an extension of $\Sa D$ with cardinality $|B_1 \cup B_2|^k$.
Let $\iota$ be a bijection $D^* \to (B_1 \cup B_2)^k$ that is the identity on $B^k_1 \cup B^k_2$ and let $\Sa E$ be the pushforward of $\Sa D^*$ by $\iota$.
Finally let $\Sa B_3$ be the $L^{(k, 1)}$-structure with domain $B_1 \cup B_2$ such that $\Sa B_3[L] = \Sa E$.
Then $\Sa B_1, \Sa B_2$ are substructures of $\Sa B_3$ with intersection $\Sa A$.
A similar argument shows that $B_k(\Cal C)$ satisfies the disjoint joint amalgamation property.
\end{proof}

We now define the associated operation on certain theories.

\begin{lemma}\label{lem:new wink}
Suppose that $L$ is a finite relational language and $T$ is an algebraically trivial $L$-theory with quantifier elimination.
Fix $k \ge 1$ and a cardinal $\kappa \ge 1$.
Let $L^{(k, \kappa)}$ be as above and let $B^\kappa_k(T)$ be the model companion of the theory of $L^{(k, \kappa)}$-structures $\Sa M$ such that each $\Sa M[L, i]$ is a model of $T$.
Then we have the following.
\begin{enumerate}[leftmargin=*]
\item $B^\kappa_k(T)$ exists, is complete, admits quantifier elimination, and is algebraically trivial.
\item $B^\kappa_k(T)$ is locally $k$-trace definable in $T$ and is $k$-trace definable in $T$ when $\kappa$ is finite.
\end{enumerate}
\end{lemma}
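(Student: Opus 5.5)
For (1) I would first reduce to finite $\kappa$ and then use Fraïssé theory. Since $T$ is algebraically trivial with quantifier elimination in a finite relational language, Fact~\ref{fact:fraisse} (applied in reverse) says the finite substructures of models of $T$ form a Fraïssé class $\Cal C$ with disjoint amalgamation, with limit $\Sa M \models T$. For finite $m$, Lemma~\ref{lem:dag} says $B^m_k(\Cal C)$ is a Fraïssé class with disjoint amalgamation. Let $\Sa B$ be its limit.

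The next step is to show that $\Th(\Sa B)$ is the model companion of the theory of $L^{(k,m)}$-structures all of whose $[L,i]$ are models of $T$. By Fact~\ref{fact:fraisse}, $\Th(\Sa B)$ is the model companion of the universal theory of $B^m_k(\Cal C)$. So I need two things.
\begin{itemize}[leftmargin=*]
\item Each $\Sa B[L,i]$ is a model of $T$. I would check the extension axioms of $T$ directly: a one-point extension of a finite substructure of $\Sa B[L,i]$ lives on $k$-tuples, and richness of $\Sa B$ lets us realize it inside $B^k$. The key input is disjoint amalgamation, used as in the proof of Lemma~\ref{lem:dag}, which lets us add a new element $b$ and place the required new tuple among the new $k$-tuples involving $b$.
\item Every model of the incomplete theory embeds in a model of $\Th(\Sa B)$, which is immediate since its age lies in $B^m_k(\Cal C)$.
\end{itemize}
This gives existence, completeness, quantifier elimination, and algebraic triviality for finite $\kappa$.

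For infinite $\kappa$ I would take the union of the finite-reduct theories $B^I_k(T)$ over finite $I \subseteq \kappa$. These are compatible, since the $B^I_k(\Cal C)$ for $I \subseteq J$ are reducts-compatible by disjoint amalgamation. Quantifier elimination and completeness are finitary, so they pass to the union.

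For (2), fix $\Sa B \models B^\kappa_k(T)$. Each $\Sa B[L,i]$ is a model of $T$ on $B^k$. By compactness, or by the Löwenheim–Skolem theorem applied to the $\Sa B[L,i]$, I would embed all of them into a single sufficiently saturated $\Sa N \models T$, via embeddings $\uptau_i \colon B^k \hookrightarrow N$. Quantifier elimination in $T$ makes these embeddings legitimate: any model of $T$ of size at most $|N|$ elementarily embeds into a saturated $\Sa N$.

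Each relation $R^*_i(\alpha_1,\ldots,\alpha_{dk})$ is then equivalent to $\Sa N \models R(\uptau_i(\alpha_1..\alpha_k),\ldots)$, which is quantifier-free in $(B, \Sa N, (\uptau_i))$. Equality on $B$ is handled by $\uptau_i(x,\ldots,x)$, using injectivity. Since $B^\kappa_k(T)$ has quantifier elimination, Lemma~\ref{lem:ka}(4) gives local $k$-trace definability. When $\kappa$ is finite the witnessing family is finite, so Lemma~\ref{lem:ka}(5) yields $k$-trace definability.

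The main obstacle is verifying the extension property in the first bullet above, namely that the generic $\Sa B[L,i]$ really satisfies $T$ and not just its universal part. Algebraic triviality and disjoint amalgamation of $\Cal C$ are needed exactly at this point.
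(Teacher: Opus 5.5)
Your proposal is correct and follows essentially the same route as the paper: for (1) you take the \Fraisse limit of $B^m_k(\Cal C)$ via Lemma~\ref{lem:dag} and Fact~\ref{fact:fraisse}, and for (2) you embed the $\Sa B[L,i]$ into a saturated model of $T$ via maps $\uptau_i$ and apply Lemma~\ref{lem:ka}(4). Beyond the paper, you make explicit two points it leaves implicit. The first is that each $\Sa B[L,i]$ satisfies $T$ and not merely $T_\forall$, which is what identifies the limit's theory as the model companion of the stated theory. The second is the reduction of infinite $\kappa$ to finite. You also handle equality on $B$ through the diagonal $\uptau_i(x,\ldots,x)$ rather than the paper's auxiliary injection $\uptau$, which works equally well.
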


\begin{proof}
(1):
The infinite case easily reduces to the finite case.
We leave this to the reader and suppose that $\kappa$ is finite.
Let $\Cal C = \age(\Sa M)$ for any $\Sa M \models T$, so $\Cal C$ is a \Fraisse class and $T$ is the theory of the limit of $\Cal C$.
Then $B^\kappa_k(\Cal C)$ is a \Fraisse class with disjoint amalgamation by Lemma~\ref{lem:dag}.
Let $B^\kappa_k(T)$ be the theory of the \Fraisse limit of $B^\kappa_k(\Cal C)$ and apply Fact~\ref{fact:fraisse}.

\medskip
(2):
Let $\Sa M$ be a model of $B^\kappa_k(T)$ and let $\Sa N$ be an $|M|^+$-saturated model of $T$.
For each $i < \kappa$ let $\uptau_i$ be an injection $M^k \hookrightarrow N$ which gives an embedding $\Sa M[L, i] \hookrightarrow \Sa N$.
Let $\uptau \colon M \hookrightarrow N$ be an arbitrary injection.
By quantifier elimination for $\Sa M$ and Lemma~\ref{lem:ka}(4) $\uptau$ and the $\uptau_i$ together witness local $k$-trace definability of $\Sa M$ in $\Sa N$.
Finally, $\uptau$ and the $\uptau_i$ witness $k$-trace definability of $\Sa M$ in $\Sa N$ when $\kappa$ is finite.
\end{proof}

We now consider $\dlo$.
Fix a cardinal $\kappa$ which is either $1$ or infinite and fix $k \ge 1$.
We let $\dlo^\kappa_k$ be the model companion of the theory of a set $M$ equipped with $\kappa$ linear orders on $M^k$.
So $\dlo^\kappa_k$ is $B^\kappa_k(\dlo)$.
By Lemma~\ref{lem:new wink}(1) $\dlo^\kappa_k$ exists, is complete, and admits quantifier elimination.

\begin{proposition}\label{prop:dlo case}
Suppose that $\kappa$ is either $1$ or an infinite cardinal.
Then $D^\kappa_k(\dlo)$ is trace equivalent to $\dlo^\kappa_k$.
\end{proposition}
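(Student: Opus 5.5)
We prove the two trace definabilities separately.

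\emph{$\dlo^\kappa_k$ is trace definable in $D^\kappa_k(\dlo)$.}
By Lemma~\ref{lem:new wink}(2), $\dlo^\kappa_k = B^\kappa_k(\dlo)$ is locally $k$-trace definable in $\dlo$, and $k$-trace definable in $\dlo$ when $\kappa = 1$. The witnessing family consists of $\kappa$ functions $\uptau_i$ together with one injection $\uptau$. When $\kappa = 1$, Theorem~\ref{thm;dock}(1) shows that $\dlo_k$ is trace definable in $D_k(\dlo)$. When $\kappa$ is infinite, Theorem~\ref{thm;dock}(3) (local $k$-trace definability witnessed by at most $\kappa$ functions) shows that $\dlo^\kappa_k$ is trace definable in $D^\kappa_k(\dlo)$.

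\emph{$D^\kappa_k(\dlo)$ is trace definable in $\dlo^\kappa_k$.}
Fix $(P, \Sa M, \fik) \models D^\kappa_k(\dlo)$, and let $\Sa O \models \dlo^\kappa_k$ be highly saturated with orders $(<_i)_{i<\kappa}$ on $O^k$. I would build an injection $\upeta\colon P \cup M \hookrightarrow O^k$, regarding the two-sorted structure as one-sorted. The idea is to send $p \in P$ to the diagonal tuple $(o_p, \ldots, o_p)$ and $a \in M$ to a tuple $(c_a, \ldots, c_a)$ of a fixed shape. We choose these so that
\[
f_i(p_1,\ldots,p_k) < f_j(q_1,\ldots,q_k) \iff (o_{p_1},\ldots,o_{p_k}) \mathrel{\lhd} (o_{q_1},\ldots,o_{q_k}),
\]
and similarly for comparisons with elements of $M$.

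The difficulty is that each $<_i$ compares only $k$-tuples within the same index $i$, while the order on $M$ compares values of different $f_i, f_j$, and also values with constants from $M$. So I would not use the orders $<_i$ separately. Instead I would encode everything through a single order $<_0$ on $O^k$, using disjoint "blocks" of $O$. Choose pairwise disjoint infinite subsets $O_j \subseteq O$ for $j<\kappa$, plus one more subset $O_M$. Choose injections $\iota_j\colon P \to O_j$, and let $\upeta(p)$ record the $k$-tuple-valued map $(\iota_j(p))_{j}$. Then $f_j(p_1,\ldots,p_k)$ corresponds to the tuple $(\iota_j(p_1),\ldots,\iota_j(p_k)) \in O_j^k$, and $a\in M$ corresponds to a tuple in $O_M^k$. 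The tuples involved are pairwise distinct whenever the arguments differ, and disjoint amalgamation/saturation of $\dlo^\kappa_k$ (Lemma~\ref{lem:new wink}(1), algebraic triviality) lets us realize any prescribed linear preorder on these tuples by $<_0$.

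A priori $f_j$ need not be injective, while distinct tuples must be strictly ordered by $<_0$. To fix this, I would instead realize a linear order on $O^k$ that refines the preorder induced by $f_j$, and additionally reserve a second order $<_1$. Alternatively one could use an auxiliary coordinate, defining equivalence of values as "neither strictly below". Using $<_0$ alone, $f_i(\bar p) = f_j(\bar q)$ is not recoverable. So I would use Lemma~\ref{lem:ka}(6)-style flexibility and take $M$ itself into account. Pick a representative tuple $t_a \in O_M^k$ for each $a\in M$, and define $<_0$ so that each value tuple of $f_j$ lies immediately between the representatives in the cut determined by its value.

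Equality $f_j(\bar p) = a$ is then hard to express. For this reason I expect the cleaner route to go through the finitary and embedding lemmas. Specifically, I would show directly that $D_k(\dlo)$ is $1$-trace definable by giving a quantifier-free reduction via Lemma~\ref{lem:blown}, encoding the equivalence relation "same value" with a second order $<_1$ on $O^k$. Two value tuples are declared $<_1$-equal-class iff their $<_0$ and reversed-$<_1$ comparisons agree. This last encoding step is the main obstacle. The comparison between $f_i$ and $f_j$ is then routine, and the infinite $\kappa$ case follows by running it over finite subfamilies (Lemma~\ref{lem:ka}(2)).
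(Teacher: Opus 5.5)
Your first direction is correct and matches the paper: Lemma~\ref{lem:new wink}(2) combined with Theorem~\ref{thm;dock}(1) or (3). The second direction, however, has a genuine gap.

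You encode everything in a single order $<_0$, with one block $O_j$ per function $f_j$. This costs one coordinate per function. So for infinite $\kappa$ the map $p\mapsto(\iota_j(p))_{j<\kappa}$ is not an injection into any finite power $O^n$. Your fallback, running the argument over finite subfamilies via Lemma~\ref{lem:ka}(2), only gives \emph{local} trace definability, while the proposition asserts trace equivalence. No strategy using a single order can work for uncountable $\kappa$. The reduct of a model of $\dlo^\kappa_k$ to one order is a model of $\dlo^1_k$, which by your own first direction is trace definable in $D_k(\dlo)$. Success would therefore make $D_k(\dlo)$ trace define $D^\kappa_k(\dlo)$ for $\kappa>\aleph_0$, contradicting Theorem~\ref{thm:distinct}. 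In the case $\kappa=1$ you use a second order $<_1$, which $\dlo^1_k$ does not have. You also leave the encoding of ``same value'' by two orders as an unresolved ``main obstacle''.

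The missing idea is that only \emph{pairwise} comparisons of values need to be encoded. So the $\kappa$ orders should be indexed by pairs $(i,j)$, and only two copies of $P$ are needed.

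\begin{itemize}
\item Since each $f_i$ is surjective, a constant $a\in M$ is $f_0(\bar q)$ for a parameter tuple $\bar q$ from $P$. By Lemma~\ref{lem:blown} and quantifier elimination for $\dlo$, every set definable in the induced structure on $P$ is then a boolean combination of equalities and formulas $f_i(\bar a)\trianglelefteq f_j(\bar b)$. Equality of values is $\trianglelefteq$ in both directions, so your concerns about $f_j(\bar p)=a$ and the representatives $t_a$ disappear.
\item The paper takes $Q=P\times\{1,2\}$. It lets $h_{i,j}\colon Q^k\to M$ agree with $f_i$ on tuples from $P\times\{1\}$ and with $f_j$ on tuples from $P\times\{2\}$, and pulls $\trianglelefteq$ back to a linear preorder on $Q^k$.
\item It writes this preorder as the intersection of two linear orders $\le^1_{i,j},\le^2_{i,j}$: take any linear order refining the preorder, and let the second order reverse the first on each equivalence class. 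This is the easy claim behind your ``main obstacle''.
\item It embeds $(Q;(\le^\ell_{i,j}))$ into a model of $\dlo^{2\kappa}_k$. The two injections $a\mapsto(a,1)$ and $a\mapsto(a,2)$ then witness trace definability, using two coordinates whatever $\kappa$ is.
\item For $\kappa=1$ this lands in $\dlo^2_k$. One finishes with Lemma~\ref{lem:new wink}(2), since $\dlo^2_k$ is definitionally equivalent to $B^2_1(\dlo^1_k)$ and hence trace definable in $\dlo^1_k$. Your block idea could also supply this last step.
\end{itemize}
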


\begin{proof}
Lemma~\ref{lem:new wink}(2) shows that $\dlo^\kappa_k$ is locally $k$-trace definable in $\dlo$ when $\kappa \ge \aleph_0$ and is $k$-trace definable in $\dlo$ when $\kappa = 1$.
Hence $D^\kappa_k(\dlo)$ trace defines $\dlo^\kappa_k$ by Theorem~\ref{thm;dock}. 
We now show that $\dlokk$ trace defines $D^\kappa_k(\dlo)$.
Let $\trianglelefteq$ be a binary relation on a set $X$.
Then $\trianglelefteq$ is a {\bf linear preorder} if it is a transitive binary relation such that we either have $a \trianglelefteq b$ or $b \trianglelefteq a$ for all $a,b \in X$.
If $\trianglelefteq$ is a preorder then $(a \trianglelefteq b) \land (b \trianglelefteq a)$ defines an equivalence relation on $X$ and $\trianglelefteq$ pushes forward to a linear order on $X/E$.

\begin{Claim*}\label{clm:1}
Suppose that $\trianglelefteq$ is a linear preorder on a set $X$.
Then there are linear orders $\le, \le^*$ on $X$ such that for any $a,b \in X$ we have $a\trianglelefteq b$ if and only if $(a \le b) \land (a \le^* b)$.
\end{Claim*}


\begin{claimproof}
Let $E$ be the equivalence relation associated to $\trianglelefteq$ as above.
Let $\le$ be an arbitrary linear order extending $\trianglelefteq$.
Then each $E$-class is convex with respect to $\trianglelefteq$.
Let $\le^*$ be the linear order given by reversing $\le$ on each $E$-class but maintaining the same order between $E$-classes.
It is easy to see that this works.
\end{claimproof}
Now fix $(P, M; \trianglelefteq, \fik) \models D^\kappa_k(\dlo)$.
In particular $(M; \trianglelefteq) \models \dlo$.
Let $\Sa P$ be the induced structure on $P$.
It suffices to show that $\Sa P$ is trace definable in $\dlokk$.
Let $Q = P \times \{1, 2\}$.
For each $i, j < \kappa$ let $h_{i, j}$ be some function $Q^k \to M$ such that 
\[
h_{i, j}((a_1, i_1), \ldots, (a_k, i_k)) =
\begin{cases}
f_i(a_1, \ldots, a_k) & \text{when $i_1 = \ldots = i_k = 1$ } \\
f_j(a_1, \ldots, a_k) & \text{when $i_1 = \ldots = i_k = 2$ }
\end{cases}
\]
Let $\leq_{i, j}$ be the linear preorder on $Q^k$ given by declaring $b \leq_{i, j} b^*$ when $h_{i, j}(b) \trianglelefteq h_{i, j}(b^*)$ for each $i, j < \kappa$.
Applying the claim, for each $i, j < \kappa$ let $\leq^1_{i, j}, \leq^2_{i, j}$ be linear orders on $Q^k$ such that $\leq_{i, j}$ is the intersection of $\leq^1_{i, j}$ and $\leq^2_{i, j}$.
Now let $\Sa N = (N; (\leq^1_{i, j})_{i, j < \kappa}, (\leq^2_{i, j})_{i, j < \kappa})$ be a model of $\dlo^{2\kappa}_k$ extending $(Q; (\leq^1_{i, j})_{i, j < \kappa}, (\leq^2_{i, j})_{i, j < \kappa})$.
Given $i = 1, 2$ let $\uptau_i \colon P \hookrightarrow N$ be given by $\uptau_i(a) = (a, i)$.
Tracing through, we have
\[
f_i(a_1, \ldots, a_k) \trianglelefteq f_j(b_1, \ldots, b_k) \quad \Longleftrightarrow \quad \bigwedge_{\ell = 1, 2} (\uptau_1(a_1), \ldots, \uptau_1(a_k)) \leq^\ell_{i, j} (\uptau_2(b_1), \ldots, \uptau_2(b_k))
\]
for any $a_1, b_1, \ldots, a_k, b_k \in P$ and $i, j < \kappa$.
It follows that $\uptau_1, \uptau_2$ witness trace definability of $\Sa P$ in $\Sa N$.
This handles the case when $\kappa$ is infinite.
Suppose $\kappa = 1$.
We have shown that $\Sa P$ is trace definable in $\dlo^2_k$.
Finally, Lemma~\ref{lem:new wink}(2) shows that $\dlo^2_k$ is trace equivalent to $\dlo^1_k$ as $\dlo^2_k$ is definitionally equivalent to $B^2_1(\dlo^1_k)$.
\end{proof}

We prove the analogous result for $\triv$.
Let $E^\kappa_k$ be the model companion of the theory of a set $M$ equipped with $\kappa$ equivalence relations on $M^k$.
Now $E^1_1$ is the theory of an equivalence relation with infinitely many classes, each of which is infinite.
Furthermore $E^\kappa_k$ is $B^\kappa_k(E^1_1)$.
Hence Lemma~\ref{lem:new wink}(1) shows that $E^\kappa_k$ exists, is complete, and admits quantifier elimination.

\begin{proposition}\label{prop:dkk triv}
Suppose that $\kappa$ is either $1$ or an infinite cardinal.
Then $D^\kappa_k(\triv)$ is trace equivalent to $E^\kappa_k$.
\end{proposition}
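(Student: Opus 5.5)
Mirror the proof of Proposition~\ref{prop:dlo case}, with equivalence relations in place of linear orders.

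\emph{$D^\kappa_k(\triv)$ trace defines $E^\kappa_k$.}
$E^1_1$ is algebraically trivial with quantifier elimination in a finite relational language, and $E^\kappa_k = B^\kappa_k(E^1_1)$. So Lemma~\ref{lem:new wink}(2) gives that $E^\kappa_k$ is locally $k$-trace definable in $E^1_1$, and $k$-trace definable when $\kappa = 1$. Next I show $E^1_1$ is trace definable in $D^\omega(\triv)$, which is trace equivalent to $D^\omega(\triv)$ by Proposition~\ref{prop:compose combine}(3). Take $(P, M; f_0, f_1, \ldots) \models D^\omega(\triv)$; then the relation $f_0(x) = f_0(y)$ on $P$ is an equivalence relation with infinitely many infinite classes. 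Hence $E^1_1$ is locally trace definable in $\triv$, witnessed by one function. Composing via Proposition~\ref{prop:trace-basic}, $E^\kappa_k$ is locally $k$-trace definable in $\triv$, witnessed by $\le \kappa$ functions, or $k$-trace definable when $\kappa = 1$. Theorem~\ref{thm;dock}(1),(3) then gives that $D^\kappa_k(\triv)$ trace defines $E^\kappa_k$.

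A cleaner alternative: in a model of $E^\kappa_k$ each $\Sa M[L,i]$ is a model of $E^1_1$. For each $i$, take a map $g_i \colon M^k \to N$, $N$ an infinite set, sending each class to a distinct point. The $g_i$ together with an injection $M \hookrightarrow N$ witness local $k$-trace definability in $\triv$ directly by Lemma~\ref{lem:ka}(4).

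\emph{$E^\kappa_k$ trace defines $D^\kappa_k(\triv)$.}
Fix $(P, M; \fik) \models D^\kappa_k(\triv)$ and let $\Sa P$ be the induced structure on $P$. By Lemma~\ref{lem:blown}, $\Sa P$ is described by equality on $P$ together with the relations $f_i(\bar a) = f_j(\bar b)$, since $\triv$ has quantifier elimination in the language of equality.

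Let $Q = P \times \{1,2\}$. For $i, j < \kappa$ define $h_{i,j} \colon Q^k \to M$ exactly as in the $\dlo$ case, and let $E_{i,j}$ be the equivalence relation on $Q^k$ given by equality of $h_{i,j}$-values. Unlike the $\dlo$ case, no splitting claim is needed, because these are already equivalence relations.

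Extend $(Q; (E_{i,j})_{i,j < \kappa})$ to a model $\Sa N$ of $E^{\kappa}_k$ (or of $E^{\kappa^2}_k$, which is the same for infinite $\kappa$). This uses that $E^\kappa_k$ is the model companion of the theory of $\kappa$ equivalence relations on $k$-tuples, so every model of that theory embeds in a model of $E^\kappa_k$. Let $\uptau_\ell(a) = (a,\ell)$. Then
\[
f_i(\bar a) = f_j(\bar b) \iff \uptau_1(\bar a)\, E_{i,j}\, \uptau_2(\bar b),
\]
and equality on $P$ is handled by $\uptau_1$ itself. So $\uptau_1, \uptau_2$ witness trace definability of $\Sa P$ in $\Sa N$.

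\emph{The case $\kappa = 1$.}
Here only $E_{0,0}$ is needed, giving a single equivalence relation on $Q^k$, so $\Sa N \models E^1_k$ directly.

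The main obstacle I expect is the first direction's bookkeeping: getting $\kappa$ many functions without passing through infinitely many auxiliary ones when $\kappa = 1$. The direct class-labeling argument above handles this cleanly, so I would use it rather than the $D^\omega$ route.
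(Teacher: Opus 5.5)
Your proof is correct and is essentially the paper's. The direction showing that $E^\kappa_k$ trace defines $D^\kappa_k(\triv)$ is identical: the doubled set $Q = P\times\{1,2\}$, the equivalence relations $E_{i,j}$ induced by $h_{i,j}$, and the maps $\uptau_1,\uptau_2$ into a model of $E^\kappa_k$. For the other direction, your class-labelling maps $g_i\colon M^k\to N$ together with an injection are just Lemma~\ref{lem:new wink}(2) unwound through the trace definability of $E^1_1$ in $\triv$. The paper reaches the same point differently: it notes that $E^1_1$ is mutually interpretable with $\triv$ and uses Corollary~\ref{cor;dock} to replace $D^\kappa_k(\triv)$ by $D^\kappa_k(E^1_1)$. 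In both versions Theorem~\ref{thm;dock} finishes the argument.
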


This is very similar to the proof of Proposition~\ref{prop:dlo case}, so we omit some details.
One can produce a common generalization of these results, but this is more trouble than it is worth at present.

\begin{proof}
Of course $E^1_1$ is mutually interpretable with $\triv$, so $D^\kappa_k(E^1_1)$ is trace equivalent to $D^\kappa_k(\triv)$.
Lemma~\ref{lem:new wink}(1) shows that $E^\kappa_k$ is locally $k$-trace definable in $E^1_1$ and is $k$-trace definable in $E^1_1$ when $\kappa = 1$.
Hence $E^\kappa_k$ is trace definable in $D^\kappa_k(\triv)$.

\medskip
Now let $(P, M; \fik) \models D^\kappa_k(\triv)$ and $\Sa P$ be the induced structure on $P$.
Let $Q = P \times \{1, 2\}$.
For each $i, j < \kappa$ let $h_{i, j} \colon Q^k \to M$ be as in the proof of Proposition~\ref{prop:dlo case} and let $E_{i, j}$ be the equivalence relation on $Q^k$ given by declaring $E_{i, j}(b, b^*)$ when $h_{i, j}(b) = h_{i, j}(b^*)$ for any $b, b^* \in Q^k$.
Now let $\Sa N$ be a model of $E^{\kappa}_k$ extending $(Q; (E_{i, j})_{i, j < \kappa})$.
Given $i = 1, 2$ let $\uptau_i \colon P \hookrightarrow N$ be given by $\uptau_i(a) = (a, i)$.
Now for any $a_1, b_1, \ldots, a_k, b_k \in P$ and $i, j < \kappa$ we have
\[
f_i(a_1, \ldots, a_k) = f_j(b_1, \ldots, b_k) \quad \Longleftrightarrow \quad E_{i, j}(\uptau_1(a_1), \ldots, \uptau_1(a_k), \uptau_2(b_1), \ldots, \uptau_2(b_k)).
\]
Hence $\uptau_1, \uptau_2$ witness trace definability of $(P, M; \fik)$ in $\Sa N$.
\end{proof}

\subsection{Two general lemmas}\label{section:dkex}
We next consider algebraic examples.
We first prove two general lemmas.

\begin{lemma}\label{lem;f 0}
Fix a structure $\Sa O\models T$, a theory $T^*$, and $k\ge 2$.
Suppose that for every set $P$ and function $f \colon P^k \to O$ there is $\Sa M \models T^*$, an $\Sa M$-definable set $X \subseteq M^n$, an $\Sa M$-definable function $g \colon X^k \to M^m$, and elements $(a^i_p : (i,p) \in \{1, \ldots, k\} \times P)$ of $X$ such that 
\begin{enumerate}[leftmargin=*]
\item $O$ is a subset of $M^m$,
\item $\Sa M$ trace defines $\Sa O$ via the inclusion $O \hookrightarrow M^m$,
\item and $g(a^1_{p_1}, \ldots, a^k_{p_k}) = f(p_1, \ldots, p_k)$ for all $p_1, \ldots, p_k \in P$.
\end{enumerate}
Then $T^*$ trace defines $D_k(T)$.
\end{lemma}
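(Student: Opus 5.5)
The plan is to realize a model of $D_k(T)$ inside a model of $T^*$ through $k$ separate copies of $P$, in the spirit of the proof of Proposition~\ref{prop:dlo case}, where $\uptau_1, \uptau_2$ were used.

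Fix $(P, \Sa O', f) \models D_k(T)$. We may take $\Sa O' = \Sa O$: by Lemma~\ref{lem:blown} the theory $D_k(T)$ is complete, and by Proposition~\ref{prop:trace-theories} it suffices to trace define a single model. Concretely, take any set $P_0$ and any function $P_0^k \to O$, and apply Lemma~\ref{lem:exists P} to obtain a rich $(P, O; f)$. Then $(P, \Sa O, f) \models D_k(T)$.

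Now apply the hypothesis to this $P$ and $f$. This gives $\Sa M \models T^*$, a definable $X \subseteq M^n$, a definable $g \colon X^k \to M^m$, and points $a^i_p \in X$. Let $\Sa P$ be the structure induced on $P$. By the discussion after Lemma~\ref{lem:blown}, every $\Sa P$-definable subset of $P^d$ is a boolean combination of two kinds of sets:
\begin{enumerate}[leftmargin=*]
\item sets definable in the language of equality;
\item sets of the form $\{\bar p : \Sa O \models \vartheta(f(p_{j_{1,1}}, \ldots, p_{j_{1,k}}), \ldots, f(p_{j_{e,1}}, \ldots, p_{j_{e,k}}))\}$ with $\vartheta$ an $L(O)$-formula.
\end{enumerate}
Define $\uptau \colon P \to M^{kn}$ by $\uptau(p) = (a^1_p, \ldots, a^k_p)$. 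To make $\uptau$ injective and to handle equality, extend it if needed by a coordinate coming from an injection $P \hookrightarrow M$. This uses that $T^*$ has infinite models, which holds because $\Sa M$ trace defines the infinite set $O$; after passing to a saturated elementary extension of $\Sa M$ we may assume $|M| \ge |P|$. Equality on $P$ is then pulled back from equality on $M$.

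For a set of the second kind, use hypothesis (2): $\vartheta(O^e) \cap O^e = Y \cap (O^{e})$ for some $\Sa M$-definable $Y \subseteq M^{me}$, where we identify $O^e$ with a subset of $(M^m)^e$. By (3), $f(p_{j_{l,1}}, \ldots, p_{j_{l,k}}) = g(a^1_{p_{j_{l,1}}}, \ldots, a^k_{p_{j_{l,k}}})$, and each $a^i_{p_{j}}$ is a coordinate block of $\uptau(p_j)$. The set therefore equals the pullback under $\uptau$ of the $\Sa M$-definable set $\{(\bar b_1, \ldots, \bar b_d) : (g(b_{j_{1,1}}^1, \ldots, b_{j_{1,k}}^k), \ldots) \in Y\}$, where $\bar b_j = (b_j^1, \ldots, b_j^k)$. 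Boolean combinations pass through pullbacks, so $\uptau$ witnesses trace definability of $\Sa P$, and hence of $D_k(T)$, in $\Sa M$.

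The main obstacle is injectivity together with the equality sets; the extra injective coordinate handles both. One more subtlety is that (2) is applied to parametric $L(O)$-formulas; trace definability gives exactly this, since every $\Sa O$-definable set counts. The only other point to check is that the hypothesis is applied to the rich $P$ and $f$ themselves, which it allows because it holds for every set and every function.
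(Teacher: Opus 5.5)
This is essentially the paper's proof. The same map $\uptau(p)=(a^1_p,\ldots,a^k_p)$ does the work. The paper packages the verification differently: it Morleyizes, embeds $(P,\Sa O,f)$ into the $\Sa M$-definable structure $(X^k,\Sa O^*,h)$ with $h(\bar b_1,\ldots,\bar b_k)=g(b^1_1,b^2_2,\ldots,b^k_k)$, and cites Fact~\ref{fact:trace embedd}, whereas you unpack the quantifier elimination of Lemma~\ref{lem:blown} by hand.

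One justification needs fixing. The statement does not require $O$ to be infinite, so ``$\Sa M$ trace defines the infinite set $O$'' can fail as a reason for $M$ being infinite. Argue instead from richness of $f$, using $|O|\ge 2$: for $p\ne p'$ there is $q$ with $f(p,q,\ldots,q)\ne f(p',q,\ldots,q)$, so (3) gives $a^1_p\ne a^1_{p'}$. Hence $\uptau$ is already injective, which is exactly what the paper's embedding claim tacitly uses, and your extra injective coordinate and the passage to a saturated extension become unnecessary.
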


\begin{proof}
After possibly Morleyizing we suppose that $\Sa O$ admits elimination in a relational language $L$.
By Lemma~\ref{lem:exists P} there is a set $P$ and function $f \colon P^k \to O$ such that $(P, \Sa O, f) \models D_k(T)$.
Let $\Sa M$, $X$, $g$, and $(a^i_p : (i,p) \in \{1, \ldots, k\} \times P)$ be as above.
We show that $\Sa M$ trace defines $(P, \Sa O, f)$.
By Fact~\ref{fact:trace embedd} and quantifier elimination for $(P, \Sa O, f)$ it is enough to show that $(P, \Sa O, f)$ embeds into a structure definable in $\Sa M$.
It is easy to see that $\Sa O$ is a substructure of  an $\Sa M$-definable $L$-structure $\Sa O^*$ with domain $M^m$.
Let $h\colon (X^k)^k \to M^m$ be given by 
\[
h((a^1_1,\ldots,a^1_k),\ldots,(a^k_1,\ldots,a^k_k)) = g(a^1_1,a^2_2,\ldots,a^k_k).
\]
Let $\uptau\colon P \to X^k$ be given by $\uptau(p) = (a^1_p, \ldots, a^k_p)$.
Now $\uptau$ and the inclusion $O \hookrightarrow M^m$ together give an embedding $(P, \Sa O, f) \hookrightarrow (X^k, \Sa O^*, h)$.
Finally, $(X^k, \Sa O^*, h)$ is definable in $\Sa M$.
\end{proof}

Lemma~\ref{lem;f} is a finitary form of Lemma~\ref{lem;f 0}.
It follows from Lemma~\ref{lem;f 0} by compactness, we leave the details to the reader.

\begin{lemma}\label{lem;f}
Fix structures $\Sa O\models T$, $\Sa M\models T^*$, and $k\ge 2$.
Suppose that $O\subseteq M^m$ and that $\Sa M$ trace defines $\Sa O$ via the inclusion $O\hookrightarrow M^m$.
Let $X$ be an $\Sa M$-definable set, $g$ be an $\Sa M$-definable function $X^k \to M^m$, and suppose that for every $n\ge 2$ and $\sigma\colon \nset^k \to O$ there are elements $(a_j^i : (i, j) \in \{1,\ldots,k\} \times \nset)$ of $X$ such that 
$$ g(a^1_{j_1}, \ldots, a^k_{j_k}) = \sigma(j_1, \ldots, j_k) \quad \text{for all\quad $j_1, \ldots, j_k \in \nset$.}$$
Then $T^*$ trace defines $D_k(T)$.
\end{lemma}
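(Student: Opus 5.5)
The plan is to reduce Lemma~\ref{lem;f} to Lemma~\ref{lem;f 0} by a compactness argument, with the elementary extension as the new model of $T^*$. Fix an arbitrary set $P$ and a function $f \colon P^k \to O$. We need a model $\Sa M^* \models T^*$, an $\Sa M^*$-definable set $X^*$, an $\Sa M^*$-definable $g^* \colon (X^*)^k \to (M^*)^m$, and elements $a^i_p \in X^*$ satisfying conditions (1)--(3) of Lemma~\ref{lem;f 0} for the fixed structure $\Sa O$. We take $\Sa M^*$ to be a $(|P| + |M|)^+$-saturated elementary extension of $\Sa M$. We let $X^*, g^*$ be defined in $\Sa M^*$ by the same parameters and formulas as $X, g$.

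First, (1) and (2) transfer. We have $O \subseteq M^m \subseteq (M^*)^m$. Moreover, if $Y \subseteq O^n$ is $\Sa O$-definable, then $Y = Z \cap O^n$ for some $\Sa M$-definable $Z \subseteq M^{mn}$. The set $Z^*$ defined in $\Sa M^*$ by the same formula satisfies $Z^* \cap M^{mn} = Z$, so $Z^* \cap O^n = Y$. Hence $\Sa M^*$ trace defines $\Sa O$ via the inclusion $O \hookrightarrow (M^*)^m$.

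Next, for (3), consider variables $(x^i_p : (i,p) \in \{1,\ldots,k\} \times P)$ ranging over $X$. Consider the set of $L(M)$-formulas saying $x^i_p \in X$ together with
\[
g(x^1_{p_1}, \ldots, x^k_{p_k}) = f(p_1, \ldots, p_k) \quad \text{for all } p_1, \ldots, p_k \in P.
\]
Each value $f(p_1,\ldots,p_k)$ is an element of $O \subseteq M^m$, so it is a tuple of parameters from $M$ and this is a legitimate partial type over $M$. We show it is finitely satisfiable in $\Sa M$. A finite subset mentions only finitely many $p \in P$. Enumerate these $p$ as $1, \ldots, n$ with $n \ge 2$; if there is only one, add a dummy element, or use that $P$ may be taken infinite. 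Let $\sigma \colon \nset^k \to O$ be the restriction of $f$ under this enumeration, extended arbitrarily if a dummy element was added. The hypothesis of Lemma~\ref{lem;f} provides elements $a^i_j \in X$ realizing the finite subset.

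Since the type has $|P|$ variables and is over $M$, saturation of $\Sa M^*$ realizes it. Then Lemma~\ref{lem;f 0} applies, with $\Sa M^* \models T^*$ and $T$ taken to be $\Th(\Sa O)$, and gives that $T^*$ trace defines $D_k(T)$.

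The main point needing care is that Lemma~\ref{lem;f 0} asks for condition (2) relative to the new model $\Sa M^*$, not $\Sa M$; the transfer argument above handles it. A second, minor point is realizing a type in $|P|$ many variables, which needs saturation above $|P| + |M|$. That is available since $P$ is fixed before choosing $\Sa M^*$.
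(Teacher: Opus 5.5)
Your proposal is correct and is exactly the compactness reduction to Lemma~\ref{lem;f 0} that the paper intends; the paper leaves these details to the reader. You realize the type $\{x^i_p \in X\} \cup \{g(x^1_{p_1},\ldots,x^k_{p_k}) = f(p_1,\ldots,p_k)\}$ in a sufficiently saturated elementary extension $\Sa M^*$, and you check by elementarity that trace definability of $\Sa O$ via the inclusion transfers from $\Sa M$ to $\Sa M^*$, which is precisely the data Lemma~\ref{lem;f 0} requires.
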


\subsection{Hilbert space}
\label{section;ips}
We consider any real Hilbert space to be a two-sorted structure of the form $(V,\R,\spcrngle)$ where we have vector addition on $V$, the ordered field structure on $\R$, scalar multiplication as a map $\R\times V\to V$, and the inner product $\spcrngle$ as a map $V\times V\to \R$.
We let $\ips$ be the theory of infinite-dimensional real Hilbert spaces.
Solovay, Arthan, and Harrison showed that $\ips$ is complete~\cite[Thm.~31]{solovay-arthan-harrison}.
We consider any complex Hilbert space to be a two-sorted structure of the form $(V,\C,\sigma,\spcrngle)$ where  we have vector addition on $V$, scalar multiplication as a map $\C\times V\to V$, the field structure on $\C$, $\sigma$ is complex conjugation $\C\to \C$, and $\spcrngle$ is the Hermitian form $V\times V\to \C$.
We let $\cips$ be the theory of infinite-dimensional complex Hilbert spaces.

\begin{proposition}
\label{prop;dobro}
$\ips$ and $\cips$ are both trace equivalent to $D_2(\rcf)$.
\end{proposition}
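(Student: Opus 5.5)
We prove two directions for each of $\ips$ and $\cips$: that each is trace definable in $D_2(\rcf)$, and that each trace defines $D_2(\rcf)$.

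\textbf{Upper bound.} By Theorem~\ref{thm;dock}(1), it suffices to show that $\ips$ and $\cips$ are $2$-trace definable in $\rcf$. Fix $(V,\R,\spcrngle)\models\ips$ and regard it as one-sorted with domain $V\cup\R$.

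We take the following finitely many functions $(V\cup\R)^{\le 2}\to\R$, which is allowed by Lemma~\ref{lem:ka}(6):
\begin{itemize}
\item the inner product $\langle x,y\rangle$ on $V^2$, extended by $0$ elsewhere;
\item the identity on $\R$;
\item the indicator of the sort $V$.
\end{itemize}
The Solovay--Arthan--Harrison quantifier reduction says every formula in vector variables $v_1,\dots,v_m$ and scalar variables $c$ is equivalent to $\vartheta(\langle v\rangle,c)$ with $\vartheta$ in the language of ordered fields. So every definable set is quantifier-free definable in the expanded structure, and $\ips$ is $2$-trace definable in $\rcf$.

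For $\cips$ we do the same over $\R$. We identify $\C$ with $\R^2$, so the coordinate functions $\operatorname{Re},\operatorname{Im}$ are among our functions. We use $\operatorname{Re}\langle x,y\rangle$ and $\operatorname{Im}\langle x,y\rangle$ on $V^2$. We also need a quantifier reduction for complex Hilbert space, and we get it from the real one. Viewing $V$ as a real Hilbert space with inner product $\operatorname{Re}\langle x,y\rangle$, the map $x\mapsto ix$ is definable. We also have $\operatorname{Im}\langle x,y\rangle=\operatorname{Re}\langle x,iy\rangle$, up to sign convention. So the complex structure is interpretable in the real Hilbert space expanded by the orthogonal complex structure $J$.

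The step I expect to be the main obstacle is checking that the Solovay--Arthan--Harrison reduction still holds in that expansion. I would first try to prove quantifier elimination relative to the field directly by the same back-and-forth they use. Any partial isometry preserving the Hermitian Gram matrix of finitely many vectors extends to a unitary map of the finite-dimensional span. Hence types are determined by the Hermitian Gram matrix together with the field type.

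\textbf{Lower bound.} We apply Lemma~\ref{lem;f} with $k=2$, $T=\rcf$, $\Sa O=\R$ sitting inside $\Sa M$ as the scalar sort (the inclusion is trace definable by quantifier elimination for $\rcf$), $X=V$, and $g=\spcrngle$. Given $n$ and $\sigma\colon\{1,\dots,n\}^2\to\R$, we need vectors $a^1_1,\dots,a^1_n,a^2_1,\dots,a^2_n$ with $\langle a^1_i,a^2_j\rangle=\sigma(i,j)$.

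To build them, let $e_1,\dots,e_n$ be orthonormal and set $a^1_i=e_i$ and $a^2_j=\sum_i\sigma(i,j)e_i$. The $a^1$ and $a^2$ families are distinct index families, so no symmetry constraint arises.

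For $\cips$ we take $g=\operatorname{Re}\langle x,y\rangle$, which is definable since conjugation is in the language, and $\R$ is the definable fixed field of $\sigma$; the same construction works. Lemma~\ref{lem;f} then gives that both theories trace define $D_2(\rcf)$. Together with the upper bound this gives the trace equivalences.
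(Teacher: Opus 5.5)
Your treatment of $\ips$ in both directions is the paper's argument. Your lower bound for $\cips$, using $\operatorname{Re}\langle x,y\rangle$ and the fixed field of $\sigma$, is also fine.

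The gap is the upper bound for $\cips$. It rests on a relative quantifier reduction for complex Hilbert spaces that you announce but do not prove, and your one-line justification does not establish it.

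\begin{itemize}
\item Extending a Gram-preserving map of finitely many vectors to a unitary of their span is the trivial step.
\item To show that types are determined by the Hermitian Gram matrix together with the field type, you need a genuine back-and-forth between saturated models. Their scalar field is $K(i)$ for a nonstandard real closed $K$.
\item At each stage you must extend a partial elementary map of the scalar fields. You must also realize a new vector with prescribed inner products against the old ones and a prescribed field type, using positive semidefiniteness, infinite dimension and saturation.
\item None of this is carried out.
\end{itemize}

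You also never check that $\cips$ is complete. This is needed because the paper's notions between theories presuppose complete theories (e.g.\ Proposition~\ref{prop:trace-theories}, and the passage from one model to the theory in Lemma~\ref{lem;f}).

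The idea that removes all of this is that $\ips$ already interprets $\cips$, via complexification. You do not need to expand the realification of $V$ by $J$, which is not a model of $\ips$. Instead, write $V \cong W\oplus W$, where $W$ is the real Hilbert space of vectors with real coordinates in an orthonormal basis. Then:

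\begin{itemize}
\item Given $(W,\R,\langle\cdot,\cdot\rangle)\models\ips$, let $i(v,w)=(-w,v)$ and $\C=\R^2$.
\item Set $\langle (v,w),(v',w')\rangle^* = \langle v,v'\rangle+\langle w,w'\rangle-i\langle v,w'\rangle+i\langle w,v'\rangle$.
\item This gives an infinite-dimensional complex Hilbert space definable in $(W,\R,\langle\cdot,\cdot\rangle)$.
\end{itemize}

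Every infinite-dimensional complex Hilbert space is the complexification of a real one of the same dimension. So completeness of $\cips$ follows from completeness of $\ips$.

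Interpretations are trace definitions, and trace definability is transitive. Hence $\cips$ is trace definable in $D_2(\rcf)$ because $\ips$ is.

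The realification map (real part of the form, $\R$ as the fixed field of $\sigma$) interprets $\ips$ in $\cips$, and your lower bound already uses it implicitly. So the two theories are mutually interpretable, hence trace equivalent, and the entire complex case reduces to the real one. This is exactly how the paper argues.

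Your direct route could also be completed, but only by actually proving the relative quantifier elimination for $\cips$.
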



\begin{proof}
We first show that $\ips$ and $\cips$ are mutually interpretable and that $\cips$ is complete.
This boils down to basic theory of inner product spaces so we only give a sketch.

\medskip
Let $(V,\C,\sigma,\spcrngle)$ be an infinite-dimensional complex Hilbert space.
Note that $\R\subseteq\C$ is definable as it is the fixed field of $\sigma$.
Let $\langle v,w\rangle^*$ be the real part of $\langle v,w\rangle$ for any $v,w\in V$.
Then $(V,\R,\spcrngle^*)$ is a real Hilbert space.
Hence $\cips$ interprets $\ips$.

\medskip
We now show that $\ips$ interprets $\cips$.
Let $(V,\R,\spcrngle)$ be an infinite-dimensional real Hilbert space.
We consider the complexification of $(V,\R,\spcrngle)$.
We make $V\oplus V$ into a $\C$-vector space by declaring $\imag(v,w)=(-w,v)$ for all $v,w\in V$ and declare
$$\langle (v,w),(v',w')\rangle^* = \langle v,v'\rangle+\langle w,w'\rangle - \imag\langle v,w'\rangle +\imag\langle w,v'\rangle \quad\text{for all $(v,w),(v',w')\in V\oplus V$}. $$
Then $(V\oplus V,\C,\sigma,\spcrngle^*)$ is an infinite-dimensional complex Hilbert space which is definable in $(V,\R,\spcrngle)$.
Hence $\ips$ interprets $\cips$.
Furthermore, it is well-known that any complex Hilbert space is isomorphic to the complexification of a real Hilbert space of the same dimension.
Hence completeness of $\cips$ follows from completeness of $\ips$.

\medskip
We now show that $\ips$ is trace equivalent to $D_2(\rcf)$.
We first show that $\ips$ is trace definable in $D_2(\rcf)$.
Let $(V, \R, \spcrngle) \models \ips$.
It is enough to show that $(V, \R, \spcrngle)$ is $2$-trace definable in $\R$.
Let $v = (v_1,\ldots,v_n)$ be a tuple of vector variables and $r = (r_1, \ldots, r_m)$ be a tuple of scalar variables.
Let $\langle v \rangle$ be the tuple of inner products $\langle v_i, v_j \rangle$ for $i, j \in \{1, \ldots, n\}$.
Every formula in the variables $v, r$ in $(V, \R, \spcrngle)$ is equivalent to a formula of the form $\psi( \langle v \rangle, r)$ where $\psi(x_1, \ldots, x_{n^2 + m})$ is a formula in the language of ordered fields~\cite[Thm.~29]{solovay-arthan-harrison}.
Hence $\spcrngle$ and the identity $\R \to \R$ together witness $2$-trace definability of $(V, \R, \spcrngle)$ in $\R$.
It remains to show that $\ips$ trace defines $D_2(\rcf)$.
By Lemma~\ref{lem;f} it is enough to fix a function $\sigma \colon \nset^2 \to \R$ and produce vectors $v_1, w_1, \ldots, v_n, w_n \in V$ such that $\langle v_i, w_j \rangle = \sigma(i, j)$ for all $i, j \in \nset$.
Let $v_1, \ldots, v_n \in V$ be orthonormal and set $w_j = \sigma(1, j) v_1 + \cdots + \sigma(n, j) v_n$ for each $j = 1, \ldots, n$.
\end{proof}

\subsection{Vector spaces as two-sorted structures}
The following examples also involve the two-sorted theory of infinite-dimensional vector spaces over the models of a (complete) theory expanding the theory of fields.
We first recall this theory and its quantifier elimination.

\medskip
Let $L$ be a language expanding the language of fields and $T$ be an $L$-theory expanding the theory of fields.
Let $\Sa F$ range over models of $T$ and let $\F$ be the underlying field of $\Sa F$.
Given an $\F$-vector space $V$ we consider the two-sorted structure $(V, \Sa F)$ with sorts $V$ and $\F$ and:
\begin{enumerate}[leftmargin=*]
\item the full $L$-structure $\Sa F$ on $\F$, 
\item vector addition, subtraction, and $0$ on $V$, 
\item scalar multiplication as a map $\F\times V\to V$,
\item and $(n + 1)$-ary functions $\spn_{n, 1}, \ldots, \spn_{n, n} \colon V^{n + 1} \to \F$ for each $n\ge 1$ given by declaring $\spn_{n,i}(v_1,\ldots,v_{n},w)=0$ for each $i = 1,\ldots,n$ if either $v_1,\ldots,v_n$ are not independent or $w$ is not in the span of $v_1,\ldots,v_n$, and otherwise  the $\spn_{n, i}(v_1,\ldots,v_n,w)$ are the unique elements of $\F$ satisfying
$$w = \sum_{i=1}^n \spn_{n,i}(v_1,\ldots,v_n,w) v_i.$$
\end{enumerate}
We refer to the resulting language as $\lvec$.
Given an $L$-theory $T$ expanding the theory of fields we let $\vvec_T$ be the $\lvec$-theory of structures of the form $(V, \Sa F)$ for $\Sa F \models T$ and $V$ an infinite-dimensional $\F$-vector space.
Then $\vvec_T$ is complete by~\cite[Cor.~5.36]{aldaim}.
We refer to $\F$ as the {\bf scalar field} of $(V, \Sa F)$.
We also refer to a variable of the first sort as a {\bf vector variable} and a variable of the second sort as a {\bf scalar variable}.
Fact~\ref{fact:lspn} is due to Abd~Aldaim, Conant, and Terry~\cite[Cor.~5.36]{Kuzichev}.

\begin{fact}\label{fact:lspn}
If $T$ admits quantifier elimination then so does $\vvec_T$.
\end{fact}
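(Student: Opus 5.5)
The plan is to prove quantifier elimination for $\vvec_T$ by the standard back-and-forth / embedding test. Assume $T$ admits quantifier elimination in $L$. It suffices to show the following: whenever $(V_1, \Sa F_1)$ and $(V_2, \Sa F_2)$ are models of $\vvec_T$ with the second $|V_1|^+$-saturated, $(A, \Sa K)$ is a finitely generated $\lvec$-substructure of the first, and $\eta \colon (A, \Sa K) \to (V_2, \Sa F_2)$ is an embedding, then $\eta$ extends to an embedding of $(V_1,\Sa F_1)$ (or at least of a structure generated by one more element). The point of the $\spn_{n,i}$ functions is that substructures are well behaved. Let $(A, \Sa K)$ be an $\lvec$-substructure. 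Then $\Sa K$ is an $L$-substructure of $\Sa F_1$, $A$ is a $K$-subspace of $V_1$, and closure under the $\spn_{n,i}$ forces the following: any $v_1,\ldots,v_n \in A$ that are $K$-independent are already $\F_1$-independent. Indeed, if $w=\sum c_i v_i$ with $c_i \in \F_1$ and the $v_i$ are $\F_1$-independent, then the $c_i = \spn_{n,i}(v,w)\in K$. Hence $A$ is $K$-linearly disjoint from $\F_1$, in the sense that a $K$-basis of $A$ is $\F_1$-independent.

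First I extend $\eta$ on the scalar sort. Since $T$ has quantifier elimination, $\eta|_{\Sa K}$ is a partial elementary map $\Sa F_1 \to \Sa F_2$. By saturation it extends to an elementary embedding $\eta_0 \colon \Sa F_1 \to \Sa F_2$. Fix a $K$-basis $B$ of $A$. By linear disjointness $B$ is $\F_1$-independent, and $\eta(B)$ is $\F_2$-independent, since $\eta$ preserves the $\spn$ functions and hence the relevant independence: independence of $b_1,\ldots,b_n$ is expressed by $\spn_{n,i}(b,b_i)=1$. Extend $B$ to an $\F_1$-basis $B\cup C$ of $V_1$. Then use infinite dimensionality and saturation of $V_2$ over $\F_2$ to choose an $\F_2$-independent family $C'$, of size $|C|$, independent from $\eta(B)$. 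Define $\eta_1$ on $V_1$ $\eta_0$-semilinearly by sending $B \cup C$ to $\eta(B)\cup C'$.

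Next I check that this is an $\lvec$-embedding. Additivity and scalar multiplication are built in. It agrees with $\eta$ on $A$ because $A$ is the $K$-span of $B$ and $\eta_0$ extends $\eta|_K$. For the $\spn_{n,i}$ functions, note that $\eta_1$ maps independent tuples to independent tuples and preserves membership in spans, because it is an injective semilinear map whose image basis is independent. It therefore preserves the coefficients, twisted by $\eta_0$. The $L$-structure is preserved since $\eta_0$ is elementary.

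The main obstacle is the linear-disjointness observation for substructures. It is what makes the images of $B$ independent, and hence what makes the extension well defined and injective. Without the $\spn$ functions, a $K$-independent set could become $\F$-dependent, and quantifier elimination would fail. Once that observation is in place, the standard criterion (embeddings of substructures into saturated models extend) gives quantifier elimination.
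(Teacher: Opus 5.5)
The paper does not prove this statement; it quotes it from Abd Aldaim--Conant--Terry, so there is no in-paper argument to compare with. Your proof is correct and self-contained, and it is the standard route.

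The steps all go through:
\begin{enumerate}
\item The saturated-embedding criterion for quantifier elimination applies here.
\item Closure under the $\spn_{n,i}$ makes $K$-independent tuples of $A$ independent over $\F_1$. Your argument works: pass to a maximal $\F_1$-independent subfamily and read off $K$-coefficients for a remaining vector.
\item Quantifier elimination for $T$ extends $\eta|_{\Sa K}$ to an elementary map $\eta_0$.
\item Independence of $\eta(B)$ follows because $\spn_{n,1}(b,b_1)=1$ is a quantifier-free expression of independence.
\item Saturation supplies $C'$ by transfinite choice of vectors outside previously generated spans.
\item The $\eta_0$-semilinear extension preserves every $\spn_{n,i}$, including the degenerate value $0$, since a rank argument on coordinate matrices shows it preserves independence in both directions.
\end{enumerate}

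Make one small point explicit. Choosing a $K$-basis of $A$ presupposes that $K$ is a field. If the language of fields has no inverse symbol, the scalar part of an $L$-substructure is a priori only a subring. The $\spn$ functions supply this automatically:
\begin{itemize}
\item if $0 \neq v \in A$ and $0 \neq c \in K$, then $c^{-1} = \spn_{1,1}(cv, v) \in K$;
\item if $A = \{0\}$, take $B = \emptyset$, and only the scalar step is needed.
\end{itemize}
Also, your hedge about finitely generated substructures and adding ``one more element'' is unnecessary. Your construction extends an embedding of an arbitrary substructure to all of $(V_1,\Sa F_1)$, which is exactly what the criterion requires.
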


If $\F$ is infinite then $\vvec_T$ has infinite dp-rank as there is a definable injection $\F^n \hookrightarrow V$ for every $n \ge 1$.
Hence $\vvec_T$ is not trace definable in $T$ when $T$ has finite dp-rank as finiteness of dp-rank is preserved under trace definability by~\cite[Prop.~4.1]{trace1}.
In particular $\vvec_T$ is not trace definable in $T$ when $T$ is the theory of algebraically, real, or $p$-adically closed fields.

\begin{proposition}
\label{prop;infn}
Let $L$ expand the language of rings, $\Sa F$ be an $L$-structure expanding a field $\F$, and $T = \Th(\Sa F)$.
\begin{enumerate}[leftmargin=*]
\item If $\F$ is finite then $\vvec_T$ is bi-interpretable with the usual one-sorted theory of $\F$-vector spaces.
\item If $\F$ has characteristic zero then $\vvec_T$ is locally trace equivalent to $T$.
\item If $\F$ has infinite imperfection degree then $\vvec_T$ is mutually interpretable with $T$.
\end{enumerate}
\end{proposition}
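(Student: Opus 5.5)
Each of the three items is proved separately. In every case I use the quantifier elimination of Fact~\ref{fact:lspn} (after Morleyizing $T$) to see what the definable sets of $(V,\Sa F)$ look like.

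\emph{Item (1).} Suppose $\F$ is finite. Then $\Sa F$ is a finite structure, and each of its elements is $0$-definable in the one-sorted structure $V$ (as a scalar map). So I interpret $(V,\Sa F)$ in the one-sorted $\F$-vector space $V$. I take the scalar sort to be the finite set of scalar multiplications, equivalently a finite imaginary coded by constants. The maps $\spn_{n,i}$ are then $0$-definable, since ``$v_1,\ldots,v_n$ independent and $w=\sum c_iv_i$'' is a finite disjunction over tuples $(c_i)\in\F^n$. Conversely the one-sorted reduct is obtained by forgetting the scalar sort. The two compositions are definable isomorphisms, so this is a bi-interpretation.

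\emph{Item (2).} Suppose $\F$ has characteristic zero. One direction is immediate: $T$ is interpretable in $\vvec_T$ as the scalar sort, and in particular it is trace definable there. For the other direction I show that $\vvec_T$ is locally trace definable in $T$, using the characterization that it suffices to embed each finite-relational reduct, or equivalently to handle finitely many definable sets at a time.
\begin{itemize}[leftmargin=*]
\item By quantifier elimination, a definable set in vector variables $v_1,\ldots,v_n$ and scalar variables is a boolean combination of conditions on linear dependencies among the $v_i$, together with $L$-formulas applied to $\spn$-coordinates and scalars.
\item Given finitely many such formulas, a single formula mentions only finitely many vectors. I would therefore take a model with $\dim V=|V|$, fix a basis, and try to map $V$ into some $\F^N$ so that the relevant $\spn$-values are preserved.
\item I expect this to fail globally, because $V$ has infinite dimension and a vector cannot carry all of its coordinates. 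Instead I would use a large elementary extension $\Sa F^*$ of $\Sa F$, together with an injection $V\hookrightarrow F^*$ chosen as a ``generic linear functional'': pick $\F$-linearly independent elements $t_b\in F^*$ for the basis vectors $b$, and send $\sum c_b b\mapsto \sum c_b t_b$.
\item Then linear dependence of $v_1,\ldots,v_n$ over $\F$ becomes $\F$-linear dependence of their images. If the $t_b$ are chosen algebraically independent over $\F$, this is witnessed by an $\F$-linear relation in $F^*$.
\item The $\spn$-coordinates must then be recovered definably from the images. This is where characteristic zero enters: $\F$ is definable in $\Sa F^*$ as the original field, since by elementarity the prime field and the structure over it are available, perhaps after naming a subfield.
\end{itemize}
This step is the \textbf{main obstacle}. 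Definability of $\F$ inside $F^*$ is not automatic. My first attempt would be to use only finitely many coordinates per formula, and to exploit a trace embedding in which the images land in a large power of $F$. Concretely, I would embed $V$ into $F^N$ via $N$ generic functionals and check that for the bounded $n$ the generic functionals separate dependencies. The characteristic zero assumption should be what lets Vandermonde-type genericity arguments with infinitely many distinct scalars work.

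\emph{Item (3).} Suppose $\F$ has infinite imperfection degree, so its characteristic $p$ is positive and $[\F:\F^p]$ is infinite. Then $\F$, viewed as an $\F^p$-vector space, is infinite dimensional and is definable in $\Sa F$. I would therefore interpret a model of $\vvec_T$ inside $T$ by taking $V=\F$ with scalar field $\F^p$, which is isomorphic to $\Sa F$ via Frobenius. For this to work, $\spn_{n,i}$ must be definable in $\Sa F$; it is, since $p$-independence and coordinates of $w$ with respect to independent $v_i$ are first-order expressible. Completeness of $\vvec_T$ then shows that this interpreted structure is a model of $\vvec_T$. The converse interpretation of $T$ in $\vvec_T$ is again the scalar sort.
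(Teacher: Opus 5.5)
Items (1) and (3) are fine and essentially match the paper. For (3) the paper also views $\F$ as a vector space over the subfield $\F^p$, which is definably isomorphic to $\Sa F$ via Frobenius, and it dismisses (1) as immediate.

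The genuine gap is in item (2), which is where all the work lies. You must show that, inside a model of $T$, the $\F$-linear (in)dependence of finitely many vectors and their $\spn_{n,i}$-coordinates can be computed definably from finitely many field-valued invariants of those vectors. Your proposal never does this.

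Your primary route rests on $\F$ being definable in the elementary extension $\Sa F^*$, and this is false in general. For $T=\acf_0$ every definable subset of the line is finite or cofinite, so no proper elementary subfield is definable. In fact no single injection $V\hookrightarrow F^*$ can even separate dependent from independent pairs: the relevant definable fiber over the image of $v$ contains the infinite image of $\F v$, hence is cofinite, hence contains images of vectors independent from $v$. ``Naming a subfield'' is not allowed either, since it replaces $T$ by a theory of pairs. Your fallback with ``$N$ generic functionals'' is only announced. The separation statement is neither formulated nor proved, and you do not address the induction through nested $\spn$-terms needed to apply quantifier elimination. Your account of where characteristic zero enters is also off.

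The paper's key ingredient is the Wronskian. It takes $V=\F(t)\subseteq\E$ for some $\Sa E\succ\Sa F$ and $t\in\E\setminus\F$, and uses the iterates $\der^{(n)}$ of $\der=d/dt$ as the witnessing maps $V\to\E$. In characteristic zero the constants of $(\F(t),\der)$ are exactly $\F$. Hence $w_1,\ldots,w_d$ are $\F$-independent iff $\det\wron(w)\neq 0$, which is a polynomial condition on $\Delta_d(w)$, and the coordinates of $u$ in their span are $\wron(w)^{-1}\Delta_d(u)$. An induction on $\lvec$-terms then shows that every scalar term has the form $h^*(\Delta_k(v),c)$ and every vector term has the form $\sum_l s_l(v,c)v_l$. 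This uses that each $\der^{(j)}$ is $\F$-linear, so it passes through $\F$-valued coefficients. Quantifier elimination for $\vvec_T$ then finishes.

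Your fallback can in fact be completed along the same lines, as follows.
\begin{itemize}
\item Take a basis $(e_i)$ of $V$ and elements $(s_i)$ of a saturated $\Sa F^*\succeq\Sa F$ that are algebraically independent over $\F$.
\item Set $\lambda_k(\sum_i c_ie_i)=\sum_i c_is_i^k$.
\item Apply Cauchy--Binet and specialize $s_i=0$ off a nonvanishing maximal minor of the coefficient matrix. This shows that $\F$-independent $n$-tuples have $F^*$-independent images under $(\lambda_1,\ldots,\lambda_N)$ whenever $n\le N$.
\item Coordinates are then the unique $F^*$-solutions, and the same term induction goes through.
\end{itemize}
But this argument is exactly what your proposal is missing. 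Note also that it uses no characteristic assumption, so it is not where that hypothesis enters.
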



\begin{proof}
First note that (1) is immediate from the definition.
It is also clear that $\vvec_T$ always interprets $T$.
Suppose that $\F$ has infinite imperfection degree.
Let $p$ be the characteristic of $\F$.
Consider $\F$ to be a vector space over itself with scalar multiplication given by $(\lambda, v) \mapsto \lambda^p v$.
Equivalently, consider $\F$ to be a vector space over the subfield of $p$th powers, which is definably isomorphic to $\F$.
This gives an interpretation of $\vvec_T$ in $T$.

\medskip
It remains to show that $\vvec_T$ is locally trace definable in $T$ when $\F$ is characteristic zero.
First let us describe the motivation for our argument.
When $T = \acf_0$ one can show that $\vvec_T$ is locally trace definable in $T$ by noting that $\vvec_T$ is interpretable in the theory of an algebraically closed field of characteristic zero equipped with a unary relation defining a proper algebraically closed subfield, the latter theory is interpretable in $\dcf^1$, and $\dcf^1$ is locally trace definable in $\acf_0$.
Now, local trace definability of $(K, \der) \models \dcf^1$ in $K$ is witnessed by $\der$ and its iterates, so tracing through we see that local trace definability of $\vvec_T$ in $T$ is witnessed by $\der$ and its iterates in this case.
We generalize this idea.

\medskip
Suppose that $\F$ is characteristic zero.
After possibly Morleyizing we may suppose that $T$ admits quantifier elimination.
Then $\vvec_T$ admits quantifier elimination by Fact~\ref{fact:lspn}.
Now let $\Sa E$ be a non-trivial elementary extension of $\Sa F$ with underlying field $\E$.
Fix $t \in \E \setminus \F$ and let $V$ be $\F(t) \subseteq \E$, considered as an $\F$-vector space, so $(V, \Sa F) \models \vvec_T$.
It suffices to show that $\Sa E$ locally trace defines $(V, \Sa F)$.
Identify $\F(t)$ with the field of rational functions over $\F$ in one variable.
Let $\der\colon\F(t)\to\F(t)$ be the usual derivation and recall that $\der^{(n)}$ is the $n$-fold compositional iterate of $\der$.
In particular $\der^{(0)}$ is the identity on $\F(t)$.
Take each $\der^{(n)}$ to be a function $V \to \E$.
We show that $(\der^{(n)})_{n\in\N}$ witnesses local trace definability of $(V,\Sa F)$ in $\Sa E$.

\medskip
Let $v=(v_1,\ldots,v_n)$ be a tuple of vector variables and $c=(c_1,\ldots,c_m)$ be a tuple of scalar variables.
A vector, scalar term is an $\lvec$-term taking values in the sort $V,\F$, respectively.
Let $\Delta_k(v)$ be the  tuple $(\der^{(d)}(v_i))_{0\le d\le k - 1, 1\le i\le n}$ for each $k\ge 1$.
Given an $\Sa F$-definable function $h\colon \F^d\to\F$ we let $h^*\colon\E^d\to\E$ be the $\Sa E$-definable function defined by any formula defining $h$.
\begin{enumerate}[leftmargin=*]
\item A \textbf{$\Delta$-scalar function} is a function $V^n\times \F^m\to \F$ of the form $h^*(\Delta_k(v), c)$ for some $k\ge 1$ and $\Sa F$-definable function $h\colon \F^{kn}\times \F^m\to \F$.
\item A \textbf{$\Delta$-vector function} is a function $f \colon V^n\times\F^m\to V$ of the form
\[
f(v, c) = s_1(v, c) v_1 + \cdots + s_n(v, c) v_n.
\] 
for some $\Delta$-scalar functions $s_1, \ldots, s_n \colon V^n \times \F^m \to \F$.

\end{enumerate}
Finally, a $\Delta$-function is a function that is either a $\Delta$-scalar function or a $\Delta$-vector function.
\begin{Claim*}
Any $\lvec$-term in the variables $v,c$ defines a $\Delta$-function.
\end{Claim*}

\begin{claimproof}
Note that each $v_i$ trivially defines a $\Delta$-vector function and each $c_i$ trivially defines a $\Delta$-scalar function.
Hence it is enough to show that $\Delta$-functions are closed under the $\lvec$-term building operations.
We first make some easy observations.
\begin{enumerate}[leftmargin=*, label=(\alph*)]
\item If $s_1(v,c), \ldots, s_d(v,c)$ are $\Delta$-scalar functions and $h\colon \F^d \to \F$ is $\Sa F$-definable then $$h(s_1(v,c) ,\ldots, s_d(v,c))$$ is a $\Delta$-scalar function.
Hence $\Delta$-scalar functions are closed under sums and products.
\item $\Delta$-vector functions are closed under finite sums (combine like terms).
\item If $s(v,c)$ is a $\Delta$-scalar function and $f(v,c)$ is a $\Delta$-vector then $s(v,c)f(v,c)$ is also a $\Delta$-vector function (write $f(v,c)$ out as in the definition and distribute $s(v,c)$).
\end{enumerate}
It remains to show that $\spn_{d, i}(t_1,\ldots,t_{d+1})$ is a $\Delta$-scalar function for fixed $\Delta$-vector functions $t_1(v,c), \ldots, t_{d + 1}(v,c)$.
We first show that $\spn_{d,i}(w_1,\ldots,w_d, u)$ is a $\Delta$-scalar function in the vector variables $w_1,\ldots,w_d,u$ for fixed $d \ge 1$ and $i = 1, \ldots, d$.
We treat the case $i = 1$, the general case follows in the same way.
Set $w=(w_1,\ldots,w_d)$.
We let $\wron(w)$ be the Wronskian of $w$.
This is the following matrix over $\F(t)$.
$$\wron(w) =\begin{pmatrix}
    w_1 & w_2 &\cdots& w_d\\
    \der(w_1)&\der(w_2)&\cdots& \der(w_d)\\
    \vdots&\vdots&\ddots&\vdots
    \\
    \der^{(d-1)}(w_1)&\der^{(d-1)}(w_2)&\cdots& \der^{(d-1)}(w_d)
\end{pmatrix}$$
Recall that the $w_i$ are $\F$-linearly independent if and only if $\wron(w)$ is invertible~\cite[Lemma~4.1.13]{trans}.
(This is the point at which we use characteristic zero.)
Now let $u=\lambda_1 w_1+\cdots+\lambda_d w_d$ for $\lambda_1,\ldots,\lambda_d\in\F$.
Then we have
$$\der^{(i)}(u) = \lambda_1 \der^{(i)}(w_{1}) +\cdots+\lambda_d \der^{(i)}(w_{d})\quad\text{for each $i = 1,\ldots,d-1$ }.$$
Hence
$$\begin{pmatrix}
    w_1 & w_2 &\cdots& w_d\\
    \der(w_1)&\der(w_2)&\cdots& \der(w_d)\\
    \vdots&\vdots&\ddots&\vdots
    \\
    \der^{(d-1)}(w_1)&\der^{(d-1)}(w_2)&\cdots& \der^{(d-1)}(w_d)
\end{pmatrix} \begin{pmatrix}
    \lambda_1 \\\lambda_2\\\vdots\\\lambda_d\end{pmatrix}
=\begin{pmatrix}u\\\der(u)\\\vdots\\\der^{(d-1)}(u)\end{pmatrix}.$$
The vector on the right hand side of this equation is $\Delta_d(w)$.
Thus if $w_1,\ldots,w_d$ are linearly independent and $u$ is in the span of $w_1,\ldots,w_d$ then $\wron(w_1,\ldots,w_d)^{-1}\Delta_d(u)$ is the coordinate vector of $u$ with respect to $w_1,\ldots,w_d$.
Hence  we have:
\begin{enumerate}[leftmargin=*]
\item $\spn_{d, 1}(w,u)=0$ when either $\det\wron(w)=0$ or $\det\wron(w)\ne 0\ne\det\wron(w,u)$,
\item and otherwise  $\spn_{d,1}(w,u)$ is the first coordinate of $\wron(w)^{-1}\Delta_d(u)$.
\end{enumerate}
It follows that $\spn_{d,1}(w_1,\ldots,w_d,u)$ is a $\Delta$-scalar function in the vector variables $w_1,\ldots,w_d,u$.
Hence it is enough to fix a $\Delta$-scalar function $g(w_1,\ldots,w_d)$ in the vector variables $w_1,\ldots,w_d$ and show that $g(t_1,\ldots,t_d)$ is a $\Delta$-scalar function for $\Delta$-vector functions $t_1(v,c),\ldots,t_d(v,c)$.
Fix $k \ge 1$, $\Sa F$-definable $f \colon \F^k \to \F$, $i_1, \ldots, i_k \in \{1,\ldots,d\}$, and $j_1, \ldots, j_k \in \N$ such that
\[
g(w_1,\ldots,w_d) = f^*\left(\der^{(j_1)}(w_{i_1}),\ldots,\der^{(j_k)}(w_{i_k})\right).
\]
Fix $\Delta$-scalar functions $s_{i,1}, \ldots, s_{i,n} \colon V^n \times \F^m\to \F$ for each $i = 1,\ldots,d$
such that
\[
t_i(v, c) = s_{i,1}(v, c) v_1 + \cdots + s_{i,n}(v, c) v_n.
\]
For each $j\in\N$ we have
\[
\der^{(j)}(t_i(v, c)) = s_{i,1}(v, c) \der^{(j)}(v_1) + \cdots + s_{i,n}(v, c) \der^{(j)}(v_n).\]
Here we have used the fact that each $s_{i,\ell}(v, c)$ takes values in $\F$  to pass $\der^{(j)}$ through.
So each $\der^{(j)}(t_i(v, c))$ is a sum of products of $\Delta$-scalar functions and is hence a $\Delta$-scalar function.
Now
\[
g(t_1(v, c), \ldots, t_d(v, c)) = f^*\left( \der^{(j_1)}(t_{i_1}(v, c)), \ldots, \der^{(j_k)}(t_{i_k}(v, c)) \right).
\]
Hence $g(t_1(v,c),\ldots,t_d(v,c))$ is a $\Delta$-scalar function by (a) above.
\end{claimproof}
We finally show that the $\der^{(i)}$ witness local trace definability of $(V, \Sa F)$ in $\Sa E$.
Let $X$ be a $(V, \Sa F)$-definable subset of $V^n \times \F^m$.
An application of quantifier elimination for $\vvec_T$ shows that $X$ is a boolean combination of sets of  the following forms:
\begin{enumerate}[leftmargin=*]
\item $\{(a,b)\in V^n \times \F^m : (t_1(a, b), \ldots, t_d(a, b)) \in Y\}$ where $Y$ is an $\Sa F$-definable subset of $\F^d$ and $t_1, \ldots, t_d$ are $\lvec$-terms taking values in $\F$.
\item $\{(a,b) \in V^n \times \F^m : t(a, b) = \eta\}$ where $t$ is an $\lvec$-term taking values in $V$ and $\eta \in V$.
\end{enumerate}
Suppose that $X$ is as in (1).
By the claim each $t_i$ is a $\Delta$-scalar function.
Fix $k$ and $\Sa F$-definable functions $h_1, \ldots, h_d \colon \F^{kn} \times \F^m \to \F$ such that $t_i(a, b) = h^*_i(\Delta_k(a), b)$ for each $i = 1,\ldots, d$ and $(a, b) \in V^n \times \F^m$.
Let $Y^*$ be the subset of $\E^d$ defined by any formula defining $Y$.
So for any $(a, b) \in V^n \times \F^m$ we have $(a, b) \in X$ if and only if $(h^*_1(\Delta_k(a), b), \ldots, h^*_d(\Delta_k(a), b)) \in Y^*$.
Finally, let $Z$ be the set of $(\beta, \gamma) \in \E^m \times \E^{kn}$ such that $(h^*_1(\gamma, \beta), \ldots, h^*_d(\gamma, \beta)) \in Y^*$.
Then $Z$ is definable in $\Sa E$ and we have $(a, b) \in X$ if and only if $(b, \Delta_k(a)) \in Z$ for any $(a, b) \in V^n \times \F^m$.

\medskip
Now suppose that $X$ is as in (2).
By the claim $t$ is a $\Delta$-vector function.
Hence there is $k\ge 1$ and $\Sa F$-definable functions $h_1,\ldots,h_n\colon \F^{kn}\times \F^m\to \F$ such that \[t(a, b) = h^*_1(\Delta_k(a), b) a_1 + \cdots  +h^*_n(\Delta_k(a), b) a_n\]
and  $h^*_1(\Delta_k(a),b), \ldots, h^*_n(\Delta(a), b) \in \F$ for all $a=(a_1,\ldots,a_n) \in V^n$ and $b \in \F^m$.
Now let $Z$ be the set of $(\alpha_1, \ldots, \alpha_n, \beta, \gamma) \in \E^n \times \E^m \times \E^{kn}$ such that
\[
h^*_1(\gamma, \beta) \alpha_1 + \cdots  + h^*_n(\gamma, \beta) \alpha_n = \eta.
\]
Now $Z$ is definable in $\Sa E$ and we have $(a, b) \in X$ if and only if $(a, b, \Delta_k(a)) \in Z$ for any $(a, b) \in V^n \times \F^n$.
\end{proof}

Suppose $\F$ is characteristic zero.
We have shown that local trace definability of $\vvec_T$ in $T$ is witnessed by countably many functions.
Hence $\vvec_T$ is trace definable in $D^{\aleph_0}(T)$ by Theorem~\ref{thm;dock}(3).
So it is natural to ask if $\vvec_T$ is trace equivalent to $D^{\aleph_0}(T)$.
We show that this fails in many cases of interest.

\begin{proposition}\label{prop:not T}
Let $T$ be as in the previous proposition.
Suppose that $T$ is strongly dependent and geometric.
Then $\vvec_T$ does not trace define $D^{\aleph_0}(T)$.
\end{proposition}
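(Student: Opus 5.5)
The plan is to separate the two theories by strong dependence. By \cite{trace1}, strong dependence is preserved under trace definability. So it suffices to prove two things: that $D^{\aleph_0}(T)$ is not strongly dependent, and that $\vvec_T$ is strongly dependent whenever $T$ is strongly dependent and geometric. After Morleyizing we may assume $T$ admits quantifier elimination, so $\vvec_T$ does as well by Fact~\ref{fact:lspn}.

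\textbf{Step 1: $D^{\aleph_0}(T)$ is not strongly dependent.} Fix $(P,\Sa M,(f_i)_{i<\omega}) \models D^{\aleph_0}(T)$, which is saturated enough. Since $M$ is infinite, for each $i$ choose distinct $b_{i,j} \in M$ for $j<\omega$. By richness, for every $\sigma\colon\omega\to\omega$ there is $p\in P$ with $f_i(p)=b_{i,\sigma(i)}$ for all $i$. The formulas $f_i(x)=y$ then form an ict-pattern of depth $\omega$ in the single variable $x$. By Lemma~\ref{lem:c} this is even visible in the one-sorted structure induced on $P$, so $D^{\aleph_0}(T)$ is not strongly dependent.

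\textbf{Step 2: $\vvec_T$ is strongly dependent.} I would use the indiscernible-sequence characterization: for any single element $a$ and mutually indiscernible sequences $(I_n)_{n<\omega}$ over $\emptyset$ in a monster model, $a$ is indiscernible over all but finitely many of the $I_n$ (each $I_n$ indiscernible over $a$ together with the other sequences). Scalar $a$ are handled directly. The induced structure on $\F$ is just $\Sa F$, by Lemma-style use of quantifier elimination: every $\lvec$-term with values in $\F$ evaluated at parameters is a scalar. Hence the scalar sort is stably embedded with induced theory $T$. Each sequence is replaced by the sequence of all scalar terms in its elements, and strong dependence of $T$ applies.

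For a vector $a$, let $W$ be the span of all vectors appearing in $\bigcup_n I_n$. There are two cases.
\begin{itemize}[leftmargin=*]
\item If $a \notin W$, quantifier elimination should show that $\tp(a/\bigcup I_n)$ is the unique ``generic'' type. Indeed, every term involving $a$ either has $\spn$-coordinates equal to $0$ or reduces to one not involving $a$ in an essential way. So $a$ is indiscernible over every $I_n$.
\item If $a \in W$, write $a=\sum_{\ell\le d}\lambda_\ell w_\ell$ with the $w_\ell$ taken from finitely many of the sequences, say $I_{n_1},\ldots,I_{n_r}$. Over these finitely many elements, $a$ is interdefinable with the scalar tuple $\lambda=(\lambda_1,\ldots,\lambda_d)$. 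Applying the scalar case to each $\lambda_\ell$, all but finitely many sequences are indiscernible over $\lambda$, hence over $a$.
\end{itemize}

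The main obstacle is Step 2. One issue is making the reduction to the scalar sort rigorous: the sequences contain vectors, so one must pass to their scalar "shadows" (all $\spn$-coordinates of tuples from the sequences). One must then check that mutual indiscernibility transfers to these shadows and back, which requires care with the $\spn$ functions of unbounded arity. Another issue is where geometricity enters, which I expect is to control the case split. Using uniform finiteness and exchange in $T$, the coordinates $\lambda$ should depend on only finitely many sequence elements in a uniform way, so that moving $w_\ell$ along $I_{n_i}$ does not destroy the bound. If this direct argument gets messy, I would fall back on computing dp-rank via the dimension theory of $T$, and show that every $1$-type in $\vvec_T$ has dp-rank bounded by that of some finite tuple of scalars.
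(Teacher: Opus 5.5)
Your outer strategy is the paper's. Strong dependence is preserved under trace definability. Your Step~1 correctly exhibits an ict-pattern of depth $\omega$ in a single variable of sort $P$; the appeal to Lemma~\ref{lem:c} there is beside the point and can be dropped. So everything hinges on $\vvec_T$ being strongly dependent.

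\textbf{The gap.} The step that fails as written is your scalar case. Replacing each element of $I_n$ by the scalar terms in that element does not capture $\operatorname{dcl}(\bigcup_n I_n)\cap\F$: applying $\spn$ to vectors from different positions of a sequence, or from different sequences, creates new scalars. For example, let $v_j=e+\lambda_j f$ with $(\lambda_j)$ an indiscernible sequence of distinct scalars. Every scalar term in a single $v_j$ is $\emptyset$-definable, so the shadow sequence is constant and hence indiscernible over every scalar. Yet $\spn_{2,1}(v_0,v_1,v_2)=(\lambda_1-\lambda_2)/(\lambda_1-\lambda_0)$ depends injectively on $\lambda_0$, so $(v_j)$ is not indiscernible over the scalar $a=\spn_{2,1}(v_0,v_1,v_2)$.

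\textbf{The repair.} Leave the sequences alone and prove instead that the scalar sort is uniformly stably embedded with induced structure $\Sa F$. By quantifier elimination, choose a basis of the span of the vector parameters and compute every term in coordinates. This shows that for each formula $\varphi(x;y)$ with $x$ a tuple of scalar variables there is an $L$-formula $\psi(x;z)$ such that every instance $\varphi(\F^{|x|};b)$ equals some $\psi(\F^{|x|};c)$. Then an ict-pattern of depth $\omega$ in finitely many scalar variables of $\vvec_T$, over any parameters, is an ict-pattern in $\Sa F$. The standard translation then gives the indiscernible-sequence version you need for scalar tuples over an arbitrary base.

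\textbf{The vector case.} With this repair your case split goes through.
\begin{itemize}
\item For $a\notin W$, the substructure generated by $B\cup\{a\}$ is the one generated by $B$ with $a$ adjoined freely. The scalar part does not grow, because $\F$-linear dependences among vectors of $B$ already have coefficients in the scalar part generated by $B$. So $\tp(a/B)$ is the unique type of a vector outside $W$, and an automorphism argument shows that no $I_n$ is broken.
\item For $a\in W$, you must apply the scalar case to the whole tuple $\lambda$ over the base $I_{n_1}\cup\cdots\cup I_{n_r}$, not to each $\lambda_\ell$ separately over $\emptyset$. Indiscernibility over $\lambda$ alone says nothing about $a$, which also depends on $\bar w$.
\end{itemize}

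\textbf{Comparison with the paper.} The paper gets strong dependence of $\vvec_T$ in one line: it interprets $\vvec_T$ in the theory of lovely pairs of models of $T$ and cites strong dependence of lovely pairs. Geometricity is used only because lovely pairs are defined for geometric theories. Your direct argument never uses exchange or uniform finiteness, so your guess about where geometricity enters is off. Once repaired, your route yields the stronger fact that $\vvec_T$ is strongly dependent whenever $T$ is. The price is a hands-on quantifier-elimination analysis that the paper avoids by citation.
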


\begin{proof}
Strong dependence is preserved under trace definability~\cite[Prop.~4.1]{trace1}.
It is easy to see that $D^{\aleph_0}(\triv)$ is not strongly dependent.
Hence it suffices to show that $\vvec_T$ is strongly dependent.
It is easy to see that $\vvec_T$ is interpretable in the theory of lovely pairs of models of $T$, and the theory of lovely pairs of models of $T$ is strongly dependent by \cite[Cor.~3.3]{gdensepairs}.
\end{proof}

\subsection{Nilpotent Lie algebras}\label{section;vecT}
Let  $\Gamma$ be either a group or a Lie algebra,  and  $\spcbrckt$ be either the group commutator or the Lie bracket on $\Gamma$, respectively.
Recall that $\Gamma$ is said to be nil-$k$ if it is nilpotent of class at most $k$.
A \textbf{Lazard sequence} is a finite sequence $P_0\supseteq\cdots\supseteq P_{k}$ of substructures of $\Gamma$ such that $P_0=\Gamma$, $P_{k}$ is the trivial substructure, and $[P_i,P_j]\subseteq P_{\min\{i+j,k\}}$ for all $i,j$.
Hence if $\Gamma$ is nil-$k$ then the lower central series of $\Gamma$ is a Lazard sequence and $\Gamma$ admits a Lazard sequence of length $k$ if and only if $\Gamma$ is nil-$k$.
A \textbf{Lazard group (Lie algebra)} is a group (Lie algebra) equipped with a Lazard sequence.

\medskip
Let $L$ expand the language of rings, $\Sa F$ be an $L$-structure expanding a field $\F$, and  $T=\Th(\Sa F)$. 
Given $k\ge 2$ let $\leftindex _{\_}\nil^k_T$ be the theory of structures of the form $(V, \Sa F, \spcbrckt,P_0,\ldots,P_k)$ where $(V, \Sa F) \models \vvec_T$, $\spcbrckt$ is a Lie bracket on $\Sa V$, and $P_0,\ldots,P_k$ is a Lazard sequence of the associated Lie algebra.
Hence the underlying Lie algebra of any model of $\leftindex _{\_}\nil^k_T$ is nil-$k$.
Furthermore if $\E$ is the underlying field of some $\Sa E\models T$ then any infinite-dimensional nil-$k$ Lie algebra over $\E$ expands to a model of $\leftindex _{\_}\nil^k_T$ in an obvious way.
Fact~\ref{fact;vecT} is due to d'Elb\'{e}e, M\"uller, Ramsey and Siniora~\cite[Lemmas~3.1 and 3.2]{delbée2024twosortedtheorynilpotentlie}.

\begin{fact}\label{fact;vecT}
Let $L$ and $T$ be as above and suppose that $T$ admits quantifier elimination.
Then $\leftindex _{\_}\nil^k_T$ has a model companion $\nil^k_T$ and $\nil^k_T$ is complete and admits quantifier elimination.
\end{fact}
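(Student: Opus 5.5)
The plan is to follow the standard route used for Lemma~\ref{lem:blow}.
\begin{itemize}[leftmargin=*]
\item Identify the models of the universal part $(\leftindex _{\_}\nil^k_T)_\forall$.
\item Prove that this class has the amalgamation property.
\item Show that the existentially closed models of $\leftindex _{\_}\nil^k_T$ form an elementary class $\nil^k_T$.
\item Deduce quantifier elimination from model completeness plus amalgamation \cite[Thm.~8.4.1]{Hodges}, and completeness from joint embedding over the substructure generated by $\emptyset$.
\end{itemize}
Throughout, $T$ has quantifier elimination, so $T_\forall$ has amalgamation. By Fact~\ref{fact:lspn}, $\vvec_T$ has quantifier elimination in $\lvec$.

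First I will describe substructures. A model of the universal part is a structure $(V_0,\Sa F_0,\spcbrckt,P_0,\ldots,P_k)$ with the following properties.
\begin{itemize}[leftmargin=*]
\item $\Sa F_0\models T_\forall$.
\item $V_0$ is an $\F_0$-vector space, and the $\spn_{n,i}$ are interpreted so that they compute coordinates correctly.
\item $\spcbrckt$ is an $\F_0$-bilinear Lie bracket.
\item $P_0\supseteq\cdots\supseteq P_k$ is a Lazard sequence of subspaces.
\end{itemize}
The coordinate condition says exactly that independence in $V_0$ over $\F_0$ persists in extensions.

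For amalgamation over a common substructure $A\subseteq B_1,B_2$, I proceed in three steps.
\begin{enumerate}[leftmargin=*]
\item Amalgamate the scalar sorts in a model $\Sa E$ of $T$.
\item Base change each $B_j$ to $\E$ via $B_j\otimes_{\F_j}\E$. The bracket and the filtration extend, and the Lazard condition is preserved because it is a condition on bilinear maps between filtration pieces. Linear disjointness over $\F_0$ makes $A\otimes\E\to B_j\otimes\E$ injective.
\item Amalgamate the two Lazard Lie algebras $B_1',B_2'$ over $A'$ inside the variety of Lazard nil-$k$ Lie algebras over the single field $\E$.
\end{enumerate}

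The third step is the \emph{main obstacle}. My first attempt is a free construction. Give each element of $P_i\setminus P_{i+1}$ weight $i$. Take the free Lie algebra on $B_1'\oplus_{A'}B_2'$, with weights extended additively to brackets. Quotient by the relations of $B_1'$ and of $B_2'$ together with all brackets of total weight $\ge k$. Let $P_i$ be the span of elements of weight $\ge i$.

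The key point is to check two things. First, $B_j'$ embeds. Second, the new filtration restricts to the old one on each $B_j'$. I would prove both by passing to the associated graded Lie algebras $\bigoplus P_i/P_{i+1}$, which are amalgamated by a graded free product truncated in degree $\ge k$. I would then use a Hall-basis or Shirshov-type normal form argument to show that the natural maps from the graded pieces of $B_j'$ are injective. Lifting back along the filtration then gives the amalgam.

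For existential closedness, I would use quantifier elimination for $\vvec_T$ and for $T$. These show that the quantifier-free type of a finite tuple of new vectors over a finite-dimensional substructure is determined by two pieces of data:
\begin{itemize}[leftmargin=*]
\item scalar data, namely coordinates of brackets with respect to a basis, which is handled by $T$;
\item which filtration levels the relevant spans lie in.
\end{itemize}
Hence the existentially closed models are axiomatized by first-order extension axioms. Each axiom says that every finite-dimensional consistent one-step extension, described by scalar structure constants and filtration levels, is realized.

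Consistency of an extension is checked in the amalgam constructed above. This yields $\nil^k_T$ as the model companion, and quantifier elimination follows. For completeness, the substructure generated by $\emptyset$ consists of the prime substructure of the scalar field together with the zero vector space. Since $T$ is complete, it embeds into every model compatibly, and amalgamation over it gives joint embedding.
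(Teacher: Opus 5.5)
The paper gives no proof of this Fact; it is quoted from d'Elb\'ee--M\"uller--Ramsey--Siniora. Your argument has a genuine gap at step~2 of the amalgamation, and the failure is real: in the language as written, amalgamation for the universal theory is false.

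Injectivity of $A\otimes_{\F_0}\E\to B_j\otimes_{\F_j}\E$ is not enough. For $A\otimes\E$ to be a \emph{common} substructure you also need $(P_i^{B_j}\otimes\E)\cap(A\otimes\E)=P_i^A\otimes\E$ for $j=1,2$. In the language as written ($\lvec$, the bracket, and predicates for the $P_i$), a substructure only records which $\F_0$-combinations of its vectors lie in $P_i$, not which combinations with scalars from an extension do. Here is a counterexample to amalgamation:
\begin{itemize}
\item Take $T=\acf_0$. Let $A$ have vector sort the $\Q$-span of independent $v_1,v_2$, zero bracket, and $P_i^A=0$ for $i\ge 1$.
\item Let $B_1$, $B_2$ have scalar sorts $\Q(\sqrt2)$, $\Q(\sqrt3)$, vector sort the span of $v_1,v_2$, and zero bracket. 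Let $P_1=\cdots=P_{k-1}$ be the line through $v_1-\sqrt2 v_2$, resp.\ $v_1-\sqrt3 v_2$. Since $\sqrt2,\sqrt3\notin\Q$, $A$ is a substructure of both.
\item In any amalgam, the images $\alpha,\beta$ of $\sqrt2,\sqrt3$ are distinct, and both $v_1-\alpha v_2$ and $v_1-\beta v_2$ lie in $P_{k-1}$. Hence $v_2\in P_{k-1}\cap A=\{0\}$, a contradiction.
\end{itemize}
Correspondingly, $\exists\mu\,(\mu^2=2\wedge x_1-\mu x_2\in P_{k-1})$ is not equivalent to a quantifier-free formula. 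So your claim that quantifier-free types determine ``which filtration levels the relevant spans lie in'' also fails.

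The Fact has to be read in a language that also contains coordinate functions for each quotient $V/P_i$, analogous to the $\spn_{n,i}$. Substructures then satisfy $(A/P_i^A)\otimes_{\F_0}\F\hookrightarrow V/P_i$. Tensoring this with $\E$ gives exactly the equality above, and your base change goes through. For $T=\Th(\F_p)$, the case used later in the paper, the problem does not arise, since every scalar substructure is $\F_p$.

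There is a second, smaller gap in step~3. For your filtered quotient $C$ you only get a graded surjection from the truncated amalgamated free product of the $\mathrm{gr}(B_j')$ over $\mathrm{gr}(A')$ onto $\mathrm{gr}(C)$. A normal form for the former shows that $\mathrm{gr}(B_j')$ embeds there. It does not show that $\mathrm{gr}(B_j')$ embeds in $\mathrm{gr}(C)$, because the filtered relations can have consequences whose leading terms are not consequences of the leading terms of the relations. You need to prove that this surjection is injective, or build $C$ explicitly. Once $\mathrm{gr}(B_j')\to\mathrm{gr}(C)$ is injective, injectivity and strictness of $B_j'\to C$ do follow. This is the real algebraic content of amalgamation for Lazard algebras, and at present it is only asserted.

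With both points repaired, the rest of your outline is the standard route: extension axioms over finite-dimensional substructures quantifying over structure constants, transfer through quantifier elimination in $T$, and joint embedding over the prime substructure.
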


We now relate $\nil^k_T$ to $D_k(\vvec_T)$.

\begin{proposition}\label{prop;vecT}
Let $L, \Sa F, \F, T$ be as above.
Suppose that $T$ admits quantifier elimination and fix $k\ge 2$.
Then $\nil^k_T$ is trace equivalent to $D_k(\vvec_T)$.
Hence if $\F$ is characteristic zero  then $\nil^k_T$ is locally trace equivalent to $D_k(T)$ and if $\F$ has infinite imperfection degree then $\nil^k_T$ is trace equivalent to $D_k(T)$.
\end{proposition}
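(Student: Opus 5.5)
We will prove the two trace definabilities separately and then read off the ``hence'' clauses. The second sentence of the statement follows from the first together with Corollary~\ref{cor;dock}. If $\F$ has characteristic zero then $\vvec_T$ is locally trace equivalent to $T$ by Proposition~\ref{prop;infn}(2). Hence $D_k(\vvec_T)$ is locally trace equivalent to $D_k(T)$ by Corollary~\ref{cor;dock}(2). If $\F$ has infinite imperfection degree then $\vvec_T$ is mutually interpretable with $T$, in particular trace equivalent to it, so Corollary~\ref{cor;dock}(1) applies.

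\medskip
\emph{Showing that $D_k(\vvec_T)$ trace defines $\nil^k_T$.} By Theorem~\ref{thm;dock}(1) it suffices to show that $\nil^k_T$ is $k$-trace definable in $\vvec_T$, i.e.\ to find finitely many functions of arity at most $k$ into a model of $\vvec_T$ (Lemma~\ref{lem:ka}(6)). Fix $(V,\Sa F,\spcbrckt,P_0,\ldots,P_k)\models\nil^k_T$ and target $(V,\Sa F)$ itself. We use the following witnessing functions:
\begin{itemize}
\item the identity $V\to V$;
\item for each $j\le k$ a unary function coding $P_j$, for instance $v\mapsto$ the image of $v$ in a fixed complement decomposition, or simply indicator values in $\F$;
\item the $k$-ary function $(v_1,\ldots,v_k)\mapsto[v_1,[v_2,\ldots,[v_{k-1},v_k]]]$, together with all lower iterated brackets, padded to arity $k$.
\end{itemize}
By quantifier elimination (Fact~\ref{fact;vecT}) every atomic formula involves $\lvec$-terms in $\spcbrckt$. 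Since the algebra is nil-$k$ and the bracket is bilinear, every Lie term is a linear combination, with coefficients given by scalar terms, of iterated brackets of depth at most $k$ of the variables. The $\spn$ functions applied to such terms are then $\vvec_T$-definable functions of the values of those brackets. So every atomic set is quantifier-free definable from these functions.

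The main obstacle is making this normal form rigorous. Bracket terms can contain $\spn$ and scalar subterms nested inside brackets. I would handle this by induction on terms, as in the $\Delta$-function claim of Proposition~\ref{prop;infn}: vector terms are shown to be $\F$-linear combinations, with ``trace-definable'' scalar coefficients, of left-normed brackets of the variables of length at most $k$. Closure under the bracket operation uses bilinearity and the vanishing of brackets of length $>k$.

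\medskip
\emph{Showing that $\nil^k_T$ trace defines $D_k(\vvec_T)$.} We apply Lemma~\ref{lem;f} with $\Sa O=(V_0,\Sa F)\models\vvec_T$ living inside the model as $(P_{k-1},\Sa F)$, or as the last nonzero term of the lower central series. The trace definability of $\Sa O$ via inclusion should follow from quantifier elimination in both theories. Take $g(x_1,\ldots,x_k)=[x_1,[x_2,\ldots,x_k]]$ on $X=V$. Given $\sigma\colon\nset^k\to V_0$, we need elements $a^i_j$ with $g(a^1_{j_1},\ldots,a^k_{j_k})=\sigma(j_1,\ldots,j_k)$. By genericity (existential closedness of the model companion) it suffices to build a nil-$k$ Lie algebra over $\F$ realizing this. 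Take the free nil-$k$ Lie algebra on the $kn$ generators $a^i_j$ and quotient its degree-$k$ component by relations identifying the relevant brackets with the prescribed vectors, adjoined freely; this is possible because the left-normed brackets with distinct generator slots are linearly independent in degree $k$. Then embed the result, using Fact~\ref{fact;vecT}, into the monster model.
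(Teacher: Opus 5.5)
Your proof has a genuine gap in the direction where $D_k(\vvec_T)$ trace defines $\nil^k_T$.

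You take the bare $\lvec$-reduct $(V,\Sa F)$ as the target. That structure does not define the Lazard predicates. So every atomic formula $P_j(t)$, with $t$ a compound vector term, has to be decided by a $(V,\Sa F)$-formula applied to the values of your witnessing functions on the variables. Your normal-form argument treats only $\lvec$-terms and brackets and never addresses these atoms. With the functions you list, they cannot be handled.

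\emph{Indicators fail.} With indicator functions $V\to\{0,1\}\subseteq\F$, this breaks already for $P(x_1-x_2)$, where $P$ is the last nonzero Lazard term (which is central). Pick $v$ in no proper Lazard term, $z\in P$, and $w=v+z$. Pick $w'$ in no proper Lazard term, commuting with $v$, with $v-w'\notin P$.
\begin{itemize}
\item All iterated brackets of $v,w$ vanish, since $[v,w]=[v,z]=0$; all iterated brackets of $v,w'$ vanish as well.
\item All indicators agree.
\item Both pairs can be chosen linearly independent over any given finite parameter set, so they have the same type in $(V,\Sa F)$ together with all your data.
\end{itemize}
Yet $v-w\in P$ and $v-w'\notin P$.

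\emph{Projections alone also fail.} Replacing indicators by linear projections $\rho_i$ from a decomposition of $V$ adapted to the Lazard sequence fixes linear combinations of the variables. But for $k\ge 3$, nothing in your list decides $P_j(m(x))$ for a Lie monomial $m$, for example whether $[x_1,x_2]$ lies in a deeper Lazard term. The target sees $m(x)$ only as an element of a pure vector space. You need the composites $\rho_i\circ m$ as additional witnesses.

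\emph{How the paper gets these composites.} It keeps the predicates in the target. It uses the quantifier elimination and the description of quantifier-free formulas of d'Elb\'ee--M\"uller--Ramsey--Siniora. These show the finitely many Lie monomials witness $k$-trace definability of $(\Sa W,\spcbrckt)$ in $\Sa W=(V,\Sa F,P_0,\ldots,P_k)$. Since $\Sa W\cong(V^k,\Sa F,Q_0,\ldots,Q_k)$ is interpretable in $\vvec_T$, Proposition~\ref{prop:trace-basic}(1) finishes, and the resulting witnesses are exactly the (coordinate map)$\circ$(Lie monomial) composites.

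\emph{Your other direction.} It is a legitimate variant of the paper's argument.
\begin{itemize}
\item You stay in a fixed model, take $\Sa O$ to be the last Lazard term, and get the witnesses from existential closedness via the finitary Lemma~\ref{lem;f}.
\item The paper instead applies Lemma~\ref{lem;f 0} to an abelian algebra $V$, forms $(V\oplus\Sa C)/I$, extends it to a model, and gets the trace condition from model completeness of $\vvec_T$.
\end{itemize}
For your version to work, three things need care.
\begin{itemize}
\item The prescribed vectors must enter the amalgam as a central subspace carrying their actual linear relations, not adjoined freely. That is, form $(\operatorname{span}(\sigma)\oplus\Sa C)/I$; this is where independence of the brackets is used.
\item They must lie in the last Lazard term of the amalgam, so that quantifier elimination lets you embed over $\operatorname{span}(\sigma)$.
\item As the paper does, pass to the structure induced on the vector sort, so that $\sigma$ takes only vector values.
\end{itemize}
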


Of course we can drop the assumption that $T$ admits quantifier elimination by Morleyizing.

\medskip
We will need to use some Lie theory.
We let $\Sa A \oplus \Sa B$ be the direct sum of Lie $\F$-algebras $\Sa A$ and $\Sa B$.
This is the Lie $\F$-algebra whose underlying vector space is the direct sum of the underlying vector spaces of $\Sa A$ and $\Sa B$ and with Lie bracket given by
$$[a + b, a^* + b^*] = [a,a^*] + [b,b^*]\quad\text{for all  $a, a^* \in A$ and $b, b^* \in B$.}$$
Hence in particular $[a, b]=0$ for any $a \in A$ and $b \in B$.
A \textbf{Lie monomial} is an element of the smallest collection of terms containing all vector variables and closed under applying $\spcbrckt$.
We let $[x_1,\ldots,x_n]$ be the Lie monomial given inductively by $[x_1,\ldots,x_n] = [[x_1,\ldots,x_{n-1}],x_n]$ for all $n\ge 3$.
We also use some basic facts about free nilpotent Lie algebras which can be found in \cite[\S~4.2]{2024modeltheoreticpropertiesnilpotentgroups}.
We now prove Proposition~\ref{prop;vecT}.

\begin{proof}
The second claim follows from the first claim by Proposition~\ref{prop;infn} and Corollary~\ref{cor;dock}.
Hence it is enough to show that $\nil^k_T$ is trace equivalent to $D_k(\vvec_T)$.

\medskip
We first show that $\nil^k_T$ trace defines $D_k(\vvec_T)$.
Fix $\Sa V = (V, \Sa F) \models \vvec_T$.
We consider $\Sa V$ to be a model of $\leftindex _{\_}\nil^k_T$ by declaring $[a,b]=0$ for all $a,b\in V$, $P_0 = V$, and $P_1 = \ldots = P_k = \{0\}$.
Let $\Sa V^*$ be the structure induced on $V$ by $\Sa V$.
Now $\Sa V^*$ interprets $\Sa V$, so it is enough to show that $\nil^k_T$ trace defines $D_k(\Th(\Sa V^*))$.
Fix a set $P$ and a function $f \colon P^k \to V$.
We construct a nil-$k$ Lie $\F$-algebra $\Sa B$  extending $\Sa V$ and elements $(a^i_p : (i, p) \in \{1,\ldots,k\} \times P)$ of $\Sa B$ such that
\[
[a^1_{p_1},\ldots,a^k_{p_k}] = f(p_1, \ldots, p_k) \quad \text{for all  } p_1,\ldots,p_k \in P.
\]
We then consider $\Sa B$ to be a model of $\leftindex _{\_}\nil^k_T$ in the natural way and let $\Sa W$ be a model of $\nil^k_T$ extending $\Sa B$.
Let $(W, \Sa E)$ be the $\lvec$-reduct of $\Sa W$.
Now $\vvec_T$ is model complete by Fact~\ref{fact:lspn}, so $(V, \Sa F)$ is an elementary substructure of $(W, \Sa E)$.
Hence an application of Lemma~\ref{lem;f 0} will show that $\nil^k_T$ trace defines $D_k(\Th(\Sa V^*))$.

\medskip
Let $\Sa C$ be the free nil-$k$ Lie $\F$-algebra with generators $x^i_p$ for $(i, p) \in \{1,\ldots,k\} \times P$.

\begin{Claim*}
$\{ [x^1_{p_1}, \ldots, x^k_{p_k}] : p_1, \ldots, p_k \in P\}$ is a linearly independent subset of $\Sa C$.
\end{Claim*}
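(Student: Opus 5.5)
The plan is to exhibit a quotient of $\Sa C$ in which the elements $[x^1_{p_1},\ldots,x^k_{p_k}]$ visibly map to linearly independent vectors. Linear independence in $\Sa C$ then follows immediately, because a nontrivial linear dependence in $\Sa C$ would be carried by the quotient map to a nontrivial dependence in the quotient.

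\textbf{The target algebra.} Let $U$ be the $\F$-vector space with basis $\{e^i_p : (i,p)\in\{1,\ldots,k\}\times P\}$. Let $Z$ be the $\F$-vector space with basis indexed by $P^k$, written $z_{(p_1,\ldots,p_k)}$. Grade the algebra by the multiset of first indices (``colors'') $i$. Concretely, let $\Sa A$ be the space spanned by formal elements $u(p_{1},\ldots,p_{j})$, one for each $j\le k$ and each tuple of points with colors $1,\ldots,j$ in order. Put $u(p_1)=e^1_{p_1}$ and $u(p_1,\ldots,p_k)=z_{(p_1,\ldots,p_k)}$. Define the bracket on basis elements as follows:
\begin{itemize}
\item $[u(p_1,\ldots,p_j),e^{j+1}_{p}]=u(p_1,\ldots,p_j,p)$ for $j<k$;
\item $[e^{j+1}_p,u(p_1,\ldots,p_j)]=-u(p_1,\ldots,p_j,p)$;
\item every other bracket of basis elements is $0$.
\end{itemize}
For $k=2$ the construction is simply $U\oplus Z$ with $[e^1_p,e^2_q]=z_{(p,q)}=-[e^2_q,e^1_p]$.

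\textbf{Main obstacle: the Jacobi identity.} The bracket is alternating by construction, so the real work is checking Jacobi. A cleaner route that avoids this check is to realize $\Sa A$ concretely inside a known nilpotent Lie algebra. Take strictly upper triangular $(k+1)\times(k+1)$ block matrices whose $(j,j+1)$ block is $\F^{P}$-indexed, and send $e^i_p$ to the elementary matrix $E_{i,i+1}\otimes(\text{coordinate } p)$. In practice I would instead use the tensor algebra. Let $\Sa T$ be the associative algebra of strictly upper triangular matrices with entries in the tensor algebra $T(\F^P)$, graded so that entry $(a,b)$ lies in $(\F^P)^{\otimes(b-a)}$. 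Map $e^i_p$ to $E_{i,i+1}\cdot\delta_p$. Then for $i<j$ the product $E_{i,i+1}\delta_{p_i}\cdots E_{j-1,j}\delta_{p_{j-1}}$ equals $E_{i,j}\,\delta_{p_i}\otimes\cdots\otimes\delta_{p_{j-1}}$, and every product taken in the wrong order vanishes. Equip $\Sa T$ with the commutator bracket; this makes it a Lie algebra automatically, with no Jacobi check needed. It is nil-$k$, since a bracket of $k+1$ strictly upper triangular $(k+1)\times(k+1)$ matrices is zero.

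\textbf{Computing the monomials and concluding.} By induction on $j$, using that only the forward product survives,
\[
[y_1,\ldots,y_j]=E_{1,j+1}\,\delta_{p_1}\otimes\cdots\otimes\delta_{p_j},
\qquad\text{where } y_i \text{ denotes the image of } e^i_{p_i}.
\]
In particular $[y_1,\ldots,y_k]=E_{1,k+1}\,\delta_{p_1}\otimes\cdots\otimes\delta_{p_k}$. These elements, over all $(p_1,\ldots,p_k)\in P^k$, are linearly independent in $(\F^P)^{\otimes k}$. By freeness of $\Sa C$ among nil-$k$ Lie $\F$-algebras there is a homomorphism $\Sa C\to\Sa T$ with $x^i_p\mapsto E_{i,i+1}\delta_p$. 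This homomorphism sends the family $\{[x^1_{p_1},\ldots,x^k_{p_k}]\}$ to a linearly independent family, so the original family is linearly independent in $\Sa C$.

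The only points needing care are the block-index bookkeeping and confirming the nil-$k$ bound, so that the universal property of $\Sa C$ applies.
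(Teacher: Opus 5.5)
Your final argument is correct, and it takes a different route from the paper. The paper orders the generators lexicographically and observes that $[x^2_{p_2}, x^1_{p_1}, x^3_{p_3}, \ldots, x^k_{p_k}]$ is a Hall basis element of the free nil-$k$ Lie algebra. It then concludes by antisymmetry, so it relies on the Hall basis theorem (cited from the literature) and a check of the basic-commutator conditions.

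You use only the universal property of $\Sa C$. You send $x^i_p \mapsto E_{i,i+1}\delta_p$ into the commutator Lie algebra of graded strictly upper triangular $(k+1)\times(k+1)$ matrices over the tensor algebra. That target is nil-$k$ because any product of $k+1$ of its elements vanishes. The matrix units kill every wrong-order product, so the monomials land on $E_{1,k+1}\,\delta_{p_1}\otimes\cdots\otimes\delta_{p_k}$, which are visibly independent. Your route is self-contained and needs no ordering of $P$ and no combinatorics of basic commutators. The paper's route yields more, namely a full basis of $\Sa C$, but only this independence is needed here.

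One correction: drop the abstractly defined algebra $\Sa A$ and the vague block-matrix sketch. For $k \ge 3$ the bracket on $\Sa A$ violates the Jacobi identity. For $e^1_a, e^2_b, e^3_c$ you declared $[e^2_b,e^3_c]=0$ and $[e^3_c,e^1_a]=0$, so the cyclic sum equals $u(a,b,c)\neq 0$. Hence $\Sa A$ is not a Lie algebra and cannot be realized inside $\Sa T$. Indeed, in $\Sa T$ one has $[y_2,y_3]=E_{2,4}\,\delta_b\otimes\delta_c\neq 0$. Your conclusion uses only $\Sa T$ and the homomorphism $\Sa C\to\Sa T$, so the proof is unaffected, but the remark that $\Sa T$ realizes $\Sa A$ is false and should be deleted.
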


\begin{claimproof}
We apply the theory of Hall bases~\cite[\S~4.2.2]{2024modeltheoreticpropertiesnilpotentgroups}.
Fix an arbitrary linear order on $P$, equip $\{1, \ldots, k\} \times P$ with the resulting lexicographic order, and order the $x^i_p$ accordingly.
Fix $p_1, \ldots, p_k$.
So we have $x^1_{p_1} < x^2_{p_2} < \ldots < x^k_{p_k}$.
It follows from the definition that 
\[
[\ldots[[[x^2_{p_2}, x^1_{p_1}], x^3_{p_3}], x^4_{p_4}], \ldots, x^k_{p_k}] = [x^2_{p_2}, x^1_{p_1}, x^3_{p_3}, x^4_{p_4}, \ldots, x^k_{p_k}]
\]
is a Hall basis element of $\Sa B$ with respect to this order.
Hence these elements are linearly independent.
Now we have $[x^1_{p_1}, x^2_{p_2}] = - [x^2_{p_2}, x^1_{p_1}]$, so by bilinearity of the bracket we have
\[
[x^2_{p_2}, x^1_{p_1}, x^3_{p_3}, x^4_{p_4}, \ldots, x^k_{p_k}] = - [x^1_{p_1}, x^2_{p_2}, x^3_{p_3}, x^4_{p_4}, \ldots, x^k_{p_k}].
\]
The claim follows.
\end{claimproof}

We consider the Lie algebra $V \oplus \Sa C$.
Let
\[
S = \{ f(p_1, \ldots, p_k) - [x^1_{p_1}, \ldots, x^k_{p_k}] : p_1,\ldots,p_k\in P \} \subseteq V \oplus \Sa C
\]
and let $I$ be the subspace of $V \oplus \Sa C$ spanned by $S$.
We show that $I$ is a Lie ideal.
We need to show that $[v, w] \in I$ for any $v \in I$ and $w \in V \oplus \Sa C$.
It suffices to show that $[v, w] = 0$ for any $v \in S$ and $w \in V \oplus \Sa C$.
For any $c \in \Sa C$, $v \in V$, and $p_1,\ldots,p_k \in P$ we have
\begin{align*}
\left[f(p_1, \ldots, p_k) - [x^1_{p_1}, \ldots, x^k_{p_k}], v + c \right] \quad &=\quad
[f(p_1, \ldots, p_k), v] + [f(p_1,\ldots,p_k), c] \\ &\quad - [x^1_{p_1}, \ldots, x^k_{p_k}, v] - [x^1_{p_1}, \ldots, x^k_{p_k}, c] \\
&= 0 + 0 - 0 - 0  = 0.
\end{align*}
Here the second and third terms vanish by definition of the direct sum of Lie algebras, the first term vanishes as the bracket on $V$ is trivial, and the last term vanishes as $\Sa C$ is nil-$k$.
Hence $I$ is a Lie ideal.
Let $\Sa B$ be the Lie algebra $(V \oplus \Sa C)/I$.
It follows from the claim that $\{ [x^1_{p_1}, \ldots, x^k_{p_k}] : p_1, \ldots, p_k \in P\}$ is linearly independent over $V$.
Hence $I\cap V= \{0\}$, so we may identify $V$ with its image under the quotient map $V \oplus \Sa C \to B$.
Furthermore identify each $x^i_p$ with its image under the quotient map.
Then $\Sa B$ satisfies 
$$[x^1_{p_1},\ldots,x^k_{p_k}] = f(p_1,\ldots,p_k) \quad\text{for all } p_1,\ldots,p_k \in P.$$
Finally note that $\Sa B$ is nil-$k$ as $V \oplus \Sa C$ is nil-$k$.

\medskip
It remains to show that $D_k(\vvec_T)$ trace defines $\nil^k_T$.
It is enough to show that $\nil^k_T$ is $k$-trace definable in $\vvec_T$.
Fix $(V, \Sa F, \spcbrckt,P_0,\ldots,P_k) \models \nil^k_T$.
Let $\Sa W = (V, \Sa F, P_0,\ldots,P_k)$.
This is a model of $\vvec_T$ equipped with a strictly descending chain of subspaces such that each quotient $P_i/P_{i + 1}$ is infinite-dimensional.
Observe that $\Sa W$ is isomorphic to the structure $(V^k, \Sa F, Q_0, \ldots, Q_k )$ where $Q_0 = V^k$ and
\[
Q_i = \{(v_0, \ldots, v_{k - 1}) \in V^k : v_{k - i} = v_{k - i + 1} = \ldots = v_{k - 1} = 0\} \quad \text{for each $i = 1, \ldots, k$.}
\]
It follows that $\Sa W$ is interpretable in $\vvec_T$.
We show that $(\Sa W, \spcbrckt)$ is $k$-trace definable in $\Sa W$.
 
\medskip
As $(\Sa W,\spcbrckt)$ is nil-$k$ the collection of Lie monomials gives only a finite collection of functions, each of arity at most $k$.
Hence it is enough to show that any formula in $(\Sa W, \spcbrckt)$ in vector variables $x_1,\ldots,x_n$ and scalar variables $y_1, \ldots, y_m$ is equivalent to a formula of the form
$$ \vartheta(x_1, \ldots, x_n, y_1, \ldots, y_m, g_1(x_{i_{1,1}},\ldots,x_{1,k}),\ldots,g_d(x_{i_{d,1}},\ldots,x_{d,k}))$$
for some formula $\vartheta(z_1,\ldots,z_{n + m + d})$ in $\Sa W$, Lie monomials $g_1,\ldots,g_d$, and indices $i_{j, l}$ from $\nset$.
This follows by the quantifier elimination given in \cite[Lemma~3.2]{delbée2024twosortedtheorynilpotentlie} together with the description of quantifier-free formulas given in \cite[Lemma~4.2]{delbée2024twosortedtheorynilpotentlie}.
(The description of terms is only written to cover the case when $T$ is the theory of an algebraically closed field but immediately generalizes.)
\end{proof}

We now consider the case of Proposition~\ref{prop;vecT} when $T = \Th(\F_p)$.
Recall that $\vvec_p$ is the (one-sorted) theory of $\F_p$-vector spaces and $\vvec_p$ is bi-interpretable with $\vvec_T$ in this case.

\begin{proposition}\label{prop;f f}
Fix $k\ge 2$ and a prime $p$.
Then $D_k(\vvec_p)$ is trace equivalent to the theory of the \Fraisse limit of the class of finite nil-$k$ Lazard Lie algebras over $\F_p$.
If $k < p$ then $D_k(\vvec_p)$ is trace equivalent to the theory of the \Fraisse limit of the class of finite nil-$k$ Lazard groups of exponent $p$.
\end{proposition}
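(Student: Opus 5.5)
The plan is to reduce both claims to Proposition~\ref{prop;vecT} with $T = \Th(\F_p)$, using that $\vvec_T$ is bi-interpretable with $\vvec_p$ (Proposition~\ref{prop;infn}(1)). By Corollary~\ref{cor;dock}(1), $D_k(\vvec_T)$ is then trace equivalent to $D_k(\vvec_p)$. So it suffices to show that the Fra\"iss\'e limit $\Sa L$ of finite nil-$k$ Lazard Lie algebras over $\F_p$ is trace equivalent to (indeed, I expect, bi-interpretable with) a model of $\nil^k_T$.

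\textbf{First claim.} Since $\F_p$ is finite, scalar multiplication and the $\spn_{n,i}$ functions are $\emptyset$-definable in the one-sorted Lie algebra: $\spn_{n,i}$ is given by a finite disjunction over coefficient tuples in $\F_p^n$. Hence models of $\leftindex_{\_}\nil^k_T$ are, up to this definitional expansion and the finite sort $\F_p$, exactly Lazard nil-$k$ Lie algebras over $\F_p$ with $P_0, \ldots, P_k$ named. I would check that $\nil^k_T$ is the theory of $\Sa L$. Finitely generated substructures are finite, so the age of the countable model of $\nil^k_T$ is the class of finite Lazard nil-$k$ Lie algebras. That model is ultrahomogeneous because $\nil^k_T$ has quantifier elimination (Fact~\ref{fact;vecT}) and is $\aleph_0$-categorical; categoricity follows from quantifier elimination in a language whose finitely generated substructures are finite and uniformly bounded in size by the number of generators. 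A countable ultrahomogeneous structure with the right age is the Fra\"iss\'e limit. One point to verify is that the infinite-dimensionality requirements on the quotients $P_i/P_{i+1}$ are automatic in the limit, which follows from the extension property. Combining this with Proposition~\ref{prop;vecT} gives the first claim.

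\textbf{Second claim.} For $k < p$, I would use the Lazard correspondence: the Baker--Campbell--Hausdorff formula and its inverse give mutually inverse, term-definable translations between nil-$k$ Lie algebras over $\F_p$ and nil-$k$ groups of exponent $p$ on the same underlying set. These translations carry Lazard sequences of subalgebras to Lazard sequences of subgroups, using that BCH terms preserve the filtration conditions $[P_i,P_j]\subseteq P_{\min\{i+j,k\}}$. The correspondence restricts to an isomorphism of categories on finite objects preserving embeddings, so it transports the Fra\"iss\'e class and hence the Fra\"iss\'e limit. The two limits are therefore interdefinable on the same universe, and the second claim follows from the first.

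\textbf{Main obstacle.} The step I expect to be hardest is verifying that the Lazard correspondence respects Lazard sequences, and not merely the ambient nilpotency class. This requires checking that subgroups in the filtration correspond to Lie subalgebras and that the commutator conditions transfer. The condition $k<p$ is needed exactly so that the BCH denominators are invertible. I would handle this by citing the standard filtered version of Lazard's correspondence (for example via the discussion in \cite{2024modeltheoreticpropertiesnilpotentgroups}), which shows that the group commutator and the Lie bracket differ by higher BCH terms lying deeper in the filtration.
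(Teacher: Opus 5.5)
Your proposal is correct and follows the paper's route: reduce to Proposition~\ref{prop;vecT} with $T=\Th(\F_p)$, and for $k<p$ transfer along the Lazard correspondence. The only difference is that you verify by hand what the paper takes from Fact~\ref{fact:laz}, namely the Fra\"iss\'e-class property, the identification of $\nil^k_T$ with the theory of the limit up to bi-interpretation (which the paper leaves implicit), and the bidefinability of the Lie and group limits; you do this via quantifier elimination and $\aleph_0$-categoricity of $\nil^k_T$ and the filtered Lazard correspondence.
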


Proposition~\ref{prop;f f} follows from Proposition~\ref{prop;vecT} and Fact~\ref{fact:laz} below.
Suppose $k < p$.
We apply the Lazard correspondence between finite nil-$k$ groups of  exponent $p$ and finite nil-$k$ Lie algebras over $\F_p$~\cite{Lazard}.
The Lazard correspondent $\Gamma_\mathrm{Laz}$  of a finite exponent $p$ nil-$k$ group $\Gamma$ has the same domain as $\Gamma$ and  $\Gamma_\mathrm{Laz}$ is interdefinable with $\Gamma$~\cite[\S~2.3]{2024modeltheoreticpropertiesnilpotentgroups}.
Fact~\ref{fact:laz} is due to d'Elb\'{e}e, M\"uller, Ramsey and Siniora~\cite[Thm.~4.37 and Cor.~4.39]{2024modeltheoreticpropertiesnilpotentgroups}.

\begin{fact}
\label{fact:laz}
Fix $k\ge 2$ and a prime $p$.
\begin{enumerate}
[leftmargin=*]
\item Finite nil-$k$ Lazard Lie algebras over $\F_p$ form a \Fraisse class.
Let $\Sa A_{p,k}$ be the \Fraisse limit.
Then $\Sa A_{p,k}$ is interdefinable with its underlying Lie algebra.
\item  Suppose $k < p$.
Then finite nil-$k$ Lazard groups of exponent $p$ form a \Fraisse class.
Let $\Sa G_{p,k}$ be the \Fraisse limit.
Then $\Sa G_{p,k}$ is interdefinable with its underlying group and furthermore the underlying group is the Lazard correspondent of the underlying Lie algebra of $\Sa A_{p,k}$.
It follows that $\Sa G_{p,k}$ is bidefinable with $\Sa A_{p,k}$.
\end{enumerate}
\end{fact}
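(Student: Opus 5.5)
We treat the two items separately. The Lie algebra case carries the real content; the group case is transferred from it through the Lazard correspondence. Throughout we view a nil-$k$ Lazard Lie algebra over $\F_p$ as a structure in the language of $\F_p$-Lie algebras with unary predicates $P_0, \ldots, P_k$. Since $\F_p$ is finite, scalar multiplication is given by finitely many unary functions, so the language is finite.

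For (1), we check the \Fraisse axioms.

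\emph{Heredity.} A finitely generated substructure is a subalgebra $A$, and $P_i \cap A$ is again a Lazard sequence. Finitely generated substructures are finite, and there are only countably many isomorphism types.

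\emph{Joint embedding.} The direct sum $\Sa A \oplus \Sa B$ with $P_i(\Sa A) \oplus P_i(\Sa B)$ works.

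\emph{Amalgamation.} This is the main obstacle. Given $\Sa A \subseteq \Sa B_1, \Sa B_2$, we will first reduce to the case where $\Sa B_2$ is generated over $\Sa A$ by a single element $b$ of some filtration level $j$, meaning $b \in P_j \setminus P_{j+1}$. Then $\Sa B_2$ is a quotient of the "Lazard free extension" $\Sa A\langle x \rangle$, in which $x$ is assigned weight $j$. We build this as follows. Take the free nil-$k$ algebra on $A \cup \{x\}$ with a weighted filtration, in which a monomial of total weight $\ge i$ lies in $P_i$. Quotient by the relations of $\Sa A$, and truncate weights $\ge k$.

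The amalgam will be $\Sa B_1 \langle x \rangle / J$, where $J$ is the ideal generated by the images of the relations defining $\Sa B_2$ over $\Sa A$. The key check is that $\Sa B_1$ and $\Sa B_2$ embed, with the correct filtrations, i.e.\ $J \cap \Sa B_1 = 0$ and $J \cap \Sa A\langle x\rangle$ is exactly the kernel of the map onto $\Sa B_2$.

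I would try to prove this with a weighted Hall-basis normal form, in the spirit of the Claim in the proof of Proposition~\ref{prop;vecT}. The idea is to use a basis of $\Sa B_1$ extending a basis of $\Sa A$, and Hall monomials involving $x$. One then shows that the relations only involve monomials in which $x$ occurs, together with elements of $\Sa A$. Should this bookkeeping become unmanageable, I would fall back on an explicit vector-space construction: take $B_1 \oplus (B_2/A)$ and define the bracket piece by piece, degree by degree, along the filtration. Finiteness is then immediate, since everything is a finite-dimensional $\F_p$-space.

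\emph{Interdefinability.} For the second claim of (1), I will show that in $\Sa A_{p,k}$
\[
P_i = \{a : [a, y_1, \ldots, y_{k-i}] = 0 \text{ for all } y_1, \ldots, y_{k-i}\},
\]
which is the $(k-i)$-th term of the upper central series. The inclusion $\subseteq$ follows from the Lazard condition $[P_i, P_j] \subseteq P_{i+j}$ together with $P_k = 0$. For $\supseteq$, take $a \notin P_i$. We use the extension property of the \Fraisse limit to adjoin new elements $y_1, \ldots, y_{k-i}$ of level $0$, freely, so that $[a, y_1, \ldots, y_{k-i}] \neq 0$. This is a one-step amalgamation of the type constructed above; it is consistent because $a \notin P_i$ means the iterated bracket is only forced into $P_{i'+k-i}$ with $i' < i$, hence not forced to vanish.

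For (2), where $k < p$, we use the Lazard correspondence: the Baker--Campbell--Hausdorff formula and its inverse have denominators invertible modulo $p$. These give an isomorphism of categories between finite nil-$k$ groups of exponent $p$ and finite nil-$k$ Lie algebras over $\F_p$, on the same underlying sets. The correspondence preserves subobjects and sends normal subgroups to ideals. One checks from the BCH formula that group commutators and Lie brackets generate the same filtration conditions, so Lazard sequences correspond.

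Hence the two classes are equivalent via term-definable functors, so the group class is a \Fraisse class and its limit $\Sa G_{p,k}$ is the Lazard correspondent of $\Sa A_{p,k}$, by uniqueness of \Fraisse limits. Because $k < p$, the BCH operations are given by finite terms, so the group and Lie structures are mutually definable on the same domain. Together with (1), this yields the interdefinability and bidefinability claims.
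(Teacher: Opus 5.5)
The paper does not prove this fact. It is quoted from \cite[Thm.~4.37, Cor.~4.39]{2024modeltheoreticpropertiesnilpotentgroups}, so your outline has to stand on its own. The skeleton is sensible, and part (2) is fine granted (1) and the standard fact that Lazard series correspond under the Lazard correspondence.

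\textbf{The gap: amalgamation in (1).} This is the real content of the cited theorem, and your sketch does not establish it.
\begin{enumerate}
\item Imposing only the Lie relations of $\Sa B_2$ over $\Sa A$ (your ideal $J$), and giving $\Sa B_1\langle x\rangle/J$ the weight filtration, does not make $\Sa B_2$ a substructure. The filtration of $\Sa B_2$ is not determined by the levels of $A$ and of $b$, even though $A\cup\{b\}$ generates $\Sa B_2$ as a Lie algebra.
\item For instance, $[b,a]$ with $a\in A$ may lie strictly deeper than its weight. That forces $[[b,a],u]$ into a deeper level (possibly $0$) for every $u\in B_1$, and $J$ imposes nothing of the kind.
\item So $J\cap B_1=0$ and $J\cap A\langle x\rangle=\ker(A\langle x\rangle\to B_2)$ only give embeddings of Lie algebras. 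They say nothing about $P_i\cap B_1$ or $P_i\cap B_2$ in the amalgam.
\item Once you add the filtration constraints coming from $\Sa B_2$, you must still prove that nothing in $B_1$ is killed or pushed into a deeper $P_i$. The relations need not be homogeneous in $x$: if $[b,A]\subseteq A$, then $\mathrm{ad}\,b$ is a derivation of $A$ that has to be extended across $B_1$. No argument is given for this step.
\item This step is exactly where the predicates matter. Without them, amalgamation already fails for nil-$2$ Lie algebras: $z=[x,y]$ is central in a Heisenberg algebra $\Sa B_1$, while $[z,w]\neq 0$ in $\Sa B_2$.
\end{enumerate}

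\textbf{The fallback on $B_1\oplus(B_2/A)$ cannot work,} because amalgams generally need new elements. Take $k=3$, with brackets raising the level by one. Let $\Sa B_1=\langle x,y,z:[x,y]=z\rangle$, $\Sa A=\langle z\rangle$, and $\Sa B_2=\langle z,w,v:[z,w]=v\rangle$. Put $z\in P_1\setminus P_2$ on both sides and $v\in P_2$. Jacobi forces $[x,[y,w]]-[y,[x,w]]=v$. A short computation, using $[x,z]=[y,z]=0$ and $\gamma_4=0$, shows that no Lie algebra of class $3$ on the $5$-dimensional space $B_1+B_2$ restricts to both.

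\textbf{Two smaller points.}
\begin{enumerate}
\item Your argument that $P_i=Z_{k-i}$ in $\Sa A_{p,k}$ does go through once $\Sa A_{p,k}$ is known to exist. The extension you need is the free Lie algebra on $a$ (at its level $i'<i$) and $y_1,\ldots,y_{k-i}$ (at level $0$), graded by weight and truncated. There $[a,y_1,\ldots,y_{k-i}]\neq 0$, since the word $ay_1\cdots y_{k-i}$ has coefficient $1$. The substructure $\langle a\rangle$ gets the right level via the retraction $y_j\mapsto 0$. As written, however, you route this step through the same unproved amalgamation construction.
\item The inclusion $P_i\subseteq Z_{k-i}$ does not follow from $[P_i,P_j]\subseteq P_{i+j}$ with $P_0=\Gamma$ read literally, as the paper displays it. Under that reading, bracketing with elements of $P_0$ does not raise the level, and $\Gamma,0,\ldots,0$ would be a Lazard sequence for every Lie algebra. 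You need the convention in which brackets raise the level by one, e.g.\ $[P_i,P_j]\subseteq P_{i+j+1}$. Your count $i'+k-i$ already uses it tacitly.
\end{enumerate}
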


We finally describe another theory that is trace equivalent to $D_2(\vvec_p)$.
Let $\mathrm{Bil}_p$ be the class of two-sorted structures of the form $(V,W,\beta)$ where $V$ and $W$ are finite $\F_p$-vector spaces and $\beta$ is an alternating bilinear form $V\times V \to W$.
Baudisch showed that if $p > 2$ then $\mathrm{Bil}_p$ is a \Fraisse class whose limit is bidefinable with $\Sa A_{p,2}$~\cite{BAUDISCH_nilpotent}.
Therefore Proposition~\ref{prop;p2} is a consequence of Proposition~\ref{prop;f f}.

\begin{proposition}
\label{prop;p2}
If $p$ is an odd prime then $D_2(\vvec_p)$ is trace equivalent to the theory of the \Fraisse limit of $\mathrm{Bil}_p$.
\end{proposition}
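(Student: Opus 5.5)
The statement is essentially a corollary: chain Baudisch's bidefinability with the $k=2$ case of Proposition~\ref{prop;f f}. Let $\Sa B_p$ be the \Fraisse limit of $\mathrm{Bil}_p$, and write $\Sa A_{p,2}$ for the \Fraisse limit of the finite nil-$2$ Lazard Lie algebras over $\F_p$, as in Fact~\ref{fact:laz}.

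First, Proposition~\ref{prop;f f} with $k = 2$ gives that $D_2(\vvec_p)$ is trace equivalent to $\Th(\Sa A_{p,2})$. This needs no parity assumption; only $k \ge 2$ is used.

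Second, for $p$ odd, Baudisch's result~\cite{BAUDISCH_nilpotent} says that $\mathrm{Bil}_p$ is a \Fraisse class whose limit $\Sa B_p$ is bidefinable with $\Sa A_{p,2}$. We regard the two-sorted $\Sa B_p$ as one-sorted, in the usual way. Bidefinable structures define each other, so in particular each trace defines the other: take the definable bijection as the witnessing injection, and note that pullbacks of definable sets are definable. Hence $\Th(\Sa B_p)$ and $\Th(\Sa A_{p,2})$ are trace equivalent.

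Third, trace equivalence is transitive, as observed after the definitions in Section~\ref{section:k-trace def}. Composing the two trace equivalences gives that $D_2(\vvec_p)$ is trace equivalent to $\Th(\Sa B_p)$.

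The only point requiring care is whether Baudisch's bidefinability matches the Lazard-structure version used in Fact~\ref{fact:laz}. By Fact~\ref{fact:laz}(1), $\Sa A_{p,2}$ is interdefinable with its underlying Lie algebra, so it does not matter whether the Lazard sequence is included. For nil-$2$ the sequence is just $V \supseteq P_1 \supseteq 0$, with $P_1$ central and containing the commutators.

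If one wanted to avoid citing Baudisch, a direct argument is available.
\begin{itemize}
\item[(i)] $\Sa B_p$ is $2$-trace definable in $\vvec_p$. Use $\beta$ together with identity maps on the sorts. By \Fraisse quantifier elimination, formulas reduce to linear conditions on the elements of $V$ and the values $\beta(v_i, v_j)$, and these sit inside the vector space $V \sqcup W$.
\item[(ii)] $\Th(\Sa B_p)$ trace defines $D_2(\vvec_p)$ via Lemma~\ref{lem;f}. Given $\sigma\colon \nset^2 \to W$, choose independent $a^1_1, \ldots, a^1_n, a^2_1, \ldots, a^2_n \in V$. Any prescribed values $\beta(a^1_i, a^2_j) = \sigma(i,j)$ extend to an alternating form on a finite subspace. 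This is realized in $\Sa B_p$ by the extension property of the \Fraisse limit.
\end{itemize}
Here the main obstacle is the mild point that the two-sorted Lemma~\ref{lem;f} is applied with $O = W$ trace defined via inclusion.
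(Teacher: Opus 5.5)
Your proposal is correct and follows the paper's argument: the paper also obtains the statement by combining Baudisch's result, that the limit of $\mathrm{Bil}_p$ is bidefinable with $\Sa A_{p,2}$ for $p > 2$, with the $k=2$ case of Proposition~\ref{prop;f f}. Your optional direct sketch, via $2$-trace definability through $\beta$ and Lemma~\ref{lem;f}, is the alternative the paper has in mind when it remarks that the result can also be proven along the lines of Proposition~\ref{prop;dobro}.
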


This can also be proven directly along the lines of Proposition~\ref{prop;dobro}.




\subsection{Multilinear forms}\label{section;alt}
Fix $k\ge 2$.
Let $L$ be the language of rings, $\F$ be a field, and $T=\Th(\F)$.
We consider a theory of vector spaces over models of $T$ equipped with alternating $k$-linear forms.
We let $\lalt$ be the expansion of $\lvec$ by a $k$-ary function $\beta$ and an $n$-ary relation $R_\varphi$ for every $n$-ary formula in the language of rings.
Given $\E \models T$, an $\E$-vector space $V$, and an alternating $k$-linear form $V^k \to \E$, we produce an $\lalt$-structure by considering $(V, \E)$ to be an $\lvec$-structure as described above, letting $\beta$ be the form, and letting each $n$-ary $R_\varphi$ define the subset of $\E^n$ defined by $\varphi$.
We will simply denote this structure by $(V, \E, \beta)$ and let $\mathrm{Alt}_{\F,k}$ be the theory of such structures.

\medskip
Chernikov and Hempel define a notion of non-degeneracy for multilinear forms and show that the $\lalt$-theory $\mathrm{Alt}^*_{T,k}$ of models $(V, \E, \beta)$ of $\mathrm{Alt}_{\F,k}$ with $V$ infinite-dimensional and $\beta$ non-degenerate is complete and admits quantifier elimination~\cite[Thm.~2.19]{chernikov2025ndependentgroupsfieldsiii}.
When $k = 2$ their notion of non-degeneracy agrees with the usual notion.
Furthermore every model of $\mathrm{Alt}_{T,k}$ embeds into a model of $\mathrm{Alt}^*_{T,k}$ with the same scalar field~\cite[Lemma~2.4]{chernikov2025ndependentgroupsfieldsiii}.
In particular $\mathrm{Alt}^*_{T,k}$ is the model companion of $\mathrm{Alt}_{T, k}$ and there is a model of $\mathrm{Alt}^*_{T, k}$ with scalar field $\F$.

\begin{proposition}\label{prop;ccs}
Let $T$ be the theory of a field $\F$.
Then $\mathrm{Alt}^*_{T,k}$ is trace definable in $D_k(\vvec_T)$ and trace defines $D_k(T)$.
If $\F$ is characteristic zero then $\mathrm{Alt}^*_{T,k}$ is locally  trace equivalent to $D_k(T)$ and if $\F$ has infinite imperfection degree then $\mathrm{Alt}^*_{T, k}$ is trace equivalent to $D_k(T)$.
\end{proposition}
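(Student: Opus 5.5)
The proof has three parts: show $\altT$ is $k$-trace definable in $\vvec_T$, show $\altT$ trace defines $D_k(T)$, and then combine these with earlier results for the final claims.

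\emph{Upper bound.} By Theorem~\ref{thm;dock}(1), to show $\altT$ is trace definable in $D_k(\vvec_T)$ it suffices to show that $\altT$ is $k$-trace definable in $\vvec_T$. Fix $(V,\E,\beta)\models\altT$ and Morleyize the scalar sort. The witnessing functions will be $\beta\colon V^k\to\E$, viewed as a function into the scalar sort of the $\vvec_T$-model $(V,\E)$, together with the identity $V\to V$. By Lemma~\ref{lem:ka}(6), mixed arities $\le k$ are allowed.

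By the quantifier elimination of Chernikov--Hempel \cite[Thm.~2.19]{chernikov2025ndependentgroupsfieldsiii}, every formula in vector variables $x$ and scalar variables $y$ is a boolean combination of atomic $\lalt$-formulas. So we must check that every $\lalt$-term is built from $\lvec$-terms applied to the variables and to values $\beta(t_1,\ldots,t_k)$, where the $t_i$ are vector terms. The key observation is that vector terms are always $\E$-linear combinations of the $x_i$ with scalar-term coefficients. By multilinearity,
\[
\beta\Bigl(\sum_i s_{1,i}x_i,\ldots,\sum_i s_{k,i}x_i\Bigr)
\]
expands into a polynomial in the scalar coefficients and the values $\beta(x_{i_1},\ldots,x_{i_k})$. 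This gives a term-normal-form claim analogous to the $\Delta$-function claim in Proposition~\ref{prop;infn}: every scalar term is an $\Sa F$-definable function of the $y$'s, of the values $\beta(x_{i_1},\ldots,x_{i_k})$, and of $\spn$-values of vector terms. Closure under $\spn$ needs no extra argument, because $\spn$ is already part of $\lvec$. Hence every formula has the form required by Proposition~\ref{prop:anudda}. The main obstacle is the bookkeeping of this induction on terms, since $\spn$ and $\beta$ interleave. I would handle it by proving simultaneously that scalar terms are $\lvec$-terms in the augmented tuple $(x,y,\beta(\bar x))$, where $\beta(\bar x)$ denotes the tuple of values $\beta(x_{i_1},\ldots,x_{i_k})$, and that vector terms are linear combinations of the $x_i$ with such scalar coefficients.

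\emph{Lower bound.} To show $\altT$ trace defines $D_k(T)$, apply Lemma~\ref{lem;f} with $\Sa O=\F$ sitting as the scalar sort of a model $(V,\F,\beta)$; trace definability of $\F$ via this inclusion is clear. Take $g=\beta$ on $X=V$. Given $\sigma\colon\{1,\ldots,n\}^k\to\F$, we need vectors $a^i_j$ with
\[
\beta(a^1_{j_1},\ldots,a^k_{j_k})=\sigma(j_1,\ldots,j_k).
\]
Construct these in a free alternating setting. Let $U$ have basis $e^i_j$ for $i\le k$, $j\le n$. Define an alternating $k$-form on $U$ by prescribing its values on the basis wedges $e^1_{j_1}\wedge\cdots\wedge e^k_{j_k}$: these wedges are distinct and linearly independent in $\Lambda^kU$, since they involve distinct basis vectors. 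Prescribe $\sigma(j_1,\ldots,j_k)$ on these wedges and $0$ on all other basis wedges. This gives a model of $\mathrm{Alt}_{T,k}$ with scalar field $\F$ (after adding infinitely many further basis vectors). By \cite[Lemma~2.4]{chernikov2025ndependentgroupsfieldsiii} it embeds into a model of $\altT$ with the same scalar field, which gives the required vectors.

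\emph{Final claims.} If $\F$ has characteristic zero, Proposition~\ref{prop;infn}(2) says $\vvec_T$ is locally trace equivalent to $T$, so Corollary~\ref{cor;dock}(2) gives that $D_k(\vvec_T)$ is locally trace equivalent to $D_k(T)$. Sandwiching $\altT$ between $D_k(T)$ and $D_k(\vvec_T)$ then yields local trace equivalence of $\altT$ with $D_k(T)$. If $\F$ has infinite imperfection degree, Proposition~\ref{prop;infn}(3) gives mutual interpretability, hence trace equivalence, of $\vvec_T$ and $T$. Corollary~\ref{cor;dock}(1) then gives that $D_k(\vvec_T)$ is trace equivalent to $D_k(T)$, and the same sandwich yields trace equivalence of $\altT$ with $D_k(T)$.
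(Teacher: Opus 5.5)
Your proposal follows the paper's proof in all three parts. For the upper bound you use quantifier elimination plus the term normal form; you reprove that normal form by a simultaneous induction on terms, where the paper instead cites \cite[Lemma~5.9]{aldaim}. For the lower bound you use the exterior-power construction followed by \cite[Lemma~2.4]{chernikov2025ndependentgroupsfieldsiii}. For the final claims you use Proposition~\ref{prop;infn} together with Corollary~\ref{cor;dock}.

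One step needs a small repair. Lemma~\ref{lem;f} asks for the vectors $a^i_j$ inside a single fixed model $\Sa M$. Your construction instead produces a new model of $\altT$ for each $\sigma$.

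The simplest fix is the paper's. Apply Lemma~\ref{lem;f 0} instead, which allows the model to depend on the function $f\colon P^k\to\F$. Your construction then runs verbatim with basis $(e^i_p)_{i\le k,\,p\in P}$.

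Alternatively, you can keep Lemma~\ref{lem;f} and transfer the vectors. By the Chernikov--Hempel quantifier elimination, the existence of such $a^i_j$ is equivalent to a quantifier-free condition on the scalar parameters $\sigma(j_1,\ldots,j_k)$. That condition depends only on $\F$, so it holds in any fixed model of $\altT$ with scalar field $\F$.
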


\begin{proof}
Fix $(V, \F, \beta) \models \mathrm{Alt}^*_{T,k}$.
We first show that $(V, \F, \beta)$ is trace definable in $D_k(\vvec_T)$.
It is enough to show that $(V, \F, \beta)$ is $k$-trace definable in $(V, \F) \models \vvec_T$.
As noted above $(V, \F, \beta)$ admits quantifier elimination in $\lalt$ and by \cite[Lemma~5.9]{aldaim} any term in $(V, \F, \beta)$ in vector variables $v_1,\ldots,v_n$ and scalar variables $c_1, \ldots, c_m$ is equivalent to a term of the form 
$$t\left(v_1,\ldots,v_n, c_1, \ldots, c_m, \beta(v_{i_{1,1}},\ldots,v_{i_{1,k}}), \ldots, \beta(v_{i_{d,1}},\ldots,v_{i_{d,k}})\right)$$
for some $\lvec$-term $t$ and indices $i_{j,l}$ from $\{1,\ldots,n\}$.
Hence $\beta$ and the identity $\F \to \F$ together witness $k$-trace definability of $(V, \F, \beta)$ in $(V, \F)$.

\medskip
We now show that $\altT$ trace defines $D_k(T)$.
We apply Lemma~\ref{lem;f 0}.
Fix a set $P$ and a function $f \colon P^k \to \F$.
Let $V$ be the $\F$-vector space with basis $(x^i_p : (i, p) \in \{1,\ldots,k\} \times P)$.
Fix an arbitrary linear order on $P$ and order the $x^i_p$ according to the resulting lexicographic order on $\{1,\ldots,k\} \times P$.
Now let $W$ be the $k$th exterior power of $V$.
Then each $x^1_{p_1} \wedge \cdots \wedge x^k_{p_k}$ is an element of the standard ordered basis of $W$ and so these elements are linearly independent.
Hence there is a linear map $\gamma \colon W \to \F$ such that 
\[
\gamma(x^1_{p_1} \wedge \cdots \wedge x^k_{p_k}) = f(p_1, \ldots, p_k) \quad \text{for all $p_1, \ldots, p_k \in P$.}
\]
Now let $\beta \colon V^k \to \F$ be given by $\beta(v_1,\ldots,v_k) = \gamma(v_1\wedge\cdots\wedge v_k)$ for all $v_1,\ldots,v_k\in V$.
Then $\beta$ is a $k$-linear alternating form satisfying 
\[
\beta(x^1_{p_1}, \ldots, x^k_{p_k}) = f(p_1, \ldots, p_k) \quad \text{for all $p_1,\ldots,p_k \in P.$}
\]
Finally, embed $(V, \F, \beta)$ into a model of $\altT$ with the same scalar field.
\end{proof}

\subsection{Uryshon spaces}\label{section:ury}
Let $\Sa R = \rgroop$ be an ordered abelian group and $X$ be a set.
An $\Sa R$-valued metric $d$ on $X$ is a function $X^2 \to R$ satisfying the usual metric space axioms: $d(x, y) = 0$ iff $x = y$, $d$ is symmetric, and $d(x, z) \trianglelefteq d(x,y) + d(y, z)$.
We say that an $\Sa R$-valued metric $d$ on $X$ is universal if every finite $\Sa R$-valued metric space isometrically embeds into $(X, d)$ and is homogeneous if every isometry between finite subsets of $X$ extends to an isometry $X \to X$.
We say that an $\Sa R$-valued metric is {\bf Uryshon} if whenever  $(Y, d^*)$ is a finite $\Sa R$-valued metric space, then any isometric embedding of a finite subset of $Y$ into $X$ extends to an isometric embedding of $Y$ into $X$.
The classical Uryshon space is the unique up to isometry separable complete $\rgoup$-valued Uryshon metric space.

\begin{fact}\label{fact:gabe}
If $\Sa R$ is countable then there is a unique up to isometry universal homogeneous $\Sa R$-valued metric space $\Sa U_\Sa R$.
Any universal homogeneous $\Sa R$-valued metric space is Uryshon.
\end{fact}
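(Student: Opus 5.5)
The plan is to run the standard \Fraisse argument for the class $\Cal K_{\Sa R}$ of finite $\Sa R$-valued metric spaces, viewed as relational structures with a binary relation $D_r$ for each $r \in R$ ($D_r(x,y)$ iff $d(x,y) = r$). Since $R$ is countable this is a countable relational language. Isometric embeddings between metric spaces are exactly embeddings of these structures, because every pair of points satisfies exactly one $D_r$.

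First I will check the \Fraisse hypotheses.
\begin{enumerate}[leftmargin=*]
\item $\Cal K_{\Sa R}$ is closed under substructures.
\item It has countably many isomorphism types, since a finite space of size $n$ is determined by a finite table of elements of the countable set $R$.
\item It has amalgamation, which is the main step. Given finite spaces $A \subseteq B_1, B_2$ with $B_1 \cap B_2 = A$ and $A \neq \emptyset$, I define $d$ on $B_1 \cup B_2$ by extending $d_1$ and $d_2$ and setting
\[
d(b_1,b_2) = \min_{a \in A}\bigl(d_1(b_1,a) + d_2(a,b_2)\bigr) \quad \text{for $b_1 \in B_1 \setminus A$ and $b_2 \in B_2 \setminus A$.}
\]
The minimum exists because $A$ is finite and $\trianglelefteq$ is a linear order. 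Positivity holds because $b_1 \ne b_2$ and each summand is $\trianglerighteq 0$ with at least one strictly positive.
\item It has joint embedding, which is the case $A = \emptyset$. There I set $d(b_1,b_2) = \operatorname{diam}(B_1) + \operatorname{diam}(B_2) + \epsilon$ for a fixed $\epsilon \triangleright 0$. This requires $R$ nontrivial; if $R$ is trivial the only spaces are points and the statement is degenerate.
\end{enumerate}

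The verification of the triangle inequality for the amalgam is a case analysis on the location of the three points. If all three lie in one $B_i$, it is the triangle inequality there. If the points are $x,y \in B_1$ and $z \in B_2$, then for the inequality $d(x,z) \trianglelefteq d(x,y)+d(y,z)$ I pick $a$ realizing the minimum for $d(y,z)$ and use $d_1(x,a) \trianglelefteq d_1(x,y)+d_1(y,a)$. For the inequality $d(x,y) \trianglelefteq d(x,z)+d(z,y)$ I take minimizers $a, a'$ for $d(x,z)$ and $d(z,y)$ and use
\[
d_1(x,y) \trianglelefteq d_1(x,a) + d_1(a,a') + d_1(a',y) = d_1(x,a) + d_2(a,a') + d_1(a',y),
\]
then bound $d_2(a,a') \trianglelefteq d_2(a,z)+d_2(z,a')$. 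These computations only use that $\Sa R$ is an ordered abelian group. I expect this case check, and the choice of amalgam in the empty case, to be the only real obstacle.

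\Fraisse's theorem then yields a countable homogeneous structure $\Sa U_{\Sa R}$ whose age is $\Cal K_{\Sa R}$, which means a universal homogeneous $\Sa R$-valued metric space. The structure is unique up to isomorphism, that is up to isometry, among countable universal homogeneous spaces. I will read ``unique'' in the statement in this countable sense.

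For the second claim, let $(X,d)$ be universal and homogeneous, and let $Y_0 \subseteq Y$ be finite with $e \colon Y_0 \to X$ an isometric embedding. By universality there is an isometric embedding $g \colon Y \to X$. The map $e \circ (g|_{Y_0})^{-1}$ is an isometry between the finite sets $g(Y_0)$ and $e(Y_0)$. By homogeneity it extends to an isometry $\sigma \colon X \to X$. Then $\sigma \circ g$ is an isometric embedding $Y \to X$ extending $e$, so $d$ is Uryshon.
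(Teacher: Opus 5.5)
Your proposal is correct and follows the route the paper relies on. The paper cites Conant's Theorem~5.5 for the first claim, whose content is this Fra\"iss\'e argument with the shortest-path amalgam over $A$; the paper itself uses the same amalgam, via Conant's Def.~5.10, in Lemma~\ref{lem:ury model comp}. Your extension argument for the second claim is the one the paper calls immediate from the definitions. Reading uniqueness as uniqueness among countable spaces is the right interpretation. Your triangle-inequality check goes through once you note that for $y \in A$ the min-formula still returns $d_2(y,z)$, by the triangle inequality in $B_2$.
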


The second claim of Fact~\ref{fact:gabe} is immediate from the definitions.
The first is~\cite[Thm.~5.5]{Conant_distance}.
If $\Sa R = (\Q; +, <)$ then $\Sa U_\Sa R$ is the rational Uryshon space and the completion of $\Sa U_\Sa R$ is the classical Uryshon space.
When $\Sa R = (\Z; +, <)$, $\Sa U_\Sa R$ is called the integral Uryshon space.

\medskip
Now let $L$ be a language expanding the language of ordered abelian groups and $T$ be an $L$-theory expanding the theory of ordered abelian groups.
For our purposes a $T$-valued metric space is a two sorted structure $(X, \Sa M, d)$ where $\Sa M \models T$, $X$ is a set, and $d$ is an $\Sa M$-valued metric on $X$.
We refer to $X$ as the point sort and $\Sa M$ as the distance sort.
We say that a $T$-valued metric space is finite when the point sort is finite.

\medskip
It is clear from the definition that $T$-valued Uryshon spaces form an elementary class; we let $\tu$ be their theory.
Let $T_0$ be the reduct of $T$ to the language of ordered abelian groups.
Then $T_0$ has a countable model, so by Fact~\ref{fact:gabe} there is an Uryshon $T_0$-valued metric space.
It follows by the Robinson joint consistency theorem that $\tu$ is consistent, i.e. there are Uryshon $T$-valued metric spaces.

\begin{proposition}\label{prop:ury}
Let $T$ be as above.
Then $\tu$ is trace equivalent to $D_2(T)$.
\end{proposition}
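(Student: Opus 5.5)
Both directions will follow the pattern of Proposition~\ref{prop;dobro}.

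\emph{$D_2(T)$ trace defines $\tu$.} By Theorem~\ref{thm;dock}(1) it is enough to show that $\tu$ is $2$-trace definable in $T$. Fix $(X, \Sa M, d) \models \tu$ and Morleyize $T$ so that it has quantifier elimination. I would show that $d$ together with $\id_M$ witnesses $2$-trace definability of $(X, \Sa M, d)$ in $\Sa M$. This reduces to a relative quantifier elimination for $\tu$: every formula in point variables $x_1, \ldots, x_n$ and distance variables $y$ should be equivalent to $\vartheta((d(x_i, x_j))_{i,j}, y)$ for an $L$-formula $\vartheta$.

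To prove this I would run a back-and-forth between $\aleph_1$-saturated models. Take a partial map between small substructures consisting of finite $A \subseteq X$ and an elementary map on the distance sorts containing $d(A^2)$, and extend it by one new point $b$. The type of $b$ over $A$ is determined by the $L$-type of the tuple $(d(b, a))_{a \in A}$ over the distance parameters. On the other side, saturation of the distance sort lets us realize the image tuple $(r'_a)_{a \in A}$. This tuple satisfies the triangle inequalities with the image distances, since those inequalities are expressed in the $L$-type. Equality of $b$ with some point of $A$ is coded by $d(b, a) = 0$, and likewise transfers. The Uryshon property, applied to the finite metric space $A'$ extended by a point with distances $r'_a$, then produces the required point $b'$.

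Extending by new distance elements needs no work, because $d$ is not required to be onto. Completeness of $\tu$ follows similarly, starting from the empty map and using completeness of $T$. I expect this quantifier elimination to be the main obstacle. The delicate step is checking that Uryshon-ness is exactly the extension property needed, including the degenerate cases where the new point has distance $0$ to a point of $A$.

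\emph{$\tu$ trace defines $D_2(T)$.} I would apply Lemma~\ref{lem;f} with $\Sa O = \Sa M$ the distance sort, $m = 1$, $X$ the point sort and $g = d$. Condition (2) of the lemma holds because by the quantifier elimination the distance sort is stably embedded with induced structure $\Sa M$, so the inclusion witnesses trace definability. Given $\sigma \colon \nset^2 \to M$, I need points $a^1_1, \ldots, a^1_n, a^2_1, \ldots, a^2_n$ with $d(a^1_i, a^2_j) = \sigma(i,j)$. Since $d$ is a metric, its values are nonnegative, so $\sigma$ cannot be realized directly, and I would first modify it.

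Work in a sufficiently saturated model, and fix a positive $c \in M$ exceeding every $|\sigma(i,j)|$; for example $c = 1 + \sum_{i,j} |\sigma(i,j)|$ if the group is nontrivial. Build a finite metric space on $2n$ points as follows. Set
\[
d(a^1_i, a^2_j) = 3c + \sigma(i,j),
\]
and let distances within each side equal $3c$ for distinct points. All these values lie in $[2c, 4c]$, so the triangle inequality holds automatically. By the Uryshon property this finite metric space embeds in the model.

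Finally, let $g(x, y) = d(x, y) - 3c$. This is definable with the parameter $c$, and it realizes $\sigma$ on the chosen points. Lemma~\ref{lem;f} allows an $\Sa M$-definable $g$, and since $\Sa M$-definability permits parameters, using $c$ is fine. Lemma~\ref{lem;f} then gives that $\tu$ trace defines $D_2(T)$.
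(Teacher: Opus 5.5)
The first direction is correct. The second direction has a quantifier-order gap.

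\textbf{The gap.} In Lemma~\ref{lem;f}, the model, the definable set and the definable function $g$ are fixed \emph{before} $n$ and $\sigma\colon\nset^2\to O$ are quantified. You choose $c$ after $\sigma$, so your $g=d-3c$ depends on $\sigma$, and the hypothesis of the lemma is not verified. With $\Sa O$ equal to the entire distance sort, no better choice of $c$ repairs this, and saturation does not help. Any fixed $g=d-3c$ is bounded below by $-3c$, so it cannot realize a $\sigma$ that takes a value below $-3c$.

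\textbf{The fix.} The missing idea, which is what the paper's proof is built around, is to take $\Sa O$ to be a \emph{bounded} elementary substructure of the distance sort. Start with a model $(X_0,\Sa O,d_0)\models\tu$. Pass to an elementary extension $(X,\Sa M,d)$ containing some $\delta\in M$ above every element of $O$; then $\Sa O\preceq\Sa M$. The inclusion $O\hookrightarrow M$ witnesses trace definability simply by elementarity, so stable embeddedness is not needed. Now the single function $g=d-3\delta$ works for every $\sigma\colon\nset^2\to O$, using your $2n$-point space, whose distances lie in $[2\delta,4\delta]$.

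Alternatively, make $c$ an extra variable in the compactness argument that derives Lemma~\ref{lem;f 0} from Lemma~\ref{lem;f}; Lemma~\ref{lem;f 0} does allow $g$ to depend on $f$.

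Once repaired, your finitary route is a mild simplification of the paper's. The paper uses Lemma~\ref{lem;f 0}, so it must first embed the metric space on $P\times\{1,2\}$ into a model of $\tu$, which is the first part of Lemma~\ref{lem:ury model comp}. You only need the Uryshon extension property inside one fixed model.

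\textbf{First direction.} You prove the relative quantifier elimination of Corollary~\ref{cor:ury} by a back-and-forth between $\aleph_1$-saturated models. It uses only the one-point Uryshon extension property and saturation of the distance sort. The paper instead proves in Lemma~\ref{lem:ury model comp} that $\tu$ is a model companion, and checks amalgamation and joint embedding. Your argument is more direct, but relies on the consistency of $\tu$, which the paper obtains separately.
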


Recall that $\Sa R$ is said to be non-singular if $pR$ has finite index in $R$ for every prime $p$.
All non-singular ordered abelian groups are trace equivalent~\cite[Prop.~4.19]{trace2}.
It follows that if $\Sa R$ is non-singular and countable then $\Sa U_\Sa R$ is trace equivalent to the classical Uryshon space.
In particular the integral Uryshon space is trace equivalent to the classical Uryshon space.
Combining with Proposition~\ref{prop;dobro} we also see that the classical Uryshon space, when considered as an $\rfield$-valued metric space, is trace equivalent to infinite-dimensional Hilbert space.
Proposition~\ref{prop:ury} requires several steps.
We first show that $\tu$ is the relative model companion of the theory of $T$-valued metric spaces.

\begin{lemma}\label{lem:ury model comp}
Let $T$ be as above and suppose that $T$ admits quantifier elimination.
Then $\tu$ is complete, is the model companion of the theory of $T$-valued metric spaces, and admits quantifier elimination.
\end{lemma}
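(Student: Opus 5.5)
The plan follows the pattern of Lemma~\ref{lem:blow}: show that $\tu$ is the model companion by checking existential closedness of Uryshon spaces, and get quantifier elimination from amalgamation for the universal part. Completeness then comes from joint embedding.

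\emph{Embedding into Uryshon spaces.} First I show that every $T$-valued metric space $(X, \Sa M, d)$ embeds into a model of $\tu$ with the same distance sort. This is a closure argument like Lemma~\ref{lem:exists P}. Given a finite metric space $Y$ with values in $M$, a finite $Y_0 \subseteq Y$, and an isometric embedding $Y_0 \hookrightarrow X$, adjoin the new points $Y \setminus Y_0$. Define distances from a new point $y$ to each $x \in X$ by the standard amalgamation formula
\[
d(y,x) = \min_{y_0 \in Y_0} \bigl( d(y,y_0) + d(y_0,x) \bigr),
\]
where $\min$ refers to $\trianglelefteq$ and $Y_0 \ne \emptyset$. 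If $Y_0 = \emptyset$, take $d(y,x)$ to be a large fixed element, for instance the maximum of the distances in $Y$ plus some positive element. Iterating $\omega$ times and taking the union gives an Uryshon space. The triangle inequality for this one-point free amalgam is the usual check and uses only the ordered group axioms.

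\emph{Existential closedness and model completeness.} Let $(X, \Sa M, d) \models \tu$ and let $(X^*, \Sa M^*, d^*)$ be a $T$-valued metric space extending it. Quantifier elimination for $T$ gives $\Sa M \preceq \Sa M^*$. Pass to a sufficiently saturated elementary extension $(Q, \Sa N, e)$ of $(X, \Sa M, d)$ and embed $\Sa M^*$ elementarily into $\Sa N$ over $M$. It remains to embed $X^*$ into $Q$ over $X$, isometrically, with distances read in $N$. By saturation this reduces to finite pieces: finitely many new points together with finitely many old points. The Uryshon property of $(Q, \Sa N, e)$ only handles finite metric spaces with values in $N$, and the configuration has values in $M^* \subseteq N$, so it applies. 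This gives the analogue of Lemma~\ref{lem:crucial embedd}, and model completeness follows.

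\emph{Quantifier elimination and completeness.} For quantifier elimination it suffices, as in Lemma~\ref{lem:blow}, to amalgamate models of $(\tu)_\forall$ over a common substructure $(X, \Sa M, d)$. Such models are pairs of a $T_\forall$-model and a metric space, where $\Sa M$ may only be a substructure of a model of $T$; it is still an ordered abelian group because $T$'s language expands that of ordered abelian groups. First amalgamate the distance sorts $\Sa M_1, \Sa M_2$ into $\Sa N$, which is possible since $T_\forall$ has amalgamation by quantifier elimination. Then amalgamate the point sorts $X_1, X_2$ over $X$ inside $N$ by the free amalgam:
\[
d(x_1,x_2) = \min_{x \in X} \bigl( d(x_1,x) + d(x,x_2) \bigr) \quad \text{for } x_1 \in X_1 \setminus X,\ x_2 \in X_2 \setminus X.
\]

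The main obstacle is that $X$ is possibly infinite, so this minimum need not exist. When it does not, I would instead work in a sufficiently saturated elementary extension of $\Sa N$ and choose, for each pair $(x_1,x_2)$, a value realizing the cut between
\[
\sup_{x} |d(x_1,x) - d(x,x_2)| \quad\text{and}\quad \inf_{x} \bigl( d(x_1,x) + d(x,x_2) \bigr).
\]
This cut is consistent by the triangle inequalities in $X_1$ and $X_2$. The choices must be made coherently across pairs so that triangles among new points from both sides still satisfy the triangle inequality; one way is to pick the values as close to the infimum as saturation allows. Alternatively, I would avoid this by reducing to finitely generated substructures and using the $\forall\exists$ characterization of quantifier elimination, where $X$ is finite and the minimum exists.

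Completeness then follows from model completeness plus joint embedding. Two models of $\tu$ have distance sorts that jointly embed because $T$ is complete, and their point sorts can then be placed at large mutual distances, giving a common extension.
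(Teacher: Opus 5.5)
Your proposal is correct. Its skeleton is the paper's: Uryshon spaces are existentially closed, $(\tu)_\forall$ has amalgamation, and joint embedding gives completeness. The differences are in two sub-steps.

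\emph{Embedding into a model of $\tu$.} The paper reduces to a countable ordered abelian group $\Sa R$, invokes Conant's existence of $\Sa U_{\Sa R}$, and finishes by compactness. You instead run a direct closure argument by iterated one-point free amalgams. Your route is self-contained and keeps the distance sort $\Sa M$ fixed; the paper's is shorter given the cited fact.

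\emph{Amalgamation over a possibly infinite base $X$.} The paper uses compactness to reduce to finite $X_1, X_2$ and takes the new distances in an elementary extension of $\Sa N$. Your second option uses instead that, since $\tu$ is model complete, it suffices to amalgamate over finitely generated substructures. The point sort of such a substructure is finite, so $\min_{x \in X}\bigl(d_1(x_1,x)+d_2(x,x_2)\bigr)$ exists in $\Sa N$ itself. It satisfies the triangle inequality even when $X_1, X_2$ are infinite. When $X=\emptyset$, place the two sides at a distance above all of $N$, as in your joint embedding step. This is a fine replacement. Note that what it relies on is the finitely-generated form of the QE test, not a ``$\forall\exists$ characterization'' of quantifier elimination.

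There are two small flaws to fix.

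First, your first option for the infinite base, choosing values ``as close to the infimum as saturation allows'', is not an argument. A cut need not have a closest realization, and making the choices coherent across pairs is exactly the difficulty. If you want to keep an infinite base, argue as the paper does, by compactness over finite subconfigurations.

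Second, your prescription for $Y_0=\emptyset$ in the closure step is wrong as stated. A constant value $\delta$ for $d(y,x)$ requires $d(x,x') \trianglelefteq 2\delta$ for all $x,x' \in X$. Since you keep $\Sa M$ fixed, no $\delta \in M$ works when the distances in $X$ are unbounded in $M$. The case is unnecessary anyway: for nonempty $X$, send one point of $Y$ to an arbitrary point of $X$ and apply the nonempty case.
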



\begin{proof}
We first need to show that any $T$-valued metric space embeds into an Uryshon $T$-valued metric space.
It suffices to treat the case when $T = \Th(\Sa R)$.
We may suppose that $\Sa R$ is countable.
Then $\Sa U_\Sa R$ exists and any countable $\Sa R$-valued metric space embeds into $\Sa U_\Sa R$.
It follows by an obvious compactness argument that any $T$-valued metric spaces embeds into an Uryshon $T$-valued metric space.

\medskip
Fix $(X, \Sa M, d) \models \tu$ and a $T$-valued metric space $(Y, \Sa N, e)$ extending $(X, \Sa M, d)$.
We show that $(X, \Sa M, d)$ is existentially closed in $(Y, \Sa N, e)$.
It follows that $\tu$ is the model companion of the theory of $T$-valued metric spaces.
Let $(X^*, \Sa M^*, d^*)$ be a $\max(|X|, |N|)^+$-saturated elementary extension of $(X, \Sa M, d)$.
It is enough to show that there is an embedding $(Y, \Sa N, e) \hookrightarrow (X^*, \Sa M^*, d^*)$ which fixes both $X$ and $M$ pointwise.
As $T$ admits quantifier elimination $\Sa N$ is an elementary extension of $\Sa M$, so by saturation we may suppose that $\Sa N$ is an elementary submodel of $\Sa M^*$.
Considering $e$ as a function $Y^2 \to M^*$ we have a $T$-valued metric space $(Y, \Sa M^*, e)$.
It follows by saturation and the definition of a $T$-valued Uryshon space that the inclusion $X \hookrightarrow X^*$ extends to an isometric embedding $Y \hookrightarrow X^*$.

\medskip
We now show that $\tu$ admits quantifier elimination.
By model completeness is it enough to show that $(\tu)_\forall$ has the amalgamation property.
Note that $(\tu)_\forall$ is the theory of $T_\forall$-valued metric spaces.
Let $(X, \Sa M, d)$ be a $T_\forall$-valued metric space and let $(X_i, \Sa M_i, d_i)$ be a $T_\forall$-valued metric space extending $(X, \Sa M, d)$ for $i = 1, 2$.
Now, $T_\forall$ has the amalgamation property as $T$ admits quantifier elimination.
Hence there is a model $\Sa N \models T_\forall$ and embeddings $\Sa M_i \hookrightarrow \Sa N$ such that the resulting square commutes.
We may therefore consider each $d_i$ as a function $X^2_i \to N$ and consider the $T_\forall$-valued metric spaces $(X_i, \Sa N, d_i)$.
We may also suppose that $X_1 \cap X_2 = X$.
It is enough to produce a metric $d_\cup$ on $X_\cup = X_1 \cup X_2$ taking values in an elementary extension of $\Sa N$ such that $d_\cup$ agrees with each $d_i$ on $X_i$.
By an obvious compactness argument we may suppose that $X_1, X_2$ are both finite.
We define an $\Sa N$-valued metric $d_\cup$ on $X_\cup$.
Given $a, b \in X_\cup$ we let
\begin{enumerate}[leftmargin=*]
\item $d_\cup(a, b) = d_i(a, b)$ when $a, b \in X_i$, and
\item $d_\cup(a, b) = \min\{ d_1(a, c) + d_2(c, b) : c \in X \}$ when $a \in X_1 \setminus X$, $b \in X_2 \setminus X$.
Note that the minimum exists by finiteness.
\end{enumerate}
Then $(X_\cup, d_\cup)$ is easily seen to be an $\Sa N$-valued metric space~\cite[Def.~5.10]{Conant_distance}.

\medskip
It remains to show that $\tu$ is complete.
By model completeness of $\tu$ it is enough to show that any two models of $(\tu)_\forall
$ jointly embed into a third.
Let $(X_i, \Sa M_i, d_i)$ be a $T_\forall$-valued metric space for $i = 1, 2$.
As $T$ is complete we may suppose that $\Sa M_1, \Sa M_2$ are both substructures of some $\Sa N \models T_\forall$.
As above we consider the $\Sa N$-valued metric spaces $(X_i, \Sa N, d_i)$ for $i = 1, 2$.
After possibly replacing $\Sa N$ with an elementary extension we suppose that there is $\delta \in N$ which is greater than every element of $M_1 \cup M_2$.
We may suppose that $X_1, X_2$ are disjoint.
Let $d$ be the $\Sa N$-valued metric on $X_1 \cup X_2$ such that $d$ agrees with $d_i$ on each $X_i$ and $d(a,b) = \delta$ when $a \in X_1$ and $b \in X_2$.
It is easy to see that $d$ is an $\Sa N$-valued metric.
\end{proof}

A point variable is a variable ranging over the point sort and a distance variable is a variable ranging over the distance sort.
Given a tuple $x = (x_1, \ldots, x_n)$ of point variables we let $\Delta(x)$ be the tuple consisting of all terms $d(x_i, x_j)$.

\begin{corollary}\label{cor:ury}
Let $T$ be as above.
Then $\tu$ is complete and every formula $\phi$ in point variables $x  = (x_1, \ldots, x_n)$ and distance variables $y = (y_1, \ldots, y_m)$ is equivalent to a formula of the form $\vartheta(\Delta(x), y)$ where $\vartheta(z_1, \ldots, z_{n^2 + m})$ is an $L$-formula.
\end{corollary}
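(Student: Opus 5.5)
The plan is to reduce to Lemma~\ref{lem:ury model comp} by Morleyizing, and then read off the normal form from quantifier elimination together with a description of terms. First I would replace $T$ by its Morleyization $T^m$ in the expanded language $L^m$. This is harmless: a $T$-valued Uryshon space expands uniquely to a $T^m$-valued Uryshon space. The point sort, the metric and the Uryshon condition are untouched, and the Uryshon condition only mentions the ordered group structure. So $\tu$ and $(T^m)_\mathrm{U}$ are definitionally equivalent, and every $L^m$-formula is equivalent modulo $T$ to an $L$-formula. Lemma~\ref{lem:ury model comp} then shows that $(T^m)_\mathrm{U}$ is complete and admits quantifier elimination, and completeness of $\tu$ follows immediately.

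Next I would describe the atomic formulas of the two-sorted language in the variables $x, y$. There are no function symbols with values in the point sort. Hence every point-sort term is a variable $x_i$, and the only atomic formulas involving only the point sort are equalities $x_i = x_j$. Every distance-sort term is an $L^m$-term built from the $y_j$ and terms of the form $d(x_i, x_j)$. Any $L^m$-function symbol applied to arguments can be unwound, so each distance term has the form $t(\Delta(x), y)$ for an $L^m$-term $t$. Thus each atomic formula of distance sort is $R(t_1(\Delta(x),y), \ldots)$ for a relation symbol $R$ of $L^m$, or an equality of such terms. In either case it has the form $\vartheta(\Delta(x), y)$.

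To absorb the point equalities, note that $x_i = x_j$ holds if and only if $d(x_i, x_j) = 0$, since $d$ is a metric. This is an $L$-formula in $\Delta(x)$. The formulas of the form $\vartheta(\Delta(x), y)$ are closed under boolean combinations. So every quantifier-free formula, and hence by quantifier elimination every formula, is equivalent to some $\vartheta(\Delta(x), y)$ with $\vartheta$ an $L^m$-formula. Translating back gives an $L$-formula.

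The only step requiring any care is the Morleyization bookkeeping. One has to check that the Uryshon axioms do not change and that the quantifier elimination of Lemma~\ref{lem:ury model comp} really applies to the Morleyized theory. This is routine because the hypotheses of that lemma only concern $T$ eliminating quantifiers and expanding ordered abelian groups.
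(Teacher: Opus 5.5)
Your proposal is correct and follows the paper's route: the paper simply states that the corollary follows from Lemma~\ref{lem:ury model comp} by Morleyization. You have supplied the routine details that the paper leaves implicit, namely the reduction to the Morleyized $T$, the analysis of terms (no function symbols into the point sort), and rewriting $x_i = x_j$ as $d(x_i, x_j) = 0$.
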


Corollary~\ref{cor:ury} follows from Lemma~\ref{lem:ury model comp} by Morleyization.
We now prove Proposition~\ref{prop:ury}, i.e. show that $\tu$ is trace equivalent to $D_2(T)$.

\begin{proof}
Corollary~\ref{cor:ury} shows that $\tu$ is $2$-trace definable in $T$, hence $\tu$ is trace definable in $D_2(T)$.
We show that $\tu$ trace defines $D_2(T)$.
We apply Lemma~\ref{lem;f 0}.
Fix $\Sa O \models T$, a set $P$, and a function $f \colon P^2 \to O$.
Let $\Sa N$ be an elementary extension of $\Sa O$ such that $O$ is not cofinal in $N$.
Fix an element $\delta$ of $N$ which is greater than any element of $O$.
Let $X = P \times \{1, 2\}$ and let $d \colon X^2 \to N$ be given by the following.
\begin{enumerate}[leftmargin=*]
\item $d((p_1, i_1), (p_2, i_2)) = 0$ if $i_1 = i_2$ and $p_1 = p_2$.
\item $d((p_1, i_1), (p_2, i_2)) = \delta$ if $i_1 = i_2$ and $p_1 \ne p_2$.
\item $d((p_1, i_1), (p_2, i_2)) = f(p_1, p_2) + \delta$ if $i_1 \ne i_2$.
\end{enumerate}
Note that $d$ is a $\Sa N$-valued metric on $X$.
Now let $(Y, \Sa N^*, d)$ be a model of $\tu$ extending $(X, \Sa N, d)$.
Finally, apply Lemma~\ref{lem;f 0} with $g \colon Y^2 \to N^*$ given by $g(a, b) = d(a, b) - \delta$ and $a^i_p = (p, i)$ for each $i = 1, 2$ and $p \in P$.
\end{proof}

\section{A non-example}\label{section:non example}
Let $p$ range over primes and fix $k \ge 2$.
An \textbf{extra-special $p$-group} is a group $\Gamma$ such that $a^p = 1$ for all $a \in \Gamma$, the center and commutator of $\Gamma$ agree, and the center of $\Gamma$ is  cyclic  of order $p$.
We let $\esp$ be the theory of infinite extra-special $p$-groups.
An extra-special $2$-group is abelian, so we only consider the case when $p$ is odd.
Felgner showed that $\esp$ is $\aleph_0$-categorical and hence complete~\cite{Felgner}.
This was the first ``algebraic" example of a strictly $2$-$\nip$ structure~\cite{hempel-field}.

\begin{thm}\label{thm:extra special}
Let $p$ be an odd prime.
Then $\esp$ is not locally trace equivalent to $D_k(T)$ for any theory $T$ and $k \ge 2$.
\end{thm}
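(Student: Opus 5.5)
The plan is to argue by contradiction, splitting according to whether $T$ has infinite models. Suppose $\esp$ is locally trace equivalent to $D_k(T)$ with $k \ge 2$. I will rely on three auxiliary results, each to be established in this section:
\begin{enumerate}[leftmargin=*]
\item[(i)] $\esp$ is trace equivalent to $\vvec_p \sqcup \hyp_2$, i.e. to the disjoint union of an infinite $\F_p$-vector space with the \Erdos-Rado graph.
\item[(ii)] If $T$ has infinite models and $k \ge 2$, then $D_k(T)$ is not locally trace definable in the disjoint union of a stable structure with a structure admitting quantifier elimination in a binary relational language. This will be proven via partition-wise stability.
\item[(iii)] Theorem~B.
\end{enumerate}

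\emph{Case 1: $T$ has infinite models.} Local trace equivalence gives that $D_k(T)$ is locally trace definable in $\esp$. By (i) and transitivity, $D_k(T)$ is then locally trace definable in $\vvec_p \sqcup \hyp_2$. Here $\vvec_p$ is stable, and $\hyp_2$ has quantifier elimination in a binary relational language. This contradicts (ii).

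\emph{Case 2: every model of $T$ is finite.} Then $T = \Th(\Sa F)$ for a finite structure $\Sa F$.
\begin{itemize}[leftmargin=*]
\item If $|F| \ge 2$, Proposition~\ref{prop:finite case} shows that $D_k(T)$ is trace equivalent to $\hyp_k$. So $\esp$ is locally trace definable in $\hyp_k$, which is the same as being locally $1$-trace definable in $\hyp_k$. But $\hyp_k$ admits quantifier elimination in a finite $k$-ary relational language. Moreover, an infinite extra-special $p$-group has abelian subgroups of every finite size: lifts of arbitrarily large isotropic subspaces of the symplectic space $\Gamma/Z(\Gamma)$, together with the center, give elementary abelian subgroups. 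Theorem~B therefore yields a contradiction.
\item If $|F| = 1$, then $D_k(T)$ is essentially a pure infinite set. Concretely, the function sort is a single point, so the induced structure on $P$ is trivial. In particular $D_k(T)$ is stable. Stability is preserved under local trace definability, whereas $\esp$ is unstable (it is strictly $2$-$\nip$, and the commutator relation has the order property). So again $\esp$ is not locally trace definable in $D_k(T)$.
\end{itemize}

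The hypothesis excluding the one-element model in Section~\ref{section:genk} can be handled directly, or the degenerate case can simply be dismissed as above.

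\emph{Main obstacle.} The main difficulty is (ii), the partition-wise stability argument. I would first define a formula $\phi(x;y)$ to be partition-wise stable if, after splitting the variables into blocks, it becomes stable whenever all but one block are fixed in a suitable sense. I would then show three things: this property is preserved under local trace definability; it holds in stable $\sqcup$ binary structures, since binary relations only couple pairs of variables; and it fails for $f(x_1,\dots,x_k) = y$-type configurations in $D_k(T)$ once $T$ has infinite models. The last point uses richness, which lets one encode an infinite linear order or half-graph through the values of $f$. Item (i) should be comparatively routine: it comes from the Felgner-style description of $\esp$ as an $\F_p$-symplectic space over its center. Item (iii) follows Oger's Ramsey argument.
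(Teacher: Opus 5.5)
Your top-level reduction matches the paper's: (i) is Proposition~\ref{prop:final final}, (iii) is Theorem~\ref{thm:fh}, and the paper also finishes by splitting into $T$ with infinite models versus $T$ the theory of a finite structure. Two of your choices differ but are fine. Folding $k>2$ into (ii), instead of using that $D_k(T)$ is $2$-$\ip$, works because $D_2(T)$ is trace definable in $D_k(T)$. Your stability argument for a one-element $T$ is correct; the paper excludes that case by its standing hypothesis.

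The gap is in your plan for (ii), which carries all the content. You propose a stability-type property of formulas, and you say it fails in $D_k(T)$ because a half-graph or linear order can be encoded through the values of $f$. No such property can separate $D_k(T)$ from stable $\sqcup$ binary. Take $\Sa F$ a two-element structure. Then $f(x_1,x_2)=a$ is a generic binary relation, so $D_2(\Th(\Sa F))$ encodes half-graphs exactly as in your sketch. Yet $D_2(\Th(\Sa F))$ is trace equivalent to the binary theory $\hyp_2$, which even trace defines $\dlo$ by Fact~\ref{fact:new hyp rel}(2). So whatever fails on your witness also fails on a binary structure, contradicting your claim that the property holds there. Notice also that your sketch never uses that $T$ has infinite models, and that hypothesis is essential.

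The missing idea is to count types relative to a small unary partition, rather than to look for orders. The paper's partition-wise $\kappa$-stability asks the following. For each $A$ with $|A|\le\kappa$ there must be a family $\Cal U$ of at most $\kappa$ subsets of $M$ such that every $\Cal U[n]$-type has at most $\kappa$ extensions in $\nabla^n_A(\Sa M,\Cal U)$.
\begin{itemize}
\item Binary theories satisfy this for $\kappa\ge 2^{|T|}$: once the one-types over $A$ are fixed, only parameter-free binary relations remain. This is the precise form of your ``binary relations only couple pairs.''
\item The property is preserved under finite disjoint unions and under trace definability.
\item It fails for $\tfeq$, which is $2$-trace definable in $\triv$ and hence trace definable in $D_2(\triv)$. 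Here $\kappa$ parameters give $\kappa$ generic equivalence relations $E(c_i,x,y)$ and $2^\kappa$ types of pairs. The Guingona--Parnes Ramsey result lets these be realized so that more than $\kappa$ of them sit above a single $\Cal U[2]$-type.
\end{itemize}
You also assert preservation under \emph{local} trace definability, but the paper only proves preservation under trace definability. It bridges this by reducing to $D_2(\triv)$, which is trace equivalent to $E^1_2$, a theory with quantifier elimination in a finite relational language. By Lemma~\ref{lem:ka}(5), local trace definability of $D_2(\triv)$ in any theory therefore coincides with trace definability.
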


This is a consequence of a series of results which are interesting on their own.
If $k > 2$ then $D_k(T)$ is $2$-$\ip$ and is hence not locally trace definable in $\esp$.
So it is enough to handle the case $k = 2$.
Recall that $\vvec_p \sqcup \hyp_k$ is the theory of the disjoint union of an infinite $\F_p$-vector space with a model of $\hyp_k$.

\begin{proposition}\label{prop:final final}
Let $p$ be an odd prime.
Then $\esp$ is trace equivalent $\vvec_p \sqcup \hyp_2$.
\end{proposition}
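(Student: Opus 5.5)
We prove the two trace definabilities separately. Fix an infinite extra-special $p$-group $\Gamma$ with center $Z = \langle z \rangle$ of order $p$. The quotient $V = \Gamma/Z$ is an infinite $\F_p$-vector space, and the commutator induces a non-degenerate alternating bilinear form $\omega \colon V^2 \to \F_p$ via $[a,b] = z^{\omega(\bar a,\bar b)}$.

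We first show that $\vvec_p \sqcup \hyp_2$ is trace definable in $\esp$. By Lemma~\ref{lem:k-dis} (with $k = 1$) it is enough to trace define each summand separately. $\Gamma$ interprets $V$ as an imaginary, and using the standard description of $\Gamma$ as a central extension (for $p$ odd, $\Gamma \cong V \times \F_p$ with $(v,s)(w,t) = (v+w, s+t+\tfrac12\omega(v,w))$ after a definable choice) we can realize $V$ definably in $\Gamma$, so $\vvec_p$ is interpretable, hence trace definable. For $\hyp_2$, note that $\esp$ is $\ip$, since $[x,y]=1$ has the independence property (choose a symplectic basis). By Fact~\ref{fact:new hyp rel}(3) with $k=2$, $\esp$ trace defines $\hyp_2$.

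The main work is the converse: $\Gamma$ is trace definable in $\vvec_p \sqcup \hyp_2$. Pass to the bilinear structure $(V, \F_p, \omega)$, which is interdefinable with $\Gamma$ via the coordinates above; a finite-sorted structure's trace definability reduces to that of this structure. I would use Felgner's $\aleph_0$-categoricity, equivalently the known quantifier elimination for infinite-dimensional symplectic spaces over $\F_p$ (in the language with $+$, scalar multiplication, and the predicates $\omega(x,y)=c$ for $c \in \F_p$). Then every definable set of tuples $v_1,\dots,v_n$ is a boolean combination of linear conditions $\sum \lambda_i v_i = 0$ and conditions $\omega(v_i,v_j) = c$, after expanding $\omega$ bilinearly over linear terms. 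Now embed: choose an injection $\uptau_1 \colon V \hookrightarrow W$ (an $\F_p$-vector space, identity) handling the linear conditions, and the binary relations $R_c(x,y) \iff \omega(x,y)=c$, $c\in\F_p$, form a structure in a finite binary relational language. Since $\omega(\sum\lambda_i v_i, \sum \mu_j v_j) = \sum \lambda_i\mu_j\omega(v_i,v_j)$, every atomic formula with terms reduces to conditions on the tuple $(\omega(v_i,v_j))_{i,j}$ and linear conditions. The structure $(V; R_c)_{c}$ need not have quantifier elimination, but the ``tuple of values'' formulation sidesteps this: by Fact~\ref{fact:new hyp rel}(2) applied to a Morleyized finite binary structure, or more directly by encoding each value $\omega(x,y)\in\F_p$ with finitely many generic binary relations via an embedding into a model of $\rel_2$ (equivalently $\hyp_2$ by Fact~\ref{fact:new hyp rel}(1)), we obtain an injection $\uptau_2 \colon V \hookrightarrow H$ into a model of $\hyp_2$ (after replacing by a finite disjoint union trick) with each $R_c$ the pullback of a definable relation. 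Then $\uptau = (\uptau_1,\uptau_2) \colon V \to W \times H$ pulls back all the generating sets, and by the quantifier elimination every definable set is a pullback.

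The obstacle I expect is verifying the quantifier elimination in a form where all atomic formulas are indeed of these two kinds; I would derive it from homogeneity of the Fraïssé limit of finite symplectic spaces (Witt's extension theorem for partial isometries of non-degenerate spaces) and handle degenerate finite subspaces by extending to non-degenerate ones inside the infinite-dimensional space.
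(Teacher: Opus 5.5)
Your proposal is correct and follows essentially the paper's route (Proposition~\ref{prop:final} with $k=2$ and $\F=\F_p$): in one direction $\vvec_p$ comes from the quotient $\Gamma/Z$ and $\hyp_2$ from the independence property of $[x,y]=1$ via Fact~\ref{fact:new hyp rel}(3). In the other direction you use quantifier elimination for the alternating form, the identity on $V$, and injections into a saturated model of $\rel_2$ encoding the level sets $\{\omega=c\}$ for $c\in\F_p$. The slips are cosmetic: there is no definable section $\Gamma/Z\to\Gamma$, so $\Gamma$ and $(V,\F_p,\omega)$ are mutually interpretable rather than interdefinable (which suffices since $\esp$ is complete), and Morleyizing would destroy the finite binary language, so the direct $\rel_2$ encoding is the version that works.
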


Hence $\esp$ is, modulo trace equivalence, the simplest $\ip$ theory above $\vvec_p$.

\medskip
Let $T = \Th(\F_p)$.
Proposition~\ref{prop:final final} follows by Proposition~\ref{prop:final} below and the well-known fact that $\esp$ is mutually interpretable with $\mathrm{Alt}^*_{T, 2}$.
We describe the proof of the latter.
First, let $V$ be an infinite $\F_p$-vector space and $\spcrngle$ be a non-degenerate $\F_p$-valued alternating bilinear form on $V$, so $(V, \F_p, \spcrngle) \models \mathrm{Alt}^*_{T, 2}$.
Let $\ast$ be the following binary operation on $V \times \F_p$.
\[
(v, \lambda) \ast (v', \lambda') = \left(v + v', \lambda + \lambda' + \frac{1}{2} \langle v, v' \rangle \right) \quad \text{for all $(v, \lambda), (v', \lambda') \in V \times \F_p$}
\]
Then $(V \times \F_p; \ast) \models \esp$.
Conversely, let $\Gamma$ be an infinite extra-special $p$-group, identify the center of $\Gamma$ with $\F_p$, and note that $\Gamma/\F_p$ is an infinite $\F_p$-vector space.
Finally, note that the commutator $\spcbrckt$ on $\Gamma$ induces a non-degenerate alternating form $(\Gamma/\F_p) \times (\Gamma/\F_p) \to \F_p$.

\begin{proposition}\label{prop:final}
Suppose that $\F$ is a finite field of characteristic $p$ and let $T = \Th(\F)$.
Then $\altT$ is trace equivalent to $\vvec_p \sqcup \hyp_k$.
\end{proposition}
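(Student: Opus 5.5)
Since $\F$ is finite, $\vvec_T$ is bi-interpretable with the one-sorted theory of $\F$-vector spaces. That theory is in turn trace equivalent to $\vvec_p$: an $\F$-vector space is an $\F_p$-vector space, and a finite field adds only finitely many definable unary functions, each interpretable via an $\F_p$-basis of $\F$. The plan is to prove two trace definabilities.

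For the first direction, that $\vvec_p \sqcup \hyp_k$ is trace definable in $\altT$, Lemma~\ref{lem:k-dis} (with $k=1$) reduces the claim to trace defining each summand separately. $\vvec_p$ is trace defined by the vector sort, which is an $\F_p$-vector space. For $\hyp_k$, $\altT$ is $(k-1)$-$\ip$ because the form $\beta(x_1,\ldots,x_k)=0$ has $(k-1)$-$\ip$. This follows from genericity: by the construction in the proof of Proposition~\ref{prop;ccs}, any prescribed values $\beta(x^1_{p_1},\ldots,x^k_{p_k})$ on linearly independent vectors are realized. Fact~\ref{fact:new hyp rel}(3) then yields trace definability of $\hyp_k$.

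For the converse, the key input is quantifier elimination for $\altT$ (Chernikov--Hempel) together with the term description of \cite[Lemma~5.9]{aldaim}. Every formula in vector variables $v_1,\ldots,v_n$ and scalar variables is then a boolean combination of $\lvec$-conditions on $v$ and finitely many values $\beta(v_{i_1},\ldots,v_{i_k})\in\F$, where the $\lvec$-conditions may themselves involve those $\beta$-values as scalars. Fix $(V,\F,\beta)\models\altT$ and take a model $\Sa H$ of $\rel_k$ with domain $V$ and one $k$-ary relation $R_c$ for each $c\in\F$, where $R_c$ holds iff $\beta=c$. This is a finite relational language, so after passing to a generic extension it is trace definable in $\hyp_k$ by Fact~\ref{fact:new hyp rel}(1),(2). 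More concretely, embed $(V;(R_c)_c)$ into a model of the model companion of the theory of $|\F|$ disjoint $k$-ary relations, which is trace equivalent to $\hyp_k$. Let $\uptau\colon V\to V\sqcup H$ send $v$ to the pair $(v,v)$, with vector part in the $\vvec_p$ copy and relation part in the hypergraph copy. Because $\F$ is finite, each $\beta$-value ranges over finitely many cases, so any formula is a finite disjunction over assignments $\sigma$ of $\F$-values to the $\beta$-terms. Each disjunct is the conjunction of $\bigwedge R_{\sigma(\cdot)}$ on the hypergraph side with the $\lvec$-formula obtained by substituting the constants $\sigma$, which lives on the vector side. Such a set is a finite union of products of definable sets in the two summands, pulled back along $\uptau$, and that is exactly what trace definability into a disjoint union requires. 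The scalar sort is finite and is handled by constants.

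I expect the main obstacle to be the first direction's claim that $\hyp_k$ is trace definable, specifically verifying $(k-1)$-$\ip$ (equivalently, embedding finite $k$-partite hypergraphs). This needs nondegenerate models to contain arbitrary finite configurations of $\beta$-values on independent vectors, and that requires the Chernikov--Hempel embedding lemma \cite[Lemma~2.4]{chernikov2025ndependentgroupsfieldsiii} to preserve the prescribed values. A cleaner route is Lemma~\ref{lem;f 0} with $O=\F$: it gives that $\altT$ trace defines $D_k(\Th(\F))$, and $D_k(\Th(\F))$ is trace equivalent to $\hyp_k$ by Proposition~\ref{prop:finite case}. That argument is exactly Proposition~\ref{prop;ccs}, so this direction is essentially immediate. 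The remaining care point is checking that the quantifier-free $\lvec$-part genuinely separates as claimed once the $\beta$-values are fixed, in particular for the $\spn$ functions applied to terms that contain $\beta$-values as scalar coefficients.
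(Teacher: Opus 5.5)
Your proposal is correct and is essentially the paper's proof: one direction uses Lemma~\ref{lem:k-dis}, the vector reduct, and $(k-1)$-$\ip$ of $\beta(x_1,\ldots,x_k)=0$ (from the construction in Proposition~\ref{prop;ccs}) together with Fact~\ref{fact:new hyp rel}(3), and the other uses quantifier elimination and the term description to split each formula into a finite disjunction over the possible $\F$-values of the $\beta$-terms. The differences are cosmetic: the paper encodes the level sets $\beta=\gamma$ by $|\F|$ injections $\uptau_\gamma$ into a single saturated model of $\rel_k$, whereas you use one embedding into a generic structure with $|\F|$ $k$-ary relations, and your worry about the Chernikov--Hempel embedding lemma is moot because an $\lalt$-embedding preserves $\beta$.
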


It follows that $\altT$ is $k$-trace definable in $\vvec_p$ and in particular $\esp$ is $2$-trace definable in $\vvec_p$.
This disjoint union decomposition is non-trivial: $\vvec_p$ cannot trace define $\hyp_k$ as $\hyp_k$ is unstable and $\hyp_k$ cannot trace define $\vvec_p$ by Theorem~\ref{thm:fh} below.

\medskip
We apply the following easy fact whose verification is left to the reader.

\begin{fact}\label{fact:vect ext}
Suppose that $\E/\F$ is a finite extension of fields.
Then the theory of infinite $\E$-vector spaces is interpretable in the theory of infinite $\F$-vector spaces.
\end{fact}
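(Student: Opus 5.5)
Since $\E/\F$ is finite, I would build the $\E$-vector space as $V^d$ over an infinite $\F$-vector space $V$, with $d = [\E:\F]$, using the regular representation of $\E$ over $\F$. Write the theory of infinite $\F$-vector spaces in the usual one-sorted language: addition, $0$, and a unary function symbol for multiplication by each $\lambda \in \F$. Do the same for $\E$. The theory of infinite $\E$-vector spaces is complete, since infinite vector spaces over a fixed field are uncountably categorical. So it suffices to produce, inside a single infinite $\F$-vector space $V$, a definable (without parameters) infinite $\E$-vector space.

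Fix an $\F$-basis $e_1, \ldots, e_d$ of $\E$. For each $\lambda \in \E$, multiplication by $\lambda$ is an $\F$-linear map $\E \to \E$. Let $A(\lambda) = (a_{ij}(\lambda)) \in M_d(\F)$ be its matrix with respect to this basis. The map $\lambda \mapsto A(\lambda)$ is an injective unital ring homomorphism $\E \to M_d(\F)$.

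Now take the domain to be $V^d$ with coordinatewise addition and zero. For $\lambda \in \E$, let $\lambda$ act on $V^d$ by
\[
\lambda \cdot (v_1, \ldots, v_d) = \Big( \sum_{j} a_{1j}(\lambda) v_j, \ldots, \sum_j a_{dj}(\lambda) v_j \Big).
\]
Each such map is $0$-definable in $V$, because it is built from addition and the finitely many scalar multiplications by the entries $a_{ij}(\lambda) \in \F$.

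Next I check the $\E$-vector space axioms. Additivity in the vector argument holds because each $A(\lambda)$ acts linearly. Additivity and multiplicativity in $\lambda$, together with $1 \cdot v = v$, follow from $\lambda \mapsto A(\lambda)$ being a unital ring homomorphism. Equivalently, $V^d \cong V \otimes_\F \E$ as $\E$-modules. Since $V$ is infinite, $V^d$ is infinite, so this gives a model of the theory of infinite $\E$-vector spaces, interpreted (indeed defined on a Cartesian power) in $V$.

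There is no real obstacle here. The only point needing care is the completeness and categoricity remark, which ensures that one interpreted model yields an interpretation of the whole theory. The rest is the routine check that the regular representation is a ring homomorphism.
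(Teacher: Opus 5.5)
Your argument is correct, and it is the natural verification of this fact, which the paper leaves to the reader. $V^d$ with $\E$ acting through the regular representation $\E\hookrightarrow M_d(\F)$ is just $V\otimes_\F\E$: it is defined without parameters on a Cartesian power of $V$, and the same formulas work in every infinite $\F$-vector space. One slip is the categoricity remark: when $\E$ is uncountable the theory is categorical only in cardinals $>|\E|$, not in every uncountable cardinal. This still yields completeness, and since your construction is uniform the completeness step is not actually needed.
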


\begin{proof}[Proof of Proposition~\ref{prop:final}]
We first show that $\altT$ trace defines $\vvec_p \sqcup \hyp_k$.
By Fact~\ref{lem:k-dis} it is enough to show that $\altT$ trace defines both $\vvec_p$ and $\hyp_k$.
First note that $\vvec_p$ is a reduct of $\altT$.
Secondly, the proof of Proposition~\ref{prop;ccs} shows that the formula $\beta(x_1, \ldots, x_k) = 0$ in $\altT$ is $(k - 1)$-$\ip$.
Hence $\altT$ trace defines $\hyp_k$ by Fact~\ref{fact:new hyp rel}(3).

\medskip
We show that $\altT$ is trace definable in $\vvec_p \sqcup \hyp_k$.
Recall that $\rel_k$ is the theory of the generic $k$-ary relation.
Now $\hyp_k$ is trace equivalent to $\rel_k$ by Fact~\ref{fact:new hyp rel}(1), so it is enough to show that $\altT$ is trace definable in $\vvec_p \sqcup \rel_k$.
Fix $(V, \F, \beta) \models \altT$.
So $(V, \F) \models \vvec_T$ for $T = \Th(\F)$.
Let $(M; R)$ be a $|V|^+$-saturated model of $\rel_k$.
Now $(V, \F)$ is mutually interpretable with an $\F$-vector space and by Fact~\ref{fact:vect ext} any $\F$-vector space is interpretable in an $\F_p$-vector space.
Hence it suffices to show that $(V, \F, \beta)$ is trace definable in $(V, \F) \sqcup (M; R)$.

\medskip
Any $k$-ary relation on a set of cardinality $|V|$ embeds into $(M; R)$.
For each $\gamma \in \F$ fix an injection $\uptau_\gamma \colon V \hookrightarrow M$ such that we have
\[
\beta(v_1, \ldots, v_k) = \gamma \quad \Longleftrightarrow \quad R(\uptau_\gamma(v_1), \ldots, \uptau_\gamma(v_k)) \quad \text{for all $v_1, \ldots, v_k \in V$.}
\]
We show that the $\uptau_\gamma$, together with the identities $V \to V$ and $\F \to \F$, witness trace definability of $(V, \F, \beta)$ in $(V, \F) \sqcup (M; R)$.
Let $v = (v_1, \ldots, v_n)$ range over $V^n$ and $c = (c_1, \ldots, c_m)$ range over $\F^m$.
Let $X$ be a $(V, \F, \beta)$-definable subset of $V^n \times \F^m$.
Then there is $d$ and a formula $\vartheta(x_1, \ldots, x_{n + m + d})$ from $(V, \F)$ such that we have $(v, c) \in X$ if and only if
\[
\vartheta(v, c, \beta(v_{i_{1, 1}}, \ldots, v_{i_{1, k}}), \ldots, \beta(v_{i_{d, 1}}, \ldots, v_{i_{d, k}}))
\]
where  the indices $i_{j, l}$ are from $\nset$.
Now let  $b = (b_1, \ldots, b_d)$ range over $\F^d$.
Each $\vartheta(x_1, \ldots, x_n, y_1, \ldots, y_m, b)$ is a formula in $(V, \F)$.
We have $(v, c) \in X$ if and only if
\[
\bigvee_{b} \left( \vartheta(v, c, b)  \land \bigwedge_{j = 1}^{d} \beta(v_{i_{j, 1}}, \ldots, v_{i_{j, k}}) = b_j \right).
\]
Equivalently we have $(v, c) \in X$ if and only if 
\[
\bigvee_{b} \left( \vartheta(v, c, b) \land \bigwedge_{j = 1}^{d} R(\uptau_{bj}(v_{i_{j, 1}}), \ldots, \uptau_{b_j}(v_{i_{j, k}}))  \right).
\]
Finally, note that
\[
\bigvee_{b} \left( \vartheta(x_1, \ldots, x_n , y_1, \ldots, y_m, b) \land \bigwedge_{j = 1}^{d} R(z_{j, 1}, \ldots, z_{j, k})   \right).
\]
is a formula in $(V, \F) \sqcup (M; R)$, here the $x_i$ are variables of sort $V$, the $y_i$ are variables of sort $\F$, and the $z_{j, i}$ are variables of sort $M$.
\end{proof}

We now show that $\esp$ cannot locally trace define $D_2(T)$ for any theory $T$ with infinite models.
It suffices to treat the case $T = \triv$.
Proposition~\ref{prop:dkk triv} shows that $D_2(\triv)$ is trace equivalent to a theory admitting quantifier elimination in a finite relational language.
Hence by Lemma~\ref{lem:ka}(5) a theory locally trace defines $D_2(\triv)$ if and only if it trace defines $D_2(\triv)$.
So by Proposition~\ref{prop:final final} it suffices to show that $D_2(\triv)$ is not trace definable in the theory of a disjoint union of a stable structure with a binary structure.
We apply an ad hoc property which we call partition-wise $\kappa$-stability.
We first define this property.

\medskip
Let $\Sa M$ be a structure.
A {\bf unary expansion} $(\Sa M, \Cal U)$ of $\Sa M$ is an expansion of $\Sa M$ by a collection $\Cal U$ of subsets of $M$.
Given such a collection we let $\Cal U[n]$ be the collection of subsets of $M^n$ of the form $U_1 \times \cdots \times U_n$ for $U_1, \ldots, U_n \in \Cal U$.
Given a unary expansion $(\Sa M, \Cal U)$, $A \subseteq M$, and $n \ge 1$ we let $\nabla^n_A(\Sa M, \Cal U)$ be the Stone space of the boolean algebra of subsets of $M^n$ generated by $A$-definable sets and $\Cal U[n]$.
If $\Sa M$ admits quantifier elimination in a relational language then $\nabla^n_A(\Sa M, \Cal U)$ is the collection of quantifier-free $n$-types in $(\Sa M, \Cal U)$ over $A$.
A {\bf $\Cal U[n]$-type} is an element of the Stone space of the boolean algebra of subsets of $M^n$ generated by $\Cal U[n]$.
Note that any $\Cal U[n]$-type can be uniquely identified with an $n$-tuple of $\Cal U[1]$-types.
Given a $\Cal U[n]$-type $p$ we let $\nabla^n_A(\Sa M, p)$ be the set of $q \in \nabla^n_A(\Sa M, \Cal U)$ extending $p$.
Given an infinite cardinal $\kappa$ we say that a theory $T$ is {\bf partition-wise $\kappa$-stable} if for every $\Sa M \models T$ and $A \subseteq M$ of cardinality at most $\kappa$ there is a collection $\Cal U$ of at most $\kappa$ subsets of $M$ such that $|\nabla^n_A(\Sa M, p)| \le \kappa$ for all $n \ge 1$ and $\Cal U[n]$-types $p$.
We say that such $\Cal U$ witnesses partition-wise $\kappa$-stability for $A$.
Finally, a structure is partition-wise $\kappa$-stable when its theory is.

\medskip
Note that a $\kappa$-stable theory is partition-wise $\kappa$-stable.
We say that a structure or theory is {\bf binary} if it every formula is equivalent to a boolean combination of binary formulas.

\begin{lemma}\label{lem:bin}
Suppose that $T$ is binary.
Then $T$ is partition-wise $\kappa$-stable for any $\kappa \ge 2^{|T|}$.
\end{lemma}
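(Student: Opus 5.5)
The plan is to take $\Cal U$ to be the family of all $A$-definable subsets of $M$ together with the fibers of binary formulas over single points. Concretely, fix $\Sa M \models T$, an infinite $\kappa \ge 2^{|T|}$, and $A \subseteq M$ with $|A| \le \kappa$. The idea is to choose $\Cal U$ so that the $\Cal U[1]$-type of a single element already records everything a binary formula can say about it. Then a $\Cal U[n]$-type will pin down the type over $A$ of the tuple, except for the binary relations holding between its coordinates, and there are few possibilities for those.

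The first step is to reduce to quantifier-free data. Since $T$ is binary, every $L(A)$-formula $\phi(x_1,\ldots,x_n)$ is a boolean combination of formulas $\psi(x_i,x_j)$ with parameters from $A$. Hence a point of $\nabla^n_A(\Sa M,\Cal U)$ is determined by three pieces of data: the $\Cal U[n]$-type $p$, the $A$-types of the single coordinates, and for each pair $i<j$ the set of binary $L(A)$-formulas $\psi(x_i,x_j)$ it contains. I will make $\Cal U$ contain every $A$-definable subset of $M$. There are at most $|T|+|A| \le \kappa$ of these. With this choice the $1$-types over $A$ of the coordinates are already determined by $p$. So the only remaining freedom lies in the binary data. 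For a fixed pair $i<j$, that data is a subset of the set of binary $L(A)$-formulas. This set has size at most $|T|+|A| \le \kappa$, so the trivial bound is $2^\kappa$, which is too large.

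The main obstacle is bringing this count down to $\kappa$, and this is where $\kappa \ge 2^{|T|}$ should be used. I expect the right move is to quantify only over binary formulas \emph{without} parameters. By the binarity hypothesis, a formula $\psi(x_i,x_j,\bar a)$ with parameters $\bar a$ is a boolean combination of binary formulas among the variables $x_i,x_j,\bar a$. Its truth value is therefore determined by four things: the $\emptyset$-definable binary relations between $x_i$ and $x_j$, between $x_i$ and each element of $\bar a$, and between $x_j$ and each element of $\bar a$, together with the type of $\bar a$ itself. The relations of $x_i$ to elements of $A$ are already recorded by the $A$-types of the single coordinates, which are fixed by $p$ since $\Cal U$ contains the $A$-definable sets. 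So the only additional information is the quantifier-free binary $\emptyset$-type of the pair $(x_i,x_j)$. This is a subset of the set of $\emptyset$-definable binary formulas, so there are at most $2^{|T|} \le \kappa$ choices for it.

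Putting these together, for each $\Cal U[n]$-type $p$ we get
\[
|\nabla^n_A(\Sa M,p)| \le (2^{|T|})^{n^2} \le \kappa .
\]
I also need to check that $|\Cal U| \le \kappa$, which holds by the count above, and that $\kappa$ is infinite. The one point to verify carefully is the reduction of parameterized binary formulas to parameter-free binary relations with single parameters. I expect this to follow directly from the definition of binary applied to $\psi(x_i,x_j,\bar y)$ as an $L$-formula in $2+|\bar y|$ variables.
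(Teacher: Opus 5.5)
Your proposal is correct and is essentially the paper's proof: the paper also takes $\Cal U$ to be all $A$-definable subsets of $M$, and (after Morleyizing to a binary relational language) shows that once the coordinate $1$-types over $A$ are fixed, every $L(A)$-formula is equivalent to a parameter-free formula, which gives at most $2^{|T|} \le \kappa$ completions. One thing to tidy: the ``fibers of binary formulas over single points'' in your opening sentence are redundant when the points lie in $A$, and would break $|\Cal U| \le \kappa$ if taken over arbitrary points of $M$, so drop them, as your count already implicitly does.
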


\begin{proof}
After possibly Morleyizing we suppose that $T$ admits quantifier elimination in a binary relational language $L$.
We prove the following claim.
\begin{Claim*}
\label{lem:equal}
Suppose $\Sa M \models T$, $A \subseteq M$, $p_i(x_i)$ is a complete one-type over $A$ for $i = 1, \ldots, k$, and $\varphi(x_1,\ldots,x_k)$ is an $L(A)$-formula.
Then there is an $L$-formula $\varphi^*(x_1,\ldots,x_k)$ such that $$p_1(x_1)\cup\cdots\cup p_k(x_k)\models [\varphi(x_1,\ldots,x_k)\Longleftrightarrow \varphi^*(x_1,\ldots,x_k)].$$
\end{Claim*}

Hence there are at most $2^{|T|}$ types $p(x_1, \ldots, x_k)$ over $A$ extending $p_1(x_1) \cup \cdots \cup p_k(x_k)$.
Therefore the lemma follows by letting $\Cal U$ be the collection of all $A$-definable subsets of $M$.

\medskip
We now prove the claim.
Set $x = (x_1,\ldots,x_k)$ and $p(x)=p_1(x_1) \cup \cdots \cup p_k(x_k)$.
Let $i, j$ range over $\{1, \ldots, k\}$.
By quantifier elimination $\varphi(x)$ is equivalent to a boolean combination of $L(A)$-formulas $\theta(x_i)$  and binary formulas $R(x_i, x_j)$ for $R \in L$.
Now $p(x)$ determines the truth value of each $\theta(x_i)$, so $\varphi(x)$ is equivalent modulo $p(x)$ to a boolean combination of the $R(x_i, x_j)$.
In particular $\varphi(x)$ is equivalent to an $L$-formula.
\end{proof}

\begin{lemma}\label{lem:p preserved}
Let $\kappa$ be an infinite cardinal and $T, T^*$ be theories of cardinality at most $\kappa$.
Suppose that $T$ is partition-wise $\kappa$-stable and $T^*$ is trace definable in $T$.
Then $T^*$ is also partition-wise $\kappa$-stable.
\end{lemma}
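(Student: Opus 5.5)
We pull witnesses back along the trace embedding. After Morleyizing, suppose $T$ admits quantifier elimination. Fix $\Sa O \models T^*$ and $B \subseteq O$ with $|B| \le \kappa$. By the definition of trace definability between theories, there are $\Sa M \models T$ and an injection $\uptau \colon O \hookrightarrow M^m$ such that every $\Sa O$-definable subset of every $O^n$ is the $\uptau$-pullback of an $\Sa M$-definable subset of $M^{mn}$. For each of the at most $\kappa$ formulas over $B$, choose one $\Sa M$-definable set pulling back to it. Let $A \subseteq M$ be the union of the parameters used, together with all coordinates of $\uptau(b)$ for $b \in B$. Then $|A| \le \kappa$, and every $B$-definable subset of $O^n$ is the $\uptau$-pullback of an $A$-definable subset of $M^{mn}$.

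Next we pass to coordinates. Partition-wise $\kappa$-stability of $T$ gives a family $\Cal U$ of at most $\kappa$ subsets of $M$ witnessing the property for $A$. Define $\Cal V$ as the family of subsets of $O$ of the form $\{ a \in O : \uptau_j(a) \in U\}$, where $U \in \Cal U$ and $\uptau_j$ is the $j$-th coordinate of $\uptau$, for $j = 1, \ldots, m$. Then $|\Cal V| \le \kappa$. Any $\Cal V[n]$-type $p$ on $O^n$ is determined by the $\Cal U$-types of the coordinates $\uptau_j(a_i)$. Hence $p$ pulls back from a $\Cal U[mn]$-type $p^*$ on $M^{mn}$, in the sense that the $\uptau$-image of any tuple realizing $p$ realizes $p^*$ on each $\Cal U$-box.

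We then compare types. Consider the map $\nabla^n_B(\Sa O, \Cal V) \to \nabla^{mn}_A(\Sa M, \Cal U)$ defined as follows. An ultrafilter $q$ on the boolean algebra on $O^n$ is pushed forward via $\uptau$: a set $Y$ in the $M$-side algebra is in the image iff $\uptau^{-1}(Y) \in q$. This is well defined because pullback is a boolean homomorphism. The pullback of an $A$-definable set is $B$-definable: it is definable in $\Sa O$, and since it lies in the induced structure, we can use that the traced structure's definable sets over $B$ correspond. We should instead define the $O$-side algebra as generated by pullbacks. The cleanest route is to note that the pullback homomorphism from the $M$-side algebra (generated by $A$-definable sets and $\Cal U[mn]$) onto the algebra generated by $B$-definable sets and $\Cal V[n]$ is surjective. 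This holds because every $B$-definable set is a pullback of an $A$-definable set, and every $\Cal V[n]$-box is a pullback of a $\Cal U[mn]$-box. Stone duality then makes the dual map $\nabla^n_B(\Sa O,\Cal V) \to \nabla^{mn}_A(\Sa M,\Cal U)$ injective. It sends the types extending $p$ into types extending some single $p^*$, or at worst into a union over the $\Cal U[mn]$-types compatible with $p$.

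The main obstacle is bounding that fiber. A $\Cal V[n]$-type $p$ fixes, for each coordinate, which sets of the form $\uptau_j^{-1}(U)$ it lies in. This determines the $\Cal U$-type of each $\uptau_j$-coordinate, so $p^*$ is unique. With $p^*$ unique, we get
\[
|\nabla^n_B(\Sa O, p)| \le |\nabla^{mn}_A(\Sa M, p^*)| \le \kappa,
\]
so $\Cal V$ witnesses partition-wise $\kappa$-stability for $B$. The cardinality hypotheses on $T$ and $T^*$ are used only to keep $A$ and $\Cal V$ of size at most $\kappa$.
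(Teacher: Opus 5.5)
Your route is the paper's, with the letters renamed. You choose $A\subseteq M$ of size $\le\kappa$ so that every $B$-definable set is the $\uptau$-pullback of an $A$-definable set, pull a witnessing family $\Cal U$ back to $O$, and inject the fibre $\nabla^n_B(\Sa O,p)$ into $\nabla^{mn}_A(\Sa M,p^*)$. Your coordinatewise family $\Cal V=\{\uptau_j^{-1}(U)\}$ is if anything slightly cleaner than the paper's traces of boxes $U\cap M$ with $U\in\Cal U[m]$. With your family, the pullback of a $\Cal U[mn]$-box is a finite intersection of $\Cal V[n]$-boxes, so $p^*=\{W:\uptau^{-1}(W)\in p\}$ really is a well-defined $\Cal U[mn]$-type.

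The comparison step, however, fails as written. Trace definability only says that $\Sa O$-definable sets are pullbacks of $\Sa M$-definable sets; the converse is false. The pullback of an $A$-definable set need not be $B$-definable, or even $\Sa O$-definable. For example, trace a set with no structure into $(\Q;<)$ by any injection: the pullback of $x<y$ is not definable. So $\uptau^{-1}$ does not map the $\Sa M$-side algebra onto, or even into, the algebra generated by the $B$-definable sets and $\Cal V[n]$. The pushforward $\{Y:\uptau^{-1}(Y)\in q\}$ is then undefined, and there is no surjection to dualize. Also, your claim that each $\Cal V[n]$-box is the pullback of a $\Cal U[mn]$-box tacitly assumes something like $M\in\Cal U$. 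You noticed the problem, but then asserted the surjectivity anyway.

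The missing step is an ultrafilter extension. Extend $q$ to an ultrafilter $q'$ on all subsets of $O^n$, and send $q$ to $\{Y:\uptau^{-1}(Y)\in q'\}$. Since $\uptau^{-1}$ is a Boolean homomorphism, this is a point of $\nabla^{mn}_A(\Sa M,\Cal U)$, and it extends $p^*$. Injectivity on the fibre over $p$ then needs no Stone duality. Two elements of $\nabla^n_B(\Sa O,p)$ agree on the $\Cal V[n]$-part, so they must differ on some $B$-definable set; an ultrafilter on the algebra generated by two subalgebras is determined by its two restrictions. That set is the pullback of an $A$-definable set, which separates their images. This is exactly the paper's step of extending $r\cup p^*$ to an element of $\nabla^{nm}_B(\Sa N,p^*)$.
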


\begin{proof}
Fix $\Sa M \models T^*$ and a set $A$ of parameters from $\Sa M$ of cardinality at most $\kappa$.
Fix $\Sa N \models T$ which trace defines $\Sa M$.
We may suppose that $M$ is a subset of $N^m$ and that $\Sa N$ trace defines $\Sa M$ via the inclusion $M \hookrightarrow N^m$.
Let $B \subseteq N$ be such that every $A$-definable subset of every $M^m$ is of the form $Y \cap M^m$ for some $B$-definable $Y \subseteq N^{mn}$.
We may suppose that $|B| \le \kappa$.
Let $\Cal U$ be a collection of subsets of $N$ witnessing partition-wise $\kappa$-stability for $B$.
Let $\Cal V$ be the collection of subsets of $M$ of the form $U \cap M$ for $U \in \Cal U[m]$.
Fix $n \ge 1$ and a $\Cal V[n]$-type $p$.
Note that $p$ determines a $\Cal U[nm]$-type $p^*$.
Suppose $q \in \nabla^n_A(\Sa M, p)$.
We associate to $q$ an incomplete $nm$-type $r$ in $\Sa N$ by letting $r$ be the collection of all $B$-definable subsets $Y$ of $N^{nm}$ such that $Y \cap M^n$ is in $q$.
Now $r \cup p^*$ is consistent and hence extends to an element of $\nabla^{nm}_B(\Sa N, p^*)$.
Therefore we have $|\nabla^n_A(\Sa M, p)| \le |\nabla^{nm}_B(\Sa N, p^*)| \le \kappa$.
\end{proof}

\begin{lemma}\label{lem:p disjoint}
Fix an infinite cardinal $\kappa$.
Then the collection of partition-wise $\kappa$-stable structures is closed under finite disjoint unions.
\end{lemma}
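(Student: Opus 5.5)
Let $\Sa M_1, \Sa M_2$ be partition-wise $\kappa$-stable and $\Sa M = \Sa M_1 \sqcup \Sa M_2$. By induction on the number of pieces it suffices to handle a union of two structures. Any model of $\Th(\Sa M)$ is again a disjoint union $\Sa N_1 \sqcup \Sa N_2$ with $\Sa N_i \models \Th(\Sa M_i)$. So we may fix such a model and a parameter set $A$ with $|A| \le \kappa$. Write $A_i = A \cap N_i$ and let $\Cal U_i$ be a collection of at most $\kappa$ subsets of $N_i$ witnessing partition-wise $\kappa$-stability of $\Sa N_i$ for $A_i$. Set $\Cal U = \Cal U_1 \cup \Cal U_2 \cup \{N_1, N_2\}$, viewing each $U \in \Cal U_i$ as a subset of $N_1 \cup N_2$. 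Then $|\Cal U| \le \kappa$, and we claim that $\Cal U$ witnesses partition-wise $\kappa$-stability of $\Sa N_1 \sqcup \Sa N_2$ for $A$.

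Fix $n$ and a $\Cal U[n]$-type $p$. Since $N_1, N_2 \in \Cal U$, the type $p$ decides, for each coordinate $j \le n$, which sort the $j$th coordinate lies in. After permuting coordinates we may assume the first $n_1$ coordinates lie in $N_1$ and the last $n_2 = n - n_1$ lie in $N_2$. Restricting $p$ to these blocks yields a $\Cal U_1[n_1]$-type $p_1$ and a $\Cal U_2[n_2]$-type $p_2$. (Sets in $\Cal U_2$ meet $N_1$ trivially, so they carry no information on the first block.)

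The key step uses the observation recorded in the subsection on disjoint unions: an $A$-definable subset of $N_1^{n_1} \times N_2^{n_2}$ is a finite union of products $X_1 \times X_2$ with $X_i$ definable in $\Sa N_i$. We must check that the parameters can be taken from $A_i$. Parameters from the other sort only enter through equality and atomic formulas within one sort. A definable set in $\Sa N_1 \sqcup \Sa N_2$ over $A$ is a boolean combination of sets defined over $A_1 \cup A_2$ in which each atomic formula lives in one sort, so each factor is definable over $A_i$ in $\Sa N_i$.

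It follows that an element $q \in \nabla^n_A(\Sa N_1 \sqcup \Sa N_2, p)$ is determined by its two restrictions $q_1 \in \nabla^{n_1}_{A_1}(\Sa N_1, p_1)$ and $q_2 \in \nabla^{n_2}_{A_2}(\Sa N_2, p_2)$. Indeed, the boolean algebra generated by $A$-definable sets and $\Cal U[n]$ within $p$ is generated by products of members of the corresponding algebras on each factor, and ultrafilters on such a product algebra are determined by their marginals. Hence
\[
|\nabla^n_A(\Sa N_1 \sqcup \Sa N_2, p)| \le |\nabla^{n_1}_{A_1}(\Sa N_1, p_1)| \cdot |\nabla^{n_2}_{A_2}(\Sa N_2, p_2)| \le \kappa \cdot \kappa = \kappa .
\]
The degenerate cases $n_1 = 0$ or $n_2 = 0$ are immediate.

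The main obstacle is the parameter bookkeeping in the product decomposition, namely verifying that factors are definable over $A_i$ rather than over all of $A$. This follows from the disjoint-union syntax: there are no cross-sort symbols, so the argument can proceed by induction on formulas, or via quantifier elimination after Morleyizing each $\Sa N_i$ separately.
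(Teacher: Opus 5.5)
Your proof is correct and takes the paper's approach: the paper leaves this lemma to the reader with the hint to take the disjoint union of the families witnessing partition-wise $\kappa$-stability, which is what you do, with $N_1, N_2$ added. Your product-algebra and marginals argument, including the check that each factor is definable over $A_i$, supplies exactly the details the paper omits.
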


Lemma~\ref{lem:p disjoint} is easy and left to the reader.
(Simply take the disjoint union of the families of sets witnessing partition-wise $\kappa$-stability.)



\medskip
We now consider the theory of a certain ternary finitely homogeneous structure.
Let $L$ be the two-sorted language containing a ternary relation $E$.
Let $T_\mathrm{Feq}$ be the $L$-theory such that an $L$-structure $(M, P; E)$ satisfies $T_\mathrm{Feq}$ if and only if:
\begin{enumerate}
\item $E(c, a, a^*)$ implies $c \in P$ and $a, a^* \in M$.
\item $E(c, x, x^*)$ is an equivalence relation on $M$ for all $c \in P$.
\end{enumerate}
Finite models of $T_\mathrm{Feq}$ form a \Fraisse class~\cite[Lemma~6.3]{CR}.
Let $\tfeq$ be the theory of the limit.

\begin{lemma}\label{lem:neg tfeq}
$\tfeq$ is not partition-wise $\kappa$-stable for any infinite cardinal $\kappa$.
Hence $\tfeq$ is not trace definable in the disjoint union of a stable structure with a binary structure.
\end{lemma}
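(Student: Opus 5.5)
The plan is to show that for every infinite $\kappa$ there is a model of $\tfeq$ and a parameter set $A$ of size $\le\kappa$ such that no family $\Cal U$ of at most $\kappa$ subsets of $M\cup P$ can witness partition-wise $\kappa$-stability for $A$. The second claim then follows formally. A stable structure is $\lambda$-stable for some $\lambda$, and a binary structure is partition-wise $\lambda$-stable for $\lambda\ge 2^{|T|}$ by Lemma~\ref{lem:bin}. Take $\kappa$ large with $\kappa^{|T|}=\kappa$, so that the stable side is $\kappa$-stable, hence partition-wise $\kappa$-stable. Then Lemma~\ref{lem:p disjoint} makes the disjoint union partition-wise $\kappa$-stable, and Lemma~\ref{lem:p preserved} shows that anything it trace defines would be too.

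For the first claim I would take $A=\emptyset$ (or any small set) and count quantifier-free $3$-types. Since $\tfeq$ is a Fra\"iss\'e limit in a finite relational language, it eliminates quantifiers, so $\nabla^n_A$ consists of quantifier-free types. Suppose $\Cal U$ has size $\le\kappa$. Work in a $\kappa^+$-saturated (indeed $(2^\kappa)^+$-saturated) model $(M,P;E)$. The boolean algebra generated by $\Cal U$ on $M$ has at most $2^\kappa$ ultrafilters, but I want a single $\Cal U[1]$-type $p_M$ on $M$ realized by many elements, and likewise $p_P$ on $P$.

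The key step is a pigeonhole argument. Choose $\kappa^+$ distinct elements $c_\alpha\in P$ whose equivalence relations $E(c_\alpha,\cdot,\cdot)$ are independent. By genericity, for each $\alpha$ there are $a_\alpha,b_\alpha\in M$, and for each $S\subseteq\kappa^+$ arrangements in which $E(c_\beta,a,b)$ holds exactly for $\beta\in S$. Since $\Cal U$ has only $\le\kappa$ members, I would first fix, via saturation and compactness, a complete $\Cal U[1]$-type $p_P$ containing $\kappa^+$ many $c$'s (equivalently, a type realized by $\kappa^+$ elements). Then, given any such family $(c_\beta)$ inside $p_P$, I would realize $a,b$ inside a single fixed $\Cal U[1]$-type $p_M$ with $\kappa^+$ distinct patterns $\beta\mapsto E(c_\beta,a,b)$. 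The map $(c,a,b)\mapsto$ truth of $E(c,a,b)$ then yields $\kappa^+$ quantifier-free types in $\nabla^3(\Sa M,(p_P,p_M,p_M))$ once a single $c$ is paired with varying $(a,b)$. Even more simply, fix $a,b$ in $p_M$ and vary $c$ in $p_P$.

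The main obstacle is doing all this without parameters. With $A=\emptyset$ a single triple has only two possible qf-types ($E$ or $\neg E$), which is not enough. So $A$ must contain the $\kappa$ relevant elements, and the $\kappa^+$ types must come over $A$. My concrete choice is $A=\{c_\beta:\beta<\kappa\}$ together with $n=2$. The type of a pair $(a,b)$ over $A$ records the set $\{\beta<\kappa:E(c_\beta,a,b)\}$, which gives $2^\kappa>\kappa$ possibilities. The remaining task is to show that $2^\kappa$ of these patterns can be realized with $a$ in one fixed $\Cal U[1]$-type and $b$ in one fixed $\Cal U[1]$-type. I would prove this using the extension property of the Fra\"iss\'e limit: for a fixed pattern, the set of realizing pairs is large, meeting every $\Cal U$-type that is non-algebraic over $A$. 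The plan is to fix $a$ first, choosing it with a ``generic'' $\Cal U$-type consistent with all patterns, then realize each pattern with $b$ in a fixed $\Cal U$-type via a compactness and counting argument over the $\le 2^\kappa$ $\Cal U$-types. This last pigeonhole step, which needs $2^\kappa$ patterns to land in some single $\Cal U$-cell, is where I expect the real work to be.
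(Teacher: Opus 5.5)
Your derivation of the second claim from the first is fine: take $\kappa\ge 2^{|T_{\mathrm{bin}}|}$ with $\kappa^{|T_{\mathrm{stab}}|}=\kappa$ and apply Lemmas~\ref{lem:bin}, \ref{lem:p disjoint}, \ref{lem:p preserved}. Your final setup is also right. Taking $A=\{c_\beta:\beta<\kappa\}$ and $n=2$, quantifier-free $2$-types over $A$ are patterns $S\subseteq\kappa$, and this is the paper's reduction to the theory $E^\kappa$ of $\kappa$ generic equivalence relations $E_\beta=E(c_\beta,\cdot,\cdot)$.

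The gap is that the step you postpone as ``the real work'' is the entire content of the lemma, and the tools you suggest cannot carry it out.

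\begin{itemize}
\item The assertion that the pairs realizing a given pattern meet every non-algebraic $\Cal U$-type is false. $\Cal U$ is an arbitrary family of subsets, and the extension axioms of the \Fraisse limit say nothing about such sets. Suppose $U\in\Cal U$ meets each $E_0$-class in exactly one point, and let $q$ be a $\Cal U[1]$-type containing $U$. Then $U\times U\in(q,q)$ is disjoint from $\{(x,y): x\ne y\wedge E_0(x,y)\}$. So every element of $\nabla^2_A(\Sa M,(q,q))$ contains $\neg(x\ne y\wedge E_0(x,y))$, and half the patterns are excluded. The cell must therefore be chosen with care, and producing your ``generic'' $\Cal U$-type for $a$ is exactly what has to be proved.
\item Counting cannot produce it either. $\Cal U$ may have $2^\kappa$ types, for instance if it is an independent family. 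Since $2^\kappa\cdot\kappa=2^\kappa$, no pigeonhole argument forces more than $\kappa$ of the $2^\kappa$ patterns into one cell.
\end{itemize}

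The missing ingredient is a Ramsey-type (indivisibility) property of the age, which the paper imports from Guingona--Parnes. Let $\Sa M^*$ be a reduct to finitely many of the $E_\beta$ and let $\Cal U^*\subseteq\Cal U$ be finite. Then each finite structure in the age of $\Sa M^*$ embeds into $\Sa M^*$ with image contained in a single atom of the Boolean algebra generated by $\Cal U^*$.

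Apply this to a finite configuration of pairs realizing all patterns on a finite $F\subseteq\kappa$. Compactness then shows that a certain partial type in variables $x_I,y_I$ ($I\subseteq\kappa$) is consistent. It states that $(x_I,y_I)$ has pattern $I$, and that all the $x_I,y_I$ lie in the same members of $\Cal U$. The alignment must hold across all $I$ simultaneously, not only within each pair; embedding the whole configuration into one atom gives exactly this.

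The result is a single $\Cal U[1]$-type $q$ with which all $2^\kappa$ patterns are consistent, so $|\nabla^2_A(\Sa M,(q,q))|=2^\kappa>\kappa$. Since $\nabla^2_A$ is a Stone space, no saturation of $\Sa M$ and no realization of these types inside $\Sa M$ is needed.
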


Fact~\ref{fact:new hyp rel}(2) shows that $\tfeq$ is trace definable in $\hyp_3$.
Now a theory is $2$-$\ip$ if and only if it trace defines $\hyp_3$, so it follows that partition-wise $\kappa$-stability implies $2$-$\nip$.

\begin{proof}
Fix $\kappa \ge \aleph_0$.
Let $E^\kappa = E^\kappa_1$.
So $E^\kappa$ is the model companion of the theory of a set equipped with $\kappa$ equivalence relations.
If $(M, P; E) \models \tfeq$, $(c_i)_{i < \kappa}$ is a sequence of distinct elements of $P$, and $E_i$ is the equivalence relation $E(c_i, x, x^*)$ on $M$ for each $i < \kappa$, then $(M; (E_i)_{i < \kappa}) \models E^\kappa$.
So it suffices to show that $E^\kappa$ is not partition-wise $\kappa$-stable.
Let $\Sa M \models E^\kappa$ and let $\Cal U$ be a collection of subsets of $M$.
For each $I \subseteq \kappa$ let $p_I(x_I, y_I)$ be the $2$-type where $E_i(x_I, y_I)$ is in $p_I$ if and only if $i \in I$.
This gives a collection of $2^\kappa$ distinct $2$-types over the empty set.
Let $r$ be the incomplete $(\Sa M, \Cal U)$-type in the variables $\{x_I, y_I : I \subseteq \kappa \}$ given by
\[
r = \left( \bigcup_{I \subseteq \kappa} p_I \right) \cup \{ x_I \in U \Longleftrightarrow y_I \in U  : I \subseteq \kappa, U \in \Cal U \}.
\]
It suffices to show that $r$ is consistent.
This is a consequence of the following result of Guingona and Parnes~\cite[Example~3.20]{Guingona-Parnes}.
Let $\Sa M^*$ be a reduct of $\Sa M$ to a finite sublanguage and $\Cal U^*$ be a finite subset of $\Cal U$.
They showed that any $\Sa A \in \age(\Sa M^*)$ admits an embedding $\eta$ into $\Sa M^*$ such that the image of $\eta$ is either contained in or disjoint from every $U \in \Cal U^*$.
\end{proof}

\begin{proposition}\label{prop:tfe}
$\tfeq$ is $2$-trace definable in $\triv$.
\end{proposition}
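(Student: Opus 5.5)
Take a model $(M, P; E) \models \tfeq$ and a model $\Sa N$ of $\triv$, i.e.\ an infinite set in the language of equality, with $|N| \ge |M| + |P|$. We must find finitely many functions of arity at most two from the domain of $(M,P;E)$ into $N$ witnessing $2$-trace definability; by Lemma~\ref{lem:ka}(6) functions of arity $\le 2$ suffice. We treat $(M,P;E)$ as one-sorted, with domain $M \sqcup P$ and unary predicates for the sorts. Since $\tfeq$ is the theory of a \Fraisse limit in a finite relational language, it has quantifier elimination. Hence by Lemma~\ref{lem:ka}(4) it is enough that the sort predicates, equality, and the ternary relation $E$ are each quantifier-free definable in the expanded structure $(M \sqcup P, \Sa N, \Cal E)$.

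The natural encoding is as follows. For each $c \in P$, the relation $E(c,-,-)$ is an equivalence relation on $M$, so we fix an injection $\iota$ from $P \times M$ into $N$ modulo identification of equivalent points. Concretely, define $g \colon (M \sqcup P)^2 \to N$ by sending $(c,a)$ with $c \in P$, $a \in M$ to an element of $N$ naming the $E(c,-,-)$-class of $a$. We choose these names so that distinct pairs $(c, \text{class})$ receive distinct elements of $N$, which is possible since $|N|$ is large. Then
\[
E(c,a,a^*) \iff c \in P \land a,a^* \in M \land g(c,a) = g(c,a^*).
\]
The right hand side is a quantifier-free formula in $\Sa N$ applied to values of $g$ on pairs of the variables, together with the sort predicates.

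It remains to handle the sort predicates and equality. Since $N$ has at least two elements, fix distinct $n_0,n_1 \in N$. Let $h \colon M \sqcup P \to N$ send $M$ to $n_0$ and $P$ to $n_1$; this defines both sorts. Define $e \colon (M\sqcup P)^2 \to N$ by $e(x,y)=n_0$ iff $x=y$; this handles equality, exactly as in the proof of Lemma~\ref{lem:c}. Values of $g$ off $P \times M$ can be set to $n_0$ arbitrarily, since we only ever use $g$ conjoined with the sort conditions. Then $\{g,h,e\}$ is a finite family of functions of arity at most two, and every atomic formula is quantifier-free definable in $(M\sqcup P,\Sa N,g,h,e)$. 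Lemma~\ref{lem:ka}(4) and (6), together with Proposition~\ref{prop:trace-theories}, then give that $\tfeq$ is $2$-trace definable in $\triv$.

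The only real obstacle is checking that $\tfeq$ has quantifier elimination in the language $\{E\}$ with the sort predicates, so that Lemma~\ref{lem:ka}(4) applies. This follows from its being the theory of a \Fraisse limit in a finite relational language (Fact~\ref{fact:fraisse} and homogeneity). One should also confirm that Lemma~\ref{lem:ka} applies to the two-sorted structure viewed one-sortedly, which is routine.
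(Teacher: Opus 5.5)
Your proposal is correct and follows essentially the same route as the paper. The paper also encodes $E$ by a single binary function $f(p,a)=f_p(a)$ naming the $E(p,-,-)$-class of $a$, handles equality on the two sorts with injections $\iota_0\colon M\hookrightarrow N$ and $\iota_1\colon P\hookrightarrow N$ where you use the sort indicator $h$ and the equality indicator $e$, and then appeals to quantifier elimination for $\tfeq$; your choice $|N|\ge |M|+|P|$ is, if anything, slightly more careful than the paper's $|N|=|M|$.
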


\begin{proof}
Fix $(M, P; E) \models \tfeq$.
Let $N$ be any set of cardinality $|M|$.
For each $p \in P$ let $f_p$ be a function $M \to N$ such that $f_p(a) = f_p(a^*)$ if and only if $E(p, a, a^*)$ for all $a, a^* \in M$.
Let $f \colon P \times M \to N$ be given by $f(p, a) = f_p(a)$ and let $\iota_0, \iota_1$ be arbitrary injections $M \hookrightarrow N$, $P \hookrightarrow N$, respectively.
Finally, note that $f, \iota_0, \iota_1$ witness $2$-trace definability of $(M, P; E)$ in the trivial structure on $N$ by quantifier elimination for $\tfeq$.
\end{proof}

Proposition~\ref{prop:tfe} and Lemmas~\ref{lem:bin}, \ref{lem:p preserved}, \ref{lem:p disjoint}, and \ref{lem:neg tfeq} together show that $D_2(\triv)$ is not locally trace definable in the disjoint union of a stable structure with a binary structure.
In particular $\esp$ does not locally trace define $D_2(\triv)$.
To complete the proof of Theorem~\ref{thm:extra special} we need to show that $\esp$ is not locally trace definable in $D_k(T)$ for $T$ the theory of a finite structure.
We first show that this property can be characterized in several ways.

\medskip
We say that $\Sa M$ is $\infty$-trace definable in $\Sa N$ if $\Sa M$ is $k$-trace definable in $\Sa N$ for some $k \ge 1$.
Note that $\infty$-trace definability is transitive by Proposition~\ref{prop:trace-basic}.

\begin{corollary}\label{cor:hyp rel}
The following are equivalent for any structure $\Sa M$.
\begin{enumerate}[leftmargin=*]
\item $\Sa M$ is $\infty$-trace definable in $\triv$.
\item $\Sa M$ is trace definable in $\hyp_m$ for some $m \ge 2$.
\item $\Sa M$ is $\infty$-trace definable in some theory which admits quantifier elimination in a bounded arity relational language.
\item $\Sa M$ is $\infty$-trace definable in a finite structure.
\end{enumerate}
\end{corollary}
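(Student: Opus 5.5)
I will prove the implications $(1)\Rightarrow(4)\Rightarrow(3)\Rightarrow(2)\Rightarrow(1)$. Throughout I use that $\infty$-trace definability is transitive (Proposition~\ref{prop:trace-basic}) and that trace definability is $1$-trace definability.

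\emph{$(1)\Rightarrow(4)$.} Suppose $\Sa M$ is $k$-trace definable in a model of $\triv$. By Proposition~\ref{prop:fhkd}, the theory $E^1_1$ of an equivalence relation with infinitely many infinite classes (or more simply the theory $\triv$ of equality, after replacing it by $E^1_1$, which is mutually interpretable with it) is $2$-trace definable in the two-element structure $\Sa T$ in the language of equality, since $E^1_1$ has quantifier elimination in a finite binary relational language. Because $\triv$ is interpretable in $E^1_1$, $\triv$ is $2$-trace definable in $\Sa T$. Composing gives that $\Sa M$ is $2k$-trace definable in $\Sa T$.

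\emph{$(4)\Rightarrow(3)$.} A finite structure, after Morleyization, admits quantifier elimination in a relational language. Since it is finite, every definable set is a finite union of points, so it is interdefinable with its expansion by constants and the equality relation alone. Thus it is interdefinable with a structure having quantifier elimination in a language of bounded arity, for instance unary predicates together with equality.

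\emph{$(3)\Rightarrow(2)$.} Let $\Sa M$ be $k$-trace definable in $\Sa N$, where $\Sa N$ has quantifier elimination in a relational language of arity at most $m$. Proposition~\ref{prop:fhkd} (together with Lemma~\ref{lem:ka}(4) and Lemma~\ref{lem:ka}(6) when $m=1$) shows that $\Sa N$ is locally $m'$-trace definable in $\Sa T$ for $m'=\max(m,2)$. I then need a finite-language version. The $\Sa N$-definable sets used in a fixed witness $\uptau_1,\ldots,\uptau_n$ of $k$-trace definability of $\Sa M$ could a priori involve infinitely many relations, so I will instead argue through $D$-theories. Composing the witnesses gives that $\Sa M$ is locally $km'$-trace definable in $\Sa T$. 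By Theorem~\ref{thm;dock}(2), $\Sa M$ is then locally trace definable in $D_{km'}(\Th(\Sa T))$, which is trace equivalent to $\hyp_{km'}$ by Proposition~\ref{prop:finite case}. This yields only local trace definability, which is the main obstacle. To obtain genuine trace definability I would use the fact that $\Sa M$ is $k$-trace definable via finitely many functions $\uptau_i\colon M^k\to N$. Then compose each $\uptau_i$ with the countably many characteristic functions $\chi_R$. Finally, use Proposition~\ref{prop:trace-basic}(\ref{i6}) to absorb countably many functions into a single function of one higher arity. This shows that $\Sa M$ is $(km'+1)$-trace definable in $\Sa T$, hence trace definable in $D_{km'+1}(\Th(\Sa T))$, which is trace equivalent to $\hyp_{km'+1}$ by Theorem~\ref{thm;dock}(1) and Proposition~\ref{prop:finite case}. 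I expect this arity-raising step to be the delicate point. Proposition~\ref{prop:trace-basic}(\ref{i6}) is stated for theories, so I will apply it at the level of $\Th(\Sa M)$ and then transfer it back using Proposition~\ref{prop:trace-theories}.

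\emph{$(2)\Rightarrow(1)$.} By Proposition~\ref{prop:finite case}, $\hyp_m$ is trace equivalent to $D_m(\Th(\Sa T))$, which is $m$-trace definable in $\Sa T$ by Lemma~\ref{lem:c}. The structure $\Sa T$ is trace definable in $\triv$, since it is a reduct of a trivial structure with two named constants and naming constants is harmless under trace definability. Therefore $\Sa M$ is $m$-trace definable in $\triv$ by transitivity.
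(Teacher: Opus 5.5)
Your argument is correct, but it runs the cycle in the opposite direction from the paper, which proves $(1)\Rightarrow(2)\Rightarrow(3)\Rightarrow(4)\Rightarrow(1)$.

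\textbf{The paper's route.} The substantive step is $(1)\Rightarrow(2)$. If $\Sa M$ is $k$-trace definable in $\triv$, then it is trace definable in $D_k(\triv)$. By Proposition~\ref{prop:dkk triv} this theory is trace equivalent to $E^1_k$, which has quantifier elimination in a finite $2k$-ary relational language. So $\Sa M$ is trace definable in $\hyp_{2k}$ by Fact~\ref{fact:new hyp rel}(2). The remaining steps are short:
\begin{itemize}
\item $(2)\Rightarrow(3)$ is trivial;
\item $(3)\Rightarrow(4)$ is Proposition~\ref{prop:fhkd} plus transitivity;
\item $(4)\Rightarrow(1)$ holds because finite structures are interpretable in $\triv$.
\end{itemize}

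\textbf{Your route.} The work sits in $(3)\Rightarrow(2)$. You use Proposition~\ref{prop:fhkd}, composition, the arity-raising Proposition~\ref{prop:trace-basic}(\ref{i6}), Theorem~\ref{thm;dock}(1), and Proposition~\ref{prop:finite case}. You never need Proposition~\ref{prop:dkk triv} or Fact~\ref{fact:new hyp rel}(2).

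\textbf{What each buys.} The paper's route is shorter and gives the sharper index $\hyp_{2k}$. Yours costs one extra arity ($\hyp_{km'+1}$), but it is explicit about a point the paper passes over. When the bounded-arity language in (3) is infinite, Proposition~\ref{prop:fhkd} only gives local $m$-trace definability. So the paper's $(3)\Rightarrow(4)$ also implicitly needs Proposition~\ref{prop:trace-basic}(\ref{i6}), exactly as you use it.

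\textbf{Minor remarks.}
\begin{itemize}
\item The detour through Theorem~\ref{thm;dock}(2) and local trace definability in $D_{km'}$ is superfluous once you invoke (\ref{i6}).
\item ``Countably many'' $\chi_R$ is unjustified, since the language may be uncountable. This is harmless, because (\ref{i6}) together with Lemma~\ref{lem:ka}(1) handles any cardinality.
\item In $(1)\Rightarrow(4)$ you can apply Proposition~\ref{prop:fhkd} directly to $\triv$, which already has quantifier elimination in the language of equality; the detour through $E^1_1$ is unnecessary.
\item In $(2)\Rightarrow(1)$, the phrase ``reduct of a trivial structure with two named constants'' is a muddled justification. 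It is cleaner to note that every subset of $F^n$, for finite $F$, is definable with parameters in any structure containing $F$. Alternatively, apply Proposition~\ref{prop:fhkd} to $\hyp_m$ directly.
\end{itemize}
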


\begin{proof}
Note that (4) implies (1) as any finite structure is interpretable in $\triv$.
Proposition~\ref{prop:dkk triv} shows that $D_k(\triv)$ is trace equivalent to a theory which admits quantifier elimination in a finite $2k$-ary relational language.
By Fact~\ref{fact:new hyp rel}(2) any such theory is trace definable in $\hyp_{2k}$, so (1) implies (2).
It is clear that (2) implies (3).
Proposition~\ref{prop:fhkd} and transitivity of $\infty$-trace definability show that (3) implies (4).
\end{proof}

Theorem~\ref{thm:fh} finishes the proof of Theorem~\ref{thm:extra special}.

\begin{thm}
\label{thm:fh}
Suppose that $\Gamma$ is an infinite group such that $\Gamma$ contains an abelian subgroup of cardinality $\ge n$ for all $n$.
Then $\Gamma$ is not $\infty$-trace definable in $\triv$.
Hence $\triv$ cannot $\infty$-trace define a group of infinite exponent or an infinite locally finite group.
\end{thm}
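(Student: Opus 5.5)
The plan is to adapt Oger's Ramsey-theoretic argument. By Corollary~\ref{cor:hyp rel}, it suffices to show that $\Gamma$ is not trace definable in $\hyp_m$ for any $m \ge 2$. Suppose towards a contradiction that it is. Then, unwinding the definition, there are:
\begin{itemize}[leftmargin=*]
\item a model $\Sa H \models \hyp_m$ and an injection $\uptau \colon \Gamma \hookrightarrow H^n$,
\item a quantifier-free formula $\chi$ of $\Sa H$, with parameters $\bar e$, in the finite $m$-ary relational language,
\end{itemize}
such that for all $a_1, \ldots, a_4 \in \Gamma$ we have $a_1 a_2 = a_3 a_4$ iff $\Sa H \models \chi(\uptau(a_1), \ldots, \uptau(a_4), \bar e)$. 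The point is that $\chi$ is a boolean combination of atomic formulas, each of which involves at most $m$ coordinates. So the truth value of $\chi$ on a $4$-tuple is determined by a finite coloring of $\le m$-element subtuples of the images under $\uptau$, where each coordinate's color may also depend on $\bar e$.

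First step. For each $N$ choose an abelian subgroup $A_N \le \Gamma$ with $|A_N| \ge N$. Fix an arbitrary linear order on $A_N$. Color each increasing tuple $(b_1 < \cdots < b_{4m})$ from $A_N$ by the quantifier-free type of $(\uptau(b_1), \ldots, \uptau(b_{4m}))$ over $\bar e$ in $\Sa H$. This type is determined by the restriction to $\le m$-subtuples, so there are only finitely many colors, bounded independently of $N$. By the finite Ramsey theorem, for $N$ large enough we get a sequence $c_1 < \cdots < c_\ell$ in $A_N$ with $\ell$ as large as we like, which is order-indiscernible for quantifier-free formulas over $\bar e$. Consequently, for every pattern of indices, the truth of $c_{i_1} c_{i_2} = c_{i_3} c_{i_4}$ depends only on the order type (including equalities) of $(i_1, i_2, i_3, i_4)$.

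Second step: derive a contradiction from the group law on $A_N$. Indiscernibility of this kind alone does not contradict commutativity, since the relation $xy = yx$ is order-invariant. So, following Oger, I would not work with the $c_i$ themselves but with products of them. The plan is to apply Ramsey to a coloring of tuples of pairwise products $c_i c_j$ (or of longer words) and then compare two assignments of the same order type. For instance, commutativity together with indiscernibility would force an equation such as $c_1 c_4 \cdot c_2 = c_1 c_2 \cdot c_5$ whenever $c_1 c_4 \cdot c_2 = c_1 c_2 \cdot c_4$ holds. By cancellation this gives $c_4 = c_5$, contradicting injectivity of the enumeration. To make this rigorous, I would define all the needed products as $\Sa H$-images via $\uptau$ and include them in the Ramsey coloring. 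This is legitimate because the product map is definable and hence also traced by a quantifier-free formula of bounded arity.

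The main obstacle is precisely this choice of a formula and of a homogeneous configuration in which abelianness and cancellation clash with order-indiscernibility; I expect to need Oger's strong form of Ramsey's theorem here. The final claims of Theorem~\ref{thm:fh} then follow quickly. A group of infinite exponent contains elements of arbitrarily large order, hence arbitrarily large finite cyclic subgroups or an infinite cyclic subgroup. An infinite locally finite group has an infinite abelian subgroup by the Hall--Kulatilaka--Kargapolov theorem. In either case there are abelian subgroups of every finite size, so the first part of the theorem applies.
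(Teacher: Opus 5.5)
\textbf{There is a genuine gap: the second step, which you yourself flag as the obstacle, is the whole proof, and what you sketch there does not work.}

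Your reduction is sound. By Corollary~\ref{cor:hyp rel} it suffices to rule out trace definability in $\hyp_m$. Pulling back the atomic formulas of $\Sa H$ (with the parameters $\bar e$) along $\uptau$ gives an at most $m$-ary relational structure on $\Gamma$ that traces the definable sets you need. This is the role played by the $k$-ary structure $\Sa G$ in the paper.

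The trouble is the second step. Order-indiscernibility only transfers truth values between index patterns with the same order type \emph{including equalities}. The patterns of $c_1c_4\cdot c_2=c_1c_2\cdot c_4$ and $c_1c_4\cdot c_2=c_1c_2\cdot c_5$ differ: the index $4$ is repeated in the first and not in the second. So nothing forces the second equation, and it is simply false.

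More fundamentally, indiscernible sequences exist in any structure, so indiscernibility alone cannot clash with commutativity. The contradiction has to come from the interaction between the arity bound $m$ and the group law. Indiscernibility of a sequence, or of words in it, only compares tuples related by an order-preserving reindexing, and that never uses the arity bound.

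For the same reason, the single traced formula $a_1a_2=a_3a_4$ you fix is the wrong test once $m\ge 4$. In an arity argument the compared tuples agree on every formula in at most $m$ group variables by construction. So the contradiction must be witnessed by a formula in more than $m$ variables.

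What is missing is the configuration the paper extracts from the proof of Oger's theorem; this is where the Graham--Leeb--Rothschild theorem enters, applied inside large finite abelian subgroups. Let $\Sa G^*$ be the expansion of the $m$-ary structure by $\ast$, taken $\aleph_1$-saturated. Then there are $b_1,\ldots,b_{m+1}$ generating an abelian subgroup such that
\[
\qftp_{\Sa G^*}(b_1,\ldots,b_m)=\qftp_{\Sa G^*}(b_1,\ldots,b_{i-1},b_i\ast b_{m+1},b_{i+1},\ldots,b_m)\quad\text{for each } i\le m.
\]
Put $\beta=(b_1,\ldots,b_m,b_1\ast\cdots\ast b_m)$ and $\beta'=(b_1,\ldots,b_m,b_1\ast\cdots\ast b_m\ast b_{m+1})$.
\begin{enumerate}
\item Apply the displayed equality to the term $x_1\ast\cdots\ast x_m$, use commutativity to rewrite $b_1\ast\cdots\ast(b_i\ast b_{m+1})\ast\cdots\ast b_m$ as $b_1\ast\cdots\ast b_m\ast b_{m+1}$, and delete the $i$th coordinate.
\item This shows every $m$-element subtuple of $\beta$ has the same quantifier-free type in the relational structure as the corresponding subtuple of $\beta'$. (Deleting coordinate $m+1$ gives identical subtuples.)
\item Since the structure is $m$-ary, $\beta$ and $\beta'$ then have the same quantifier-free type.
\item But the set $x_{m+1}=x_1\ast\cdots\ast x_m$ is traced by a quantifier-free formula over $\bar e$, and only $\beta$ lies in it. This is the contradiction.
\end{enumerate}
Your proposal states neither this translation-invariance property nor the comparison of $\beta$ with $\beta'$, so it does not yet prove the theorem. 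Your deduction of the last sentence of the statement from the first is fine.
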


The last claim of Proposition~\ref{thm:fh} follows from the first by Philip Hall's theorem that an infinite locally finite group has an infinite abelian subgroup~\cite[14.3.7]{Robinson_1996}.
Of course by compactness  Proposition~\ref{thm:fh} is equivalent to the assertion that an infinite group with an infinite abelian subgroup is not $\infty$-trace definable in $\triv$.

\medskip
The existence of infinite  groups with a uniform finite upper bound on the cardinality of an abelian subgroup is essentially equivalent to the failure of the Burnside problem, i.e. existence of infinite finitely generated groups of bounded exponent.
Suppose that $\Gamma$ is an infinite group and every abelian subgroup of $\Gamma$ has $\le n$ elements.
It follows that $\Gamma$ has finite exponent and that there is $m$ such that every finite subgroup of $\Gamma$ has cardinality $\le m$~\cite[pg.~2979]{oger}.
Hence any subgroup of $\Gamma$ generated by $d > m$ distinct elements is infinite, finitely generated, and has finite exponent.
Conversely, let $B(m,n)$ be the free Burnside group of exponent $n$ on $m$ generators, i.e. the quotient of the free group on $m$ generators by the subgroup generated by all $n$th powers. 
If $m\ge 2$ and $n$ is odd and sufficiently large then $B(m,n)$ is infinite and every abelian subgroup is cyclic and so has cardinality $\le m$~\cite{Novikov_1968}.

\medskip
Macpherson showed that  finitely homogeneous structures cannot interpret  infinite groups~\cite{macpherson-interpreting-groups}.
He notes that any group interpretable in a finitely homogeneous structure is $\aleph_0$-categorical, recalls that any infinite $\aleph_0$-categorical group contains an infinite subgroup which is isomorphic to a vector space over a finite field, and then  obtains a contradiction via an application of the affine Ramsey theorem.
This proofs breaks down on the first step for us as $\aleph_0$-categoricity is not preserved under trace definability.
However his proof shows that a finitely homogeneous structure cannot locally trace define an infinite vector space over a finite field.
A theory $T$ is $(m,n)$-homogeneous if whenever $\Sa M\models T$, $\alpha_1,\beta_1,\ldots,\alpha_n,\beta_n\in M$, and we have 
$$ \tp_{\Sa M}(\alpha_{i_1},\ldots,\alpha_{i_m}) =\tp_{\Sa M}(\beta_{i_1},\ldots,\beta_{i_m})\quad\text{for all}\quad 1\le i_1<\ldots<i_m\le n$$
then $\tp_{\Sa M}(\alpha_1,\ldots,\alpha_n)=\tp_{\Sa M}(\beta_1,\ldots,\beta_n)$.
Kikyo~\cite{kikyo} conjectured that the theory of an expansion of an infinite group cannot be $(m,n)$-homogeneous for any $2\le m<n$.
Oger~\cite{oger} used a strong form of Ramsey's theorem due to Leeb-Graham-Rothschild to show that if $\Th(\Gamma)$ is $(m,n)$-homogeneous for some $2\le m<n$ then there is $d\in\N$ such that every abelian subgroup of $\Gamma$ has $\le d$ elements.
Theorem~\ref{thm:fh} follows easily by adapting Oger's proof.

\medskip
We let $\qftp_{\Sa M}(a)$ be the quantifier-free type of a tuple $a$ from $\Sa M$ over the empty set.
Given $b = (b_1, \ldots, b_m)$ and $I = \{i_1, \ldots, i_n\}$ where $1 \le i_1 < \ldots < i_n \le k$ we let $b_I = (b_{i_1}, \ldots, b_{i_n})$

\begin{proof}[Proof of Proposition~\ref{thm:fh}]
The reader will need a copy of \cite{oger}.
Let $(\Gamma;\ast)$ be an infinite group and suppose that $\Gamma$ contains abelian subgroups with cardinality exceeding any given 
integer. 
Suppose towards a contradiction that $(\Gamma;\ast)$ is $\infty$-trace definable in $\triv$.
Hence $(\Gamma; \ast)$ is $k$-trace definable in a finite structure $\Sa M$ for some $k \ge 1$.
We may suppose that $\Sa M$ is the trivial structure on two elements $p, q$ and let $f_1, \ldots, f_n$ be functions $\Gamma^k \to \{p, q\}$ witnessing $k$-trace definability of $(\Gamma ; \ast)$ in $\Sa M$.
We may suppose that the $f_i$ are closed under permutations of variables.
For each $i = 1, \ldots, n$ and $a \in M^d$, $d < n$ let $R_{i, a}$ be the $(k - d)$-ary relation on $\Gamma$ given by declaring $R_i(b_1, \ldots, b_{k - d})$ if and only if $f_i(b_1, \ldots, b_{k - d}, a) = p$.
Let $\Sa G$ be the resulting relational structure on $\Gamma$ and $L$ be its language.
It follows that every $(\Gamma;\ast)$-definable set is quantifier-free definable in $\Sa G$ without parameters.
It follows that if $\alpha,\beta\in \Gamma^n$ then $\qftp_{\Sa G}(\alpha)=\qftp_{\Sa G}(\beta)$ implies $\tp_{(\Gamma;\ast)}(\alpha)=\tp_{(\Gamma;\ast)}(\beta)$.
Let $L^*=L\cup \{\ast\}$ and let $\Sa G^*$ be the natural $L^*$-structure on $\Gamma$.
After possibly passing to an elementary expansion we may suppose that $\Sa G^*$ is $\aleph_1$-saturated.
Note that $\qftp_{\Sa G}(\alpha)$ still determines $\tp_{(\Gamma ; \ast)}(\alpha)$ for any tuple $\alpha$ from $\Gamma$.
By the proof of the main theorem of \cite{oger} there is $b=(b_1,\ldots,b_{k+1})\in \Gamma^{k+1}$ such that
\begin{enumerate}
[leftmargin=*]
\item $b,\ldots,b_{k+1}$ generates an abelian subgroup of $\Gamma$, and
\item $\qftp_{\Sa G^*}(b_1,\ldots,b_k)=\qftp_{\Sa G^*}(b_1,\ldots,b_{i-1},b_i\ast b_{k+1},b_{i+1},\ldots,b_k)$ for all $i\in\{1,\ldots,k\}$
\end{enumerate}
Let $\beta=(b_1,\ldots,b_k,b_1\ast\cdots\ast b_k)$ and $\beta'=(b_1,\ldots,b_k,b_1\ast\cdots\ast b_k\ast b_{k+1})$.
We show that we have $\qftp_{\Sa G^*}(\beta_I) = \qftp_{\Sa G^*}(\beta'_I)$ for all $I\subseteq\{1,\ldots,k+1\}$ with $|I|=k$.
Fix such $I$.
The case $I=\{1,\ldots,k\}$ is trivial so we may suppose that $I=\{1,\ldots,k+1\}\setminus\{i\}$ for some $i\in\{1,\ldots,k\}$.
By (2) $\qftp_{\Sa G^*}(b_1,\ldots,b_k,b_1\ast\cdots\ast b_{k})$ is equal to
\[
\qftp_{\Sa G^*}(b_1,\ldots,b_{i-1},b_i\ast b_{k+1},b_{i+1},\ldots,b_k,b_1\ast\cdots\ast b_{i-1}\ast(b_i\ast b_{k+1})\ast b_{i+1}\ast\cdots\ast b_k).
\]
Hence
\[
\qftp_{\Sa G}(b_1,\ldots,b_{i-1},b_{i+1},\ldots,b_k,b_1\ast\cdots\ast b_k) = \qftp_{\Sa G}(b_1,\ldots,b_{i-1},b_{i+1},\ldots,b_k,b_1\ast\cdots\ast b_k\ast b_{k+1}).
\]
We have shown that $\qftp_{\Sa G}(\beta_I)=\qftp_{\Sa G}(\beta'_I)$ for any $k$-element subset $I$ of $\{1,\ldots,k+1\}$.
As $\Sa G$ is $k$-ary it follows that $\qftp_{\Sa G}(\beta)=\qftp_{\Sa G}(\beta')$, so by assumption $\tp_{(\Gamma;\ast)}(\beta)=\tp_{(\Gamma;\ast)}(\beta')$.
This is a contradiction as $\tp_{(\Gamma;\ast)}(\beta)$ satisfies $x_{k+1}=x_1\ast\cdots\ast x_k$ and $\tp_{(\Gamma;\ast)}(\beta')$ does not.
\end{proof}

\bibliographystyle{abbrv}
\bibliography{NIP}
\end{document}